\documentclass{amsart}

\usepackage{graphicx}

\def\R{{\mathbb R}}

\def\N{{\mathbb N}}

\def\Z{{\mathbb Z}}
\def\1{{1\!\!\!1}}
\def\a{{\alpha}}
\def\E{{\mathbb E}}
\def\P{{\mathbb P}}

\def\cal{\mathcal}

\def\supp{{\rm{supp}}}
\def\eps{\varepsilon}

\newcommand{\be}{\begin{equation}}
\newcommand{\ee}{\end{equation}}
\setcounter{tocdepth}{1}
\numberwithin{equation}{section}

\newtheorem{theorem}{Theorem}
\newtheorem{prop}{Proposition}[section]
\newtheorem{cor}{Corollary}[section]
\newtheorem{defi}{Definition}[section]
\newtheorem{lemma}{Lemma}[section]

\title{Martin boundary of a killed random walk in $\Z^n_+$}
\author{Irina Ignatiouk-Robert}
\address{
{Universit\'e de Cergy-Pontoise,}
{D\'epartement de math\'ematiques,}
{2, Avenue Adolphe Chauvin,}
{95302 Cergy-Pontoise Cedex,}
{France}}
\date{\today}
\email{Irina.Ignatiouk@math.u-cergy.fr}
\keywords{Minimal harmonic function. Martin compactification. Green function.Large deviations} 
\subjclass{Primary 60F10; Secondary 60J15, 60K35}

\begin{document}
\begin{abstract} The Martin compactification is investigated for a $d$-dimensional random
  walk which is killed when at least one of it's coordinates becomes zero or negative. The
  limits of the Martin kernel are represented in terms of the harmonic functions of the
  associated induced Markov chains. It is shown that any sequence of points $x_n\in\Z^d_+$
  with $\lim_n |x_n| = \infty$ and $\lim_n x_n/|x_n| = q$ 
  is fundamental in the Martin compactification of $\Z_+^d$ if up to the multiplication  by
  constants, the induced Markov chain
  corresponding to the direction $q$ has a unique positive harmonic
  function. The full Martin compactification is obtained  for Cartesian products of
  one-dimensional random walks. The methods involve  a ratio limit theorem and a large deviation 
  principle for sample paths of scaled processes leading to the logarithmic asymptotics of
  the Green function. 
\end{abstract}
\maketitle

\section{Introduction and main results}\label{sec1}
In the present paper, we investigate the Martin boundary of a random walk on $\Z^d$ which is
killed upon the first time when at least one of its coordinate becomes negative or
zero. Such a random walk $(Z(t))$ is a Markov chain on the state space $\Z_+^d~\dot=~
\{x=(x^1,\ldots,x^d)\in\Z^d: x^i > 0, \; \forall \; i=1,\ldots,d\}$ with a substochastic
transition matrix $(p(x,x') ~=~ \mu(x'-x), \; x,x'\in \Z_+^d)$. the Green function of
$(Z(t))$ is therefore given by 
\[
G(x,x') ~=~ \sum_{t=0}^\infty \P_x(X(t)~=~ x') ~=~ \sum_{t=0}^\infty \P_x(S(t)~=~ x', \;
\tau > t) 
\]
where $(S(t))$ is a homogeneous random walk on $\Z^d$ with transition probabilities
$p(x,x') ~=~ \mu(x'-x)$ and $\tau$ is the first time when the random walk $(S(t))$ exits
from $\Z_+^d$. 

For a transient discrete time Markov chain on a countable discrete  state space $E$ with
the Green function $G(x,x')$, the
Martin compactification of $E$  is the unique smallest
compactification of the discrete set $E$ for which the Martin kernels  
$$K(x,\cdot) ~\dot=~ G(x,\cdot)/G(x_0,\cdot)$$ 
extend continuously for all $x\in E$. An explicit description of the Martin compactification is usually a non-trivial problem and the
most of the existing results in this domain were obtained for so-called homogeneous random
walks, when the transition probabilities of the process are invariant with respect to the
translations over the state space $E$ (see the book of Woess~\cite{Woess}). For non-homogeneous Markov
chains, there are few examples where the Martin compactification was identified. 
Cartier~\cite{Cartier} identified the Martin compactification for random walks on
 non-homogeneous trees. Doney~\cite{Doney:02} described the harmonic functions and the Martin boundary of 
a random walk $(Z(n))$ on $\Z$ killed  on the negative half-line
$\{z : z<0\}$,  Alili and
Doney~\cite{Alili-Doney}  extend this result for the corresponding space-time random  walk $S(n)
=(Z(n),n)$. For Brownian motion on a half-space, the Martin boundary was
 obtained in the book of Doob~\cite{Doob}. Kurkova
and  Malyshev~\cite{Kurkova-Malyshev} described the  Martin boundary  for nearest
neighbor  random  walks on $\Z\times\N$ and  on $\Z^2_+$ with reflected conditions on the
boundary. The recent results of Raschel~\cite{Raschel} and Kurkova and
Raschel~\cite{Kurkova-Raschel} identify the  Martin compactification for random walks in
$\Z_+^2$ with  jumps at distance at most $1$ and absorbing boundary. 
All these results use the methods that seem to be unlikely to apply in a more
general situation. The methods of Doney~\cite{Doney:02} and  Alili and 
Doney~\cite{Alili-Doney} rely a one-dimensional structure of the process $(Z(n))$. For
Brownian motion, in ~\cite{Doob}, the explicit form of the Green function is used. Kurkova
and  Malyshev~\cite{Kurkova-Malyshev}, Raschel~\cite{Raschel} and Kurkova and
Raschel~\cite{Kurkova-Raschel} use an analytical method where the geometrical structure of  the elliptic
curve defined by the jump  generating function of the random walk is crucial : this method
work only if the corresponding elliptic
curve is homeomorphic to the torus. It seems therefore difficult to extend this method for
 higher dimensions  and for arbitrary jumps.  

In the present paper we develop large deviation method proposed in ~\cite{Ignatiouk:06},
where the  Martin compactification was identified for a random walks in a
half-space $\Z^{d-1}\times\Z_+$ killed on the boundary.  The main ideas of this method can be summarized as
follows~: 
\begin{itemize}
\item[--] The  ratio limit theorem allow to identify the limits of the Martin kernel 
$K(z,z_n)$ when the logarithmic asymptotic of the Green function for a given sequence $(x_n)$ is
zero. 
\item[--] The logarithmic asymptotics of the Green function were obtained form the sample
  path large deviation estimates of scaled processes, in terms of the corresponding
  quasipotential. 
\item[--] The ratio limit theorem is applied for a twisted Markov process, with an appropriated
  exponential change of the measure,  for which the corresponding logarithmic asymptotic
  of the Green  function is zero.  
\item[--] The limits of the Martin kernel of the original random walk are obtained from those of
  twisted random walk by using the inverse change of the measure. 
\end{itemize}
In \cite{Ignatiouk:07} this method was used to identify Martin compactification  for a random walks in a
half-space $\Z^{d-1}\times\Z_+$ with reflected boundary. The ratio limit theorem and the
large deviation estimates for the Green function were combined there with Pascal's method
applied with a suitable renewal equation. 

In the recent paper of Ignatiouk and Loree~\cite{Ignatiouk-Loree} the large deviation
method was applied to describe Martin compactification for a random walk in $\Z^2$ killed upon the first
exit from $\Z_+^2 =\{x=(x^1,x^2)\in\Z^2 : \; x^i > 0\}$.  The main ideas of this paper are
the following: for a sequence of points $x_n\in \Z_+^2$ with $\lim_n|x_n|=\infty$ and $\lim_n
x_n/|x_n| = q =(q^1,q^2)\in\R^2$   the limits of the Martin kernel were deduced from the
those of the corresponding local random walk on $\{x\in\Z^d : x^i > 0
\; \text{ for all } \; i\in\Lambda \}$ with $\Lambda=\{i\in\{1,2\} :
q^i = 0\}$. Such a local random walk is  obtained from the original random walk on
$\Z^2_+$ by removing the boundary $\{x\in\Z^2 : x^i = 0 \; \text{ for }\;
i\not\in\Lambda\}$, the transition probabilities of the original random walk are then
extended on the larger state space $\{x\in\Z^d : x^i > 0
\; \text{ for all } \; i\in\Lambda \}$ by homogeneity. For the local random walk, the
limiting behavior of the Martin kernel was identified by using the methods of the paper~\cite{Ignatiouk:06}. 

In the present paper, the results of Ignatiouk and Loree~\cite{Ignatiouk-Loree} are
generalized to higher dimensions and under  more general assumptions on the jumps
probabilities. In a difference to the paper ~\cite{Ignatiouk-Loree} where the jumps
probabilities were assume to decrease to zero at the infinity faster than any exponential
function,  here  the jumps
probabilities  on a distance $r>0$ can decrease as an exponential function $\exp(-\delta
r)$ with a given $\delta >0$. Such a more general setting require new careful estimates of the Martin
kernels for which the methods developed in ~\cite{Ignatiouk-Loree} are not sufficient (see
Section~\ref{sec2} below for more details).

The jumps probability measure $\mu$ is assumed to satisfy the following conditions: 

{\em 
\begin{itemize}
 \item[{\bf (A1)}] For any $\Lambda\subset\{1,\ldots,d\}$ and $x,x'\in\Z^{\Lambda,d}_+ ~\dot=~ \{x\in\Z^d : x_i > 0 \text{ for all }
  i\in\Lambda\}$, there is a sequence
  $x_0,\ldots,x_k\in\Z^{\Lambda,d}_+$ with $x_0=x$ and $x_k=x'$ such that
  $\mu(x_i-x_{i-1}) > 0$ for all $i=1,\ldots,k$. 
 \item[{\bf (A2)}] The  jump generating function 
\be\label{e1-2}
\varphi(a) ~\dot=~ \sum_{z\in \Z^d} \mu(z)
~\exp(a\cdot z)
\ee 
is finite in a neighborhood of the set $D~\dot=~\{a\in\R^d : \varphi(a)\leq 1\}$ in
$\R^d$ and 
\item[{\bf (A3)}] 
\be\label{e1-1}
M~\dot=~ \sum_{x\in\Z^d} \mu(x) \, x ~\not=~ 0. 
\ee
\end{itemize}
}
Remark that according to the assumption~(A1) the homogeneous random walk $(S(t))$ on
$\Z^d$ with transition probabilities $p(z,z')=\mu(z'-z)$  is irreducible.

For $i\in\{1,\ldots,
d\}$, we denote  by $x^i$ the $i$-th coordinate of $x\in\R^d$. Similarly, 
$S^i(t)$ denotes the $i$-th coordinate of $S(t)$ and  
\[
\tau_i ~\dot=~ \inf\{n \geq 0 :~ S^i(n) \leq 0\}
\]
is the first time when the $i$-th coordinate of the random walk $(S(t))$ becomes negative
or zero. 
For a subset $\Lambda$ of $\{1,\ldots ,d\}$, we denote  $\Lambda^c ~\dot=~
\{1,\ldots,d\}\setminus\Lambda$
and 
\[
\tau_\Lambda ~\dot=~ \min_{i\in\Lambda} \tau_i ~=~ \inf\{n\geq 0 :~ S^i(n)\leq 0 \;
  \text{for some $i\in\Lambda$}\}.
\]
Moreover, for $x\in\R^d$, we let $x^\Lambda ~\dot=~(x^i)_{i\in\Lambda} \in\R^\Lambda$ and similarly $(S^\Lambda(t)) = (S^i(t))_{i\in\Lambda}$.
For $\Lambda\not=\emptyset$,  the process $(S^\Lambda(t))$ is a random walk
on the lattice $$\Z^{\Lambda}~\dot=~ \left\{u=(u^i)_{i\in\Lambda} : u^i\in\Z\right\}$$ with
transition probabilities $p_\Lambda(u,u') ~\dot=~ \mu_\Lambda(u'-u)$ where 
\be\label{eq1-3}
\mu_\Lambda(u) ~\dot=~ \sum_{x\in \Z^d:~ x^\Lambda = u} \mu(x), \quad
u\in\Z^\Lambda. 
\ee
A substochastic random walk on $\Z_+^\Lambda = \{u\in\Z^\Lambda : u^i > 0 \; \text{ for
  all } \; i\in\Lambda\}$ with transition matrix $(p_\Lambda(u,u') = \mu_\Lambda(u'-u), \;
u,u'\in\Z^\Lambda_+)$ which is identical to $S^\Lambda(t)$ for $t <
\tau_\Lambda$ and killed at the time $\tau_\Lambda$ is denoted by $(X^\Lambda(t))$.
 We call  $(X^{\Lambda}(t))$  {\em the induced Markov chain} of the random walk
$(S(t))$ corresponding to a given subset $\Lambda$ of $\{1,\ldots, d\}$. 
For $\Lambda = \{1,\ldots,d\}$ we have therefore $
\tau_\Lambda ~=~ \tau$ and $X^\Lambda(t) = Z(t)$. It  is convenient
moreover to introduce the induced Markov chain for $\Lambda=\emptyset$. In the last case,
the random walk $(S^\Lambda(t))$ and the induced Markov chain $(X^\Lambda(t))$ are assumed
to be constant : almost surely $\tau_\emptyset = +\infty$ and  $S^\emptyset(t) ~=~
X^\emptyset(t) ~=~ \eta$ for all $t\geq 0$ with some additional state $\eta$.

For $q\in\R^d$ we consider
$\Lambda(q) ~=~ \{i\in\{1,\ldots,d\}~:~ q_i = 0\}$ and to simplify the notations, we let
$X^{\Lambda(M)}(t) = X_M(t)$.

Recall that  a sequence
$x_n\in \Z_+^d$ with $\lim_n|x_n|=\infty$ converges to a point on the Martin boundary 
$\partial_{\cal M}(\Z_+^d)$ of $\Z_+^d$ for a given Markov process
$(Z(t))$  if and
only if the sequence of functions $
K(x,\cdot) ~\dot=~ G(x,\cdot)/G(x_0,\cdot)$ 
converges point-wise on $\Z_+^d$. Another equivalent definition of the Martin
compactification is the following : letting  
\[
w_x ~=~ \exp\left(-\sum_{i=1}^d x^i\right)\times\P_{x_0}(Z(t) ~=~ x \; \text{ for some } t \geq 0) 
\]
define the metric 
\[
d_{\cal M}(y,y') ~=~ \sum_{x\in\Z^d_+} w_x \left(|K(x,y) - K(x,y')| + |\delta_{x,y} - \delta_{x,y'}|\right) 
\]
with $\delta_{x,x} ~=~ 1$ and $\delta_{x,y}=0$ for $x\not= y$. Then completion of the
metric space $(\Z^d_+, d_{\cal M})$ is the Martin compactification of $\Z_+^d$. A
fundamental sequence $(x_n)$ in the metric space $(\Z^d_+, d_{\cal M})$ is said to be fundamental in
the Martin compactification of $\Z_+^d$ for the Markov process
$(Z(t))$.  A sequence $x_n\in\Z_+^d$ is therefore fundamental for $(Z(t))$ if and only if
it converges to a point of the Martin boundary of $\Z_+^d$.  

Our main result is the following statement.

\begin{theorem}\label{th1} Suppose that the conditions (A1)-(A3) are satisfied and let the
  coordinates of the mean $M$ be non-negative. Then for any harmonic function $f>0$ of the
  induced Markov chain
  $(X_{M}(t))$, the function 
\be\label{e1-3p}
h(x) = f\left(x^{\Lambda(M)}\right) - \E_x\left(f(S^{\Lambda(M)}(\tau)), \;
\tau < \tau_{\Lambda(M)}\right)
\ee
is strictly positive and harmonic for $(Z(t))$ and for any fundamental sequence of points $x_n\in
  \Z_+^{d}$ with $\lim_n |x_n| = \infty$ and $\lim_n 
  x_n/|x_n| = M/|M|$ there is a harmonic function $f>0$ of the induced Markov chain
  $(X_{M}(t))$ such that for any $x\in\Z^d_+$, 
\be\label{e1-3}
\lim_n  {G(x,x_n)}/{G(x_0,x_n)} ~=~ \frac{f\left(x^{\Lambda(M)}\right) - \E_x\left(f(S^{\Lambda(M)}(\tau)), \;
\tau < \tau_{\Lambda(M)}\right)}{f\left(x_0^{\Lambda(M)}\right) - \E_{x_0}\left(f(S^{\Lambda(M)}(\tau)), \;
\tau < \tau_{\Lambda(M)}\right)}   
\ee
If moreover a harmonic function $f>0$ of the induced Markov chain $(X_{M}(t))$ is
  unique to constant multiples, then any sequence  $x_n\in
  \Z_+^{d}$ with $\lim_n |x_n| = \infty$ and $\lim_n 
  x_n/|x_n| = M/|M|$ is fundamental for $(Z(t))$ and satisfies \eqref{e1-3}.
\end{theorem}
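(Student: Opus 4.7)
The plan is to follow the large-deviation programme of \cite{Ignatiouk:06, Ignatiouk-Loree}, exploiting the fact that $q := M/|M|$ is the drift direction: since the coordinates of $M$ are non-negative, the sample-path quasipotential from the origin to $q$ inside $\R^d_+$ vanishes, and the Green function is only subexponentially small along $q$. No exponential change of measure is therefore needed, and a ratio limit theorem can be applied directly to $(Z(t))$.

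Writing $\Lambda := \Lambda(M)$, harmonicity of $f$ for the induced chain $X_M$ is equivalent to
\[
f(x^\Lambda) = \E_x\bigl[f(S^\Lambda(1)) \1_{\tau_\Lambda > 1}\bigr], \qquad x \in \Z^d_+ .
\]
A one-step Markov computation that splits on the events $\{\tau > 1\}$ and $\{\tau = 1 < \tau_\Lambda\}$ yields $\E_x[h(Z(1))] = h(x)$, so $h$ is harmonic for $(Z(t))$. For positivity, optional stopping of the non-negative martingale $f(S^\Lambda(t)) \1_{\tau_\Lambda > t}$ at $\tau$ gives $h \geq 0$; irreducibility (A1) together with harmonicity of $h$ then rules out $h(x_\ast) = 0$, since otherwise $h$ would vanish on all of $\Z^d_+$, contradicting $f > 0$ on points $x$ whose $\Lambda^c$-coordinates are so large that $\P_x(\tau < \tau_\Lambda)$ is strictly less than $1$. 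For the logarithmic asymptotics $|x_n|^{-1} \log G(x_0, x_n) \to 0$, the upper bound follows from Chernoff and $\varphi(0) \leq 1$; the lower bound comes from a tube construction, namely a polygonal path of length $\sim |x_n|/|M|$ from $x_0$ to $x_n$ along $M$ with zero LDP cost, together with the estimate (obtained from (A2)) that the walk stays within an $\eps|x_n|$-tube around it inside $\Z^d_+$ with probability $\exp(o(|x_n|))$.

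The core of the argument is a decomposition of $G(x,x_n)$ that factors out the boundary interaction in the $\Lambda^c$ directions from that in the $\Lambda$ directions. Let $\sigma$ be the first time the $\Lambda^c$-coordinates of $S$ reach a level $R_n \to \infty$ with $R_n = o(|x_n|)$. Using the strong Markov property at $\sigma \wedge \tau$ together with the fact that on $\{\sigma < \tau\}$ the $\Lambda^c$-coordinates subsequently avoid zero with probability tending to $1$, I would establish
\[
G(x, x_n) = (1+o(1)) \sum_y \E_x\bigl[\1_{S(\sigma) = y, \; \sigma < \tau}\bigr] \, G^\Lambda\bigl(y^\Lambda, x_n^\Lambda\bigr) \, G_{\rm hom}\bigl(y^{\Lambda^c}, x_n^{\Lambda^c}\bigr) ,
\]
where $G^\Lambda$ is the Green function of $X_M$ and $G_{\rm hom}$ is that of the homogeneous walk on $\Z^{\Lambda^c}$. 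Dividing by $G(x_0, x_n)$, the $G_{\rm hom}$ factor cancels by shift-invariance and the vanishing-quasipotential ratio limit theorem, while along any fundamental subsequence the $G^\Lambda$ ratio converges, by the Martin theory of $X_M$, to $f(y^\Lambda)/f(x_0^\Lambda)$ for some $f > 0$ harmonic for $X_M$. Optional stopping of the martingale $f(S^\Lambda(\cdot)) \1_{\tau_\Lambda > \cdot}$ at $\sigma \wedge \tau$, followed by $R_n \to \infty$, identifies $\sum_y \E_x[\1_{S(\sigma) = y,\, \sigma < \tau}] f(y^\Lambda) \to h(x)$, yielding \eqref{e1-3}. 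When $f$ is unique up to multiplicative constants, the right-hand side of \eqref{e1-3} is independent of the fundamental subsequence, so compactness of the Martin compactification forces the whole sequence $(x_n)$ to be fundamental.

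The main obstacle is the decomposition step above. Under only the exponential-tail assumption (A2) — rather than the super-exponential decay used in \cite{Ignatiouk-Loree} — extracting $h(x)$ uniformly in $x_n$ requires sharp two-sided Green-function estimates whose precision matches the exponential decay of the jumps, together with careful coupling between the $\Lambda$ and $\Lambda^c$ coordinates during the boundary excursion up to time $\sigma$; this is precisely the source of the new careful estimates alluded to in the introduction.
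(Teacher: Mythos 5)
Your proposal contains genuine gaps, and its central decomposition is not the one that can be made to work. The asymptotic factorization
\[
G(x,x_n) \;=\; (1+o(1))\sum_y \E_x\bigl[\1_{S(\sigma)=y,\,\sigma<\tau}\bigr]\,G^\Lambda\bigl(y^\Lambda,x_n^\Lambda\bigr)\,G_{\rm hom}\bigl(y^{\Lambda^c},x_n^{\Lambda^c}\bigr)
\]
is unjustified and in general false: the $\Lambda$- and $\Lambda^c$-coordinates of the local process $Z_\Lambda$ are driven by the same jumps and the same clock, so its Green function does not factor into a Green function of the induced chain times a Green function of the homogeneous walk (such independence is available only under the Cartesian-product assumption (A4'), used in Section~\ref{sec9}, not under (A1)--(A3)). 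Worse, the identification step built on it fails: since $M^\Lambda=0$, the sequence $x_n^\Lambda$ is $o(|x_n|)$ and may well stay bounded (e.g.\ $x_n=(n,5)$ with $M=(1,0)$), in which case $G^\Lambda(y^\Lambda,x_n^\Lambda)/G^\Lambda(x_0^\Lambda,x_n^\Lambda)$ converges to a ratio of Green functions of $X_M$, not to $f(y^\Lambda)/f(x_0^\Lambda)$ for a harmonic $f$; ``the Martin theory of $X_M$'' applied to $x_n^\Lambda$ therefore does not deliver a harmonic function. In the paper the harmonicity of the limit comes from a different mechanism: the ratio limit theorem for the full local kernel $G_\Lambda(\cdot,x_n)$ along the direction $M$ (Propositions~\ref{pr5-1}--\ref{pr5-2}), which makes the limit constant in the additive directions, combined with the uniform bound $\1_{\{|x|\le\delta|x_n|\}}G_\Lambda(x,x_n)/G_\Lambda(\hat x,x_n)\le C e^{\eps|x|}$ (Proposition~\ref{pr6-1}) that justifies, by dominated convergence, that the limit is harmonic (Lemma~\ref{lem7-3}) rather than merely superharmonic. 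That bound, whose proof occupies Sections~\ref{sec4}--\ref{sec6} (hitting probabilities of $X_M$, treated separately for $|\Lambda(M)|=1,2,\ge 3$), is exactly what you defer as ``the main obstacle''; without it neither the harmonicity of the limit nor the passage to the limit in the renewal equation \eqref{e2-1} (with the far region $|S(\tau)|\ge\delta|x_n|$ shown negligible as in Proposition~\ref{pr8-1}) is established.

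A second concrete gap is strict positivity of $h$. Optional stopping of the non-negative martingale $f(S^\Lambda(t))\1_{\{\tau_\Lambda>t\}}$ does give $h\ge 0$, but your argument for $h\not\equiv 0$ ignores that $f$ is in general unbounded (for $|\Lambda(M)|\ge 2$ it grows like a product of coordinates): the mere fact that $\P_x(\tau<\tau_\Lambda)<1$ for suitable $x$ does not preclude $f(x^\Lambda)=\E_x\bigl(f(S^\Lambda(\tau)),\,\tau<\tau_\Lambda\bigr)$. The paper needs two quantitative inputs here: the subexponential growth $f(u)\le C e^{\eps|u|}$ (Corollary~\ref{cor4-1}, itself resting on the hitting-probability estimates of Proposition~\ref{pr4-1}), and the vanishing of $\E_x\bigl(\exp(\eps|S^\Lambda(\tau)|),\,\tau<\tau_\Lambda\bigr)$ as $\min_{i\in\Lambda^c}x^i\to\infty$ (Proposition~\ref{pr8-2}, via the choice of points $\tilde a_{\Lambda'}\in D$ in Lemma~\ref{lem8-5}); only then does one point with $h(\tilde x)>0$ exist, after which Harnack and irreducibility spread positivity. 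Your final compactness argument for the uniqueness case is fine, and your one-step computation for harmonicity of $h$ matches the paper, but the core of the proof --- the factorization, the identification of the harmonic $f$, the uniform Martin-kernel estimate, and the positivity of $h$ --- is either incorrect or missing.
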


Recall that in a particular case, when the coordinates of the mean vector $M$ are all
non-zero (i.e. when $\Lambda(M)=\emptyset$)  the induced Markov chain $(X_M(t))$ is
constant and $\tau_{\Lambda(M)} = \infty$. In this case, the harmonic functions of
$(X_M(t))$ are therefore constant and the function \eqref{e1-3p} is a constant multiple
of the function 
\[
x \to  \P_x(\tau = \infty). 
\]
Using therefore Theorem~\ref{th1} one gets 

\begin{cor}\label{cor1-0} Suppose that the conditions (A1)-(A3) are satisfied and let the
  coordinates of the mean $M$ be strictly positive. Then 
\begin{itemize}
\item[--] the function $h(x) ~=~ \P_x(\tau=\infty)$ is a strictly positive and harmonic
for the Markov chain $(Z(t))$, 
\item[--] any sequence  $x_n\in
  \Z_+^{d}$ with $\lim_n |x_n| = \infty$ and $\lim_n 
  x_n/|x_n| = M/|M|$ is fundamental for $(Z(t))$ and 
 for any $x\in\Z^d_+$,
\[
\lim_n  {G(x,x_n)}/{G(x_0,x_n)} ~=~ \P_x(\tau = \infty)/\P_{x_0}(\tau = \infty).
\]
\end{itemize} 
\end{cor}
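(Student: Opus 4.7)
The plan is to obtain this corollary as a direct specialization of Theorem~\ref{th1} to the degenerate case $\Lambda(M) = \emptyset$; I do not expect any substantive obstacle, since the work is entirely one of unwinding the conventions fixed before the statement of Theorem~\ref{th1}.

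First, I would observe that since the coordinates of $M$ are strictly positive, the index set $\Lambda(M) = \{i : M_i = 0\}$ is empty. By the convention introduced for $\Lambda = \emptyset$, the induced chain $X_M = X^{\emptyset}$ is the trivial chain on the one-point space $\{\eta\}$ with $\tau_{\emptyset} = +\infty$ almost surely. Every strictly positive function on $\{\eta\}$ is then trivially harmonic and determined by the single value $f(\eta) = c > 0$, so positive harmonic functions of $X_M$ are automatically unique up to multiplication by a constant. This verifies the hypothesis of the second part of Theorem~\ref{th1} for free.

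Next, I would substitute into \eqref{e1-3p} and \eqref{e1-3}. With $f \equiv c$ and $\tau_{\Lambda(M)} = +\infty$ almost surely, the expectation $\E_x\bigl(f(S^{\Lambda(M)}(\tau)),\; \tau < \tau_{\Lambda(M)}\bigr)$ reduces to $c\,\P_x(\tau < \infty)$, so the function of \eqref{e1-3p} becomes
\[
h(x) \;=\; c - c\,\P_x(\tau < \infty) \;=\; c\,\P_x(\tau = \infty).
\]
Theorem~\ref{th1} asserts that $h$ is strictly positive and harmonic for $(Z(t))$, which yields the first bullet (and, incidentally, the strict positivity of $\P_x(\tau = \infty)$ on $\Z_+^d$). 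Dividing the numerator and the denominator of \eqref{e1-3} by the constant $c$ yields $\P_x(\tau = \infty)/\P_{x_0}(\tau = \infty)$, and the uniqueness clause of Theorem~\ref{th1} promotes the statement from merely fundamental sequences to every sequence $x_n \in \Z_+^d$ with $|x_n| \to \infty$ and $x_n/|x_n| \to M/|M|$, giving the second bullet.

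The only point demanding any care is a correct reading of the paper's conventions for $\Lambda = \emptyset$; once $X^{\emptyset}$ is understood as a trivial chain on $\{\eta\}$ with infinite lifetime, the corollary follows from Theorem~\ref{th1} with no further estimate, and no genuinely hard step arises.
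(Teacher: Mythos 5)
Your proposal is correct and follows exactly the paper's own route: the text preceding Corollary~\ref{cor1-0} derives it from Theorem~\ref{th1} by the same specialization to $\Lambda(M)=\emptyset$, where the induced chain is the constant chain with $\tau_{\emptyset}=\infty$, its positive harmonic functions are the constants (hence unique up to multiples), and \eqref{e1-3p} reduces to a constant multiple of $\P_x(\tau=\infty)$. Nothing further is needed.
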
 

Consider now the case when at least one of the coordinates of the mean vector $M$ is
zero, i.e. when $\Lambda(M)\not=\emptyset$. In this case, the induced Markov chain
$(X_M(t))$ is identical to the homogeneous random walk $(S^{\Lambda(M)}(t))$ on
$\Z^{\Lambda(M)}$ before the time $\tau_{\Lambda(M)}$  and killed at the  time
$\tau_{\Lambda(M)}$, i.e. when at least one of the
coordinates of $(S^{\Lambda(M)}(t))$  becomes zero or negative. Remark moreover that the
mean jump of the random walk $(S^{\Lambda(M)}(t))$ is equal to zero because 
\[
\E_0(S^{\Lambda(M)}(1)) ~=~ \sum_{x\in\Z^d} \mu(x) \, x^{\Lambda(M)} ~=~ M^{\Lambda(M)} ~=~ 0. 
\]
Hence, to identify the limiting behavior of the Martin kernel $G(x,x_n)/G(x_0,x_n)$ for
a sequence of points $x_n\in\Z^d_+$ with $\lim_n|x_n| ~=~ \infty$ and $\lim_n x_n/|x_n| ~=~ M$, one has to identify the positive
harmonic functions of a random walk  on $\Z^l$ (with $l=|\Lambda(M)|$) which has zero mean
and 
is killed at the first exit
from $Z^l_+$. Unfortunately,  for $l>1$ there  are is now general results in this
domain. We hope that our paper will motivate the efforts in order to solve such a 
non-trivial problem.

If $|\Lambda(M)|=1$, i.e. when $\Lambda(M) =\{i\}$ for some $1\leq i\leq d$, the induced Markov
chain $(X_M(t))$ is a homogeneous random walk on $\Z$ killed when hitting the negative
half-line $\{k\in\Z : k\leq 0\}$. In this case,  the harmonic functions can be described by
using the results of Doney~\cite{Doney:02} (see also
  Example E 27.3 in Chapter VI of Spitzer~\cite{Spitzer}). Here, using   
Theorem~\ref{th1} one gets 

\begin{prop}\label{pr1-1} Suppose that the conditions (A1)-(A3) are satisfied and let the
  coordinates of the mean $M$ be non-negative. Suppose moreover that only one of the
  coordinates of $M$ is zero, i.e. $\Lambda(M)~=~\{i\}$ for some $1\leq i\leq d$. Then 
\begin{itemize}
\item[--] the function $
h_M(x) ~=~ x^i - \E_x\left(S^i(\tau)\right)$  is  strictly positive and
harmonic for $(Z(t))$, 
\item[--] any sequence  $x_n\in
  \Z_+^{d}$ with $\lim_n |x_n| = \infty$ and $\lim_n 
  x_n/|x_n| = M/|M|$ is fundamental for $(Z(t))$ and 
 for any $x\in\Z^d_+$,  
\[
\lim_n  {G(x,x_n)}/{G(x_0,x_n)} ~=~ h_M(x)/h_M(x_0).
\]
\end{itemize} 
\end{prop}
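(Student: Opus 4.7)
My plan is to derive Proposition~\ref{pr1-1} as a direct application of Theorem~\ref{th1} with an explicit choice of positive harmonic function of the induced Markov chain $(X_M(t))$. Under the hypothesis $\Lambda(M)=\{i\}$, the chain $(X_M(t))$ is the homogeneous one-dimensional random walk $(S^i(t))$ on $\Z$ killed upon entering $\{k\in\Z : k\leq 0\}$, and its mean drift is $M^i=0$. By the classical result of Doney~\cite{Doney:02} (see also Example~E~27.3 in Chapter~VI of Spitzer~\cite{Spitzer}), under the exponential-moment assumption~(A2) the cone of positive harmonic functions of such a mean-zero killed walk is one-dimensional. I would take as a generator the function
\[
f(k) ~=~ k - \E_k\bigl(S^i(\tau_i)\bigr), \qquad k\geq 1,
\]
with $\E_k$ denoting expectation for $(S^i(t))$ started from $k$. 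Its positivity is immediate from $S^i(\tau_i)\leq 0<k$, and its harmonicity for the killed walk follows from a one-step decomposition combined with the zero-mean identity $\E_k[S^i(1)]=k$.

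The main computation is to reduce the generic formula~\eqref{e1-3p} of Theorem~\ref{th1} to the explicit function $h_M(x)=x^i-\E_x(S^i(\tau))$. Applying the strong Markov property of $(S(t))$ at the stopping time $\tau$ (which, on $\{\tau<\tau_i\}$, reduces $S^i(\tau_i)$ to the exit position of a fresh one-dimensional walk started from $S^i(\tau)>0$), one obtains
\[
\E_x\!\left(S^i(\tau_i);\,\tau<\tau_i\right) ~=~ \E_x\!\left(\E_{S^i(\tau)}\bigl(S^i(\tau_i)\bigr);\,\tau<\tau_i\right),
\]
and consequently
\[
\E_x\!\left(f(S^i(\tau));\,\tau<\tau_i\right) ~=~ \E_x\!\left(S^i(\tau);\,\tau<\tau_i\right) - \E_x\!\left(S^i(\tau_i);\,\tau<\tau_i\right).
\]
Combining this with the trivial splitting $\E_x(S^i(\tau_i))=\E_x(S^i(\tau);\,\tau=\tau_i)+\E_x(S^i(\tau_i);\,\tau<\tau_i)$, the terms involving $S^i(\tau_i)$ cancel and the right-hand side of~\eqref{e1-3p} simplifies to $x^i-\E_x(S^i(\tau))=h_M(x)$.

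Once this identification is in hand, Proposition~\ref{pr1-1} is an immediate consequence of Theorem~\ref{th1}: the strict positivity and harmonicity of $h_M$ for $(Z(t))$ are inherited from those of $h$ in Theorem~\ref{th1}, and the uniqueness of $f$ up to positive multiples allows the second part of Theorem~\ref{th1} to be invoked, giving that every sequence $(x_n)$ with $x_n/|x_n|\to M/|M|$ is fundamental and satisfies $G(x,x_n)/G(x_0,x_n)\to h_M(x)/h_M(x_0)$. The main obstacle will be to verify carefully that Doney's uniqueness theorem is indeed available in the present exponential-moment setting, together with the integrability of $S^i(\tau)$ and $S^i(\tau_i)$ needed in the manipulations above; both reductions are routine consequences of assumption~(A2).
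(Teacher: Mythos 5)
Your proposal is correct and follows essentially the same route as the paper: identify the unique (up to multiples) positive harmonic function $f(k)=k-\E_k(S^i(\tau_i))$ of the induced one-dimensional killed walk via Doney's theorem, invoke Theorem~\ref{th1}, and then collapse the generic formula~\eqref{e1-3p} to $x^i-\E_x(S^i(\tau))$ by a strong Markov decomposition at $\tau$. The paper packages the uniqueness step as Proposition~\ref{pr9-1} (which also handles the aperiodicity issue by passing to a lazy walk) and performs the same telescoping computation at the end of Section~\ref{sec9}, treating Proposition~\ref{pr1-1} as the $|\Lambda(M)|=1$ special case of Proposition~\ref{pr1-2}.
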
 
The proof of this proposition is given in Section~\ref{sec9}. 

\medskip

While for $|\Lambda(M)| > 1$, the harmonic functions of the induced Markov chain
$(X_M(t))$ are not known in general, the results of Picardello and
  Woess~\cite{Picardello-Woess} allow us to identify them under the following additional
  assumption 

\begin{itemize}
 \item[{\bf (A4)}] {\em $\mu_{\Lambda(M)}(u) = 0$  if  $u^iu^j\not= 0$  for some \; $i <
   j$, $i,j\in\Lambda(M)$, 
i.e. only one coordinate of $X_M(t)$ can change during a transition
$X_M(t)\to X_M(t+1)$.} 
\end{itemize}
This condition is satisfied for a nearest neighbor random walk and more
generally, for Cartesian products of one-dimensional random
walks~(see~\cite{Picardello-Woess, Woess}). 
 Using  Theorem~\ref{th1} we obtain 
\begin{prop}\label{pr1-2} Under the hypotheses (A1)-(A4), the following assertions hold : 
\begin{itemize}
\item[--] the function 
\[
h_M(x) = \prod_{i\in\Lambda(M)} x^i - \E_x\left(\prod_{i\in\Lambda(M)} (S^i(\tau))\right)
\]
is strictly positive and harmonic for the Markov chain $(Z(t))$,
\item[--] any sequence $x_n\in\Z_+^d$ with $\lim_n |x_n| ~=~
  \infty$ and $\lim_n x_n/|x_n| ~=~ M/|M|$ is fundamental  for  $(Z(t))$ and for any $x\in\Z_+^d$, 
\[
\lim_n  {G(x,x_n)}/{G(x_0,x_n)} ~=~ h_M(x)/h_M(x_0). 
\]
\end{itemize}
\end{prop}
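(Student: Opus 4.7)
The strategy is to apply Theorem~\ref{th1} to an explicit positive harmonic function of the induced chain $(X_M(t))$, and then to rewrite the resulting expression in the form $h_M$ of the proposition. The role of hypothesis (A4) is to give $(X_M(t))$ the Cartesian--product structure required by Picardello--Woess~\cite{Picardello-Woess}, which combined with Doney's one--dimensional analysis~\cite{Doney:02} pins down the positive harmonic function of $(X_M(t))$ uniquely up to a positive multiplicative constant.

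First I would single out the candidate harmonic function. Set $\phi(u) := \prod_{i\in\Lambda(M)} u^i$ on $\Z^{\Lambda(M)}$. Under (A4) each jump of $S^{\Lambda(M)}$ changes at most one coordinate, and since $M^{\Lambda(M)}=0$ a direct expansion of $\sum_v \mu_{\Lambda(M)}(v)\prod_i(u^i+v^i)$ collapses to $\prod_i u^i$; hence $\phi$ is harmonic for the unkilled walk $S^{\Lambda(M)}$. Define
\[
\psi(u) ~:=~ \phi(u) - \E_u\!\left(\phi\bigl(S^{\Lambda(M)}(\tau_{\Lambda(M)})\bigr)\right), \quad u\in\Z_+^{\Lambda(M)};
\]
a one-step conditioning argument then gives that $\psi$ is harmonic for $(X_M(t))$. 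Moreover, by (A4) the walk $S^{\Lambda(M)}$ exits $\Z_+^{\Lambda(M)}$ by lowering a single coordinate, so at the time $\tau_{\Lambda(M)}$ exactly one coordinate $j\in\Lambda(M)$ is $\leq 0$ while the others stay strictly positive; hence $\phi(S^{\Lambda(M)}(\tau_{\Lambda(M)}))\leq 0$ and $\psi(u)\geq\phi(u)>0$. The integrability required for $\psi$ is supplied by the exponential moment bound (A2).

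Next I would invoke Picardello--Woess~\cite{Picardello-Woess}: under (A4) the chain $(X_M(t))$ is a Cartesian product of one-dimensional random walks killed on $\{k\leq 0\}$. Each factor is a zero--mean killed one-dimensional walk (because $M^i=0$ for $i\in\Lambda(M)$) and hence, by Doney~\cite{Doney:02}, admits a positive harmonic function that is unique up to a constant. The product theorem of Picardello--Woess transfers this uniqueness to $(X_M(t))$, so $\psi$ is, up to a positive multiplicative constant, the unique positive harmonic function of $(X_M(t))$.

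Finally I would apply Theorem~\ref{th1} with $f=\psi$: it gives that
\[
h'(x) ~:=~ \psi\bigl(x^{\Lambda(M)}\bigr) - \E_x\!\left(\psi\bigl(S^{\Lambda(M)}(\tau)\bigr),\ \tau<\tau_{\Lambda(M)}\right)
\]
is strictly positive and harmonic for $(Z(t))$, and any sequence $x_n\in\Z_+^d$ with $x_n/|x_n|\to M/|M|$ is fundamental for $(Z(t))$ with Martin kernel limit $h'(x)/h'(x_0)$. It remains to check $h'=h_M$. Expanding $\psi$ inside the formula, using the strong Markov property of $(S(t))$ at the stopping time $\tau$ on $\{\tau<\tau_{\Lambda(M)}\}$ to rewrite the iterated expectation as $\E_x\!\left(\phi(S^{\Lambda(M)}(\tau_{\Lambda(M)})),\ \tau<\tau_{\Lambda(M)}\right)$, and noting that on $\{\tau=\tau_{\Lambda(M)}\}$ the two times agree and $S^{\Lambda(M)}(\tau)=S^{\Lambda(M)}(\tau_{\Lambda(M)})$, the four resulting terms telescope to
\[
h'(x) ~=~ \phi\bigl(x^{\Lambda(M)}\bigr) - \E_x\!\left(\phi\bigl(S^{\Lambda(M)}(\tau)\bigr)\right) ~=~ h_M(x).
\]
The main obstacle is verifying that (A4) truly places $(X_M(t))$ into the Picardello--Woess Cartesian--product framework (together with the accompanying integrability check); once this is in hand, the remaining manipulations are purely algebraic.
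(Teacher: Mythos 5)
Your strategy is sound and parallels the paper's, but with a genuinely different organization of the argument. The paper identifies the unique positive harmonic function of $(X_M(t))$ in its Picardello--Woess product form $\prod_{i\in\Lambda(M)}(u^i - \E_{u^i}(\eta_i(T_i)))$ (Corollary~\ref{cor9-3}) and then invests considerable effort (Lemmas~\ref{lem9-1}--\ref{lem9-4}, via Poissonization, $\eps$-discounted exponential moments and monotone convergence) in showing that this product \emph{equals} the expression $\psi(u)=\prod u^i - \E_u(\prod S^i(\tau_{\Lambda(M)}))$. You bypass this computation entirely by verifying directly that $\psi$ is positive and harmonic for $(X_M(t))$ -- the key observation being that $\prod u^i$ is harmonic for the \emph{unkilled} walk $S^{\Lambda(M)}$ under (A4) and $M^{\Lambda(M)}=0$, so its ``Riesz correction'' is harmonic for the killed chain -- and then appealing to uniqueness to identify $\psi$ with the Picardello--Woess harmonic function. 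Your final telescoping via the strong Markov property is exactly the argument in the paper's Subsection on the proof of the propositions. This is a cleaner presentation that avoids the most technical part of Section~\ref{sec9}.

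There is, however, a real gap in your ``integrability \ldots supplied by (A2)'' dismissal. What you need is $\E_u\bigl(\prod_{i\in\Lambda(M)}|S^i(\tau_{\Lambda(M)})|\bigr)<\infty$, and this does not follow from (A2) alone: the overshoot in the exiting coordinate has exponential tails, but $\tau_{\Lambda(M)}$ can be large and the \emph{non-exiting} coordinates can diffuse to order $\sqrt{\tau_{\Lambda(M)}}$, and $\E_{u^i}(\sqrt{\tau_i})=\infty$ for a zero-mean walk. The expectation is finite, but for a reason one must actually argue: either via the paper's Lemma~\ref{lem9-1} ($\eps$-discounted exponential moments) plus the monotone-convergence argument in Lemma~\ref{lem9-4}, or via the Poissonized independence of the coordinates (Lemma~\ref{lem9-3}) combined with optional stopping to bound $\E\bigl(\hat\xi^i(t);\hat\tau_i>t\bigr)\leq u^i + \E|\hat\xi^i(\hat\tau_i)|$ uniformly in $t$. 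Until this finiteness is established, the statement ``$\psi$ is harmonic'' and the subsequent four-term telescoping are not legitimate, since one might be manipulating $\infty-\infty$. You should also note, as the paper does in the proof of Proposition~\ref{pr9-3}, that Picardello--Woess applies verbatim only when $\mu_{\Lambda(M)}(0)=0$ (otherwise the kernel is not literally a Cartesian product), and that the transfer of Doney's one-dimensional uniqueness across the Cartesian product requires passing through $t$-harmonicity and the convergence norm (Propositions~\ref{pr9-1}--\ref{pr9-2}); your one-sentence appeal to Picardello--Woess compresses several nontrivial steps.
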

The proof of this proposition is given in Section~\ref{sec9}.

\medskip

To identify the limiting behavior of the Martin kernel ${G(x,x_n)}/{G(x_0,x_n)}$ for a
sequence of points $x_n\in\Z_+^d$ with $\lim_n |x_n| =\infty$ and 
$\lim_n x_n/|x_n| = q$ for an arbitrary vector 
\[
q\in{\cal S}_+^d ~\dot=~ \{x\in\R^d :~ |x|=1
\text{ and } x^i \geq 0 \text{ for all } i=1,\ldots,d\}
\]
the method of the exponential change of 
measure is used. Namely, for a given $a\in D~\dot=~ \{a\in\R^d :~\varphi(a)\leq 1\}$ we consider  
 a twisted random walk $(S_a(t))$ on $\Z^d$ with transition probabilities $
p_a(x,x') ~=~ \mu(x'-x) \exp(a\cdot(x'-x))$. 
Such a random walk is stochastic if and only if the point $a$ belongs to the boundary
$\partial D~\dot=~ \{a\in\R^d :~\varphi(a) = 1\}$ of $D$. For $a\in\partial D$, we let
\[
\tau^a ~\dot=~ \inf\bigl\{t\geq 0 : ~S^i_a(t) \leq 0 \; \text{ for some } \; i\in\{1,\ldots,d\}\bigr\}
\]
and we denote by $(Z_a(t))$ the random walk on $\Z^d$ which is identical to $(S_a(t))$ before the
time $\tau^a$ and killed at the time $\tau^a$.  The Green function  of the
twisted random walk $(Z_a(t))$ is denoted by $G_a(x,x')$. 

Furthermore, under the  assumptions (A1)-(A4), the set $D$ 
is compact and strictly convex, the gradient $\nabla\varphi(a)$ exists everywhere on
$\R^d$ and does not vanish on the
boundary $\partial D ~=~ \{a\in\R^d :~\varphi(a)= 1\}$, and the mapping 
\be\label{e1-6}
a\to q(a) ~\dot=~ \nabla\varphi(a)/|\nabla\varphi(a)|
\ee
determines a homeomorphism from $\partial D$ to the unit sphere ${\cal
  S}^d=\{q\in\R^d:~ |q| ~=~ 1\}$ (see~\cite{Hennequin}). We denote by $q\to a(q)$ the inverse mapping of 
\eqref{e1-6} and we let $a(q) = a(q/|q|)$ for a non-zero $q\in\R^d$. According to this
notation, $a(q)$ is the only point in $\partial D$ where the
vector $q$ is normal to the convex set $D$.  
For $q\in {\cal S}_+^d$ and $a=a(q)$, the mean  of the twisted random walk
$(S_{a(q)}(t))$ is given by 
\[
M(a(q)) ~=~ \sum_{x\in\Z^d} \mu(x) \, \exp(a(q)\cdot x) \, x ~=~
\left.\nabla\varphi(a)\right|_{a=a(q)} ~=~ \left|\left.\nabla\varphi(a)\right|_{a=a(q)}\right| q
\] 
and consequently,
\[
M(a(q))/|M(a(q))| ~=~ q. 
\]
Since clearly, 
\[
G_{a(q)}(x,x') ~=~ \exp(a(q)\cdot (x'-x)) G(x,x'), 
\]
the limiting behavior of the Martin kernel $G(x,x_n)/G(x_0,x_n)$ when $|x_n|\to\infty$ and
$x_n/|x_n|\to q$ can be obtained from Theorem~\ref{th1} applied for  the
twisted random walk $(S_{a(q)}(t))$. For instance, using the equality 
\[
\P_x(\tau_{a(q)} = \infty) = 1 - \P_x(\tau_{a(q)} < \infty) ~=~ 1 - \E_x(\exp(a(q)\cdot
(S(\tau) - x), \; \tau < \infty),
\]
Corollary~\ref{cor1-0} applied for the
twisted random walk $(S_{a(q)}(t))$ provides the 
following statement.

\begin{cor}\label{cor1-2} Suppose that the conditions (A1)-(A3) are satisfied and let the
  coordinates of $q\in{\cal S}_+^d$ be strictly positive. Then 
\begin{itemize}
\item[--] the function $h_q(x) ~=~ \exp(a(q)\cdot x ) - \P_x(\exp(a(q)\cdot S(\tau)), \tau <
  \infty)$ is strictly positive and harmonic for the Markov chain $(Z(t))$, 
\item[--] any sequence of points $x_n\in
  \Z_+^{d}$ with $\lim_n |x_n| = \infty$ and $\lim_n 
  x_n/|x_n| = q$ is fundamental for $(Z(t))$ and 
 for any $x\in\Z^d_+$,
\[
\lim_n  {G(x,x_n)}/{G(x_0,x_n)} ~=~ h_q(x)/h_q(x_0).
\]
\end{itemize} 
\end{cor}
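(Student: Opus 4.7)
The plan is to reduce the claim to Corollary~\ref{cor1-0} by means of the exponential change of measure that has already been set up in the excerpt. Since all coordinates of $q\in{\cal S}_+^d$ are strictly positive, the mean $M(a(q))=|\nabla\varphi(a(q))|\,q$ of the twisted walk $(S_{a(q)}(t))$ also has strictly positive coordinates---exactly the hypothesis needed to apply Corollary~\ref{cor1-0} to the twisted killed chain $(Z_{a(q)}(t))$. After doing so, I will transport the result back to $(Z(t))$ using the elementary identity $G_{a(q)}(x,x')=\exp(a(q)\cdot(x'-x))\,G(x,x')$ together with a Doob-type identification of $h_q$.

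The three structural hypotheses pass to the twisted jump measure $\mu_{a(q)}(z)=\mu(z)\exp(a(q)\cdot z)$ with essentially no work. (A1) is invariant because $\mu_{a(q)}(z)>0$ iff $\mu(z)>0$. For (A2), the twisted generating function is $\varphi_{a(q)}(b)=\varphi(a(q)+b)$ and, since $\varphi(a(q))=1$, the corresponding sublevel set is the translate $D_{a(q)}=D-a(q)$; so $\varphi_{a(q)}$ is finite on a neighborhood of $D_{a(q)}$ because $\varphi$ is finite on a neighborhood of $D$. Condition (A3) in its strict-positivity form is the very observation above on $M(a(q))$. Corollary~\ref{cor1-0} applied to $(Z_{a(q)}(t))$ then yields that $\tilde h(x)\dot=\P_x(\tau^{a(q)}=\infty)$ is strictly positive and harmonic for $(Z_{a(q)}(t))$, and that for every sequence $x_n\in\Z_+^d$ with $|x_n|\to\infty$ and $x_n/|x_n|\to q$,
\[
\lim_n \frac{G_{a(q)}(x,x_n)}{G_{a(q)}(x_0,x_n)}~=~\frac{\tilde h(x)}{\tilde h(x_0)}.
\]

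To translate this back, I combine two ingredients. The exponential-martingale change-of-measure formula evaluated at the exit time $\tau$ (valid because $\varphi(a(q))=1$) gives
\[
\P_x(\tau^{a(q)}<\infty)~=~\E_x\bigl(\exp\bigl(a(q)\cdot(S(\tau)-x)\bigr),~\tau<\infty\bigr),
\]
which, multiplied by $\exp(a(q)\cdot x)$, identifies $\exp(a(q)\cdot x)\,\tilde h(x)=h_q(x)$ and therefore gives the strict positivity of $h_q$. A direct summation then shows that for any $p_{a(q)}$-harmonic $\tilde h$ on $\Z_+^d$ the function $x\mapsto\exp(a(q)\cdot x)\,\tilde h(x)$ is $p$-harmonic for the killed chain $(Z(t))$, providing the harmonicity of $h_q$. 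Finally, dividing $G(x,x_n)=\exp(a(q)\cdot(x-x_n))\,G_{a(q)}(x,x_n)$ by its version at $x_0$ cancels the factor $\exp(-a(q)\cdot x_n)$, and combining with the twisted Martin-kernel limit above yields
\[
\lim_n \frac{G(x,x_n)}{G(x_0,x_n)}~=~\exp\bigl(a(q)\cdot(x-x_0)\bigr)\,\frac{\tilde h(x)}{\tilde h(x_0)}~=~\frac{h_q(x)}{h_q(x_0)}.
\]
I do not anticipate a serious obstacle: everything reduces to bookkeeping around the exponential tilt, once $a(q)$ is known to be well defined under (A1)--(A3), as already recorded in the excerpt.
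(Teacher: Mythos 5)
Your proposal is correct and follows essentially the same route as the paper: the paper also obtains this corollary by applying Corollary~\ref{cor1-0} to the twisted walk $(S_{a(q)}(t))$, using the identity $\P_x(\tau^{a(q)}=\infty)=1-\E_x(\exp(a(q)\cdot(S(\tau)-x)),\,\tau<\infty)$ and the relation $G_{a(q)}(x,x')=\exp(a(q)\cdot(x'-x))G(x,x')$ to transport the Martin-kernel limit back to $(Z(t))$. Your explicit verification that (A1)--(A3) pass to the tilted jump measure simply makes precise what the paper leaves implicit.
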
 

Similarly, from Proposition~\ref{pr1-1} it follows 
\begin{cor}\label{cor1-3} Suppose that the conditions (A1)-(A3) are satisfied and let only one of the
  coordinates of $q\in{\cal S}_+^d$ be zero : i.e. $\Lambda(q)~=~\{i\}$ for some $1\leq i\leq d$. Then 
\begin{itemize}
\item[--] the function $
h_q(x) ~=~ x^i\exp(a(q)\cdot x ) - \E_x\left(S^i(\tau)\exp(a(q)\cdot S(\tau)), \; \tau < \infty \right)$  is  strictly positive and
harmonic for $(Z(t))$, 
\item[--] any sequence  $x_n\in
  \Z_+^{d}$ with $\lim_n |x_n| = \infty$ and $\lim_n 
  x_n/|x_n| = q$ is fundamental for $(Z(t))$ and 
 for any $x\in\Z^d_+$,  
\[
\lim_n  {G(x,x_n)}/{G(x_0,x_n)} ~=~ h_q(x)/h_q(x_0).
\]
\end{itemize} 
\end{cor}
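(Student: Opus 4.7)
The plan is to deduce the corollary from Proposition~\ref{pr1-1} via the exponential change of measure, in complete analogy with the way Corollary~\ref{cor1-2} is obtained from Corollary~\ref{cor1-0}. I would apply Proposition~\ref{pr1-1} to the twisted random walk $(Z_{a(q)}(t))$ and then transfer the conclusions back to $(Z(t))$ using the identity $G_{a(q)}(x,x') = \exp(a(q)\cdot(x'-x))\,G(x,x')$.

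First I verify the hypotheses of Proposition~\ref{pr1-1} for $(Z_{a(q)}(t))$. The mean of the twisted walk is $M(a(q)) = \nabla\varphi(a(q)) = |\nabla\varphi(a(q))|\,q$, hence parallel to $q\in{\cal S}^d_+$, so its coordinates are non-negative and
\[
\Lambda(M(a(q))) ~=~ \Lambda(q) ~=~ \{i\}:
\]
exactly one coordinate of the mean vanishes, matching the hypothesis of Proposition~\ref{pr1-1}. Conditions (A1)--(A3) transfer to the twisted walk because $\mu_{a(q)}(z) = \mu(z)\exp(a(q)\cdot z)$ has the same support as $\mu$, the twisted jump generating function satisfies $\varphi_{a(q)}(b) = \varphi(a(q)+b)$ and is thus finite in a neighborhood of its one-sublevel set, and $\nabla\varphi$ does not vanish on $\partial D$.

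Proposition~\ref{pr1-1}, applied to $(Z_{a(q)}(t))$, then provides a strictly positive harmonic function
\[
h^{a}(x) ~=~ x^i - \E^{a(q)}_x\bigl(S^i(\tau^{a(q)})\bigr)
\]
for $(Z_{a(q)}(t))$ (with $\tau^{a(q)}<\infty$ almost surely under $\P^{a(q)}_x$, since the $i$-th coordinate of the twisted walk is a recurrent one-dimensional zero-mean walk) together with the convergence $G_{a(q)}(x,x_n)/G_{a(q)}(x_0,x_n) \to h^a(x)/h^a(x_0)$ for any sequence with $x_n/|x_n|\to q$. I then use the exponential change-of-measure identity
\[
\E^{a(q)}_x\bigl(F(S(\tau));\,\tau<\infty\bigr) ~=~ \E_x\bigl(F(S(\tau))\exp(a(q)\cdot(S(\tau)-x));\,\tau<\infty\bigr),
\]
which follows from the path density $\exp(a(q)\cdot(S(n)-x))$ of $\P^{a(q)}_x$ with respect to $\P_x$, to rewrite $h^a$ in terms of the original measure. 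Multiplying by $\exp(a(q)\cdot x)$ then yields exactly $h_q$; positivity is preserved, and harmonicity of $h_q$ for $(Z(t))$ follows from that of $h^a$ for $(Z_{a(q)}(t))$ via $p_{a(q)}(x,x') = p(x,x')\exp(a(q)\cdot(x'-x))$. Substituting $G_{a(q)}(x,x') = \exp(a(q)\cdot(x'-x))G(x,x')$ into the Martin-kernel limit, the $x_n$-dependent exponential factor cancels in the ratio, leaving $G(x,x_n)/G(x_0,x_n) \to h_q(x)/h_q(x_0)$.

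The proof contains no substantive obstacle: all the analytic content is packaged inside Proposition~\ref{pr1-1}, and the only care point is the routine bookkeeping of exponential factors and of the two probability measures $\P_x$ and $\P^{a(q)}_x$.
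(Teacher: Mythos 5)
Your proof is correct and follows exactly the route the paper intends: the paper introduces the twisted walk $(Z_{a(q)}(t))$ precisely so that Corollary~\ref{cor1-3} becomes a direct transfer of Proposition~\ref{pr1-1} via $G_{a(q)}(x,x') = \exp(a(q)\cdot(x'-x))\,G(x,x')$ and the change-of-measure identity, which is what you carry out. The bookkeeping of exponential factors, the verification that $\Lambda(M(a(q)))=\Lambda(q)=\{i\}$, and the transfer of (A1)--(A3) to the twisted measure are all handled correctly.
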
 

Finally, with Proposition~\ref{pr1-2},  one gets the full Martin
compactification for $(Z(t))$ under the following additional assumption 

\begin{itemize}
 \item[{\bf (A4')}] {\em $\mu(x) = 0$ \; if \; $x^i x^j\not= 0$ \; for some \; $1\leq i <
   j\leq d$.} 
\end{itemize}

\begin{cor}\label{cor1-4}
Under the hypotheses (A1)-(A3) and (A4'), for any $q\in{\cal S}_+^d$, 
\begin{itemize}
\item[--] the function 
\[
h_q(x) =  \exp(a(q)\cdot x) \prod_{i\in\Lambda(q)}  x^i  ~-~ \E_x\left(\exp(a(q)\cdot S(\tau)
) \prod_{i\in\Lambda(q)} S^i(\tau)), \; \tau<\infty \right)
\] 
is strictly positive and harmonic for $(Z(t))$, 
\item[--]  any sequence of points $x_n\in
  \Z_+^{d}$ with $\lim_n |x_n| = \infty$ and $\lim_n 
  x_n/|x_n| = q$ is fundamental for the Markov chain $(Z(t))$ and  for any $x\in\Z^d_+$, 
\[
\lim_n{G(x,x_n)}/{G(x_0,x_n)} = h_q(x)/h_q(x_0) 
\]
\end{itemize}
\end{cor}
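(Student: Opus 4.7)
The plan is to deduce Corollary~\ref{cor1-4} from Proposition~\ref{pr1-2} by applying the latter to the twisted random walk $(S_{a(q)}(t))$, exactly in the spirit of the derivations of Corollary~\ref{cor1-2} and Corollary~\ref{cor1-3}. Fix $q \in {\cal S}_+^d$ and set $a = a(q)$. I would first check that the twisted walk satisfies the hypotheses of Proposition~\ref{pr1-2}: assumption (A1) transfers because twisting does not alter the support of the jump distribution; (A2) follows from the identity $\varphi_a(b) = \varphi(a+b)$, which merely translates the domain of finiteness by $-a$; (A3) holds because the mean $M(a) = \nabla\varphi(a)$ does not vanish on $\partial D$; and (A4') passes to the twisted walk verbatim, so in particular (A4) holds for its induced walk. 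Moreover, since $M(a(q)) = |\nabla\varphi(a(q))|\, q$, the mean of the twisted walk has non-negative coordinates and $\Lambda(M(a(q))) = \Lambda(q)$.

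Proposition~\ref{pr1-2} then produces a strictly positive harmonic function
\[
\tilde h_q(x) \;=\; \prod_{i \in \Lambda(q)} x^i \;-\; \E^a_x\!\Bigl(\prod_{i \in \Lambda(q)} S^i(\tau^a)\Bigr)
\]
for the twisted killed walk $(Z_a(t))$, and the conclusion that every sequence $x_n$ with $x_n/|x_n|\to q$ is fundamental for $(Z_a(t))$ with $G_a(x,x_n)/G_a(x_0,x_n) \to \tilde h_q(x)/\tilde h_q(x_0)$. To invert the twist, I would observe that $p_a(x,x') = e^{a\cdot(x'-x)} p(x,x')$ implies that $u$ is harmonic for $(Z_a(t))$ if and only if $e^{a\cdot x}u(x)$ is harmonic for $(Z(t))$, so $e^{a\cdot x}\tilde h_q(x)$ is strictly positive and harmonic for $(Z(t))$. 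Since $\varphi(a) = 1$, the Radon--Nikodym identity $d\P^a_x/d\P_x|_{\mathcal{F}_t} = e^{a\cdot(S(t)-x)}$ together with optional stopping at $\tau$ gives, for every bounded measurable $F$,
\[
\E^a_x\!\bigl(F(S(\tau^a));\,\tau^a < \infty\bigr) \;=\; e^{-a\cdot x}\,\E_x\!\bigl(F(S(\tau))\,e^{a\cdot S(\tau)};\,\tau < \infty\bigr),
\]
and choosing $F(y) = \prod_{i \in \Lambda(q)} y^i$ one reads off $e^{a\cdot x}\tilde h_q(x) = h_q(x)$.

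Finally, the identity $G_a(x,x') = e^{a\cdot(x'-x)}G(x,x')$ yields $G(x,x_n)/G(x_0,x_n) = e^{a\cdot(x-x_0)}G_a(x,x_n)/G_a(x_0,x_n)$, and passing to the limit using Proposition~\ref{pr1-2} applied to the twisted walk gives
\[
\lim_n \frac{G(x,x_n)}{G(x_0,x_n)} \;=\; e^{a\cdot(x-x_0)}\,\frac{\tilde h_q(x)}{\tilde h_q(x_0)} \;=\; \frac{h_q(x)}{h_q(x_0)}.
\]
The only step requiring genuine care is the verification that Proposition~\ref{pr1-2} applies to the twisted walk, in particular the transfer of (A2) (one needs the finiteness of $\varphi$ in a neighborhood of $D$ to deduce the same for $\varphi_a$ in a neighborhood of $D_a = D - a$). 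Once this is granted, the remainder is routine bookkeeping with the Radon--Nikodym derivative and the Green-function identity $G_a(x,x') = e^{a\cdot(x'-x)}G(x,x')$, and there is no serious obstacle.
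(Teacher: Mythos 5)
Your proposal is correct and follows exactly the route the paper intends: Corollary~\ref{cor1-4} is obtained by applying Proposition~\ref{pr1-2} to the twisted walk $(S_{a(q)}(t))$, using $\Lambda(M(a(q)))=\Lambda(q)$, the identity $G_{a(q)}(x,x')=\exp(a(q)\cdot(x'-x))G(x,x')$, and the inverse change of measure to rewrite the twisted harmonic function as $h_q$, just as the paper does for Corollaries~\ref{cor1-2} and~\ref{cor1-3}. Your verification that (A1)--(A3) and (A4') transfer to the twisted walk is the same routine check the paper leaves implicit, so there is nothing to add.
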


\medskip 

Our paper is organized as follows. In Section~\ref{sec2}, the main ideas of the
proof of our main result are given. We introduce there a local random walk 
$(Z_{\Lambda(M)}(t))$ which has asymptotically the same statistical behavior as the Markov
chain $(Z(t))$ on $\Z^d_+$ far from the boundary $\cup_{i\not\in\Lambda(M)}\{x^i = 0
\}$. For a sequence  $x_n\in\Z_+^d$ with $\lim_n|x_n| =\infty$
and $\lim_n x_n/|x_n| = M/|M|$,  the limiting behavior of the Martin kernel $G(x,x_n)/G(x_0,x_n)$
of the Markov chain $(Z(t))$ is obtained from the limiting behavior of the Martin kernel
$G_{\Lambda(M)}(x,x_n)/G_{\Lambda(M)}(x_0,x_n)$ of the local process
$(Z_{\Lambda(M)}(t))$. Our main tools are large deviation estimates of the Green function
and a ratio limit theorem for the local 
process $(Z_{\Lambda(M)}(t))$. The large deviation estimates of the Green functions are obtained from the sample
path large deviation estimates of the scaled processes $X_\eps(t) ~\dot=~ \eps
Z([t/\eps])$ and $Z_{\Lambda(M)}^\eps(t)~\dot=~ \eps Z_{\Lambda(M)}([t/\eps])$ in
Section~\ref{sec3}. Section~\ref{sec4} is devoted to the estimates of the hitting
probabilities of the induced Markov chain $(X_M(t))$. The ratio limit theorem and its
corollaries are given in Sections~\ref{sec5} and~\ref{sec6}. Section~\ref{sec7} is devoted
to the limiting behavior of the Martin kernel
$G_{\Lambda(M)}(x,x_n)/G_{\Lambda(M)}(x_0,x_n)$ of the local Markov-additive process
$(Z_{\Lambda(M)}(t))$ and in Section~\ref{sec8}, Theorem~\ref{th1} is proved. In
Section~\ref{sec9} we prove Propositions~\ref{pr1-1} and ~\ref{pr1-2}.

\section{Local Markov-additive processes and the main ideas of the proof}\label{sec2}

For $\Lambda\subset \{1,\ldots,d\}$ we introduce a random walk $(Z_\Lambda(t))$ on 
\[
\Z_+^{\Lambda,d} ~=~ \{x\in\Z^d :~ x_i > 0,\;  \forall i\in\Lambda\}, 
\]
with a substochastic transition matrix $\left(p(x,x') ~=~ \mu(x'-x), \; x,x'\in
\Z_+^{\Lambda,d}\right)$. This random walk is identical to $(S(t))$ for $t < \tau_\Lambda$
and killed at the time $\tau_\Lambda$. Recall that $\tau_\Lambda$ denotes the first time
when one of the coordinates of the vector $S^\Lambda(t)=(S^i(t))_{i\in\Lambda}$ becomes negative or
zero and the
induced Markov chain $X^\Lambda(t)$ is  killed at the time $\tau_\Lambda$ and identical to
$S^\Lambda(t)$ for $t<\tau_\Lambda$. 
Another equivalent representation of the induced Markov chain $(X^\Lambda(t))$ is therefore
the following : 
\[
X^\Lambda(t) ~=~ (Z_\Lambda^i(t), \, i\in\Lambda) ~\dot=~ Z_\Lambda^\Lambda(t).
\]
Remark moreover that the transition probabilities of the Markov process $(Z_\Lambda(t))$ are
invariant with respect to the translations on $x$ for all  $x\in\Z^d$ with $x^{\Lambda} =
0$:~
\[
P_\Lambda(x',x'') ~=~ P_\Lambda(x'+x, x''+x),\quad \forall x',x''\in \Z_+^{\Lambda,d}.
\]
 Hence, according to the usual terminology, our process $(Z_\Lambda(t))$ is 
Markov-additive with  an
additive part $Z_\Lambda^{\Lambda^c}(t) = (Z^i_\Lambda(t), \, i\in\Lambda^c)$ and a
Markovian part $Z^\Lambda_{\Lambda}(t) = X^\Lambda(t)$. To simplify 
the notation we denote $Y_\Lambda(t) ~\dot=~ Z_\Lambda^{\Lambda^c}(t)$ and we identify
$Z_\Lambda(t)$ with $(X^\Lambda(t), Y_\Lambda(t))$. Similarly, it is convenient to
identify the points $x\in  \Z_+^{\Lambda,d}$ and $(x^\Lambda, x^{\Lambda^c}) \in
\Z^{\Lambda}_+\times\Z^{\Lambda^c}$.   

The random walk $(Z_\Lambda(t))$ is called a
{\em local Markov-additive process } corresponding to the set
$\Lambda\subset\{1,\ldots,d\}$. Remark that for $\Lambda = \emptyset$, this is our
homogeneous random walk $(S(t))$ on
$\Z^d$, while for $\Lambda=\{1,\ldots,d\}$, $Z_\Lambda(t) ~=~ Z(t)$ is the random walk with the
same transition probabilities as $(S(t))$ on $\Z_+^d $ and killed upon the first time $\tau = \min_i
\tau_i$ when one of its coordinates  becomes negative or zero. 
The Green function 
\[
G(x,x') ~\dot=~ \sum_{n=0}^\infty \P_x(S(n) = x', \; \tau > n) 
\]
of the random walk $(Z(t))$ can be represented in terms of the Green function  
\[
G_\Lambda(x,x') ~\dot=~ \sum_{n=0}^\infty \P_x(Z_\Lambda(n) = x')
~=~ \sum_{n=0}^\infty \P_x(S(n) = x', \; \tau_{\Lambda} > n) 
\]
of the random walk $(Z_\Lambda(t))$   in the following way : for any $x,x'\in\Z^d_+$, 
\be\label{e2-1}
G(x,x') ~=~ G_\Lambda(x,x')  ~- \E_x\Bigl( G_\Lambda\bigl(S(\tau),x'\bigr), \; \tau < \tau_\Lambda\Bigr) 
\ee
The main steps of the proof of Theorem~\ref{th1} are the following :~

\medskip 
\noindent
{\em Step 1.} The transition probabilities of the local random walk $Z_\Lambda(t) =
(X^\Lambda(t),Y_\Lambda(t))$ are invariant with respect to the translations on $x\in\Z^d$
with $x^\Lambda=0$, and hence, a function $f>0$ on $\Z^{\Lambda}_+$ is harmonic for the induced Markov chain
$(X^\Lambda(t))$ is and only if the function $
h(x) ~=~ f(x^{\Lambda})$ 
is harmonic for the local process $(Z_\Lambda(t))$. 
 For $\Lambda = \Lambda(M) ~\dot=~ \{i\in\{1,\ldots,d\} :~
M_i=0\}$ we prove the following property : if a sequence of points $x_n\in
\Z_+^{\Lambda,d}$  with $\lim_n |x_n| = \infty$ and $\lim_n x_n/|x_n| = M/|M|$ is
fundamental for the local process $(Z_\Lambda(t))$ then 
\be\label{e2-2}
\lim_n  {G_{\Lambda}(x,x_n)}/{G_{\Lambda}(x_0,x_n)} ~=~ f(x^\Lambda)/f(x_0^\Lambda),
\quad \forall x\in \Z_+^{\Lambda,d},
\ee
for some harmonic function $f > 0$ of $(X_M(t))$.  This result is obtained
by using the method developed in \cite{Ignatiouk:06} :~ a ratio limit theorem is combined
with  large deviation estimates of the Green function $G_{\Lambda}(x,x_n)$. With the
large deviation estimates we prove that  
\[
\lim_n  \frac{1}{|x_n|} \log  G_{\Lambda}(x,x_n) ~=~ 0 
\]
and next, using the ratio limit theorem of the paper \cite{Ignatiouk:06} we deduce that 
\be\label{e2-3}
\lim_n  {G_{\Lambda}(x,x_n)}/{G_{\Lambda}(\hat{x},x_n)} ~=~ 1  
\ee 
for all $x,\hat{x}\in\Z_+^{\Lambda,d}$ with $x^{\Lambda} = \hat{x}^{\Lambda}$. Furthermore, we show that  the limit 
\be\label{e2-4}
h(x) ~=~ \lim_k  {G_{\Lambda}(x,x_{n_k})}/{G_{\Lambda}(x_0,x_{n_k})}  
\ee
is a harmonic function of $(Z_\Lambda(t))$ and from \eqref{e2-3} we get the equality $h(x)
~=~ h(\hat{x})$ for all $x,\hat{x}\in\Z^{\Lambda,d}_+$ with $
x^\Lambda ~=~ \hat{x}^\Lambda$. If the only harmonic functions of the induced Markov chain
$(X^{\Lambda(M)}(t))$ are the constant multiples
of the function $h(x) ~=~ f(x^\Lambda)$,  the above arguments prove that any sequence of
points $x_n\in
\Z_+^{\Lambda,d}$  with $\lim_n |x_n| = \infty$ and $\lim_n x_n/|x_n| = M/|M|$ is
fundamental for the local process $(Z_\Lambda(t))$ and satisfies the equality \eqref{e2-2}.

\medskip 
\noindent
{\em Step 2.} The renewal equation \eqref{e2-1} is next used to deduce the following result
: if a  sequence
points 
$x_n\in \Z_+^d$ with $\lim_n |x_n| = \infty$ 
and $\lim_n x_n/|x_n| = M/|M|$ is fundamental for the local process $(Z_{\Lambda(M)}(t))$ then
it is  also fundamental for the random walk $(Z(t))$ and satisfies the equality 
\eqref{e1-3} with some harmonic function $f > 0$ of $(X_M(t))$. 
The main ideas  are here the following~: 
 for $\Lambda = \Lambda(M)$, using  \eqref{e2-1} and \eqref{e2-2} one can get  the equality  
\[
\lim_n {G(x,x_n)}/{G_{\Lambda}(\hat{x},x_n)} ~=~ \Bigl(f\left(x^{\Lambda}\right) - \E_x\left(f(S^{\Lambda}(\tau)), \;
\tau < \tau_{\Lambda}\right)\Bigr)/{f\bigl(\hat{x}^{\Lambda}\bigr)} 
\]
if one can prove the exchange of the limits  
\[
\lim_n ~\E_x\left( \frac{G_{\Lambda}(S(\tau),x_n)}{G_{\Lambda}(\hat{x},x_n)}, \; \tau =
\tau_{\Lambda^c} < \tau_{\Lambda}\right)
=~ \E_x\left( \lim_n \frac{G_{\Lambda}(S(\tau),x_n)}{G_{\Lambda}(\hat{x},x_n)}, \; \tau =
\tau_{\Lambda^c} < \tau_{\Lambda}\right) 
\]
In a straightforward way, the last equality seems very difficult to obtain : the classical
convergence theorems do not work here because there are no known suitable uniform estimates of the Martin kernel
${G_{\Lambda}(w,x_n)}/{G_{\Lambda}(\hat{x},x_n)}$. We first show that the right hand side
of \eqref{e2-1} can be decomposed into a main part  
\[
G_\Lambda(x,x_n)  ~-~ \E_x\Bigl( G_\Lambda(S(\tau),x_n), \; \tau < \tau_\Lambda, \; |S(\tau)| <
\delta|x_n|\Bigr) 
\]
and a corresponding negligible part 
with an arbitrary $\delta > 0$, by using large deviation estimates of the Green functions. Next we prove that 
\begin{multline}\label{e2-6}
\lim_n \E_x\left( \frac{G_\Lambda(S(\tau),x_n)}{G_{\Lambda}(x',x_n)}, \; \tau < \tau_\Lambda, \; |S(\tau)| <
\delta|x_n|\right)  
\\ =~ \E_x\left( \lim_n \frac{G_{\Lambda}(S(\tau),x_n)}{G_{\Lambda}(\hat{x},x_n)}, \; \tau =
\tau_{\Lambda^c} < \tau_{\Lambda}\right)
\end{multline}
by using the dominated convergence theorem is used : it is shown that  
\be\label{e2-7}
\E_x\left(\exp(\eps |Z_\Lambda(\tau)|), \;
\tau < \tau_{\Lambda}\right) ~<~ \infty 
\ee 
for $\eps > 0$ small enough and it is proved that for any
$x'\in\Z^{\Lambda,d}_+$ and $\eps > 0$ there are $\delta
> 0$ and $C>0$ such that 
\be\label{e2-8}
\1_{\{|w| < \delta |x_n|\}} {G_{\Lambda}(w,x_n)}/{G_{\Lambda}(x',x_n)}  ~\leq~ C
\exp(\eps |w|), \quad \forall w\in\Z^{\Lambda,d}_+. 
\ee

A similar method was earlier used in \cite{Ignatiouk-Loree} where the Martin compactification was
identified for a random walk on $\Z^2$ killed upon the first exit from $\Z^2_+$. In our
setting, the main steps of the proof are quite similar to those of the paper
\cite{Ignatiouk-Loree} but the intermediate results are much more delicate to
get because of a higher dimension and a more general assumption on the transition
probabilities of the process, in a difference of the paper ~\cite{Ignatiouk-Loree}, we do
not assume the
jump generating function \eqref{e1-2}  to be 
finite everywhere in $\R^d$ but only in a neighborhood of the set $D$. Such a more general setting is important
in view of the applications to a large class of random walks where the probability of the 
jumps decreases exponentially when the size jumps  tends to infinity.  

If the
jump generating function \eqref{e1-2} is finite everywhere in $\R^d$, any exponential
function is integrable with respect of the measure $\mu$, and the fact that the 
limit \eqref{e2-4} is a harmonic function of the local Markov-additive process
$(S^{\Lambda(M)}(t))$  is a simple consequence of a rough estimate of the Martin kernel
$G_{\Lambda(M)}(x,x_{n_k})/G_{\Lambda(M)}(x',x_{n_k})$ by an exponential function $C(x') \exp(
\kappa |x|)$ with a large constant $\kappa >0$.  Such a  rough estimate easily follows from the Harnack inequality. 
In our setting, the only exponential functions $C(x') \exp(
\kappa |x|)$ which are integrable with respect to the measure $\mu$ are those with a small
$\kappa > 0$ and hence, we
need a more careful estimate of the Martin kernel
$G_{\Lambda(M)}(x,x_{n_k})/G_{\Lambda(M)}(x',x_{n_k})$.

Another point where the arguments of the paper \cite{Ignatiouk-Loree}  do not work is
the proof of the inequality \eqref{e2-8}. The most difficult is here the case  when at least one of
the coordinates of the mean vector $M$ is zero. In \cite{Ignatiouk-Loree}, the proof of
\eqref{e2-8} for such a vector $M$ heavily relies on the fact that the corresponding
induced Markov chain $(X_{M}(t))$ is a recurrent random walk on $\Z$ killed when
hitting the negative half-line $\{ k\in\Z : k \leq 0\}$. An important property of such a
random walk, which is essential for the proof of \eqref{e2-8} in \cite{Ignatiouk-Loree}, is that for any $e\in\Z$,
\be\label{e2-9}
\P_k( X_{M}(t) = k+e \; \text{ for some } t > 0) ~\to~ 1 \quad \text{ as } \quad
k\to\infty 
\ee
(see \cite{Ignatiouk-Loree}). In a more general case,
 the induced Markov chain $(X_{M}(t))$  is a random
walk on $\Z^{\Lambda(M)}$ having a finite variance, zero mean and killed upon the first exit from
$\Z_+^{\Lambda(M)}$. Remark that for $Card(\Lambda(M)) \geq 3$ a random walk with a finite
variance and zero drift on $\Z^{\Lambda(M)}$ is transient and moreover, for
$Card(\Lambda(M)) \geq 2$, the property \eqref{e2-9} fails to hold when
$k\in\Z^{\Lambda(M)}_+$ tends to infinity along one of the axis $\{x \in\Z^{\Lambda(M)}_+ :
x_i > 0 \; \text{ and } \; x_j = 0 \; \text{ for } \; j\not= i\}$,
$i\in\Lambda(M)$. 
In the present paper,  the inequality \eqref{e2-8} and the careful desired estimates of the Martin kernel
$G_{\Lambda(M)}(x,x_{n_k})/G_{\Lambda(M)}(x',x_{n_k})$ 
are obtained by using a new approach, from the ratio limit theorem and large deviation estimates of the induced Markov chain
$(X_{M}(t))$.

\section{Large deviation results}\label{sec3}
In this section we obtain large deviation estimates for scaled local Markov-additive processes and we
deduce from them the large deviation asymptotics of the Green functions. Before to prove the
large deviation results we show that the local Markov-additive processes satisfy the
following communication condition. 
\subsection{Communication condition.}
\begin{defi}  A discrete time Markov chain $(\cal{Z}(t))$ on a countable state space $E\subset\Z^d$ is
  said to satisfy the communication condition on $E_0\subset E$ if there exist  
$\theta >0$ and $C>0$ such that for any $x\not=x'$, $x,x'\in E_0$ there is a sequence of
  points $x_0, x_1, \ldots,x_n\in E_0$  with $x_0=x$, $x_n=x'$ and 
    $n\leq C|x'-x|$ such that  
\[
 |x_i-x_{i-1}| ~\leq~ C \quad \text{ and  } \quad \P_{x_{i-1}}({\cal Z}(1) = x_i) ~\geq~ \theta, \quad \quad \forall \;
 i=1,\ldots,n. 
\] 
\end{defi}
\begin{lemma}\label{lem3-1} Under the hypotheses (A1), for any
  $\Lambda\in\{1,\ldots,d\}$, the local Markov-additive process $(Z_\Lambda(t))$ 
  satisfies the communication condition  on $\Z_+^{\Lambda,d}$. 
\end{lemma}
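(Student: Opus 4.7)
The plan is to construct, for any two distinct points $x,x'\in\Z_+^{\Lambda,d}$, an explicit path of $\mu$-positive single jumps connecting $x$ to $x'$ that stays inside $\Z_+^{\Lambda,d}$, uses at most a constant multiple of $|x'-x|$ jumps, and whose single-step probabilities and jump sizes are uniformly bounded away from $0$ and from $\infty$ respectively. Since only assumption (A1) is available, the idea is to extract, once and for all, a finite family of elementary $\mu$-positive paths in $\Z_+^d$ realizing the unit displacements $\pm e_i$, and then concatenate suitable translates of these to connect $x$ to $x'$.

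\textbf{Building blocks and translation principle.} Write $\mathbf{1}=(1,\dots,1)$ and let $e_i$ denote the $i$-th standard basis vector of $\Z^d$. Applying (A1) with $\Lambda$ replaced by $\{1,\dots,d\}$ to the pair $(\mathbf{1},\mathbf{1}+e_i)$ and to the pair $(\mathbf{1}+e_i,\mathbf{1})$, I obtain, for each $i=1,\dots,d$, finite $\mu$-positive paths $\pi_i^+$ and $\pi_i^-$ in $\Z_+^d$ realizing the net displacements $+e_i$ and $-e_i$. Every intermediate vertex $z$ of such a path satisfies $z^k\ge 1$ for all $k$. Translating $\pi_i^+$ by $w-\mathbf{1}$ yields a path from $w$ to $w+e_i$ whose intermediate $k$-th coordinates $w^k+z^k-1$ are $\ge w^k$, and so remains in $\Z_+^{\Lambda,d}$ whenever $w\in\Z_+^{\Lambda,d}$. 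Translating $\pi_i^-$ by $w-\mathbf{1}-e_i$ yields a path from $w$ to $w-e_i$ whose intermediate $k$-th coordinates $w^k+z^k-1-\delta_{k,i}$ are $\ge w^k-\delta_{k,i}$, and so remains in $\Z_+^{\Lambda,d}$ provided $w\in\Z_+^{\Lambda,d}$ and, when $i\in\Lambda$, additionally $w^i\ge 2$. The $2d$ paths involve only finitely many distinct jump vectors, yielding uniform constants $C_0$ bounding jump size, $\theta>0$ lower-bounding $\mu$, and $L$ bounding path length.

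\textbf{Three-phase construction.} Given distinct $x,x'\in\Z_+^{\Lambda,d}$, introduce staging points $y,y'$ defined by $y^j=y'^j=\max(x^j,x'^j)$ for $j\in\Lambda$ and $y^j=x^j$, $y'^j=x'^j$ for $j\in\Lambda^c$. Phase~1 concatenates $\max(0,x'^j-x^j)$ translates of $\pi_j^+$ for each $j\in\Lambda$ (in any order), bringing $x$ to $y$; no buffer is needed, since $\pi_j^+$-translates are feasible at any current point in $\Z_+^{\Lambda,d}$. Phase~2 moves from $y$ to $y'$ using only translates of $\pi_j^\pm$ for $j\in\Lambda^c$, each feasible at any current point of $\Z_+^{\Lambda,d}$ (the additional buffer condition is vacuous for $j\not\in\Lambda$). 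Phase~3 applies $\max(0,x^j-x'^j)$ translates of $\pi_j^-$ for each $j\in\Lambda$ in turn; the $j$-th coordinate decreases one unit per concatenation from $y'^j=x^j$ down to $x'^j$, so at the start of every such $\pi_j^-$-application the current $j$-th coordinate is at least $x'^j+1\ge 2$, meeting the buffer condition.

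\textbf{Bookkeeping and main obstacle.} The total number of elementary $\pi_j^\pm$-concatenations over the three phases is $\sum_{j\in\Lambda}\max(0,x'^j-x^j)+\sum_{j\in\Lambda^c}|x'^j-x^j|+\sum_{j\in\Lambda}\max(0,x^j-x'^j)\le 2\sqrt{d}\,|x'-x|$, so the concatenated path consists of at most $2\sqrt{d}\,L\,|x'-x|$ single jumps, each of $\mu$-probability at least $\theta$ and size at most $C_0$; the communication condition on $\Z_+^{\Lambda,d}$ then holds with $\theta$ as above and $C=\max(C_0,\,2\sqrt{d}\,L)$. The most delicate point is Phase~3: it is essential to use building blocks $\pi_i^-$ starting from $\mathbf{1}+e_i$ rather than from $(2,\dots,2)$, so that the buffer condition for $\pi_i^-$ concerns only the $i$-th coordinate. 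This decoupling between the $\Lambda$-coordinates is what allows Phase~3 to process the coordinates one at a time without interference, and keeps the overall length estimate linear in $|x'-x|$ with no additive error term.
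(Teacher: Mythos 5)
Your proof is correct and follows essentially the same route as the paper: reduce to unit displacements, fix finitely many $\mu$-positive building-block paths supplied by (A1), and translate them to an arbitrary starting point using the fact that the $\Lambda$-coordinates there dominate those at the base point, with a one-unit buffer (and processing the decreasing $\Lambda$-moves last) to handle steps that lower a coordinate in $\Lambda$. The only cosmetic difference is that you take the building blocks from the full orthant $\Z_+^d$ (the case $\Lambda=\{1,\dots,d\}$ of (A1)) with base points $\mathbf{1}$, $\mathbf{1}+e_i$, whereas the paper takes $\Lambda$-dependent base points with a $2$ in the coordinate to be decreased; the translation-and-monotonicity argument and the resulting constants are the same.
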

\begin{proof} To prove this proposition it is sufficient to show that for any unit vector
  $e\in\Z^d$ there is a sequence of vectors $u_{e,1}, \ldots,u_{e,n(e)}\in \supp(\mu) ~\dot=~
  \{x\in\Z^d : \mu(x) > 0\}$ with $u_{e,1} +  \ldots +u_{e,n(e)} = e$ such that  
\be\label{e3-0}
 x + u_{e,1} + \cdots + u_{e,k} \in \Z^{\Lambda,d}_+, \quad \forall k=1,\ldots,n(e), \quad \text{whenever $x, x+e\in\Z^{\Lambda,d}_+$.}
\ee
The communication condition will  be satisfied then 
with 
\[
\theta = \min_e \min_k \mu(u_{e,k}) > 0 \quad \text{and} \quad C = \max_e \max \left\{d\, n(e), \,\max_{k}
|u_{e,k}|\right\}. 
\]

Suppose first that the coordinates $e^i$ of the unit vector
$e$  are non-negative for all $i\in\Lambda$
(i.e. either $e^i=0$ for all $i\in\Lambda$ or  $e^i = 1$ for some $i\in\Lambda$ and
$e^j=0$ for $j\not= i$) and let us consider  
$\hat{x}\in\Z_+^{\Lambda,d}$ with $\hat{x}^i = 1$ for
$i\in\Lambda$ and $x^i=0$ for $i\in\Lambda^c$. Then clearly, $\hat{x}+e\in
\Z_+^{\Lambda,d}$ and there are  $u_{e,1}, \ldots,u_{e,n(e)}\in \supp(\mu) ~\dot=~ \{x\in\Z^d : \mu(x) > 0\}$ with
$u_{e,1} +
  \ldots +u_{e,n(e)} = e$ and  
\[
\hat{x} + u_{e,1} + \cdots + u_{e,k} \in \Z^{\Lambda,d}_+, \quad \forall k=1,\ldots,n(e),
\]
because the Markov process $(Z_\Lambda(t))$ is irreducible on $\Z^{\Lambda,d}_+$. Since 
for any $x\in\Z_+^{\Lambda,d}$ and $i\in\Lambda$, the $i$-th coordinate $(x + u_{e,1} + \cdots + u_{e,k})^i$ of
the vector $x + u_{e,1} + \cdots + u_{e,k}$ is greater or equal to the  $i$-th coordinate
$(\hat{x} + u_{e,1} + \cdots + u_{e,k})^i$ of the vector $\hat{x} + u_{e,1} + \cdots + u_{e,k}$  then  we
get also \eqref{e3-0} for all $x\in\Z_+^{\Lambda,d}$. 

Similarly, when a unit vector $e$ has a negative non-zero coordinate  $e^i = -1$ for
some $i\in\Lambda$ (and consequently, $e^j=0$ for $j\not= i$), we consider $\hat{x}\in\Z_+^{\Lambda,d}$ with
$\hat{x}^i= 2$, \, $\hat{x}^j = 1$ for $j\in\Lambda, j\not= i$ and $\hat{x}^j = 0$ for
$j\in\Lambda^c$.  For such a point $\hat{x}$, one has $\hat{x} + e\in \Z_+^{\Lambda,d}$
and consequently, there is 
sequence of vectors $u_{e,1}, \ldots,u_{e,n(e)}\in \supp(\mu)$ with
$u_{e,1} +
  \ldots +u_{e,n(e)} = e$ and  
\[
\hat{x} + u_{e,1} + \cdots + u_{e,k} \in \Z^{\Lambda,d}_+, \quad \forall k=1,\ldots,n(e).
\]
Moreover, for any  point $x\in\Z^{\Lambda,d}_+$ for which $x + e\in\Z^{\Lambda,d}_+$, one gets 
$x^i \geq 2 = \hat{x}^i$ and $x^j \geq 1 = \hat{x}^j$ for $j\in\Lambda, j\not= i$. For all 
$x\in\Z_+^{\Lambda,d}$ for which $x + e\in\Z^{\Lambda,d}_+$, the $i$-th coordinate $(x + u_{e,1} + \cdots + u_{e,k})^i$ of
the vector $x + u_{e,1} + \cdots + u_{e,k}$ is therefore greater or equal to the  $i$-th coordinate
$(\hat{x} + u_{e,1} + \cdots + u_{e,k})^i$ of the vector $\hat{x} + u_{e,1} + \cdots + u_{e,k}$  and
consequently \eqref{e3-0} holds.
\end{proof}
\subsection{Large deviation properties of scaled processes.} To formulate the large
deviation result we need to introduce the following notations : 
$D([0,T],\R^d)$ denotes  the set of all right continuous functions with left
limits from $[0,T]$ to $\R^d$  endowed with Skorohod metric
(see Billingsley~\cite{Billingsley}). We let $$\R_+^{\Lambda,d} =\{x\in\R^d :~ x_i\geq 0,
\; \forall i\in\Lambda\}.$$ For $x\in\R^{\Lambda,d}$ 
we denote by  $[x]$ the nearest lattice
point to $x$ in $\Z_+^{\Lambda,d}$.  For $t\in\R_+$,   $[t]$ denotes the integer
part of $t$. 

The following proposition proves the lower large deviation bound for the family of scaled
random walks $Z_\Lambda^\eps(t)~\dot=~\eps Z_\Lambda([t/\eps])$ in $D([0,T],\R^d)$ with the rate
function 
\be\label{e3-1}
I_{[0,T]}^\Lambda(\phi) ~\dot=~ \begin{cases} \int_0^T (\log \varphi)^*(\dot\phi(t)) \, dt, &\text{ if
    $\phi$ is absolutely continuous and}\\
&\text{ $\phi(t)\in\R_+^{\Lambda,d}$ for all $t\in[0,T]$,}\\
+\infty &\text{ otherwise,}
\end{cases}
\ee
where 
$\varphi$ is the jump generating function defined by \eqref{e1-2} and $(\log \varphi)^*$
denotes  the convex conjugate of the function $\log\varphi$ defined by 
\[
(\log \varphi)^*(v) ~=~ \sup_{a\in\R^d} \Bigl(a\cdot v - \log\varphi(a)\Bigr), \quad
v\in\R^d.
\]
Recall that a continuous function $\phi :[0,T]\to \R_+^{\Lambda,d}$ is called absolutely
continuous if the derivative $\dot\phi(t)$ exists almost everywhere on $[0,T]$ (with
respect to the Lebesgue measure on $[0,T]$) and for any $t\in[0,T]$,
\[
\phi(t) ~=~ \phi(0) + \int_0^t\dot\phi(s) \, ds.
\]

\begin{prop}\label{pr3-1} Under the hypotheses (A1) and (A2), for any  $x\in\R^{\Lambda,d}_+$, $T>0$ and an open set ${\cal
O}\subset D([0,T],\R^d)$,
\begin{equation}\label{e3-2}
\lim_{\delta\to 0} \;\liminf_{\eps\to 0} \; \inf_{x'\in \eps E : |x'-x|<\delta} \eps
\log\P_{[x'/\eps]}\left( Z_\Lambda^\eps(\cdot)\in {\cal 
O}\right) \geq -\inf_{\phi\in{\cal O}:\phi(0)=x} I_{[0,T]}(\phi), 
\end{equation}
\end{prop}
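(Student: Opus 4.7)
The plan is to follow the classical Freidlin--Wentzell lower-bound strategy via exponential change of measure, adapted to the Markov-additive structure with killing on $\bigcup_{i\in\Lambda}\{y:y^i\le 0\}$. By a standard density and continuity argument it is enough, for each fixed absolutely continuous $\phi:[0,T]\to\R_+^{\Lambda,d}$ with $\phi(0)=x$ and $I_{[0,T]}^\Lambda(\phi)<\infty$ and each Skorohod-open neighbourhood $V$ of $\phi$, to exhibit $\delta>0$ so that
\[
\liminf_{\eps\to 0}\inf_{x':|x'-x|<\delta}\eps\log\P_{[x'/\eps]}(Z_\Lambda^\eps\in V)\ge -I_{[0,T]}^\Lambda(\phi).
\]
I would first approximate $\phi$, in sup norm and with converging cost, by a piecewise linear path $\phi_\sigma$ on a partition $0=t_0<\cdots<t_N=T$ whose slopes $v_k$ all lie in the interior of $\dom(\log\varphi)^*$ and whose graph stays at distance at least $\sigma>0$ from the killing boundary $\bigcup_{i\in\Lambda}\{y^i=0\}$ on $[\sigma,T]$. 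Convexity of $(\log\varphi)^*$ together with smoothness of $\log\varphi$ near $D$, granted by (A2), makes this approximation routine.

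Next, on each segment $[t_{k-1},t_k]$, (A2) and strict convexity of $\log\varphi$ yield $a_k\in\R^d$ with $\nabla\log\varphi(a_k)=v_k$ and $(\log\varphi)^*(v_k)=a_k\cdot v_k-\log\varphi(a_k)$. I would perform the Cramér tilt
\[
\frac{d\P^{a_k}}{d\P}\bigg|_{\mathcal F_n}=\exp\bigl(a_k\cdot(S(n)-S(0))-n\log\varphi(a_k)\bigr)
\]
segmentwise. Restricting the event $\{Z_\Lambda^\eps\in V\}$ to a Skorohod-tube $V_\sigma\subset V$ of radius $\sigma$ about $\phi_\sigma$ and applying the piecewise change of measure, the Radon--Nikodym exponent telescopes, uniformly on this event, to $-I_{[0,T]}^\Lambda(\phi_\sigma)/\eps+O(\sigma/\eps)$ by discrete summation by parts. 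Under the piecewise tilted law, $Z_\Lambda^\eps$ converges uniformly to $\phi_\sigma$ by the weak law of large numbers (the tilted increments have bounded exponential moments), so the tilted probability of $V_\sigma$ tends to $1$; the $\sigma$-buffer from the killing boundary ensures $\tau_\Lambda>T/\eps$ with overwhelming tilted probability. Combining and letting $\sigma\to0$ then $\delta\to0$ delivers \eqref{e3-2}.

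Two technical loose ends arise. When $x$ lies on the boundary or $[x'/\eps]\ne x'/\eps$, Lemma~\ref{lem3-1} supplies a bridging path of $O(1)$ steps from $[x'/\eps]$ to a convenient interior starting point with probability at least $\theta^{O(1)}$, a cost that disappears at the $\eps$-logarithmic scale. The genuine \textbf{main obstacle} is the interaction between tilting and killing: since (A2) only ensures $\varphi<\infty$ in a neighbourhood of $D$, the tilt parameters $a_k$ must stay in this neighbourhood, which forces $\phi_\sigma$ to have slopes strictly in $\inter{(\dom(\log\varphi)^*)}$; simultaneously $\phi_\sigma$ must be pushed strictly inside $\R_+^{\Lambda,d}$ so that the tilted walk, being within $o(1)$ of $\eps^{-1}\phi_\sigma$ uniformly in time, avoids $\tau_\Lambda\le T/\eps$. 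These two approximations must be performed compatibly, and the concentration of the tilted walk must be strong enough, beyond the plain law of large numbers, to defeat the small but positive tilted probability of early killing near the boundary.
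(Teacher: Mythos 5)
Your strategy is essentially the one the paper uses. The paper invokes the lower bound of Mogulskii's theorem from Dembo--Zeitouni (which is proved by exactly the piecewise-linear approximation and segment-wise exponential tilting you describe) together with the communication condition of Lemma~\ref{lem3-1}, following the argument of Proposition~4.1 in \cite{Ignatiouk:06}, rather than redoing the tilting by hand. The ``main obstacle'' you flag --- the compatibility of keeping the tilt parameters in the finiteness region of $\varphi$, pushing $\phi_\sigma$ off the killing boundary, and getting enough concentration for the tilted walk --- is precisely what the communication condition absorbs: when $x$ lies on $\partial\R^{\Lambda,d}_+$, one first bridges from $[x'/\eps]$ to a $\sigma$-deep interior point using $O(\sigma/\eps)$ elementary steps each of probability at least $\theta$, at a cost of order $\exp(-c\,\sigma/\eps)$, which is $O(\sigma)$ at the $\eps\log$-scale and vanishes after the final $\sigma\to 0$; from there the ordinary functional law of large numbers under the tilted law is enough on the bulk segment, with no need for concentration ``beyond the law of large numbers.'' This is also where your assertion that the bridge costs only $O(1)$ steps is slightly off: $O(1)$ steps cover the lattice rounding $[x'/\eps]\neq x'/\eps$, but a boundary start genuinely requires $O(\sigma/\eps)$ steps, and it is Lemma~\ref{lem3-1} that makes this cost harmless.
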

The proof of \eqref{e3-2} uses the
communication condition of Proposition~\ref{pr3-1} together with 
the lower  large deviation bound of  Mogulskii's theorem~(see~\cite{D-Z}) and is 
quite similar to the proof of the corresponding lower bound of Proposition~4.1 of the
paper~\cite{Ignatiouk:06}. While the hole
Mogulskii's theorem was proved under a more restrictive condition, when the jump
generating function \eqref{e1-2} is finite everywhere on $\R^d$, for the proof of its 
lower bound  our Assumption (A3) is sufficient (see ~\cite{D-Z}).

\subsection{Large deviation estimates of the Green functions.}
The large deviation estimates of scaled processes $Z_\Lambda^\eps(t)~\dot=~\eps
Z_\Lambda([t/\eps])$ are now used to get the large deviation estimates of the Green
functions 
\[
G_\Lambda(x,x') ~\dot=~ \sum_{t=0}^\infty \P_x(Z_\Lambda(t)=x') ~=~ \sum_{t=0}^\infty
\P_x(S(t)=x', \; \tau_\Lambda > t).
\]

\begin{prop}\label{pr3-2} Under the hypotheses (A1)-(A3), for any 
$q,q'\in\R^{\Lambda,d}_+$ and any sequences $x_n,x_n'\in \Z_+^{\Lambda,d}$ and $\eps_n >0$
with $\lim_n \eps_n = 0$, $\lim_n \eps_nx'_n ~=~q'$ and $\lim_n \eps_nx_n ~=~q$, 
\be\label{e3-5}
\liminf_{n\to\infty} \eps_n \log G_\Lambda(x'_n,x_n) ~\geq~ - \sup_{a \in D} a\cdot (q-q').
\ee
\end{prop}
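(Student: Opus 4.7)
The strategy is to combine the sample-path large deviation lower bound of Proposition~\ref{pr3-1} with the communication condition of Lemma~\ref{lem3-1}. Writing $G_\Lambda(x_n',x_n) = \sum_{t\ge 0}\P_{x_n'}(Z_\Lambda(t) = x_n)$, I would retain only a single well-chosen term $\P_{x_n'}(Z_\Lambda(t_n) = x_n)$ with $t_n = [T/\eps_n]$ for a large but fixed $T$. Applying the Markov property at time $t_n$ decomposes this probability into a bulk piece that drives the rescaled process $Z_\Lambda^{\eps_n}$ close to $q$, to be handled by the LDP, and a short endgame that reaches $x_n$ exactly, to be handled by the communication condition.

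For the bulk piece I would exhibit, for any $\eta>0$, an absolutely continuous path $\phi\colon[0,T]\to\R_+^{\Lambda,d}$ with $\phi(0)=q'$, $\phi(T)=q$, and $I^\Lambda_{[0,T]}(\phi)\le \sup_{a\in D}a\cdot(q-q')+\eta$. The straight segment $\phi(t)=q'+t(q-q')/T$ lies in the convex set $\R_+^{\Lambda,d}$ automatically, and its cost
\[
T(\log\varphi)^*\bigl((q-q')/T\bigr) \;=\; \sup_{a\in\R^d}\bigl(a\cdot(q-q') - T\log\varphi(a)\bigr)
\]
decreases to $\sup_{a\in D}a\cdot(q-q')$ as $T\to\infty$: indeed every $a\in D$ satisfies $-T\log\varphi(a)\ge 0$, so the bracket is bounded below by $a\cdot(q-q')$, and the matching upper bound is attained at the optimizing $T^\star = 1/\lambda$ determined by $\nabla\varphi(a^\star)=\lambda(q-q')$ for the maximizer $a^\star\in\partial D$ of $a\cdot(q-q')$, whose existence relies on the differentiability of $\varphi$ near $D$ supplied by~(A2).

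Given such a $\phi$, let $\mathcal{O}_\delta$ be its Skorohod $\delta$-neighbourhood in $D([0,T],\R^d)$. Proposition~\ref{pr3-1} applied with $x=q'$ yields
\[
\liminf_n \eps_n \log \P_{x_n'}\bigl(Z_\Lambda^{\eps_n}(\cdot)\in\mathcal{O}_\delta\bigr) \ \ge\ -\sup_{a\in D}a\cdot(q-q') - \eta,
\]
and on this event $Z_\Lambda(t_n)$ lies in the set $B_n=\{y\in\Z_+^{\Lambda,d}:|y-x_n|\le 2\delta/\eps_n\}$, of cardinality at most $C'(\delta/\eps_n)^d$. A pigeonhole step selects $y\in B_n$ with $\P_{x_n'}(Z_\Lambda(t_n)=y)\ge \P_{x_n'}(Z_\Lambda^{\eps_n}(\cdot)\in\mathcal{O}_\delta)/|B_n|$, and Lemma~\ref{lem3-1} provides a path from $y$ to $x_n$ of length $m_y\le 2C\delta/\eps_n$ with $\P_y(Z_\Lambda(m_y)=x_n)\ge \theta^{m_y}\ge \theta^{2C\delta/\eps_n}$. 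Combining these by the Markov property at $t_n$, taking $\eps_n\log$ and noting $\eps_n\log|B_n|\to 0$, then sending $\delta\to 0$ followed by $\eta\to 0$, produces the claimed bound.

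The main technical hurdle is the convex-analytic identity underlying the trajectory construction; once a near-optimal $\phi$ is in hand, the remaining steps are a standard pairing of an LDP neighbourhood with a Markovian coupling to a fixed endpoint, the cost of the latter being negligible on the logarithmic scale. A secondary subtlety is that when a coordinate of $q$ or $q'$ indexed by $\Lambda$ vanishes, the straight segment $\phi$ touches the relative boundary of $\R_+^{\Lambda,d}$; this is compatible with $I^\Lambda_{[0,T]}(\phi)<\infty$, and the lattice-valued killed process $Z_\Lambda^{\eps_n}$ still approximates $\phi$ at distance $O(\eps_n)$ from the wall, so Proposition~\ref{pr3-1} continues to apply.
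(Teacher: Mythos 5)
Your proposal is correct and follows essentially the same route as the paper: the lower bound of Proposition~\ref{pr3-1} along a straight-line path from $q'$ to $q$, the communication condition of Lemma~\ref{lem3-1} to reach $x_n$ exactly at negligible exponential cost as $\delta\to 0$, a polynomially bounded counting (your pigeonhole over $B_n$ playing the role of the paper's sum over the ball $\{|\eps_n x-q|<r\}$), and the identity $\inf_{T>0}T(\log\varphi)^*\bigl((q-q')/T\bigr)=\sup_{a\in D}a\cdot(q-q')$, which the paper takes from Theorem~13.5 of Rockafellar and you rederive via the stationarity condition at $T^\star$. Only note that your aside that the straight-line cost \emph{decreases to} $\sup_{a\in D}a\cdot(q-q')$ as $T\to\infty$ is not accurate (for large $T$ the cost grows like $T(\log\varphi)^*(0)$), but this is harmless since your argument actually uses the finite optimizer $T^\star$, which is the correct choice.
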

\begin{proof} The proof of this proposition is similar to the proof of Proposition~4.2 of
Ignatiouk-Robert~\cite{Ignatiouk:06}.  The main arguments of this proof are the following
:  

For any $r>0$ and $T>0$,  using the lower large deviation bound
\eqref{e3-2} with an open set ${\cal O} =\{\phi : |\phi(T) - q| < r\}$ one gets 
\begin{align}
\lim_{\delta\to 0}\liminf_{n\to\infty}~&\inf_{\substack{x'\in\Z^{\Lambda,q}_+:~|\eps_n x' - q'| < \delta}}~
\eps_n\log \sum_{\substack{x\in\Z^{\Lambda,q}_+:~ |\eps_n x - q| < r}} G_\Lambda(x',x) \nonumber\\
 &\geq~ \lim_{\delta\to 0}\liminf_{n\to\infty}\inf_{\substack{x'\in\Z^{\Lambda,q}_+:~
    |\eps_n x' -q'| < \delta}}
\eps_n\log  \P_{x'}(|Z_\Lambda^{\eps_n}(T)| < r)  \nonumber\\
&\geq~ -\inf_{\phi: \phi(0)=q', |\phi(T)- q| < r } I^\Lambda_{[0,T]}(\phi)\nonumber\\
&\geq~ -\inf_{\phi: \phi(0)=q', \phi(T) = q  } I^\Lambda_{[0,T]}(\phi) \label{e3-6}
\end{align}
Furthermore, by Lemma~\ref{lem3-1}, for any $x\not=x'$, $x,x'\in \Z^{\Lambda,d}_+$ there is a sequence of
  points $w_0, w_1, \ldots,w_k\in \Z^{\Lambda,d}_+$  with $w_0=x'$, $w_k=x$ and 
    $k\leq C|x'-x|$ such that  
\[
 |w_i-w_{i-1}| \leq C \quad \text{ and  } \quad \P_{w_{i-1}}(Z_\Lambda(1) = w_i) \geq \theta, \quad \quad \forall \;
 i=1,\ldots,k. 
\] 
From this it follows that  for any $x\not=x'$, $x,x'\in \Z^{\Lambda,d}_+$, 
  there is $0<t\leq C|x-x'|$ such that 
\[
\P_{x'}(Z_\Lambda(t) = x) ~\geq~ \theta^t ~\geq~ \theta^{C|x'-x|}
\]
and consequently, 
\begin{align*}
G_\Lambda\bigl(x',x_n\bigr) &~\geq~  G_\Lambda(x',x) \,\P_{x}(Z_\Lambda(t) = x_n)  ~\geq~ G_\Lambda\bigl(x',x\bigr)\theta^{C|x-x_n|}
\\&~\geq~G_\Lambda\bigl(x',x\bigr)\theta^{C|x-q/\eps_n| + C|x_n-q/\eps_n|}.
\end{align*}
Using moreover the inequality $
\text{Card}\{x\in\Z^d : |x - q/\eps_n|< R\} ~\leq~ (2 R + 1)^{d}$ 
with $R = r/\eps_n$  one obtains 
\[
G_\Lambda\bigl(x',x_n\bigr) ~\geq~ \frac{1}{(1 + 2r/\eps_n)^{d}} \theta^{2C r/\eps_n}
\sum_{\substack{x :~|q - \eps_n x| < r}}
G_\Lambda\bigl(x',x\bigr)
\]
for all those $n\in\N$ for which  $|q
- \eps_n x_n| < r$ and consequently, for any $r>0$, 
\begin{multline*}
\lim_{\delta\to 0}\liminf_{n\to\infty}~\inf_{\substack{x':~ |\eps_n x' -q'| <
    \delta}} ~\eps_n\log G_\Lambda\bigl(x',x_n\bigr) ~\geq~ - 2C r \log \theta \\  +
\lim_{\delta\to 0}\liminf_{n\to\infty}~\inf_{\substack{x':~ |\eps_n x'-q'| < \delta}}~\eps_n
~\log \sum_{\substack{x :~|q - \eps_n x|< r }} G_\Lambda\bigl(x',x\bigr). 
\end{multline*}
Letting at the last inequality $r\to 0$ and using  \eqref{e3-6} one gets 
\[
\liminf_{n\to\infty} \eps_n \log G_\Lambda(x'_n,x_n) ~\geq~ - \inf_{T > 0} 
~\inf_{\phi:~\phi(0)=q', \,\phi(T)=q} I^\Lambda_{[0,T]}(\phi) 
\]
Since for $\phi(t)= q' + (q-q')t/T$, 
\[
I^\Lambda_{[0,T]}(\phi) ~=~ \int_0^T (\log \varphi)^*(\dot\phi(t)) \, dt ~=~ T(\log\varphi)^*\left(\frac{q-q'}{T}\right)
\]
and by Theorem~13.5 of Rockafellar~\cite{R}, 
\[
\inf_{T > 0}
T(\log\varphi)^*\left(\frac{q-q'}{T}\right) ~=~ \sup_{a : \varphi(a)\leq 1} a\cdot (q-q'), 
\]
from the last inequality it follows that 
\[
\liminf_{n\to\infty} \eps_n \log G_\Lambda(x'_n,x_n) ~\geq~ - \inf_{T > 0}
T(\log\varphi)^*\left(\frac{q-q'}{T}\right) ~=~ - ~\sup_{a \in D} a\cdot (q-q')
\]
and consequently, \eqref{e3-5} holds. 
\end{proof} 

A straightforward consequence of this proposition is the following statement.

\begin{cor}\label{cor3-1} Under the hypotheses (A1)-(A4), for any 
$q\not=q'$, $q,q'\in\R^{\Lambda,d}_+$,  and any sequences $x_n,x_n'\in \Z_+^{\Lambda,d}$ and $\eps_n >0$
with $\lim_n \eps_n = 0$, $\lim_n \eps_nx'_n ~=~q'$ and $\lim_n \eps_nx_n ~=~q$, 
\be\label{e3-7}
\liminf_{n\to\infty} \eps_n \log G_\Lambda(x'_n,x_n) ~\geq~ - a(q-q')\cdot (q-q') 
\ee
\end{cor}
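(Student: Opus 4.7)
The plan is to derive Corollary~\ref{cor3-1} directly from Proposition~\ref{pr3-2} by identifying the supremum $\sup_{a\in D} a\cdot (q-q')$ as the explicit value $a(q-q')\cdot(q-q')$. The argument is purely geometric and uses only the structural properties of $D$ that are stated in the excerpt to hold under (A1)--(A4).

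First, I would invoke Proposition~\ref{pr3-2}, which already gives
\[
\liminf_{n\to\infty} \eps_n \log G_\Lambda(x'_n,x_n) ~\geq~ -\sup_{a\in D} a\cdot(q-q').
\]
Setting $v ~\dot=~ q-q'\neq 0$, the task reduces to proving the identity
\[
\sup_{a\in D} a\cdot v ~=~ a(v)\cdot v,
\]
where $a(v)=a(v/|v|)$ according to the convention introduced in the paper.

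The key observation is that, under (A1)--(A4), the set $D=\{a\in\R^d:\varphi(a)\leq 1\}$ is compact and strictly convex, the gradient $\nabla\varphi$ exists everywhere and does not vanish on $\partial D$, and the mapping $a\mapsto\nabla\varphi(a)/|\nabla\varphi(a)|$ is a homeomorphism from $\partial D$ onto the unit sphere. Hence the linear functional $a\mapsto a\cdot v$ attains its maximum over the compact set $D$ at some point $a^*\in\partial D$, and by strict convexity this maximizer is unique. At any such maximizer, the outward normal to $D$ must be parallel to $v$; since the outward normal to the sublevel set $D$ at a boundary point $a^*$ is (a positive multiple of) $\nabla\varphi(a^*)$, we obtain $\nabla\varphi(a^*)/|\nabla\varphi(a^*)| = v/|v|$. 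By the definition of the inverse map $q\mapsto a(q)$ this forces $a^* = a(v/|v|) = a(v)$.

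Substituting back yields $\sup_{a\in D} a\cdot v = a(v)\cdot v = a(q-q')\cdot(q-q')$, which combined with Proposition~\ref{pr3-2} gives \eqref{e3-7}. There is no serious obstacle here: once one recognizes that the supremum in Proposition~\ref{pr3-2} is nothing but the support function of $D$ evaluated at $q-q'$, the formula follows directly from the characterization of $a(\cdot)$ as the inverse Gauss map of $\partial D$. The only mildly delicate point is to verify that the normal to $D$ at a boundary maximizer is indeed parallel to $\nabla\varphi$ (and not its opposite), which is immediate because $D$ is a sublevel set of $\varphi$ and $\nabla\varphi$ points outward.
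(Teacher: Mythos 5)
Your argument is essentially the paper's own proof, just spelled out in more detail: the paper likewise deduces \eqref{e3-7} from Proposition~\ref{pr3-2} by noting that $a(q-q')$ is the unique point of $\partial D$ where $q-q'$ is an outward normal, so that $\sup_{a\in D} a\cdot(q-q') = a(q-q')\cdot(q-q')$. Your elaboration of why the maximizer lies on $\partial D$, why it is unique, and why the outward normal there is a positive multiple of $\nabla\varphi$ is correct and fills in exactly the details the paper leaves implicit.
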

\begin{proof} Indeed, under the hypotheses (A1)-(A4), the point $a(q-q')$  is
  the only point on the boundary $\partial D$ of the set $D~\dot=~\{a\in\R^d :
  \varphi(a)\leq 1\}$ where the vector $q-q'$ is normal to the set $D$ and consequently,
  the right hand side of \eqref{e3-5} is equal to the right hand side of \eqref{e3-7}. 
\end{proof}

Another immediate consequence of Proposition~\ref{pr3-2} is the following statement. 
\begin{cor}\label{cor3-2} Suppose that the conditions (A1)-(A4) are satisfied and let $M^i \geq 0$ for all $i\in\Lambda$.
  Then for any sequences $x'_n,x_n\in \Z_+^{\Lambda,d}$ and $\eps_n >0$
with $\lim_n \eps_n = 0$, $\lim_n \eps_nx'_n ~=~0$ and $\lim_n \eps_nx_n ~=~ M$, 
\be\label{e3-8}
\liminf_{n\to\infty} \eps_n \log G_{\Lambda}(x'_n,x_n) ~=~ 0 
\ee
\end{cor}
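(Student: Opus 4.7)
The plan is to sandwich $\liminf_n \eps_n \log G_\Lambda(x'_n,x_n)$ between $0$ from below and $0$ from above; in fact the argument yields $\lim_n \eps_n \log G_\Lambda(x'_n,x_n) = 0$, which is stronger than the stated $\liminf$ equality.

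For the lower bound, I would apply Proposition~\ref{pr3-2} with $q=M$ and $q'=0$, obtaining
\[
\liminf_n \eps_n \log G_\Lambda(x'_n,x_n) \;\geq\; -\sup_{a\in D} a\cdot M.
\]
It then suffices to identify $\sup_{a\in D} a\cdot M = 0$. Since $\varphi(0)=1$, the origin lies on $\partial D$; since $\nabla\varphi(0)=M$, this gradient is an outward normal to the convex set $D$ at $0$. Convexity of $D$ forces $a\cdot M\leq 0$ throughout $D$, with equality at $a=0$, which gives the claim and hence $\liminf\geq 0$.

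For the matching upper bound, I would use the exponential Markov (Chernoff) inequality. For any $a$ in the interior of $D$ and any $t\geq 0$,
\[
\P_{x'_n}(S(t)=x_n)\;\leq\;\E_{x'_n}\bigl[\exp(a\cdot(S(t)-x_n))\bigr]\;=\;\exp(a\cdot(x'_n-x_n))\,\varphi(a)^t.
\]
Since $(Z_\Lambda(t))$ is $(S(t))$ killed on the boundary, $G_\Lambda(x'_n,x_n)\leq\sum_t \P_{x'_n}(S(t)=x_n)$. By (A2), $\varphi$ is finite on a neighborhood of $D$, so for $a\in\mathrm{int}(D)$ one has $\varphi(a)<1$ strictly, and summing the geometric series yields
\[
G_\Lambda(x'_n,x_n)\;\leq\;\frac{\exp(a\cdot(x'_n-x_n))}{1-\varphi(a)}.
\]
Taking $\eps_n\log$ and letting $n\to\infty$, using $\eps_n x'_n\to 0$ and $\eps_n x_n\to M$, gives $\limsup_n \eps_n\log G_\Lambda(x'_n,x_n)\leq -a\cdot M$ for every such $a$. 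Finally, taking $a=-\delta M$ (which lies in $\mathrm{int}(D)$ for small $\delta>0$, since $\varphi(-\delta M)=1-\delta|M|^2+O(\delta^2)<1$ thanks to $\nabla\varphi(0)=M$) and sending $\delta\downarrow 0$ makes $-a\cdot M=\delta|M|^2\to 0$. Hence $\limsup_n \eps_n\log G_\Lambda(x'_n,x_n)\leq 0$, and combining with the lower bound gives the result.

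The step deserving the most care is the convex-geometric identification $\sup_{a\in D} a\cdot M=0$, together with the observation that one can approach this supremum from within $\mathrm{int}(D)$ along the ray $\{-\delta M:\delta>0\}$. Both facts hinge on $0\in\partial D$ having outward normal $M=\nabla\varphi(0)\neq 0$, a consequence of (A3) and the definition of $\varphi$. Everything else is routine bookkeeping combining Proposition~\ref{pr3-2} with the classical Chernoff bound.
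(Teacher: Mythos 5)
Your proof is correct and follows essentially the same route as the paper: the lower bound comes from Proposition~\ref{pr3-2} with $q'=0$, $q=M$, combined with the observation that $\sup_{a\in D}a\cdot M=0$ because $0\in\partial D$ has outward normal $\nabla\varphi(0)=M$ (the paper phrases this as $a(M)=0$). The only difference is minor: for the complementary upper bound the paper implicitly relies on $G_\Lambda(x'_n,x_n)\leq \sum_{t}\P_0(S(t)=0)<\infty$ (the free walk is transient since $M\neq 0$), whereas your Chernoff/geometric-series estimate with $a=-\delta M$ establishes the same bound explicitly and in fact yields the full limit.
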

\begin{proof} To deduce this estimate from \eqref{e3-7} it is sufficient to notice that
  for $q'=0$ and $q=M$, under the hypotheses (A1)-(A4), one has $a(q-q') = a(M) = 0$. 
\end{proof}

\section{Ratio limit theorem}\label{sec5}
In this section we identify the limiting behavior of the Martin kernel 
$${G_{\Lambda(M)}(x,x_n)}/{G_{\Lambda(M)}(\hat{x},x_n)}$$ when $\lim_n|x_n| = \infty$ and $\lim_n
x_n/|x_n| = M/|M|$ for $x,\hat{x}\in\Z^{\Lambda,d}_+$ with $x^\Lambda = \hat{x}^\Lambda$.  To get
this results, the large
deviation estimates of the Green function $G_{\Lambda(M)}(x,x_n)$ 
are combined with the results of the paper
~\cite{Ignatiouk-Loree}. Proposition~7.3 of ~\cite{Ignatiouk-Loree} applied for the
Markov-additive process $(Z_\Lambda(t))$ proves the following statement. 

\begin{prop}\label{pr5-1} Suppose that the conditions (A1) - (A3) are satisfied and let $\mu(0) > 0$.  Suppose moreover that 
  a sequence of
  points $x_n\in\Z^{\Lambda,d}_+$ is such that  $\lim_n|x_n| = \infty$ and 
\[
\lim_{\delta\to 0} ~\liminf_{n\to\infty}~\inf_{x\in \Z^{\Lambda,d}_+:~ |x| < \delta|x_n|}
~\frac{1}{|x_n|} \log G_\Lambda\bigl(x, x_n\bigr)  ~\geq~ 0.
\]
 Then 
\begin{multline}\label{e5-1}
\lim_{\delta\to 0} ~\liminf_{n\to\infty}~\inf_{x\in \Z^{\Lambda,d}_+:~ |x| < \delta|x_n|}
G_\Lambda(x+w,x_n)/G_\Lambda(x,x_n) \\~=~ \lim_{\delta\to 0}
~\limsup_{n\to\infty}~\sup_{x\in \Z^{\Lambda,d}_+:~ |x| < \delta|x_n|} G_\Lambda(x+w,x_n)/G_\Lambda(x,x_n)  ~=~ 1  
\end{multline}
for all $x\in\Z^{\Lambda,d}_+$ and $w\in\Z^d$ with $w^\Lambda = 0$.
\end{prop}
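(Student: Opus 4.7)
The plan is to reduce the assertion to Proposition~7.3 of \cite{Ignatiouk-Loree}, which establishes the analogous ratio limit in an abstract Markov-additive framework. The first task is to verify that $(Z_\Lambda(t))$ fits into that framework. Recall from Section~\ref{sec2} that $(Z_\Lambda(t))$ is Markov-additive with Markovian component $X^\Lambda(t)=Z_\Lambda^\Lambda(t)$ on $\Z_+^\Lambda$ and additive component $Y_\Lambda(t)=Z_\Lambda^{\Lambda^c}(t)$ on $\Z^{\Lambda^c}$: indeed, since the domain $\Z_+^{\Lambda,d}$ constrains only the $\Lambda$-coordinates and the jump law is $\mu$, one has $p_\Lambda(x+w,x'+w)=p_\Lambda(x,x')$ for every $w\in\Z^d$ with $w^\Lambda=0$, so that translation by such a $w$ preserves both the state space and the kernel. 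The communication condition on $\Z_+^{\Lambda,d}$ is provided by Lemma~\ref{lem3-1}; the exponential-moment condition (A2) supplies the integrability required in \cite{Ignatiouk-Loree}; and the hypothesis $\mu(0)>0$ gives the strong aperiodicity that enables the coupling arguments there.

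With these structural hypotheses verified, the only input specific to the sequence $(x_n)$ required by Proposition~7.3 of \cite{Ignatiouk-Loree} is precisely the logarithmic lower bound on $G_\Lambda(\cdot,x_n)$ uniform over $\{|x|<\delta|x_n|\}$ that is assumed in the present statement. Invoking that proposition yields the uniform double equality \eqref{e5-1} for every $w\in\Z^d$ with $w^\Lambda=0$, which is the desired conclusion.

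If one prefers to reconstruct the argument from scratch, the idea is as follows. Translation invariance in the $\Lambda^c$ direction gives the identity $G_\Lambda(x+w,x_n)=G_\Lambda(x,x_n-w)$, reducing the problem to comparing Green functions with a fixed source and a perturbed target. Lemma~\ref{lem3-1} provides a path of length at most $C|w|$ from $x_n-w$ to $x_n$ inside $\Z_+^{\Lambda,d}$ of probability at least $\theta^{C|w|}$, which yields the crude two-sided bound
\[
\theta^{C|w|}\leq \frac{G_\Lambda(x,x_n-w)}{G_\Lambda(x,x_n)}\leq \theta^{-C|w|}.
\]
Sharpening this crude bound to actual convergence to $1$ is achieved by a coupling of two trajectories targeting $x_n$ versus $x_n-w$: the laziness $\mu(0)>0$ lets one of them ``pause'' long enough to absorb the short $O(|w|)$ correction near the endpoint, while the logarithmic growth hypothesis on $G_\Lambda$ combined with the large deviation estimate of Proposition~\ref{pr3-2} forces the exceptional contributions to be negligible on the scale controlling the ratio.

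The main obstacle is obtaining \eqref{e5-1} \emph{uniformly} in $x$ over the window $\{|x|<\delta|x_n|\}$, rather than merely pointwise: because $x$ is permitted to drift with $|x_n|$ before the outer limit $\delta\to 0$ is taken, a naive compactness or extraction argument is unavailable. This is precisely the point where the logarithmic Green-function hypothesis becomes indispensable and where the Markov-additive structure is used to propagate local coupling estimates across the growing window, which is exactly what Proposition~7.3 of \cite{Ignatiouk-Loree} is designed to deliver.
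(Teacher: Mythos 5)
Your proposal is correct and follows the paper's own route: the paper proves Proposition~\ref{pr5-1} precisely by invoking Proposition~7.3 of \cite{Ignatiouk-Loree} for the Markov-additive process $(Z_\Lambda(t))$, with the structural hypotheses (translation invariance in the $\Lambda^c$ directions, the communication condition of Lemma~\ref{lem3-1}, condition (A2), and the aperiodicity $\mu(0)>0$) playing exactly the role you assign them. Your additional sketch of a self-contained argument is only heuristic, but since your main argument is the same reduction the paper uses, nothing further is needed.
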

Corollary~\ref{cor3-2} combined with Proposition~\ref{pr5-1} provides the following
statement.

\begin{prop}\label{pr5-2} Suppose that the conditions  (A1)-(A3) are satisfied. Then for
  $\Lambda=\Lambda(M)$, relations \eqref{e5-1} hold for any sequence of points 
  $x_n\in\Z^{\Lambda(M),d}_+$ with $\lim_n |x_n| = \infty$ and $\lim_n \, x_n/|x_n| = M/|M|$.
\end{prop}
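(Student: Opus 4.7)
The plan is to reduce Proposition~\ref{pr5-2} to Proposition~\ref{pr5-1} by verifying the latter's large-deviation hypothesis for the sequence $(x_n)$ with $\Lambda=\Lambda(M)$. Two ingredients are required: a reduction to the case $\mu(0)>0$ (needed by Proposition~\ref{pr5-1} but not assumed in Proposition~\ref{pr5-2}), and the lower bound
\[
\lim_{\delta\to 0}\liminf_n \inf_{x'\in\Z^{\Lambda(M),d}_+:\,|x'|<\delta|x_n|} \frac{1}{|x_n|}\log G_{\Lambda(M)}(x',x_n) \;\geq\; 0.
\]

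For the first ingredient, I would use the standard laziness trick: replacing $\mu$ by $\tilde\mu:=\alpha\delta_0+(1-\alpha)\mu$ for any $\alpha\in(0,1)$ yields $\tilde G_{\Lambda(M)}=(1-\alpha)^{-1}G_{\Lambda(M)}$ (from $I-\tilde P=(1-\alpha)(I-P)$), so the Martin-kernel ratios in \eqref{e5-1} are unchanged. Moreover $\tilde\varphi(a)=\alpha+(1-\alpha)\varphi(a)$, hence $\tilde D=D$, the mean becomes $(1-\alpha)M$ so $\Lambda(\tilde M)=\Lambda(M)$, and (A1)-(A3) persist. Thus one may assume $\mu(0)>0$ at no cost.

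For the lower bound, the idea is to apply Proposition~\ref{pr3-2} with $\eps_n:=1/|x_n|$ and $q=M/|M|$: picking a near-minimizing $x'_n$ in the inf and passing to a subsequence with $\eps_n x'_n\to q'\in\R_+^{\Lambda(M),d}$, $|q'|\leq\delta$, one obtains
\[
\liminf_n \eps_n\log G_{\Lambda(M)}(x'_n,x_n) \;\geq\; -\sup_{a\in D} a\cdot(M/|M|-q').
\]
The essential identity is $\sup_{a\in D}a\cdot M=0$: since $\mu$ is a probability one has $\varphi(0)=1$, so $0\in D$ and the sup is at least $0$; conversely, by (A2) the function $\varphi$ is convex and differentiable at $0$ with $\nabla\varphi(0)=M$, so $\varphi(a)\geq 1+M\cdot a$, which forces $a\cdot M>0\Rightarrow a\notin D$. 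Using this together with $\sup_{a\in D}(-a\cdot q')\leq R|q'|\leq R\delta$, where $R:=\sup_{a\in D}|a|<\infty$ (finite because (A1) puts $0$ in the interior of $\conv\supp\mu$, forcing $\varphi$ to be coercive and $D$ compact), gives a bound that vanishes as $\delta\to 0$.

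The main obstacle I anticipate is the passage from the Mogulskii-type pointwise bound of Proposition~\ref{pr3-2} to a bound uniform over the moving ball $\{x':|x'|<\delta|x_n|\}$: it is natural to argue by contradiction, extracting a near-minimizing subsequence, and this requires compactness of $D$ together with continuity of $q'\mapsto\sup_{a\in D}a\cdot(M/|M|-q')$ at $q'=0$. Once the lower bound is established, Proposition~\ref{pr5-1} applied to the (lazy) walk yields \eqref{e5-1}, and the laziness reduction transfers the conclusion back to the original walk, completing the proof.
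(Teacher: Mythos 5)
Your proposal is correct and follows essentially the same route as the paper: the paper also verifies the hypothesis of Proposition~\ref{pr5-1} via the large deviation lower bound of Proposition~\ref{pr3-2}/Corollary~\ref{cor3-2} (which rests on $a(M)=0$, i.e. $\sup_{a\in D}a\cdot M=0$), and removes the assumption $\mu(0)>0$ by the same lazy-walk modification, using that the modified Green function is a constant multiple of $G_{\Lambda}$ so the kernel ratios are unchanged. Your explicit near-minimizer/subsequence treatment of the uniformity over $\{|x'|<\delta|x_n|\}$ is only a more detailed write-up of what the paper invokes implicitly.
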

\begin{proof} Under the hypotheses (A1) - (A3), Corollary~\ref{cor3-2} and Proposition~\ref{pr5-1}
  imply this  statement for strongly aperiodic random walk $(Z_\Lambda(t))$, i.e. when $\mu(0)>0$.  Hence, to prove our proposition
  we need to show that the last assumption  can be omitted. For this we consider a modified
  substochastic random walk $(\tilde{Z}_\Lambda(t))$ on $\Z^{\Lambda,d}_+$ with transition
  probabilities 
\[
\P_x(\tilde{Z}_\Lambda(1) ~=~ x') ~=~ (1-\eps) \mu(x'-x) + \eps \delta_{x,x'}
\]
where $0 < \eps < 1$ and $\delta_{x,x'}$ denotes Kronecker's symbol : $\delta_{x,x}=1$
and $\delta_{x,x'}=0$ for $x\not=x'$. One can represent the random walk
$(\tilde{Z}_\Lambda(t))$ 
in terms of the random walk  $({Z}_\Lambda(t))$ as follows : let $(\theta_n)$ be a
sequence of independent identically 
distributed Bernoully random variables with $\P(\theta_n = 1) = 1-\eps$ which are
independent on the random walk $({Z}_\Lambda(t))$, then letting 
$
N(n) ~=~ \theta_1 + \cdots + \theta_n$, one gets $
\tilde{Z}_\Lambda(n)  ~=~ Z_\Lambda(N(n))$ for all $n\in\N$. For the Green function $\tilde{G}_\Lambda(x,x')$
of the modified random walk $(\tilde{Z}_\Lambda(t))$ one gets therefore
\begin{align}
\tilde{G}_\Lambda(x,x') &~=~  \sum_{n=0}^\infty\P_x( {Z}_\Lambda(N(n)) = x') ~=~ 
\sum_{n=0}^\infty\sum_{k=0}^n \P_x( {Z}_\Lambda(k) = x') \,\P(N(n)=k)   
\nonumber\\
&~=~   \sum_{k=0}^\infty\P_x( {Z}_\Lambda(k) = x') \sum_{n=k}^\infty C_n^k (1-\eps)^k
\eps^{n-k}  ~=~ (1-\theta)^{-1} G_\Lambda(x,x').\label{e5-2}
\end{align}
For $\Lambda = \Lambda(M)$, the last equality and Corollary~\ref{cor3-2} applied for the random walk
$(Z_\Lambda(t))$ show that  
\begin{align*}
\lim_{\delta\to 0} ~\liminf_{n\to\infty}~\inf_{x\in \Z^{\Lambda,d}_+:~|x| < \delta|x_n|}
~\frac{1}{|x_n|} \log \tilde{G}_\Lambda\bigl(x, x_n\bigr) &~=~\\
\lim_{\delta\to 0} ~\liminf_{n\to\infty}~\inf_{x\in \Z^{\Lambda,d}_+: ~|x| < \delta|x_n|}
~\frac{1}{|x_n|} \log G_\Lambda\bigl(x, x_n\bigr)  &~\geq~ 0.
\end{align*}
The new random walk $(\tilde{Z}_\Lambda(t))$ satisfies therefore the conditions
of Proposition~\ref{pr5-1}. Hence,  for $\Lambda = \Lambda(M)$ and
$w\in\Z^{\Lambda,d}_+$ with $w^\Lambda = 0$, 
\begin{multline*}
\lim_{\delta\to 0} ~\liminf_{n\to\infty}~\inf_{x\in \Z^{\Lambda,d}_+: ~|x| < \delta|x_n|}
\tilde{G}_\Lambda(x+w,x_n)/\tilde{G}_\Lambda(x,x_n) \\~=~ \lim_{\delta\to 0}
~\limsup_{n\to\infty}~\sup_{x\in \Z^{\Lambda,d}_+:~ |x| < \delta|x_n|} \tilde{G}_\Lambda(x+w,x_n)/\tilde{G}_\Lambda(x,x_n)  ~=~ 1
\end{multline*}
and using again the equality \eqref{e5-2} we get \eqref{e5-1}.
\end{proof}
An immediate consequence of Proposition~\ref{pr5-2} is the following statement.

\begin{cor}\label{cor5-1} Suppose that the conditions (A1) - (A3) are satisfied
  and  let  a sequence $x_n\in
  \Z_+^{\Lambda(M),d}$ with $\lim_n |x_n| = \infty$ and $\lim_n 
  x_n/|x_n| = M/|M|$ converge to a point of the Martin boundary $\partial_{\cal
    M}(\Z_+^d)$ for the random walk $(Z_{\Lambda(M)}(t))$. Then  the limit 
\be\label{e5-3}
h(x) ~=~ \lim_{n\to\infty} 
G_{\Lambda(M)}(x,x_n)/G_{\Lambda(M)}(x_0,x_n) 
\ee
satisfies the equality 
\be\label{e5-4}
h(x+w) ~=~ h(x) \quad \text{ for all $x\in\Z^{\Lambda,d}_+$ and $w\in\Z^d$ with $w^\Lambda = 0$.}
\ee 
\end{cor}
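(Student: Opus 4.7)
The plan is to derive the translation invariance \eqref{e5-4} directly from the uniform ratio estimates established in Proposition~\ref{pr5-2}. The point is that those estimates are stated uniformly in starting points $x$ with $|x|<\delta |x_n|$, and any fixed $x$ satisfies this constraint for all large $n$ since $|x_n|\to\infty$.

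First, I would fix $x\in\Z^{\Lambda(M),d}_+$ and $w\in\Z^d$ with $w^{\Lambda(M)}=0$; note that $x+w$ again lies in $\Z^{\Lambda(M),d}_+$ since the $\Lambda(M)$-coordinates are unchanged by adding $w$. For every $\delta>0$, the inclusion $|x|<\delta |x_n|$ holds for all $n$ large enough, so
\[
\liminf_{n\to\infty} \frac{G_{\Lambda(M)}(x+w,x_n)}{G_{\Lambda(M)}(x,x_n)} ~\geq~ \liminf_{n\to\infty}\inf_{y\in\Z^{\Lambda(M),d}_+:\,|y|<\delta|x_n|}\frac{G_{\Lambda(M)}(y+w,x_n)}{G_{\Lambda(M)}(y,x_n)},
\]
and the symmetric inequality holds with $\limsup$ and $\sup$. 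Letting $\delta\to 0$ and invoking Proposition~\ref{pr5-2}, both the $\liminf$ and $\limsup$ are sandwiched between quantities tending to $1$, so
\[
\lim_{n\to\infty} \frac{G_{\Lambda(M)}(x+w,x_n)}{G_{\Lambda(M)}(x,x_n)} ~=~ 1.
\]

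To conclude, I would split the Martin kernel at $x+w$ through $x$:
\[
\frac{G_{\Lambda(M)}(x+w,x_n)}{G_{\Lambda(M)}(x_0,x_n)} ~=~ \frac{G_{\Lambda(M)}(x+w,x_n)}{G_{\Lambda(M)}(x,x_n)}\cdot \frac{G_{\Lambda(M)}(x,x_n)}{G_{\Lambda(M)}(x_0,x_n)}.
\]
By hypothesis the second factor converges to $h(x)$, and by the previous step the first factor converges to $1$; hence the left-hand side converges to $h(x)$, which by definition of $h$ gives $h(x+w)=h(x)$, proving \eqref{e5-4}.

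There is no serious obstacle here: the corollary is a direct repackaging of Proposition~\ref{pr5-2} into the harmonic-function language, and all the work has already been done in establishing the uniform ratio limit theorem and the large deviation lower bound $\liminf_n |x_n|^{-1}\log G_{\Lambda(M)}(x',x_n)\geq 0$ of Corollary~\ref{cor3-2}. The only point to check carefully is that a fixed $x$ enters the ranges of the $\inf$ and $\sup$ in \eqref{e5-1} for all sufficiently large $n$, which is automatic from $|x_n|\to\infty$.
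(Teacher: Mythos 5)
Your argument is correct and is essentially the paper's own: the corollary is stated there as an immediate consequence of Proposition~\ref{pr5-2}, obtained exactly as you do by noting that a fixed $x$ satisfies $|x|<\delta|x_n|$ for all large $n$, so the uniform bounds in \eqref{e5-1} force $G_{\Lambda(M)}(x+w,x_n)/G_{\Lambda(M)}(x,x_n)\to 1$, and then factoring the Martin kernel through $x$ gives $h(x+w)=h(x)$. No gaps.
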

Remark moreover that under the hypotheses of Corollary~\ref{cor5-1}, by Fatou's lemma, the
function $h$ defined by \eqref{e5-3} is  super-harmonic 
for the random walk $(Z_{\Lambda(M)}(t))$ and hence, letting for $u\in\Z^{\Lambda(M)}_+$, 
\[
f(x^{\Lambda(M)}) ~=~ h(x) \quad \text{ with $x\in\Z_+^{\Lambda(M),d}$ such that
  $x^{\Lambda(M)} = u$} 
\] 
and using \eqref{e5-4} one gets a positive function $f$ on $\Z^{\Lambda(M)}_+$ satisfying the inequality 
\[
\E_{u}\left(f(X_M(t))\right)  ~=~ 
\E_x\left(h(Z_{\Lambda(M)}(t))\right) \leq h(x) ~=~ f(u). 
\]
The resulting function $f$ is therefore super-harmonic for the induced Markov chain
$(X_M(t))$. Moreover,  if the function
\eqref{e5-3} is harmonic for $(Z_{\Lambda(M)}(t))$ then the last inequality holds with the
equality and consequently, the function $f$ is
harmonic for the induced Markov chain
$(X_M(t))$. The next step of our proof shows that under the hypotheses of
Corollary~\ref{cor5-1}, 
the function \eqref{e5-3} is always harmonic for $(Z_{\Lambda(M)}(t))$. This result would
be a simple consequence of dominated convergence theorem and the Harnack inequality if
instead of the assumption (A2) we assume that the jump generating function $\varphi$ is
finite everywhere on $\R^d$ : indeed, in this case any exponential function $\exp(\eps
|x|)$ is integrable with respect to the probability measure $\mu$ and using the Harnack
inequality, one can easily show that for any $n \geq 0$ and $x\in\Z^{\Lambda(M),d}_+$ 
\[
G_{\Lambda(M)}(x,x_n)/G_{\Lambda(M)}(x_0,x_n) ~\leq~ C \exp(\eps|x|) 
\]
with some $C>0$ and $\eps >0$ do not depending on $x$ and $n$.  
In our setting, the exponential functions $\exp(\eps |x|)$ are integrable only if $\eps
>0$ is small enough and hence, we need to get this inequality with a suitable small $\eps >
0$. For this we need to estimate hitting probabilities of the induced Markov chain
$(X_M(t))$. This is a subject of the following section.

\section{Hitting probabilities of the induced Markov chain}\label{sec4} 

Recall that the induced Markov chain $X_M(t) ~\dot=~ X^{\Lambda(M)}(t)$ corresponding to
the set $\Lambda(M) ~=~ \{ i\in\{1,\ldots,d\} : M^i = 0\}$ is a
substochastic random walk on $\Z_+^{\Lambda(M)} ~\dot=~ \{u=(u^i)_{i\in\Lambda(M)}\in\Z^{\Lambda(M)} : u_i > 0 \; \text{ for all } \;
i\in\Lambda(M)\}$  with
transition probabilities 
\[
p_{\Lambda(M)}(u,u') ~\dot=~ \mu_{\Lambda(M)}(u'-u) ~\dot=~ \sum_{x\in \Z^d:~ x^{\Lambda(M)} = u'-u} \mu(x),
\]
It is identical to the random walk $S^{\Lambda(M)}(t) =
(S^i(t))_{i\in\Lambda(M)}$ on $\Z^{\Lambda(M)}$ before  it first exits from $\Z_+^{\Lambda(M)}$. 
The mean jump of the random walk
$S^{\Lambda(M)}(t)$ is equal to zero because according to the definition of the set $\Lambda(M)$, 
\be\label{e4-1}
\sum_{u\in\Z^{\Lambda(M)}} \mu_{\Lambda(M)}(u) \,u ~=~ \sum_{x\in \Z^{d}:~
  x^{\Lambda(M)}}  \mu(x) \,x^{\Lambda(M)} ~=~ M^{\Lambda(M)} ~=~ 0. 
\ee
 The main result of this section is the following statement.  
\begin{prop}\label{pr4-1} Under the hypotheses (A1)-(A3), for any
  $\hat{u}\in\Z^{\Lambda(M)}_+$, 
\[
\lim_{u \in \Z^{\Lambda(M)}_+, \; |u|\to\infty} \frac{1}{|u|} \log \P_{\hat{u}}( X_M(t) = u \; \text{ for some } \; t
\geq 0) ~=~ 0.
\]
\end{prop}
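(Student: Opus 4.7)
The plan is to establish the lower bound, the upper bound $\limsup_{|u|\to\infty}\frac{1}{|u|}\log \P_{\hat u}(X_M(t)=u\text{ for some }t\geq 0)\leq 0$ being trivial from $\P\leq 1$. By compactness of the closed positive unit sphere in $\R^{\Lambda(M)}$, it suffices to fix a sequence $u_n\in\Z^{\Lambda(M)}_+$ with $|u_n|\to\infty$ and $u_n/|u_n|\to q$ for some unit vector $q\in\R^{\Lambda(M)}_+$, set $\eps_n=1/|u_n|$, and prove that for every $\gamma>0$,
\[
\liminf_n \eps_n \log \P_{\hat u}(X_M(t)=u_n\text{ for some }t\geq 0)\;\geq\;-\gamma.
\]

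The main tool is the sample-path large deviation lower bound of Proposition~\ref{pr3-1}, which I would apply to the marginal process $X_M^\eps(t)\dot=\eps X_M([t/\eps])$. The corresponding analog of Proposition~\ref{pr3-1} for $X_M^\eps$ follows by the same proof: the jump generating function $\varphi_{\Lambda(M)}(a)=\varphi(\iota(a))$, with $\iota:\R^{\Lambda(M)}\hookrightarrow\R^d$ the zero-padding embedding, is finite on a neighborhood of $\{\varphi_{\Lambda(M)}\leq 1\}$ by (A2) (so that Mogulskii's lower bound applies), and the communication condition for $X_M$ on $\Z^{\Lambda(M)}_+$ is obtained from Lemma~\ref{lem3-1} by projecting the paths constructed there onto the $\Lambda(M)$-coordinates. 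The associated rate function is
\[
I^M_{[0,T]}(\phi)=\int_0^T(\log\varphi_{\Lambda(M)})^*(\dot\phi(s))\,ds,
\]
and the decisive property — a consequence of \eqref{e4-1} — is that $\mu_{\Lambda(M)}$ has zero mean, so $\nabla\log\varphi_{\Lambda(M)}(0)=0$ and hence $(\log\varphi_{\Lambda(M)})^*(v)=O(|v|^2)$ as $v\to 0$.

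Given $\gamma>0$, I would, for a large integer $k$ and a large real $T>0$, construct the two-phase path $\phi:[0,T+1]\to\R^{\Lambda(M)}_+$ by $\phi(t)=(t/T)(q+\mathbf{1}/k)$ for $t\in[0,T]$ and $\phi(t)=(q+\mathbf{1}/k)-(t-T)\mathbf{1}/k$ for $t\in[T,T+1]$, where $\mathbf{1}\in\R^{\Lambda(M)}$ is the all-ones vector. This satisfies $\phi(0)=0$, $\phi(T+1)=q$, and every coordinate of $\phi(t)$ is strictly positive for $t\in(0,T+1)$, so $\phi$ stays in the strict interior on that interval. Its rate equals
\[
I^M_{[0,T+1]}(\phi)=T(\log\varphi_{\Lambda(M)})^*\bigl((q+\mathbf{1}/k)/T\bigr)+(\log\varphi_{\Lambda(M)})^*(-\mathbf{1}/k),
\]
which is $O(1/T)$ as $T\to\infty$ (by the quadratic estimate on $(\log\varphi_{\Lambda(M)})^*$ near $0$) plus $o(1)$ as $k\to\infty$, so $T$ and $k$ can be chosen with $I^M_{[0,T+1]}(\phi)<\gamma/4$. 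The LDP lower bound applied to the open set $\mathcal{O}=\{\psi\in D([0,T+1],\R^{\Lambda(M)}):|\psi(T+1)-q|<r\}$ then yields, for every $r>0$ and all large $n$,
\[
\P_{\hat u}\bigl(|X_M([(T+1)/\eps_n])-u_n|<r|u_n|,\;\tau_{\Lambda(M)}>[(T+1)/\eps_n]\bigr)\;\geq\;\exp\bigl(-(\gamma/4+o(1))|u_n|\bigr).
\]
A final appeal to the communication condition for $X_M$ joins any such endpoint to $u_n$ via a length-$O(r|u_n|)$ alive path with probability at least $\theta^{Cr|u_n|}$, and the Markov property at time $[(T+1)/\eps_n]$ combines the two bounds into
\[
\P_{\hat u}(X_M(t)=u_n\text{ for some }t\geq 0)\;\geq\;\exp\bigl(-(\gamma/4+Cr\log(1/\theta)+o(1))|u_n|\bigr).
\]
Fixing $r=r(\gamma)$ small enough that $Cr\log(1/\theta)<\gamma/4$ yields the desired $\liminf\geq-\gamma/2>-\gamma$, and letting $\gamma\to 0$ concludes.

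The principal obstacle will be the boundary case $q\in\partial\R^{\Lambda(M)}_+$: the naive straight-line path from $0$ to $q$ then lies entirely on $\partial\R^{\Lambda(M)}_+$ and offers no margin against killing, and must be tilted into the strict interior by the auxiliary vector $\mathbf{1}/k$. The cost of that tilt, $(\log\varphi_{\Lambda(M)})^*(-\mathbf{1}/k)$, vanishes as $k\to\infty$ precisely because $\mu_{\Lambda(M)}$ has zero mean; without this zero-mean hypothesis the term would be bounded away from $0$ and the strategy would collapse. The mismatch between the fixed starting point $\hat u$ and the limit point $0$ of the rescaled path is handled automatically by Proposition~\ref{pr3-1}, whose lower bound takes an infimum over initial points $x'\in\eps E$ with $|x'|<\delta$ — a set that contains $x'=\eps_n\hat u$ for all $n$ large.
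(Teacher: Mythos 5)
Your proposal is correct, but it follows a genuinely different route from the paper. The paper never applies the sample-path LDP to the hitting probability directly: it writes $\P_{\hat u}(\exists t:\,X_M(t)=u)=g_M(\hat u,u)/g_M(u,u)$ and lower-bounds the Green function $g_M(\hat u,u)$ by the analogue of Proposition~\ref{pr3-2} for the induced chain; this forces a uniform upper bound on $g_M(u,u)$, which is where recurrence of the free zero-mean walk intervenes, and hence a three-case analysis on $|\Lambda(M)|$: for $|\Lambda(M)|=1$ it uses recurrence of the one-dimensional walk, a boundary-escape lemma from \cite{Ignatiouk-Loree} and a Ces\`aro argument; for $|\Lambda(M)|\geq 3$ transience bounds $g_M(u,u)$ by the free Green function; and the delicate case $|\Lambda(M)|=2$ bounds $g_M(\hat u+ne,\hat u+ne)$ along the axes via transience of a single killed coordinate and then pieces the two axis directions together by the Markov property. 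Your argument sidesteps all of this: you transplant the mechanism of the proof of Proposition~\ref{pr3-2} (LDP lower bound for a terminal-ball event, then the communication condition to pin down the exact endpoint at cost $\theta^{O(r|u_n|)}$) from Green functions to hitting probabilities, and the zero-mean hypothesis enters only through the vanishing of the path cost ($D_{\Lambda(M)}=\{0\}$, equivalently $T(\log\varphi_{\Lambda(M)})^*(v/T)\to 0$), with the tilt by $\mathbf{1}/k$ handling boundary directions $q$ uniformly. What this buys is a single, case-free proof; what it relies on is the LDP lower bound and communication condition for the induced chain itself, but the paper's own Case~2 already invokes exactly this analogue (inequality \eqref{e4-4}), so your reliance is no heavier than the paper's. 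Two cosmetic points: the terminal ball should be taken of radius $2r|u_n|$ (or the target recentred at $q|u_n|$) to absorb the $|u_n|\,|u_n/|u_n|-q|=o(|u_n|)$ mismatch, and the $O(|v|^2)$ bound on $(\log\varphi_{\Lambda(M)})^*$ near $0$ uses nondegeneracy of the covariance — alternatively you only need continuity of $(\log\varphi_{\Lambda(M)})^*$ at $0$, or the identity $\inf_{T}T(\log\varphi_{\Lambda(M)})^*(v/T)=\sup_{a:\varphi_{\Lambda(M)}(a)\le 1}a\cdot v=0$ already used in the paper.
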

\begin{proof} Since clearly $\P_{\hat{u}}\left(X_M(t) = u \; \text{ for some } \; t >
0\right) ~\leq~ 1$, to prove this proposition it is sufficient to show that 
for any ${\hat{u}}\in\Z^{\Lambda(M)}_+$
\be\label{e4-6}
\liminf_{u \in \Z^{\Lambda(M)}_+, \; |u|\to\infty} \frac{1}{|u|} \log \P_{\hat{u}}( X_M(t) = u \; \text{ for some } \; t
\geq 0) ~\geq~ 0.
\ee
Moreover, since the Markov process $(X_M(t))$ is irreducible on $\Z^{\Lambda(M)}_+$ it
  is sufficient to prove \eqref{e4-6} for $\hat{u}=(\hat{u}^i)_{i\in\Lambda(M)}$ with $\hat{u}^i=1$ for all
 $i\in\Lambda(M)$. The proof of this inequality depends on the number of elements
  $|\Lambda(M)|$ of the set $\Lambda(M)$ and is different in each of the following cases :
\begin{itemize}
\item[]{\em case 1:} $|\Lambda(M)| = 1$, 
\item[]{\em case 2:} $|\Lambda(M)| \geq 3$, 
\item[]{\em case 3:} $|\Lambda(M)| = 2$. 
\end{itemize} 
In the first case, $(S^{\Lambda(M)}(t))$ is a random walk on $\Z$ with zero mean (see
\eqref{e4-1}) and a finite variance because its jump generating function 
\[
  \varphi_{\Lambda(M)}(\a) ~=~ \sum_{u\in\Z^{\Lambda(M)}} \mu_{\Lambda(M)}(u) \exp(\a\cdot
  u), \quad \a\in\R^{\Lambda(M)} 
\]
is finite in a neighborhood of zero in $\R$. In this case, the random walk
$(S^{\Lambda(M)}(t))$ is therefore recurrent and for any $u\in\Z$, 
\be\label{e4-3}
\P_u(S^{\Lambda(M)}(t) = u + 1 \; \text{ for some } \; t > 0) ~=~ 1. 
\ee
Since the induced Markov chain $(X_M(t))$ is identical to $(S^{\Lambda(M)}(t))$ for $t <
\tau_{\Lambda(M)}$ and killed upon the time $\tau_{\Lambda(M)}$, from this it follows that for any $u > 0$,
\begin{multline*}
\P_{u+k}(X_M(t) = u + k + 1 \; \text{ for some } \; t > 0) ~=~ \\
\P_{u+k}(S^{\Lambda(M)}(t) = u + k + 1 \; \text{ for some } \; 0 < t < \tau_{\Lambda(M)}) ~\to~ 1 \; \text{ as } \; k\to\infty  
\end{multline*}
(for more details, see Lemma~7.2 of the paper ~\cite{Ignatiouk-Loree}). 
Hence, by strong Markov property applied for a sequence
of stopping times $(T_n)$ with $T_0 = 0$  and
\[
T_n ~=~ \inf\{ t > T_{n-1} :
S^{\Lambda(M)}(t) = u + n\}, \; \text{ for $n \geq 1$,}
\]
one gets 
\begin{align*}
\liminf_{n\to\infty} \frac{1}{n} \log \P_{\hat{u}}&(X_M(t) = \hat{u} + n \; \text{ for some } \; t > 0) \\&\geq~
\liminf_{n\to\infty}  \frac{1}{n}
\log ~\prod_{k=0}^{n-1} ~\P_{\hat{u}+k}(X_M(t) = \hat{u} + k + 1 \; \text{ for some } \; t > 0)  \\ &=~
\lim_{n\to\infty} ~\frac{1}{n}
~\sum_{k=0}^{n-1}~\log \P_{\hat{u}+k}(X_M(t) = \hat{u} + k + 1 \; \text{ for some } \; t > 0) \\&=~ 0
\end{align*}
where the last relation follows from \eqref{e4-3} by Cezaro's theorem. In the case when $|\Lambda(M)|
= 1$, the inequality \eqref{e4-6} is therefore verified. 

\medskip 
\noindent
{\em Case 2.} Suppose now that $|\Lambda(M)| \geq 3$. Here, to get \eqref{e4-6} we use large deviation
estimates of the Green function of the induced Markov chain $(X_M(t))$ 
\[
g_M(u,u') ~=~ \sum_{t=0}^\infty \P_u(X_M(t) = u'), \quad u,u'\in\Z^{\Lambda(M)}_+
\]
and the equality 
\[
g_M(\hat{u},u) ~=~ \P_{\hat{u}}( X_M(t) = u \; \text{ for some } \; t
\geq 0) \times g_M(u,u) 
\]
In this case, the random walk $(S^{\Lambda(M)}(t))$ is transient, the function
$g_M(u,u)$ is bounded above by the Green function of $(S^{\Lambda(M)}(t))$ : 
\begin{align}
g_M(u,u) &~=~ \sum_{t=0}^\infty \P_u(X_M(t) = u) ~=~ \sum_{t=0}^\infty
\P_u(S^{\Lambda(M)}(t) = u, \; \tau_{\Lambda(M)} > t) \nonumber\\&~\leq~ \sum_{t=0}^\infty
\P_u(S^{\Lambda(M)}(t) = u) ~=~ \sum_{t=0}^\infty \P_0(S^{\Lambda(M)}(t) = 0) ~<~ \infty, \label{e4-4p} 
\end{align}
and consequently, the left hand side of \eqref{e4-6} is greater or equal to 
\[
\liminf_{u \in \Z^{\Lambda(M)}_+, \; |u|\to\infty} \frac{1}{|u|} ~\log ~g_M(\hat{u},u). 
\]
To get \eqref{e4-6} it is therefore sufficient to show that for any sequence
$u_n\in\Z^{\Lambda(M)}_+$ with $\lim_n|u_n| = \infty$, 
\be\label{e4-7}
\liminf_n \frac{1}{|u_n|} \log g_M(\hat{u}, u_n)~\geq~ 0.
\ee
Moreover, since the set 
\[
{\cal S} ~\dot=~  \{u\in\R^{\Lambda(M)} : |u| = 1 \; \text{ and
 } \; u^i\geq 0, \; \text{ for all } \; i\in\Lambda(M)\}
\]
is compact, it is sufficient to prove that  \eqref{e4-7} holds when
$\lim_n u_n/|u_n| = e$, for each $e\in{\cal S}$. 
The proof of this inequality uses the large deviation estimates similar to that of  
Section~\ref{sec3}. 
Namely, denote 
\[
\R^{\Lambda(M)}_+~\dot=~
\{u\in\R^{\Lambda(M)} : u^i \geq 0, \; \forall i\in\Lambda(M)\}
\]
and let 
\[
  \varphi_{\Lambda(M)}(\a) ~=~ \sum_{u\in\Z^{\Lambda(M)}} \mu_{\Lambda(M)}(u) \exp(\a\cdot
  u), \quad \a\in\R^{\Lambda(M)}, 
\]
be the jump generating function of the random walk $(S^{\Lambda(M)}(t))$.
 Then for any $q\not=q'$, $q,q'\in\R^{\Lambda(M)}_+$ and
any sequences $u_n', u_n\in \Z_+^{\Lambda(M)}$ and $\eps_n >0$ with $\lim_n \eps_n = 0$,
$\lim_n \eps_nu'_n ~=~q'$ and $\lim_n \eps_nu_n ~=~q$, the same arguments as in the proof
of Proposition~\ref{pr3-2} prove that 
\be\label{e4-4}
\liminf_{n\to\infty} \eps_n \log g_M(u'_n,u_n) ~\geq~ - \sup_{\a\in\R^{\Lambda(M)} :
  \varphi_{\Lambda(M)}(\a)\leq 1} ~a\cdot (q-q'). 
\ee
Moreover, since the mean of the random walk $(S^{\Lambda(M)}(t))$ is zero (see \eqref{e4-1}), then
the set $\{\a\in\R^{\Lambda(M)} : \varphi_{\Lambda(M)}(\a)\leq 1\}$ contains an only
point zero and consequently, the right hand side of \eqref{e4-4} is equal to zero. Using
therefore \eqref{e4-4} with $u_n'=\hat{u}$, $\eps_n =1/|u_n|$, $q'=0$ and
$q=e\in{\cal S}$ one gets \eqref{e4-7}
for any sequence of points $u_n\in\Z^{\Lambda(M)}_+$ with $\lim_n|u_n|=\infty$ and $\lim_n
u_n/|u_n|= e$. 

\medskip
\noindent
{\em Case 3.} Consider now the case when $|\Lambda(M)|=2$. Remark that in this case, the same arguments as above
prove the inequality \eqref{e4-7} for any sequence $u_n\in\Z^{\Lambda(M)}_+$ with
$\lim_n|u_n|=\infty$. However, the right hand side of \eqref{e4-4p} is in this case
infinite because the random walk $(S^{\Lambda(M)}(t))$ is  recurrent and hence, in order to
get \eqref{e4-6} one should be more careful. To prove \eqref{e4-6} in this case we first
notice that for any unit vector $e\in\R^d$ with $e^i = 1$ for some $i\in\Lambda(M)$
and $e^j = 0$ for $j\not= i$, 
\begin{align*}
g_M(\hat{u} + n e, \hat{u} + n e)  &=~ \sum_{t=0}^\infty \P_{u + n e}( X_M(t) ~=~ \hat{u} + n e) \\ &=~
\sum_{t=0}^\infty \P_{\hat{u} + n
  e}\left( S^{\Lambda(M)}(t) = \hat{u} + n e, \; \tau(\Lambda(M)) > t\right) \\
&=~  \sum_{t=0}^\infty \P_{\hat{u} + n
  e}\left( S^{\Lambda(M)}(t) = \hat{u} + n e, \; \tau_j > t \; \text{ for all } \; j\in\Lambda(M)\right) \\
&\leq \sum_{t=0}^\infty \P_{\hat{u} + n
  e}\left( S^{\Lambda(M)}(t) = \hat{u} + n e,  \; \tau_j > t \text{ for } 
j\in\Lambda(M)\!\setminus\!\{i\}\right) \\
&\leq~ \sum_{t=0}^\infty \P_{\hat{u}}\left( S^j(t) = \hat{u}^j , \; \tau_j > t
\; \text{ for } \; j\in\Lambda(M)\setminus\{i\}\right). 
\end{align*}
When  $|\Lambda(M)|= 2$, the right
hand side of the above inequality is finite because for $j\in\Lambda(M)\setminus\{i\}$, the random walk
$(S^j(t))$ killed upon the time $\tau_j$ is transient, and consequently,
\begin{align*}
g_M(\hat{u}, \hat{u} + n e) &=~ 
\P_{\hat{u}}(X_M(t) = \hat{u} + n e \; \text{ for some } t > 0 )  \times g_M(\hat{u} + n e, \hat{u} + n e) \\
&\leq~ C \, \P_{\hat{u}}(X_M(t) = \hat{u} + n e \; \text{ for some } t > 0 ) 
\end{align*}
with some $C > 0$ do not depending on $n$. The last relation combined with \eqref{e4-7} proves that for any
unit vector $e\in\R^d$ with $e^i = 1$ for some $i\in\Lambda(M)$
and $e^j = 0$ for $j\not= i$, the following inequality holds 
\be\label{e4-2}
\liminf_{n\to\infty} \frac{1}{n} \log \P_{\hat{u}}\left(X_M(t) = \hat{u} + n e \; \text{ for some } \; t >
0\right) ~\geq~ 0. 
\ee
For $\Lambda(M) = \{i,j\}$ with $1\leq i < j\leq d$, consider now the unit vectors
$e_i=(e_i^i,e_i^j)$ and $e_j=(e_j^i,e_j^j)$ in $\Z^{\Lambda(M)}$ with $e_i^i = e^j_j = 1$ and $e_i^j = e_j^i = 0$. Then $
\hat{u} ~=~ e_i + e_j$
and for any $u\in\Z^{\Lambda(M)}_+$, 
\[
u ~=~ \hat{u} + n e_i + m e_j 
\]
with some non-negative integers $n,m$. To prove \eqref{e4-6} it is sufficient to
show that 
\be\label{e4-8}
\liminf ~\frac{1}{n+m} \log \P_{\hat{u}}\left(X_M(t) = \hat{u} + n e_i + m e_j\; \text{ for some } \; t >
0\right) ~\geq~ 0
\ee
when $n+m\to\infty$. The last relation is a consequence of \eqref{e4-2}. Indeed,  for any
$u\in\Z^{\Lambda(M)}_+$, one has 
\[
\P_{u}\bigl(X_M(t) = u + m e_j\; \text{ for some } \; t >
0\bigr) ~\geq~ \P_{\hat{u}}\bigl(X_M(t) = \hat{u} + m e_j\; \text{ for some } \; t >
0\bigr).
\]
When applied with $u=\hat{u} + n e_i$, the last inequality shows that  
\begin{align*}
\P_{\hat{u}}\bigl(X_M(t) = \hat{u} + n e_i + &m e_j\; \text{ for some } \; t >
0\bigr)  \\&\geq~ \P_{\hat{u}}\left(X_M(t) = \hat{u} + n e_i\; \text{ for some } \; t >
0\right) \\ &\hspace{1cm}\times \P_{\hat{u}+ n e_i}\left(X_M(t) = \hat{u} + n e_i + m e_j\; \text{ for some } \; t >
0\right)  \\&\geq~ \P_{\hat{u}}\left(X_M(t) = \hat{u} + n e_i\; \text{ for some } \; t >
0\right) \\ &\hspace{2.6cm}\times \P_{\hat{u}}\left(X_M(t) = \hat{u} + m e_j\; \text{ for some } \; t >
0\right).   
\end{align*}
The left hand side of \eqref{e4-8} is therefore greater or equal to 
\begin{multline*}
\liminf ~\frac{1}{n+m} \log \P_{\hat{u}}\left(X_M(t) = \hat{u}  + n e_i\; \text{ for some } \; t >
0\right) \\ + \liminf ~\frac{1}{n+m} \log \P_{\hat{u}}\left(X_M(t) = \hat{u}  + m e_j\; \text{ for some } \; t >
0\right) 
\end{multline*}
and consequently, using \eqref{e4-2} one gets \eqref{e4-8}.
\end{proof}

When combined with the Harnack inequality, the above proposition implies the following
statement.  

\begin{cor}\label{cor4-1} Let $f > 0$ be a harmonic function of the  induced Markov chain
  $(X_M(t))$. Then under the hypotheses (A1)-(A3), 
\be\label{e4-9}
\limsup_{|u|\to\infty} \frac{1}{|u|} \log f(u) ~\leq~ 0. 
\ee
\end{cor}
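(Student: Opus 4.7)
The plan is to combine Proposition~\ref{pr4-1} with the one-point form of the Harnack inequality, which follows immediately from the optional sampling inequality for nonnegative martingales. Since $f>0$ is harmonic on $\Z_+^{\Lambda(M)}$ for the substochastic chain $(X_M(t))$, extending $f$ by zero to the cemetery state makes $(f(X_M(t)))$ a nonnegative martingale under every $\P_{\hat{u}}$. I would fix a reference point $\hat{u}\in\Z_+^{\Lambda(M)}$ and consider the hitting time
\[
T_u ~\dot=~ \inf\bigl\{t\geq 0 :~ X_M(t) = u\bigr\}
\]
of an arbitrary point $u\in\Z_+^{\Lambda(M)}$. Applying Fatou's lemma to the bounded stopping times $T_u\wedge n$ will yield
\[
f(u)\,\P_{\hat{u}}\bigl(T_u < \infty\bigr) ~=~ \E_{\hat{u}}\bigl(f(X_M(T_u)),\; T_u < \infty\bigr) ~\leq~ f(\hat{u}),
\]
where the first equality uses that $X_M(T_u)=u$ on $\{T_u<\infty\}$ while $f$ vanishes at the cemetery.

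Taking logarithms and dividing by $|u|$ in the inequality $f(u)\,\P_{\hat{u}}(T_u<\infty)\leq f(\hat{u})$ then gives
\[
\frac{1}{|u|}\log f(u) ~\leq~ \frac{\log f(\hat{u})}{|u|} ~-~ \frac{1}{|u|}\log \P_{\hat{u}}\bigl(X_M(t) = u \text{ for some } t\geq 0\bigr).
\]
Letting $|u|\to\infty$, the first term on the right-hand side vanishes because $f(\hat{u})$ is a fixed positive constant, and the second term vanishes by Proposition~\ref{pr4-1}. This immediately yields \eqref{e4-9}.

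I do not anticipate any serious obstacle here: the argument is a two-line consequence of optional stopping combined with the hitting-probability asymptotics already established in Proposition~\ref{pr4-1}. The only point of caution is that one must extend $f$ by zero to the cemetery state so that harmonicity on $\Z_+^{\Lambda(M)}$ translates correctly into the martingale identity for $(f(X_M(t)))$; once this is done, the optional sampling inequality handles the killed trajectories automatically, and no further estimates are required.
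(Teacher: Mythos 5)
Your argument is correct and is essentially the paper's proof: both rest on the inequality $f(u)\,\P_{\hat{u}}(X_M(t)=u \text{ for some } t\geq 0)\leq f(\hat{u})$ combined with Proposition~\ref{pr4-1}. The only difference is cosmetic — the paper invokes this bound as the Harnack inequality (citing Woess), whereas you re-derive it via optional stopping of the nonnegative martingale $(f(X_M(t)))$, which is a perfectly valid substitute.
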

\begin{proof} Indeed, by the Harnack inequality (see~\cite{Woess}), for any
  $u,\hat{u}\in\Z^{\Lambda(M)}_+$, 
\[
f(u) ~\leq~ f(\hat{u})/\P_{\hat{u}}( X_M(t) = u \; \text{ for some } \; t \geq 0) 
\]
and hence, \eqref{e4-6} implies \eqref{e4-9}. 
\end{proof}

\section{Estimates of the local Martin kernel $G_{\Lambda(M)}(x,x_n)/G_{\Lambda(M)}(x_0,x_n)$}\label{sec6}
Throughout this section, to simplify the notation, we let $\Lambda(M) = \Lambda$. 
Our main result is here the following proposition. 

\begin{prop}\label{pr6-1} Suppose that the conditions (A1)-(A3) are satisfied and let  a sequence of points 
  $x_n\in\Z^{\Lambda,d}_+$ be such that  $\lim_n |x_n| = \infty$ and $\lim_n \,
  x_n/|x_n| = M/|M|$. Then for
  any $\hat{x}\in\Z^{\Lambda,d}_+$ and $\eps > 0$ there are $N>0$, $\delta > 0$ and  $C > 0$ such that 
\be\label{e6-1}
\1_{\{|x|\leq\delta|x_n|\}}G_{\Lambda}(x,x_n)/G_{\Lambda}(\hat{x},x_n) ~\leq~ C \exp(\eps|x|) 
\ee
for all $n\geq N$ and $x\in\Z_+^{\Lambda,d}$. 
\end{prop}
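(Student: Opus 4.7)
The plan is to exploit the Markov-additive structure of $(Z_\Lambda(t))$ and treat separately the Markov coordinates (in $\Lambda$) and the additive coordinates (in $\Lambda^c$). Introducing the intermediate point $\tilde x=(\hat x^\Lambda, x^{\Lambda^c})$, I would factor
\[
\frac{G_\Lambda(x, x_n)}{G_\Lambda(\hat x, x_n)}=\frac{G_\Lambda(x, x_n)}{G_\Lambda(\tilde x, x_n)}\cdot\frac{G_\Lambda(\tilde x, x_n)}{G_\Lambda(\hat x, x_n)}
\]
and handle each factor with a different tool: the ratio limit theorem (Proposition~\ref{pr5-2}) for the purely additive shift $\hat x\to\tilde x$, and a Harnack-type bound obtained from the hitting probabilities of the induced chain (Proposition~\ref{pr4-1}) for the purely Markov shift $\tilde x\to x$.

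For the additive shift, the displacement $w:=x^{\Lambda^c}-\hat x^{\Lambda^c}$ satisfies $w^\Lambda=0$, so Proposition~\ref{pr5-2} applies. Given $\eps>0$, applied to each of the finitely many unit vectors $e\in\Z^d$ with $e^\Lambda=0$, it yields $\delta_1>0$ and $N_1\in\N$ such that $G_\Lambda(y+e, x_n)\le e^\eps G_\Lambda(y, x_n)$ uniformly for $y\in\Z^{\Lambda,d}_+$ with $|y|<\delta_1|x_n|$ and $n\ge N_1$. Iterating along a unit-step path of length $|w|$, whose intermediate points all remain in $\{|\cdot|<\delta_1|x_n|\}$ provided $\delta$ is taken smaller than $\delta_1$, gives
\[
\frac{G_\Lambda(\tilde x, x_n)}{G_\Lambda(\hat x, x_n)}\le e^{\eps|w|}\le C\,e^{\eps|x|}.
\]

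For the Markov shift, I would combine the Markov-additive summation identity
\[
\sum_{z\in\Z^{\Lambda^c}}G_\Lambda\bigl((u, 0),(u', z)\bigr)=g_M(u, u'),
\]
which yields the crude bound $G_\Lambda(x, x_n)\le g_M(x^\Lambda, x_n^\Lambda)$, with Harnack's inequality for the super-harmonic function $g_M(\cdot, x_n^\Lambda)$ of $(X_M(t))$: the subexponential hitting-probability lower bound of Proposition~\ref{pr4-1} gives
\[
g_M(x^\Lambda, x_n^\Lambda)\le \frac{g_M(\hat x^\Lambda, x_n^\Lambda)}{\P_{\hat x^\Lambda}\bigl(X_M\text{ hits }x^\Lambda\bigr)}\le C\,e^{\eps|x^\Lambda|}\,g_M(\hat x^\Lambda, x_n^\Lambda).
\]
To close the argument one needs a matching lower bound comparing $g_M(\hat x^\Lambda, x_n^\Lambda)$ to $G_\Lambda(\tilde x, x_n)$. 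Partial-summing the near-$1$ ratios of Proposition~\ref{pr5-2} over an additive neighborhood of polynomial radius $R_n$ around the typical additive target would yield
$
g_M(\hat x^\Lambda, x_n^\Lambda)\le C R_n^{|\Lambda^c|}\,G_\Lambda(\tilde x, x_n)+(\text{tail}),
$
after which the polynomial prefactor $R_n^{|\Lambda^c|}$ is absorbed into $e^{\eps|x|}$ for $|x|$ large enough (enlarging $\eps$ if necessary), and bounded $|x|$ is handled by a direct Harnack constant via the communication condition of Lemma~\ref{lem3-1}.

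The main obstacle is the tail estimate
$
\sum_{|y-y_*|>R_n}G_\Lambda(\hat x, (x_n^\Lambda, y))=o\bigl(g_M(\hat x^\Lambda, x_n^\Lambda)\bigr),
$
which localizes the additive mass of $g_M$ to a polynomial-radius neighborhood of the typical target $y_*$. Establishing it requires a Cram\'er-type upper large-deviation estimate complementing Proposition~\ref{pr3-2}, and this step is delicate under assumption~(A2), which allows the jump generating function $\varphi$ to be finite only in a neighborhood of $D$ rather than on all of $\R^d$---precisely the source of the additional technical difficulty emphasized in Section~\ref{sec2} and that separates the present setting from the one treated in \cite{Ignatiouk-Loree}.
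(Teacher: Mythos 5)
Your decomposition of the ratio into a purely additive shift $\hat x\to\tilde x$ and a purely Markov shift $\tilde x\to x$, and your treatment of the additive shift via Proposition~\ref{pr5-2} by iterating the near-$1$ ratios along a unit-step path, coincide with the paper's Lemma~\ref{lem6-1}. The genuine divergence, and the genuine gap, is in the Markov shift.

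You propose to bound the Markov shift by passing through the Green function $g_M$ of the induced chain: use $G_\Lambda(x,x_n)\le g_M(x^\Lambda,x_n^\Lambda)$, apply Harnack for $g_M$ together with Proposition~\ref{pr4-1}, and then re-localize, i.e.\ show $g_M(\hat x^\Lambda,x_n^\Lambda)\le C\,G_\Lambda(\tilde x,x_n)$ up to a polynomial factor. The re-localization is where the argument breaks down, as you sense yourself. The ratio $g_M(\hat x^\Lambda,x_n^\Lambda)/G_\Lambda(\tilde x,x_n)$ is a sum over the additive coordinate $z$ of $G_\Lambda(\tilde x,(x_n^\Lambda,z))$ divided by a single term, and the additive mass is spread over a volume that grows polynomially in $|x_n|$; hence $R_n$, and the prefactor $R_n^{|\Lambda^c|}$, grow with $n$. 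You cannot absorb a factor that is unbounded in $n$ into $C\,e^{\eps|x|}$ because $|x|$ may stay bounded while $n\to\infty$, and the fallback Harnack constant for bounded $|x|$ is itself not uniform in $x_n$ without already knowing the conclusion. On top of this, the tail estimate you need is a Cram\'er-type upper large-deviation bound which the paper never establishes, precisely because it is delicate under~(A2); so there are two missing pieces, not one.

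The paper's Lemma~\ref{lem6-2} circumvents both by never introducing $g_M$ at all. For $u=ke$ it applies the strong Markov property at the hitting time $T_u=\inf\{t>0:Z_\Lambda^\Lambda(t)=Z_\Lambda^\Lambda(0)+u^\Lambda\}$, restricted to the event that the additive part has drifted by at most a \emph{fixed} radius $R$ from $x+u$. On that event the displacement from $x+u$ has zero $\Lambda$ component and norm at most $R$, so Lemma~\ref{lem6-1} bounds $G_\Lambda(Z_\Lambda(T_u),x_n)\ge e^{-\sigma R}G_\Lambda(x+u,x_n)$, yielding
\[
\frac{G_\Lambda(x,x_n)}{G_\Lambda(x+u,x_n)}~\geq~\E_{\hat x}\bigl(e^{-\sigma|Z_\Lambda(T_u)-\hat x-u|};\ T_u<\infty,\ |Z_\Lambda(T_u)-\hat x-u|\le R\bigr).
\]
Letting $\sigma\to 0$ and $R\to\infty$ by monotone convergence, the right-hand side tends to $\P_{\hat x}(T_u<\infty)$, and Proposition~\ref{pr4-1} furnishes the subexponential lower bound $\P_{\hat x}(T_{ke}<\infty)\ge e^{-\eps k/2}$ for $k$ large. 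Since $\sigma$ and $R$ are chosen once, as functions of $u$ alone, the constraints $R<\delta(\sigma)|x_n|$ and $|x+u|\le\delta(\sigma)|x_n|$ are satisfied for all $n$ large, so no $n$-dependent prefactor ever appears. Lemmas~\ref{lem6-3} and~\ref{lem6-4} then upgrade this one-step bound to arbitrary $u\in\N^d$ with $u^{\Lambda^c}=0$ via the communication condition of Lemma~\ref{lem3-1}, after which the two factors of your decomposition are combined exactly as you intended. In short, you should replace the detour through $g_M$ and the localization step by the strong Markov argument at $T_{ke}$ with a fixed truncation radius; everything else in your outline is sound.
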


To prove this proposition we need the following lemmas. 

\begin{lemma}\label{lem6-1} Under the hypotheses of Proposition~\ref{pr6-1}, for any $\eps > 0$
  there are $N > 0$ and $\delta > 0$  such that  
\be\label{e6-2}
\exp(-\eps|w|) ~\leq~ G_{\Lambda}(x+w,x_n)/G_{\Lambda}(x,x_n) ~\leq~ \exp(\eps|w|)
\ee
for all $n > N$, $x\in\Z^{\Lambda,d}_+$ and $w\in\Z^d$  with  $w^{\Lambda} = 0$ satisfying
the inequality 
$\max\{|w|, |x|\} \leq \delta |x_n|$. 
\end{lemma}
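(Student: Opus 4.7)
The strategy is to bootstrap the qualitative one-step ratio limit of Proposition~\ref{pr5-2} into a quantitative multiplicative bound by chaining single-step estimates along a lattice path from $x$ to $x+w$. Since $w^\Lambda=0$, the vector $w$ decomposes as a sum of at most $|w^{\Lambda^c}|_1\leq C_0|w|$ coordinate unit vectors $e\in\Z^d$ whose $\Lambda$-components vanish, with $C_0$ a dimensional constant comparing the $\ell^1$ norm on $\Z^{\Lambda^c}$ with $|\cdot|$. Choosing a monotone path $x=y_0,y_1,\ldots,y_k=x+w$ with each $y_j-y_{j-1}$ a unit vector of zero $\Lambda$-component preserves the identity $y_j^\Lambda=x^\Lambda$, so every $y_j$ remains in $\Z_+^{\Lambda,d}$, and the triangle inequality yields $|y_j|\leq|x|+|w|$.

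Given $\eps>0$, I would fix $\eps_0=\eps/C_0$ and apply Proposition~\ref{pr5-2} to each of the finitely many unit vectors $e\in\{\pm e_i:~i\in\Lambda^c\}$. For each such $e$ there exist $\delta_e>0$ and $N_e\in\N$ such that
\[
\exp(-\eps_0)~\leq~G_\Lambda(y+e,x_n)/G_\Lambda(y,x_n)~\leq~\exp(\eps_0)
\]
for all $n\geq N_e$ and $y\in\Z_+^{\Lambda,d}$ with $|y|\leq\delta_e|x_n|$. Set $\delta'=\min_e\delta_e$, $N=\max_e N_e$, and $\delta=\delta'/2$. Then whenever $\max\{|x|,|w|\}\leq\delta|x_n|$ each intermediate vertex $y_j$ of the path satisfies $|y_j|\leq|x|+|w|\leq\delta'|x_n|$, so the single-step inequality applies at every step. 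Multiplying at most $C_0|w|$ such bounds telescopes to
\[
\exp(-C_0\eps_0|w|)~\leq~G_\Lambda(x+w,x_n)/G_\Lambda(x,x_n)~\leq~\exp(C_0\eps_0|w|),
\]
which is precisely \eqref{e6-2} since $C_0\eps_0=\eps$.

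The only subtle point is that the single-step estimate must be uniform in the base point $y$ on the scale $\delta_e|x_n|$, not merely for fixed $y$; this uniformity is already built into Proposition~\ref{pr5-2} through the infimum and supremum it contains, which in turn trace back to Corollary~\ref{cor3-2} and the irreducibility-type communication condition of Lemma~\ref{lem3-1}. Because only finitely many unit directions enter the chaining, a common choice of $\delta'$ and $N$ is immediate, so no further large deviation input is required here; the argument is purely a finite-dimensional bootstrapping of the ratio limit theorem.
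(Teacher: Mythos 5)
Your proposal is correct and follows essentially the same route as the paper: the paper likewise decomposes $w$ into at most $d\,|w|$ unit vectors with vanishing $\Lambda$-component, applies the uniform single-step ratio estimate of Proposition~\ref{pr5-2} (with accuracy $\eps/d$ on the scale $\delta(d+1)|x_n|$, so that all intermediate points $x+e_1+\cdots+e_i$ remain within range), and telescopes the product of one-step ratios. Your bookkeeping with $\eps_0=\eps/C_0$, $\delta=\delta'/2$ and the monotone path keeping $y_j^\Lambda=x^\Lambda$ is the same argument with slightly different constants.
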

\begin{proof} By  
  Proposition~\ref{pr5-2},   for any $\eps > 0$ there are $N > 
  0$ and $\delta > 0$ such that  
\be\label{e6-3}
\exp(-\eps/d) ~\leq~ G_{\Lambda}(x'+e,x_n)/G_{\Lambda}(x',x_n)  ~\leq~ \exp(\eps/d)   
\ee
for  all $n > N$ and $x'\in\Z^{\Lambda,d}_+$ satisfying the inequality $|x'| < \delta(d+1)|x_n|$, and any unit
vector $e\in\Z^d$ with $e^{\Lambda} = 0$. For $w\in\Z^d$ with
$w^{\Lambda} = 0$ and $|w| = 1$ the inequalities \eqref{e6-2} are therefore verified. To
get these inequalities for an arbitrary $w\in\Z^d$ with  $w^{\Lambda} =
0$, it is sufficient  to consider a sequence of unit vectors
$e_1,\ldots, e_k\in\Z^d$ with  $e_i^{\Lambda} = 0$ and $k\leq d |w|$
such that $e_1 + \ldots + e_k =w$ and to apply the inequalities \eqref{e6-3}  in 
\[
\frac{G_{\Lambda}(x+w,x_n)}{G_{\Lambda}(x,x_n)} ~=~ \frac{G_{\Lambda}(x+e_1,x_n)}{G_{\Lambda}(x,x_n)}
\prod_{i=1}^{k-1} \frac{G_{\Lambda}(x+e_1+ \cdots + e_{i+1},x_n)}{G_{\Lambda}(x+ e_1+ \cdots +
  e_i,x_n)}
\]
first with $x'=x$
and $e=e_1$ and next with $x' = x + e_1 + \cdots + e_i$ and $e = e_{i+1}$ for every
$i=1,\ldots,k-1$. The resulting estimates  
\[
\exp(- k\eps/d) ~\leq~ G_{\Lambda}(x+w,x_n)/G_{\Lambda}(x,x_n)  ~\leq~ \exp(k\eps/d)
\]
and the inequality $k\leq d |w|$ provide \eqref{e6-2}.
\end{proof}

\begin{lemma}\label{lem6-2} Under the hypotheses of Proposition~\ref{pr6-1}, for any $\eps
  > 0$ there are $\delta > 0$, $N> 0$ and a 
positive integer $k > 0$  such that for any  $n >N$, 
$x\in\Z^{\Lambda,d}_+$ and a unit vector
$e\in\Z^{\Lambda,d}_+$  with $e^{\Lambda^c}=0$, 
\be\label{e6-4}
\1_{\{|x| < \delta|x_n|\}} {G_\Lambda(x+k e,x_n)}/{G_\Lambda(x,x_n)}  ~\leq~
\exp(\eps k).  
\ee
\end{lemma}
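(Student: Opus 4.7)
The strategy is to apply the strong Markov property of $(Z_\Lambda(t))$ at the first passage time
\[
T ~\dot=~ \inf\{t\geq 0 :~ Z_\Lambda^\Lambda(t) ~=~ x^\Lambda + ke^\Lambda\}
\]
at which its $\Lambda$-component reaches the shifted target. Because $Z_\Lambda^\Lambda$ has the law of the induced chain $(X_M(t))$ and $e^{\Lambda^c}=0$, the event $\{T<\infty\}$ is exactly the event that $X_M$ started at $x^\Lambda$ visits $x^\Lambda+ke^\Lambda$, whose probability is bounded sub-exponentially from below by Proposition~\ref{pr4-1}. On $\{T<\infty\}$ one has $Z_\Lambda(T) = x+ke + (0,\eta_T)$, where $\eta_T ~\dot=~ Z_\Lambda^{\Lambda^c}(T)-x^{\Lambda^c}$ is the random $\Lambda^c$-displacement accumulated by time $T$; the crucial structural point is that $(0,\eta_T)$ has zero $\Lambda$-component, which is precisely the form of increment handled by Lemma~\ref{lem6-1}.

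The strong Markov property gives $G_\Lambda(x,x_n)\geq \E_x[G_\Lambda(Z_\Lambda(T),x_n);\,T<\infty]$. For a small parameter $\eps_1>0$ to be fixed below, Lemma~\ref{lem6-1} applied with starting point $x+ke$ and shift $(0,\eta_T)$ yields, on the event $\{|x+ke|,\,|\eta_T|\leq\delta_1|x_n|\}$ and for $n>N_1$,
\[
G_\Lambda(x+ke+(0,\eta_T),x_n) ~\geq~ \exp(-\eps_1|\eta_T|)\, G_\Lambda(x+ke,x_n),
\]
with $\delta_1,N_1$ the constants returned by Lemma~\ref{lem6-1}. Combining the two inequalities gives
\[
\frac{G_\Lambda(x+ke,x_n)}{G_\Lambda(x,x_n)} ~\leq~ \frac{1}{\E_x\bigl[\exp(-\eps_1|\eta_T|);~T<\infty,~|\eta_T|\leq\delta_1|x_n|\bigr]},
\]
and so it suffices to bound the denominator below by $\exp(-\eps k)$.

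For this last bound I would restrict further to $\{T\leq T_0\}$ for a suitable $T_0=T_0(k)$. By (A2) the $\Lambda^c$-increments have finite exponential moments, so an exponential Chebyshev estimate yields $|\eta_t|\leq Ct$ with probability at least $1-\exp(-c't)$, which forces $|\eta_T|\leq CT_0$ on $\{T\leq T_0\}$ outside a negligible event. Uniformity in the starting point $x^\Lambda$ is obtained by a coordinate-wise monotone coupling of two induced chains driven by common iid increments: started at $x^\Lambda\geq \hat u~\dot=~(1,\ldots,1)^\Lambda$, the inequality $X_M^x\geq X_M^{\hat u}$ holds coordinate-wise while both are alive, so the $x^\Lambda$-chain survives at least as long as the $\hat u$-chain and visits $x^\Lambda+ke^\Lambda$ whenever the $\hat u$-chain visits $\hat u+ke^\Lambda$, giving $\P_{x^\Lambda}(X_M\text{ visits }x^\Lambda+ke^\Lambda)\geq \P_{\hat u}(X_M\text{ visits }\hat u+ke^\Lambda)\geq \exp(-\eps k/4)$ for $k$ large, by Proposition~\ref{pr4-1}. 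One then chooses $T_0(k)$ large enough that $\P(T\leq T_0)\geq (1/2)\P(T<\infty)$ and the Chebyshev-discarded event is negligible, picks $\eps_1$ so small that $\eps_1 CT_0\leq \eps k/2$, applies Lemma~\ref{lem6-1} with this $\eps_1$ to obtain $\delta_1,N_1$, and finally selects $\delta<\delta_1$ and $N$ large enough that $CT_0+k\leq\delta_1|x_n|$ for $n>N$; assembling these estimates yields the desired $G_\Lambda(x+ke,x_n)/G_\Lambda(x,x_n)\leq\exp(\eps k)$. The main obstacle is exactly this balancing of quantifiers: the constants $\delta_1,N_1$ of Lemma~\ref{lem6-1} are functions of $\eps_1$, and $\eps_1$ must be chosen small relative to the scale $T_0(k)$ of the hitting time, while $T_0(k)$ itself must be large compared to the effective time scale of the induced-chain hitting event but still dominated by $\delta_1|x_n|$ for $n>N$.
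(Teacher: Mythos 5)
Your proof is correct and follows essentially the same route as the paper: the strong Markov property at the first time the $\Lambda$-component reaches $x^\Lambda+ke^\Lambda$, Lemma~\ref{lem6-1} applied to the purely-$\Lambda^c$ overshoot, a coordinate-wise monotone coupling reducing the hitting probability to the corner point, and Proposition~\ref{pr4-1} to make that probability at least $\exp(-\eps k/2)$ for a suitably large $k$. The only difference is technical: the paper truncates the overshoot in space (restricting to $|Z_\Lambda(T_{ke})-x-ke|\leq R$) and removes the cut-off by monotone convergence as $\sigma\to 0$, $R\to\infty$, whereas you truncate in time ($T\leq T_0$) and control the overshoot by an exponential Chebyshev bound, which entails the extra bookkeeping $\eps_1 C T_0\leq \eps k/2$ that you correctly identify and resolve.
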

\begin{proof}  To prove this lemma we combine  Lemma~\ref{lem6-1} with
  Proposition~\ref{pr4-1}. By Lemma~\ref{lem6-1}, for any $\sigma > 0$ there are
  $N(\sigma)>0$ and  $\delta(\sigma) > 0$ such that  
\be\label{e6-5}
G_{\Lambda}(\hat{x},x_n) ~\geq~
\exp(- \sigma|\hat{x}-\tilde{x}|) \,G_{\Lambda}(\tilde{x},x_n)
\ee
for all $n > N(\sigma)$ and $\hat{x},\tilde{x}\in\Z^{\Lambda,d}_+$ with  $\hat{x}^{\Lambda} = \tilde{x}^\Lambda$
and satisfying the inequality $\max\{|\tilde{x}|,|\hat{x}-\tilde{x}|\} \leq \delta(\sigma)
|z_n|$ (we use here the first inequality of \eqref{e6-2} with 
$x = \tilde{x}$ and $w = \hat{x} - \tilde{x}$). Furthermore, for a vector
$u\in\Z^{\Lambda,d}_+$ denote 
\[
T_{u} ~\dot=~ \inf\{ t > 0 : Z^\Lambda_\Lambda(t) ~=~ Z^\Lambda_\Lambda(0) + u^\Lambda\}. 
\] 
Then for any $R > 0$, 
\begin{align*}
G_\Lambda(x,x_n) &\geq~ \E_{x}\Bigl(G_\Lambda\bigl(Z_\Lambda(T_u), x_n\bigr); \; T_u <
\infty\Bigr) \\&\geq~ \E_{x}\Bigl(G_\Lambda\bigl(Z_\Lambda(T_u), x_n\bigr); \; T_u <
\infty, \; |Z_\Lambda(T_u) - x - u| \leq R\Bigr).
\end{align*}
The inequality \eqref{e6-5} applied for the right hand side of the last inequality with
 $\hat{x}=Z_\Lambda(T_u)$ and  $\tilde{x} = x + u$  shows that 
\begin{multline*}
{G_\Lambda(x,x_n)}/{G_\Lambda(x+u, x_n) } ~\geq~ \\ \E_{x}\bigl( \exp(-\sigma | Z_\Lambda(T_u) - x-u|); \;
T_u < \infty, \; |Z_\Lambda(T_u)-x -u| \leq R\bigr) 
\end{multline*}
for all  $n\geq N(\sigma)$ and $R > 0$ satisfying the inequalities  $0 < R \leq
\delta(\sigma)|x_n|$ and $|x+u| \leq \delta(\sigma)|x_n|$. Moreover, let $\hat{x}\in\Z^{\Lambda,d}_+$ be such that
$\hat{x}^i = 1$ for $i\in\Lambda$ and $\hat{x}^i=0$ for $i\in\Lambda^c$. Then the right
hand side of the above inequality is greater or equal to 
\[
\E_{\hat{x}}\bigl( \exp(-\sigma | Z_\Lambda(T_u) - \hat{x}-u|); \;
T_u < \infty, \; |Z_\Lambda(T_u)-\hat{x} -u| \leq R\bigr) 
\]
and consequently, 
\begin{align}
&{G_\Lambda(x,x_n)}{G_\Lambda(x+u, x_n) } \nonumber\\&\hspace{1.2cm}\geq~  \E_{\hat{x}}\bigl( \exp(-\sigma | Z_\Lambda(T_u) - \hat{x}-u|); \;
T_u < \infty, \; |Z_\Lambda(T_u)-\hat{x} -u| \leq R\bigr) \label{e6-6}
\end{align}
for all  $R>0$, $n\geq N(\sigma)$  and $x,u\in\Z^{\Lambda,d}_+$ satisfying the
inequalities $|x+u| \leq \delta(\sigma)|x_n|$ and $R \leq \delta(\sigma)|x_n|$. 
Remark now that   
 by monotone
convergence theorem, the right hand side of \eqref{e6-6} tends to  $\P_{\hat{x}}(T_u < \infty)$
as $\sigma\to 0$ and $R\to\infty$. Since clearly,  
\[
\P_{\hat{x}}(T_u < \infty) ~<~ 1 
\]
from this it follows  that for any $u\in\Z^{\Lambda,d}_+$, there are
$\sigma(u) > 0$ and $R(u) > 0$ such that 
\be\label{e6-7}
{G_\Lambda(x,x_n)}/{G_\Lambda(x+u,x_n)}  ~\geq~ (\P_{\hat{x}}(T_u < \infty) )^2.
\ee 
for all  $n \geq
N(\sigma(u))$ and $x\in\Z^{\Lambda,d}_+$  satisfying the inequalities $R(u) < \delta(\sigma(u)) |x_n|
$ and  $|x+u| \leq \delta(\sigma(u))|x_n|$. 

Recall finally that for $\Lambda = \Lambda(M)$, $Z^\Lambda_\Lambda(t) = X_M(t)$ is the induced Markov chain on
$\Z^\Lambda_+$
corresponding to the set $\Lambda$. Another equivalent definition of the stopping time $T_{u}$ is therefore the
following : 
\[
T_{u} = \inf\{ t > 0 : X_M(t) = X_M(0) + u^\Lambda\}.
\]
Hence, by 
Proposition~\ref{pr4-1}, 
\[
\frac{1}{k}\log \P_{\hat{x}}(T_{ke}~<~ \infty)  ~\to~ 0 \quad \text{ as } \quad k \to
\infty 
\]
and consequently,  for any $\eps > 0$ there is a positive integer $k > 0$ such that 
\[
\P_{\hat{x}}(T_{ke}~<~ \infty) ~\geq~ 
~\exp(-\eps k/2) 
\] 
for any unit vector $e\in\Z^{\Lambda,d}_+$. Letting therefore
$\sigma = \sigma(k e)$ and $R = R(k e)$ we conclude  that  for any  $\eps > 0$
there are $k>0$, $N = N(\sigma)> 0$ and $\delta = \delta(\sigma) > 0$  such that 
\[
{G_\Lambda(x,x_n)}/{G_\Lambda(x+ke,x_n)}  ~\geq~ \exp(-\eps k)
\]
for any unit vector $e\in\Z^{\Lambda,d}_+$ and all $n \geq
N(\sigma)$ and $x\in\Z^{\Lambda,d}_+$  satisfying the inequalities $R < \delta |x_n|$ and
$|x+ke| \leq \delta|x_n|$. Since $\lim_n |x_n| = +\infty$ the last statement 
proves Lemma~\ref{lem6-2}.  
\end{proof}

\begin{lemma}\label{lem6-3} Under the hypotheses of Proposition~\ref{pr6-1}, for any $\eps
  > 0$ there are $\delta > 0$, $N> 0$ and a 
positive integer $k > 0$  such that for any  $n >N$, 
$x\in\Z^{\Lambda,d}_+$  and  $u\in\N^d$  with $u^{\Lambda^c} =0$, 
\be\label{e6-8}
\1_{\{|x| + k |u|< \delta|x_n|\}} {G_\Lambda(x+k u,x_n)}/{G_\Lambda(x,x_n)}  ~\leq~
\exp(\eps k|u|).  
\ee
\end{lemma}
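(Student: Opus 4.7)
The plan is to decompose $u$ into standard basis vectors and apply Lemma~\ref{lem6-2} telescopically along each unit step. Given $\eps>0$, fix a dimensional constant $C_0>0$ (for instance $C_0=\sqrt{|\Lambda|}$) for which $\sum_{i\in\Lambda}v^i\le C_0|v|$ on $\R_+^\Lambda$, and invoke Lemma~\ref{lem6-2} with $\eps/C_0$ in place of $\eps$ to obtain a positive integer $k$ together with constants $\delta_0,N_0>0$. I will show that Lemma~\ref{lem6-3} holds with this same $k$, with $N=N_0$, and with $\delta=\delta_0$.

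Given such $u\in\N^d$ with $u^{\Lambda^c}=0$, write
\[
u ~=~ e_1+e_2+\cdots+e_m
\]
as a sum of $m=\sum_{i\in\Lambda}u^i$ standard basis vectors (each basis vector $e^{(i)}$ with $i\in\Lambda$ appearing with multiplicity $u^i$); note $m\le C_0|u|$. Setting $x_j\dot= x+k(e_1+\cdots+e_{j-1})$ for $j=1,\ldots,m$, the ratio factors as a telescoping product
\[
\frac{G_\Lambda(x+ku,x_n)}{G_\Lambda(x,x_n)} ~=~ \prod_{j=1}^{m}\frac{G_\Lambda(x_j+ke_j,x_n)}{G_\Lambda(x_j,x_n)}.
\]
Since $e_1+\cdots+e_{j-1}$ is componentwise bounded above by $u$, we have $|x_j|\le|x|+k|u|<\delta_0|x_n|$ under the hypothesis of the lemma, so Lemma~\ref{lem6-2} applies to each factor and bounds it by $\exp((\eps/C_0)k)$. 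Multiplying the $m$ factors and invoking $m\le C_0|u|$ yields the claimed bound $\exp(\eps k|u|)$.

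The argument is essentially a routine iteration of Lemma~\ref{lem6-2}; the only point requiring some care is the bookkeeping that ensures every intermediate state $x_j$ remains in the region $\{|y|<\delta_0|x_n|\}$ where Lemma~\ref{lem6-2} is available. This is precisely why the hypothesis of Lemma~\ref{lem6-3} is formulated with the enlarged cutoff $|x|+k|u|<\delta|x_n|$ rather than simply $|x|<\delta|x_n|$.
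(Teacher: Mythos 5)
Your argument is correct and is essentially the paper's own proof: both iterate Lemma~\ref{lem6-2} (applied with $\eps$ divided by a dimensional constant) along coordinate steps of size $k$ and check that every intermediate starting point stays below the cutoff, so that the per-step bounds multiply to $\exp(\eps k|u|)$. The only difference is organizational: the paper telescopes in two stages (first $m$ steps along one fixed direction, then over the $|\Lambda|$ directions, using a $2\delta$ slack for the intermediate points), whereas you telescope over all $\sum_{i\in\Lambda}u^i$ single steps at once, which makes the bookkeeping $|x_j|\leq |x|+k|u|<\delta|x_n|$ slightly more direct but does not change the substance.
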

\begin{proof} Indeed, by Lemma~\ref{lem6-2}, for any $\eps
  > 0$ there are $\delta > 0$, $N> 0$ and a 
positive integer $k > 0$ such that 
\[
\1_{\{|x| < 2\delta|x_n|\}} {G_\Lambda(x+k e,x_n)}/{G_\Lambda(x,x_n)}  ~\leq~
\exp(\eps k/d )
\]
for any unit vector
$e\in\Z^{\Lambda,d}_+$ with $e^{\Lambda^c}=0$, $n >N$ and
$x\in\Z^{\Lambda,d}_+$. Using this
 inequality at the right hand side of 
\[
\1_{\{|\tilde{x}| + k m < 2\delta|x_n|\}} \frac{G_\Lambda(\tilde{x}+k m e,x_n)}{G_\Lambda(\tilde{x},x_n)}  \leq~
\prod_{j=1}^m \1_{\{|\tilde{x} + k (j-1) e|< 2\delta|x_n|\}} \frac{G_\Lambda(\tilde{x}+k j e,x_n)}{G_\Lambda(\tilde{x}+
  k(j-1) e,x_n)}  
\]
with $x = \tilde{x} + k(j-1) e$ for every $j=1,\ldots,m$ one gets   
\be\label{e6-9}
\1_{\{|\tilde{x}| + k m < 2\delta|x_n|\}} {G_\Lambda(\tilde{x}+k m e,x_n)}/{G_\Lambda(\tilde{x},x_n)}  ~\leq~
\exp(\eps k m/d )
\ee
for any unit vector
$e\in\Z^{\Lambda,d}_+$ with $e^{\Lambda^c}=0$, $n >N$, $m \geq 0$ and  
$\tilde{x}\in\Z^{\Lambda,d}_+$.  
Suppose  now that $\Lambda =
\{j_1,\ldots,j_{|\Lambda|}\}$ with $j_1 < \cdots < j_{|\Lambda|}$ and let $e_j=(e_j^1,\ldots,e_j^d)$ denote the unit
vector in $\Z^d$ with $e_j^j = 1$ and $e_j^i = 0$ for $i\not= j$. Then  for any $u\in\N^d$
with $u^{\Lambda^c} = 0$ one has 
\[
u ~=~ \sum_{j\in\Lambda} u^j e_j \quad \text{ with } \quad u^j\in\N \quad \text{ for all }
\quad j\in\Lambda  
\]
and hence, letting  for $l=0,\ldots, |\Lambda|$ 
\[
 m_i=u^{j_i} \quad \text{ and } \quad z_l ~=~ x + k \sum_{i \leq l} m_i e_{j_i} 
\]
we obtain $z_0=x$, \, $z_{|\Lambda|} = u + k u$, \, $z_l = z_{l-1} + m_le_{j_l}$ and 
\[
 |z_{l-1}| + k m_{l} \leq |x| + k \Bigl|\sum_{i \leq l} m_i e_{j_i} \Bigr| + k m_{l} \leq |x| + k |u| + k m_{l} \leq 2(
|x| + k |u|) 
\]
for all $ l=1,\ldots,|\Lambda|$. From this it follows  that    
\begin{align*}
\1_{\{|x| + k|u| < \delta|x_n|\}} &\frac{G_\Lambda(x+k u,x_n)}{G_\Lambda(x,x_n)} ~=~
\1_{\{|x| + k|u| < \delta|x_n|\}}   \prod_{l=1}^{|\Lambda|} \frac{G_\Lambda(z_{l-1}+k
  m_{l}e_{j_{l}},x_n)}{G_\Lambda(z_{l-1},x_n)} \\
&\hspace{1.5cm} \leq~ \prod_{l=1}^{|\Lambda|} \1_{\{|z_{l-1}| + k
  m_{l} < 2\delta|x_n|\}}\frac{G_\Lambda(z_{l-1} +k
  m_{l}e_{j_l},x_n)}{G_\Lambda(z_{l-1},x_n)} 
\end{align*}
and consequently, using \eqref{e6-9} with $\tilde{x} = z_{l-1}$, $m=m_l$ and $e=e_{j_l}$  for
every $l=1,\ldots,|\Lambda|$ 
we conclude that 
\[
\1_{\{|x| + k|u| < \delta|x_n|\}} \frac{G_\Lambda(x+k
  u,x_n)}{G_\Lambda(x,x_n)} ~\leq~ \exp\left(\eps k \sum_{l=1}^{|\Lambda|}m_l
/d\right) ~\leq~ \exp\left(\eps k |u|\right).
\]
for all $n >N$. 
\end{proof}

When combined with Lemma~\ref{lem3-1}, Lemma~\ref{lem6-3} implies the following
estimates. 

\begin{lemma}\label{lem6-4} Under the hypotheses of Proposition~\ref{pr6-1}, for any $\eps
  > 0$ there are $\delta > 0$, $N> 0$ and $C>0$ such that for any  $n >N$,
  $x\in\Z^{\Lambda,d}_+$ and $u\in\N^d$ 
 with $u^{\Lambda^c} =0$, 
\be\label{e6-10}
\1_{\{|x| + |u|< \delta|x_n|\}} {G_\Lambda(x+
  u,x_n)}/{G_\Lambda(x,x_n)}  ~\leq~ C
\exp(\eps |u|).  
\ee
\end{lemma}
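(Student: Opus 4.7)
The plan is to reduce Lemma~\ref{lem6-4} to Lemma~\ref{lem6-3} by decomposing an arbitrary $u\in\N^d$ with $u^{\Lambda^c}=0$ as $u=kq+r$, where $k$ is the integer supplied by Lemma~\ref{lem6-3} for the given $\eps$, the vector $q\in\N^d$ has $q^{\Lambda^c}=0$ with $k|q|\le|u|$, and $r\in\N^d$ has $r^{\Lambda^c}=0$ with $r^j\in\{0,\ldots,k-1\}$ for $j\in\Lambda$, so that $|r|\le k\sqrt{d}$. Both $x+kq$ and $x+u=x+kq+r$ lie in $\Z_+^{\Lambda,d}$ whenever $x$ does, and the communication condition of Lemma~\ref{lem3-1} provides a path inside $\Z_+^{\Lambda,d}$ of length $\ell\le C_1|r|\le C_1 k\sqrt{d}$ connecting $x+kq$ to $x+u$, each of whose one-step transitions has probability at least $\theta>0$.

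By the Markov property this yields
\[
G_\Lambda(x+kq,x_n)\ \ge\ \P_{x+kq}\bigl(Z_\Lambda(\ell)=x+u\bigr)\,G_\Lambda(x+u,x_n)\ \ge\ \theta^{C_1 k\sqrt{d}}\,G_\Lambda(x+u,x_n),
\]
so passing from $x+u$ to $x+kq$ costs only a multiplicative constant $C=\theta^{-C_1 k\sqrt{d}}$ that depends on $\eps$ (through $k$) but not on $n$, $x$, or $u$. Next, Lemma~\ref{lem6-3} applied with $v=q$ gives $G_\Lambda(x+kq,x_n)/G_\Lambda(x,x_n)\le \exp(\eps k|q|)\le \exp(\eps|u|)$ on the event $\{|x|+k|q|<\delta|x_n|\}$, which is implied by $\{|x|+|u|<\delta|x_n|\}$ since $k|q|\le|u|$. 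Combining these two estimates yields \eqref{e6-10} with constant $C=\theta^{-C_1 k\sqrt{d}}$. The only subtle point is to orient the communication argument from $x+kq$ to $x+u$ (not the reverse), so that the resulting lower bound on $G_\Lambda(x+kq,x_n)$ becomes the desired upper bound on $G_\Lambda(x+u,x_n)$; once that is observed, no estimate beyond Lemmas~\ref{lem3-1} and~\ref{lem6-3} is needed.
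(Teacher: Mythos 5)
Your proposal is correct and follows essentially the same route as the paper: write $u$ as a multiple of $k$ plus a remainder bounded by $k$ in each coordinate, handle the multiple of $k$ via Lemma~\ref{lem6-3}, and absorb the remainder into a constant through the communication condition of Lemma~\ref{lem3-1} (the paper phrases this step as the Harnack inequality $G_\Lambda(x+\tilde u,x_n)\ge \P_{x+\tilde u}(Z_\Lambda \text{ hits } x+u)\,G_\Lambda(x+u,x_n)$, which is exactly your Markov-property bound, with the same orientation from the lattice point $x+kq$ to $x+u$).
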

\begin{proof} Indeed, Lemma~\ref{lem6-3} proves that for any $\eps
  > 0$ there are $\delta > 0$, $N> 0$ and $k\geq 1$ such that for any  $n >N$, 
$x\in\Z^{\Lambda,d}_+$  and $\tilde{u}\in k\N^d$ with $\tilde{u}^{\Lambda^c} =0$, one has 
\be\label{e6-11}
\1_{\{|x| + |\tilde{u}|< 2\delta|x_n|\}} {G_\Lambda(x + \tilde{u}, x_n)}/{G_\Lambda(x, x_n)}  ~\leq~  
\exp(\eps |\tilde{u}|).
\ee 
Remark now that for any $u\in \N^d$ with $u^{\Lambda^c} = 0$ there is $\tilde{u}\in
  k\N^d$ with $\tilde{u}^{\Lambda^c}=0$ such that 
\be\label{e6-12}
\tilde{u}^i  < u^i \leq
  \tilde{u}^i + k\quad \text{ for all $i\in\Lambda$}
\ee
and by the Harnack inequality~(see~\cite{Woess}), 
\begin{multline*}
{G_\Lambda(x+
  u,x_n)}/{G_\Lambda(x,x_n)}  ~=~ \frac{G_\Lambda(x+
  u,x_n)}{G_\Lambda(x+\tilde{u},x_n)} \times\frac{G_\Lambda(x+
  \tilde{u},x_n)}{G_\Lambda(x,x_n)}\\
~\leq~ \frac{1}{\P_{x+\tilde{u}}( Z_\Lambda(t)
    = x + u\; \text{ for some } t \geq 0)} \times\frac{G_\Lambda(x+
  \tilde{u},x_n)}{G_\Lambda(x,x_n)}. 
\end{multline*}
Moreover since by Lemma~\ref {lem3-1}, the random walk $(Z_\Lambda(t))$ satisfies
the communication condition on $\Z^{\Lambda,d}_+$, then  there is $0< \theta <1$ such that 
\[
\P_{x+\tilde{u}}( Z_\Lambda(t)
    = x + u \; \text{ for some } t \geq 0) ~\geq~ \theta^{|u-\tilde{u}|} ~\geq~ \theta^{k d}
\]
and consequently, 
\[
{G_\Lambda(x+
  u,x_n)}/{G_\Lambda(x,x_n)} ~\leq~ \theta^{- k d} {G_\Lambda(x+
  \tilde{u},x_n)}/{G_\Lambda(x,x_n)}. 
\]
The last inequality combined with \eqref{e6-11} and \eqref{e6-12} proves \eqref{e6-10} for $n>N$
large enough with $C = \exp(\eps k d) \theta^{- k d}$. 
\end{proof}

\bigskip

\begin{proof}[Proof of Proposition~\ref{pr6-1}] Since the Markov chain $(Z_\Lambda(t))$ is
  irreducible on $\Z^{\Lambda,d}_+$ it is sufficient to prove this proposition for
  $\hat{x}\in\Z^{\Lambda,d}_+$ with $\hat{x}^i = 1$ for $i\in\Lambda$ and $\hat{x}^i = 0$
  for $i\in\Lambda^c$. For this we combine
  Lemma~\ref{lem6-1} with Lemma~\ref{lem6-4}.  For a given $\hat{x}\in\Z^{\Lambda,d}_+$, 
Lemma~\ref{lem6-1} proves that for any $\eps > 0$ there $N_1(\eps) > 0$ and $\delta_1(\eps) > 0$  such that  
\be\label{e6-13}
G_{\Lambda}(\hat{x}+w,x_n)/G_{\Lambda}(\hat{x},x_n) ~\leq~ \exp\left(\eps|w|\right)
\ee
for all $n > N_1(\eps)$ and $\hat{x},w\in\Z^{\Lambda,d}_+$ with  $w^{\Lambda} = 0$ such that 
$|w| \leq \delta_1(\eps) |x_n|$. While by  Lemma~\ref{lem6-4}, 
for any $\eps > 0$ there are $\delta_2(\eps) > 0$, $N_2(\eps)> 0$ and $C(\eps) >0$ such that 
\be\label{e6-14}
{G_\Lambda(\tilde{x}+
  u,x_n)}/{G_\Lambda(\tilde{x},x_n)}  ~\leq~ C(\eps) 
\exp(\eps |u|).  
\ee
for any  $n >N_2(\eps)$,
  $x\in\Z^{\Lambda,d}_+$ and $u\in\N^d$ 
 with $u^{\Lambda^c} =0$ satisfying the inequality $|\tilde{x}| + |u|< \delta_2(\eps)|x_n|$.
Letting for $x\in\Z^{\Lambda,d}_+$,
\[
w ~=~ \sum_{i\in\Lambda^c} x^i e_i \quad \text{ and } \quad u ~=~ \sum_{i\in\Lambda} (x^i-1) e_i
\]
one gets $u\in\N^d$ with $u^{\Lambda^c} = 0$ and $w\in\Z^{\Lambda,d}_+$ with $w^\Lambda =
0$ such that 
\[
x = \hat{x} + w + u \quad \text{ and } \quad \max\{|u|, |w|\} \leq |x-\hat{x}| \leq |x| + d.
\]
Hence, using the inequalities \eqref{e6-13} and  \eqref{e6-14} with $\tilde{x} = \hat{x} + w$ we obtain 
\begin{align*}
\frac{G_{\Lambda}(x,x_n)}{G_{\Lambda}(\hat{x},x_n)} &~=~
\frac{G_{\Lambda}(x,x_n)}{G_{\Lambda}(\tilde{x},x_n)} \times
\frac{G_{\Lambda}(\tilde{x},x_n)}{G_{\Lambda}(\hat{x},x_n)} ~=~ \frac{G_{\Lambda}(\tilde{x}+u,x_n)}{G_{\Lambda}(\tilde{x},x_n)} \times
\frac{G_{\Lambda}(\hat{x}+ w ,x_n)}{G_{\Lambda}(\hat{x},x_n)} \\
&~\leq~  C(\eps) \exp(\eps|w| + \eps|u|) ~\leq~  C(\eps) \exp(2\eps|x| + 2 d) 
\end{align*}
for all $n > \max\{N_1(\eps), N_2(\eps)\}$ and $x\in\Z^{\Lambda,d}_+$ satisfying the inequality 
\[
|x| + d\leq \delta_1(\eps) |x_n| \quad \text{ and } \quad 
2|x| + 3 d< \delta_2(\eps)|x_n|.  
\]
Since $\lim_n |x_n| = \infty$, the last relations prove \eqref{e6-1} with $C=C(\eps/2)\exp(
2 d)$ and $0 < \delta <
\min\{\delta_1(\eps/2), \delta_2(\eps/2)/2\}$ for   $n$ large enough. 

\end{proof}

\section{Limiting behavior of the local Martin kernel}\label{sec7} The estimates 
obtained in the previous section are now combined with  the results of Section~\ref{sec4}
in order to
investigate the limiting behavior of the Martin kernel
$G_{\Lambda(M)}(x,x_n)/G_{\Lambda(M)}(x_0,x_n)$ when $|x_n|\to\infty$ and $x_n/|x_n| \to
M/|M|$.  To simplify the notations, we denote throughout this section $\Lambda =
\Lambda(M)$.  
Our main result is here the following proposition. 

\begin{prop}\label{pr7-1} Suppose that the conditions (A1) - (A3) are satisfied
  and  let a sequence of points $x_n\in\Z^{\Lambda,d}_+$ with $\lim_n |x_n| = +\infty$ and $\lim_n
  x_n/|x_n| = M/|M|$ be fundamental for the Markov process $(Z_\Lambda(t))$. Then there is
  a harmonic function $f > 0$ of the induced Markov chain $(X_{M}(t))$ such that  for any $x\in\Z^{\Lambda,d}_+$, 
\be\label{e7-1}
\lim_{n\to\infty} 
G_{\Lambda}(x,x_n)/G_{\Lambda}(x_0,x_n) ~=~ f(x^{\Lambda})/f(x_0^{\Lambda}). 
\ee  
\end{prop}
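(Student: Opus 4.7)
The plan is to follow the strategy outlined in Section~\ref{sec2}. Since $(x_n)$ is fundamental for the local Markov-additive process $(Z_\Lambda(t))$ with $\Lambda=\Lambda(M)$, the limit
\[
h(x) := \lim_n G_\Lambda(x,x_n)/G_\Lambda(x_0,x_n)
\]
exists pointwise on $\Z^{\Lambda,d}_+$, and $h(x_0)=1$. Corollary~\ref{cor5-1} then gives $h(x+w)=h(x)$ for every $w\in\Z^d$ with $w^\Lambda=0$, so $h$ factors as $h(x)=f(x^\Lambda)$ for a function $f:\Z^\Lambda_+\to(0,\infty)$ with $f(x_0^\Lambda)=1$; positivity follows from the Harnack inequality applied to the functions $g_n(y):=G_\Lambda(y,x_n)/G_\Lambda(x_0,x_n)$, which are harmonic at every $x\ne x_n$. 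Since $f(x_0^\Lambda)=1$, the proposition reduces to proving that $f$ is harmonic for the induced Markov chain $(X_M(t))$.

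Super-harmonicity of $f$ is immediate. For $n$ large enough that $x\ne x_n$, the Markov equation yields
\[
g_n(x) = \sum_{y\in\Z^{\Lambda,d}_+} \mu(y-x)\, g_n(y),
\]
and Fatou's lemma gives $h(x)\ge \sum_y \mu(y-x)\,h(y)\mathbf{1}_{y\in\Z^{\Lambda,d}_+}$; the translation invariance of $(Z_\Lambda(t))$ in the $\Lambda^c$-directions then transfers this super-harmonicity to $f$ on $\Z^\Lambda_+$, as spelled out after Corollary~\ref{cor5-1}.

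The core of the argument is to upgrade super-harmonicity to harmonicity by justifying the interchange of limit and sum in the Markov equation. By (A2), choose $\varepsilon_0>0$ with $\Sigma:=\sum_z \mu(z)\exp(\varepsilon_0|z|)<\infty$, fix $\varepsilon\in(0,\varepsilon_0)$, and invoke Proposition~\ref{pr6-1} to obtain constants $N,\delta,C$ such that $g_n(y)\le C\exp(\varepsilon|y|)$ whenever $|y|\le\delta|x_n|$ and $n\ge N$. Split the Markov equation as
\[
g_n(x) = \sum_{\substack{y\in\Z^{\Lambda,d}_+\\|y|\le\delta|x_n|}} \mu(y-x)\,g_n(y) + \sum_{\substack{y\in\Z^{\Lambda,d}_+\\|y|>\delta|x_n|}} \mu(y-x)\,g_n(y).
\]
The first sum converges to $\sum_y\mu(y-x)h(y)\mathbf{1}_{y\in\Z^{\Lambda,d}_+}$ by dominated convergence with the summable dominant $y\mapsto C\mu(y-x)\exp(\varepsilon|y|)$. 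Showing that the tail sum tends to $0$ is the main technical obstacle, since Proposition~\ref{pr6-1} is inapplicable there.

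For the tail, assumption (A3) implies $M\ne 0$, hence the unkilled walk $(S(t))$ is transient and $G_\Lambda(y,x_n)\le G(y,x_n)\le G(0,0)<\infty$ uniformly in $y,x_n$. Combined with the elementary tail bound $\sum_{|z|>R}\mu(z)\le\Sigma\exp(-\varepsilon_0 R)$, for $n$ large enough that $\delta|x_n|>2|x|$,
\[
\sum_{|y|>\delta|x_n|} \mu(y-x)\,G_\Lambda(y,x_n) \le G(0,0)\,\Sigma\exp(-\varepsilon_0\delta|x_n|/2).
\]
On the other hand, Proposition~\ref{pr3-2} applied with $q'=0$ and $q=M/|M|$, together with the elementary fact that $\sup_{a\in D}a\cdot M=0$ (since $a\cdot M\le\log\varphi(a)\le 0$ for $a\in D$ by convexity of $\log\varphi$ and $\log\varphi(0)=0$, with equality at $a=0$), yields $\liminf_n |x_n|^{-1}\log G_\Lambda(x_0,x_n)\ge 0$. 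Hence for any preselected $\eta>0$, $G_\Lambda(x_0,x_n)\ge\exp(-\eta|x_n|)$ for $n$ large; choosing $\eta<\varepsilon_0\delta/2$, the tail sum is $O(\exp((\eta-\varepsilon_0\delta/2)|x_n|))\to 0$. Combining the two pieces gives $h(x)=\sum_y\mu(y-x)h(y)\mathbf{1}_{y\in\Z^{\Lambda,d}_+}$, so $h$ is harmonic for $(Z_\Lambda(t))$ and $f$ is harmonic for $(X_M(t))$, completing the proof of \eqref{e7-1}.
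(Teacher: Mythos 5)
Your argument is correct and follows essentially the same route as the paper: fundamentality plus Corollary~\ref{cor5-1} give the limit $h(x)=f(x^\Lambda)$, and harmonicity is obtained exactly as in Lemmas~\ref{lem7-2} and~\ref{lem7-3} by splitting the Markov equation at $|y|\le\delta|x_n|$, dominating the main part via Proposition~\ref{pr6-1} and (A2), and killing the tail by transience of $(S(t))$, the exponential tail of $\mu$, and the large deviation lower bound $\liminf_n|x_n|^{-1}\log G_\Lambda(x_0,x_n)\ge 0$. The only (harmless) deviations are cosmetic: you normalize the tail by $G_\Lambda(x_0,x_n)$ rather than $G_\Lambda(x,x_n)$ and verify $\sup_{a\in D}a\cdot M=0$ directly by convexity of $\log\varphi$ instead of invoking $a(M)=0$.
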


As a consequence of this result one gets 

\begin{cor}\label{cor7-1} 
Suppose that the conditions (A1) - (A3) are satisfied
  and  let the only non-negative harmonic functions of the induced Markov chain $(X_M(t))$
  be the  constant multiples of $f>0$. Then any sequence  $x_n\in\Z^{\Lambda,d}_+$ with $\lim_n |x_n| = +\infty$ and $\lim_n
  x_n/|x_n| = M/|M|$ is fundamental for the Markov process $(Z_\Lambda(t))$ and the
  equality \eqref{e7-1} holds with a given function $f$. 
\end{cor}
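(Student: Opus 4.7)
The plan is a straightforward subsequence argument that reduces Corollary~\ref{cor7-1} to Proposition~\ref{pr7-1} via the uniqueness assumption on positive harmonic functions of $(X_M(t))$. I would start by fixing an arbitrary sequence $x_n\in\Z^{\Lambda,d}_+$ with $|x_n|\to\infty$ and $x_n/|x_n|\to M/|M|$, and aim to prove the pointwise convergence
\[
G_\Lambda(x,x_n)/G_\Lambda(x_0,x_n) ~\to~ f(x^\Lambda)/f(x_0^\Lambda)
\]
for every $x\in\Z^{\Lambda,d}_+$; this simultaneously shows that $(x_n)$ is fundamental for $(Z_\Lambda(t))$ and verifies \eqref{e7-1}.

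The key external ingredient is the compactness and metrizability of the Martin compactification of $\Z^{\Lambda,d}_+$ associated with $(Z_\Lambda(t))$. Since that compactification is the completion of a countable set under the Martin metric recalled in Section~\ref{sec1} (built from the summable weights $w_x$), it is a compact metric space, so every subsequence of $(x_n)$ admits a further subsequence $(x_{n_k})$ that is fundamental for $(Z_\Lambda(t))$. Such a subsequence still satisfies $|x_{n_k}|\to\infty$ and $x_{n_k}/|x_{n_k}|\to M/|M|$, so Proposition~\ref{pr7-1} applies and yields a positive harmonic function $\tilde f$ of the induced Markov chain $(X_M(t))$ with
\[
\lim_{k\to\infty} G_\Lambda(x,x_{n_k})/G_\Lambda(x_0,x_{n_k}) ~=~ \tilde f(x^\Lambda)/\tilde f(x_0^\Lambda)
\]
for every $x\in\Z^{\Lambda,d}_+$.

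Next I would invoke the uniqueness hypothesis: $\tilde f$ must be a positive scalar multiple of $f$, so the normalized ratio $\tilde f(x^\Lambda)/\tilde f(x_0^\Lambda)$ equals $f(x^\Lambda)/f(x_0^\Lambda)$ and, crucially, does not depend on the choice of the subsequence. The standard sub-subsequence principle (if every subsequence has a further subsequence converging to the same limit, the whole sequence converges to that limit) then forces $G_\Lambda(x,x_n)/G_\Lambda(x_0,x_n)$ to converge pointwise in $x$ to $f(x^\Lambda)/f(x_0^\Lambda)$, which is exactly \eqref{e7-1}.

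No step here presents a serious obstacle: the entire analytic content has been absorbed into Proposition~\ref{pr7-1}, and the corollary reduces to the formal compactness-and-uniqueness argument above. If anything deserves a careful word, it is the assertion that the Martin compactification of a countable state space is sequentially compact, so that fundamental sub-subsequences are guaranteed to exist; this is however a classical consequence of the construction of the Martin metric recalled in Section~\ref{sec1}, and no new estimate beyond those already established is needed.
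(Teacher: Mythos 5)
Your proposal is correct and follows essentially the same route as the paper: extract a fundamental (sub)subsequence by compactness of the Martin compactification, apply Proposition~\ref{pr7-1} to it, use the uniqueness hypothesis to see that every such subsequence yields the same limit $f(\cdot^\Lambda)/f(x_0^\Lambda)$, and conclude by the sub-subsequence principle that the whole sequence is fundamental and satisfies \eqref{e7-1}. The paper's own proof is just a more condensed statement of exactly this argument.
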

\begin{proof} Indeed, if a sequence of points $x_n\in\Z^{\Lambda,d}_+$ is such that $\lim_n |x_n| = +\infty$ and $\lim_n
  x_n/|x_n| = M/|M|$, then  by Proposition~\ref{pr7-1}, for any fundamental subsequence
  $(x_{n_k})$, the sequence of functions $G_\Lambda(\cdot,x_{n_k})/G_\Lambda(x_0,x_{n_k})$
  converges point-wise to the same limit $f(\cdot)/f(x_0)$. By
  compactness, the sequence
  $(x_n)$ is therefore fundamental itself and satisfies the equality \eqref{e7-1}.
\end{proof}

The proof of Proposition~\ref{pr7-1} uses  the following preliminary results. 

\begin{lemma}\label{lem7-1} Under the hypotheses (A2), 
\be\label{e7-3}
\limsup_{r\to\infty} ~\frac{1}{r}~\log \!\sum_{x\in\Z^d : |x|\geq  r} \mu(x) ~<~ 0.
\ee
\end{lemma}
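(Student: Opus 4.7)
\bigskip

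\textbf{Proof proposal for Lemma~\ref{lem7-1}.} The plan is to use a standard Chernoff-type tail estimate based on assumption (A2). The key observation is that $0\in D$: indeed $\varphi(0)=\sum_z \mu(z)\leq 1$ because $\mu$ is (sub)stochastic, so $0\in D$. By (A2), $\varphi$ is finite on a neighborhood of $D$, hence in particular on a ball $\{a\in\R^d:|a|\leq t\}$ for some $t>0$. In particular, for each of the $2d$ vectors $\pm t e_1,\ldots,\pm te_d$ (where $e_1,\ldots,e_d$ are the standard unit vectors in $\R^d$), the quantity $\varphi(\pm te_i)$ is finite.

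Next I would use a union-bound decomposition of $\{x\in\Z^d:|x|\geq r\}$ into $2d$ half-spaces. Since $|x|\geq r$ forces some coordinate to satisfy $|x^i|\geq r/\sqrt{d}$, one has
\[
\{x\in\Z^d:|x|\geq r\}\;\subset\;\bigcup_{i=1}^d\Bigl(\{x:e_i\cdot x\geq r/\sqrt{d}\}\cup\{x:-e_i\cdot x\geq r/\sqrt{d}\}\Bigr).
\]
For each such half-space, the standard exponential Markov inequality gives
\[
\sum_{x:\pm e_i\cdot x\geq r/\sqrt{d}}\mu(x)\;\leq\;\exp(-tr/\sqrt{d})\sum_{x\in\Z^d}\mu(x)\exp(\pm te_i\cdot x)\;=\;\varphi(\pm te_i)\exp(-tr/\sqrt{d}).
\]
Summing these $2d$ bounds and setting $C=2d\max_i\max\{\varphi(te_i),\varphi(-te_i)\}<\infty$ yields
\[
\sum_{x\in\Z^d:|x|\geq r}\mu(x)\;\leq\;C\exp(-tr/\sqrt{d}),
\]
so $\limsup_{r\to\infty}\frac{1}{r}\log\sum_{|x|\geq r}\mu(x)\leq -t/\sqrt{d}<0$, as required.

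There is essentially no obstacle here: the whole argument rests on the single fact that $\varphi$ is finite on a neighborhood of the origin, which is immediate from (A2) combined with $0\in D$. The only minor point to be careful about is the choice of $t>0$ small enough that $\pm te_i$ all lie in the neighborhood of $D$ on which $\varphi$ is finite; any such $t$ works, and the resulting exponential decay rate $t/\sqrt{d}$ is strictly positive.
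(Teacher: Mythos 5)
Your proof is correct and takes essentially the same approach as the paper: exploit that $0\in D$ so (A2) gives finiteness of $\varphi$ on a small ball around the origin, cover $\{|x|\ge r\}$ by $2d$ half-spaces along the coordinate axes, and apply the exponential Markov (Chernoff) bound in each. Your version is in fact slightly more careful than the paper's, which writes the covering as $\sum_{|x|\ge r}\mu(x)\le 2d\max_e\sum_{e\cdot x\ge r}\mu(x)$ without the $r/\sqrt{d}$ correction; your explicit $r/\sqrt{d}$ threshold fixes that minor imprecision, and of course the conclusion is the same.
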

\begin{proof} Indeed, under the hypotheses (A2), there is $\theta > 0$ such that 
\[
C_\theta ~\dot=~ \sup_{a\in\R^d : |a|\leq \theta} \varphi(a) ~<~ \infty
\]
and hence, for any unit vector $e\in\Z^d$,
\[
C_\theta\geq \varphi(\theta e) ~\geq \sum_{x :~e\cdot x ~\geq~  r} \mu(x)
\exp(\theta e\cdot x) ~\geq~ \exp( \theta r ) \!\sum_{x:~e\cdot x ~\geq~  r}
\mu(x) 
\]
From the last inequality it follows that 
\[
\sum_{x\in\Z^d : |x|\geq  r} \mu(x) ~\leq~ 2d \max_{e\in\Z^d : |e|=1} \sum_{x\in\Z^d : ~e\cdot x ~\geq~  r}
\mu(x) ~\leq~ 2d C_\theta \exp( -\theta r ) 
\]
and consequently, \eqref{e7-3} holds. 
\end{proof}

Using this lemma together with Corollary~\ref{cor3-2} we obtain 

\begin{lemma}\label{lem7-2} Suppose that the conditions (A1)-(A3) are satisfied and let a
  sequence $x_n\in\Z^{\Lambda,d}_+$ be such that  $\lim_n |x_n| = +\infty$ and $\lim_n
  x_n/|x_n| = M/|M|$. Then for any $\delta > 0$ and $x\in\Z^{\Lambda, d}_+$, 
\be\label{e7-5}
\lim_{n\to\infty} ~\frac{1}{G_{\Lambda}(x,x_n)} \sum_{z\in\Z^{\Lambda,d}_+: |z| \geq \delta |x_n|} \mu(z-x)
G_{\Lambda}(z,x_n) ~=~ 0.  
\ee
\end{lemma}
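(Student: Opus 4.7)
The plan is to show that the numerator in \eqref{e7-5} decays exponentially in $|x_n|$, whereas the denominator decays at most subexponentially, so that the ratio vanishes. The key observation is that (A3) forces transience of the homogeneous random walk $(S(t))$: since $M\ne 0$, the strong law of large numbers gives $|S(t)|\to\infty$ almost surely, so $C_0 := G(0,0)=\sum_{t\ge 0}\P_0(S(t)=0)<\infty$. By translation invariance and the strong Markov property one has $G(z,x_n)\le G(x_n,x_n)=C_0$ uniformly in $z\in\Z^d$ and $n$, and since killing only decreases Green functions, $G_\Lambda(z,x_n)\le C_0$ for every $z\in\Z^{\Lambda,d}_+$ and every $n$. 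For $n$ so large that $\delta|x_n|\ge 2|x|$, the condition $|z|\ge \delta|x_n|$ forces $|z-x|\ge \delta|x_n|/2$, and therefore
\[
\sum_{z\in\Z^{\Lambda,d}_+,\,|z|\ge\delta|x_n|}\mu(z-x)G_\Lambda(z,x_n) \;\le\; C_0\sum_{|y|\ge \delta|x_n|/2}\mu(y) \;\le\; C_0 C_1 \exp\!\left(-\tfrac{\alpha\delta}{2}|x_n|\right),
\]
with $\alpha>0$ and $C_1>0$ coming from Lemma~\ref{lem7-1}.

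For the denominator I would invoke Proposition~\ref{pr3-2} with $q'=0$, $q=M$, $x'_n\equiv x$ and $\eps_n=|M|/|x_n|$, obtaining
\[
\liminf_{n\to\infty}\eps_n\log G_\Lambda(x,x_n)\;\ge\;-\sup_{a\in D}a\cdot M.
\]
Under (A2) the function $\varphi$ is differentiable at $0$ with $\nabla\varphi(0)=M$, and by convexity of $\varphi$ together with $\varphi(0)=1$ any $a$ satisfying $a\cdot M>0$ obeys $\varphi(a)>1$; combined with $0\in D$ this yields $\sup_{a\in D}a\cdot M=0$. Hence for every $\eta>0$ one has $G_\Lambda(x,x_n)\ge\exp(-\eta|x_n|)$ for all $n$ sufficiently large. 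Choosing $\eta<\alpha\delta/2$, the ratio in \eqref{e7-5} is at most $C_0 C_1\exp\!\bigl(-(\alpha\delta/2-\eta)|x_n|\bigr)\to 0$, which establishes the lemma. The only mildly subtle step will be the uniform Green function bound in the first part; it rests entirely on the non-degeneracy of the drift supplied by (A3), after which the proof reduces to the exponential tail of $\mu$ from Lemma~\ref{lem7-1} and the subexponential lower bound on $G_\Lambda(x,x_n)$ already provided by the large deviation machinery of Section~\ref{sec3}.
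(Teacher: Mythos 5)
Your proof is correct and follows essentially the same route as the paper's: bound $G_\Lambda(z,x_n)$ uniformly by the free Green function $\sum_t\P_0(S(t)=0)<\infty$, use the exponential tail of $\mu$ from Lemma~\ref{lem7-1} to make the numerator decay exponentially at rate $\geq\alpha\delta/2$, and combine with the subexponential lower bound on $G_\Lambda(x,x_n)$ supplied by the large deviation estimates. The one cosmetic difference is that you invoke Proposition~\ref{pr3-2} directly and establish $\sup_{a\in D}a\cdot M=0$ by an elementary convexity argument ($\varphi(a)\ge 1+a\cdot M$), whereas the paper simply cites Corollary~\ref{cor3-2}, whose proof obtains the same fact via $a(M)=a(\nabla\varphi(0))=0$; the content is identical.
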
 
\begin{proof} Indeed,  for any $n\in\N$ and $z\in\Z^{\Lambda,d}_+$, 
\begin{align*}
G_{\Lambda}(z,x_n) &~\leq~ G_{\Lambda}(x_n,x_n) ~=~ \sum_{t=0}^\infty
\P_{x_n}(S(t)=x_n, \, \tau_{\Lambda} > t) \\&~\leq~ \sum_{t=0}^\infty
\P_{x_n}(S(t)=x_n) ~=~ \sum_{t=0}^\infty
\P_0(S(t)=0)  ~<~ \infty, 
\end{align*}
and hence, by Lemma~\ref{lem7-1}, 
\begin{multline*}
\limsup_{n\to\infty} ~\frac{1}{|x_n|} \log \sum_{z\in\Z^{\Lambda,d}_+: |z| \geq \delta |x_n|} \mu(z-x)
G_{\Lambda(M)}(z,x_n) \\ \leq~ \limsup_{n\to\infty} ~\frac{1}{|x_n|} \log
\sum_{z\in\Z^{\Lambda,d}_+: |z| \geq \delta |x_n|} \mu(z-x) ~<~ 0. 
\end{multline*}
The last inequality proves \eqref{e7-5} because by Corollary~\ref{cor3-2}, 
\[
\limsup_{n\to\infty} ~\frac{1}{|x_n|} \log G_{\Lambda(M)}(x,x_n) ~=~ 0.
\]
\end{proof}

Lemma~\ref{lem7-2} combined with Proposition~\ref{pr6-1} implies the following property. 

\begin{lemma}\label{lem7-3} Suppose that the conditions (A1)-(A3) are satisfied and let a
  sequence of points 
  $x_n\in\Z^{\Lambda,d}_+$ with  $\lim_n |x_n| = +\infty$ and $\lim_n
  x_n/|x_n| = M/|M|$ be fundamental for the Markov process $(Z_\Lambda(t))$. Then  the limit 
\be\label{e7-6}
h(x) ~=~ \lim_{n\to\infty} G_{\Lambda}(x,x_n)/G_{\Lambda}(x_0,x_n) 
\ee
is a harmonic function of  $(Z_\Lambda(t))$. 
\end{lemma}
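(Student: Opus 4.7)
The plan is to start from the one-step decomposition obtained from the Markov property. Since $(Z_\Lambda(t))$ is killed upon exiting $\Z^{\Lambda,d}_+$, for $n$ large enough so that $x \neq x_n$ we have
\[
\frac{G_\Lambda(x,x_n)}{G_\Lambda(x_0,x_n)} ~=~ \sum_{x'\in\Z^{\Lambda,d}_+} \mu(x'-x)\,\frac{G_\Lambda(x',x_n)}{G_\Lambda(x_0,x_n)}.
\]
The goal is to exchange limit and sum on the right in order to obtain the harmonic identity $h(x) = \sum_{x'\in\Z^{\Lambda,d}_+} \mu(x'-x)h(x')$, and the natural strategy is to split the sum at the threshold $|x'|=\delta|x_n|$.

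For the tail $|x'|\geq\delta|x_n|$, I would invoke Lemma~\ref{lem7-2}: the sum
$
\frac{1}{G_\Lambda(x,x_n)}\sum_{|x'|\geq\delta|x_n|}\mu(x'-x)G_\Lambda(x',x_n)
$
tends to $0$. Since $G_\Lambda(x,x_n)/G_\Lambda(x_0,x_n)$ converges to the finite quantity $h(x)$ (finiteness follows from applying Proposition~\ref{pr6-1} with the fixed point $\hat x = x_0$, which yields $h(x)\leq C\exp(\eps|x|)$ for some small $\eps>0$), the tail contribution to the displayed identity above also vanishes as $n\to\infty$.

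For the bulk $|x'|\leq\delta|x_n|$, I would use Proposition~\ref{pr6-1} with $\hat x=x_0$ to bound
\[
\1_{\{|x'|\leq\delta|x_n|\}}\,\frac{G_\Lambda(x',x_n)}{G_\Lambda(x_0,x_n)} ~\leq~ C\exp(\eps|x'|).
\]
By assumption (A2) (and the argument in Lemma~\ref{lem7-1}), for $\eps>0$ small enough, the function $x'\mapsto\mu(x'-x)\exp(\eps|x'|)$ is summable over $\Z^d$. Hence the dominated convergence theorem applies to the truncated sum, and the pointwise convergence $G_\Lambda(x',x_n)/G_\Lambda(x_0,x_n)\to h(x')$ (given by the fundamentality of $(x_n)$) yields
\[
\sum_{x'\in\Z^{\Lambda,d}_+}\mu(x'-x)\,\1_{\{|x'|\leq\delta|x_n|\}}\,\frac{G_\Lambda(x',x_n)}{G_\Lambda(x_0,x_n)} ~\longrightarrow~ \sum_{x'\in\Z^{\Lambda,d}_+}\mu(x'-x)\,h(x').
\]
Combining bulk and tail gives $h(x)=\sum_{x'\in\Z^{\Lambda,d}_+}\mu(x'-x)h(x')$, i.e.\ $h$ is harmonic for $(Z_\Lambda(t))$.

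The main technical obstacle is the choice of $\eps$: one must have $\eps$ small enough that $\mu(\cdot-x)\exp(\eps|\cdot|)$ is summable (this is the reason (A2) is stated in a neighborhood of $D$ rather than just on $D$), yet such an $\eps$ must simultaneously be admissible in the uniform exponential bound of Proposition~\ref{pr6-1}. Since Proposition~\ref{pr6-1} is valid for \emph{every} $\eps>0$, both constraints can be met simultaneously by first fixing any $\eps$ small enough to ensure summability against $\mu$, and then applying the proposition with that $\eps$. This is precisely the point where the more delicate Martin kernel estimates of Section~\ref{sec6}, rather than a crude exponential Harnack bound, are essential.
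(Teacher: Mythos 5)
Your proposal is correct and follows essentially the same route as the paper: the one-step identity $G_\Lambda(x,x_n)=\sum_{z}\mu(z-x)G_\Lambda(z,x_n)$, the split at $|z|\leq\delta|x_n|$ with the tail killed by Lemma~\ref{lem7-2}, and dominated convergence on the bulk using the bound $C\exp(\eps|z|)$ from Proposition~\ref{pr6-1} with $\eps$ chosen small enough for $\mu$-integrability under (A2). Your remark on the order of quantifiers (fix $\eps$ first, then get $\delta$, $C$, $N$ from Proposition~\ref{pr6-1}) is exactly the point the paper relies on.
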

\begin{proof} To prove this statement one has to show that 
\[
\lim_{n\to\infty} 
{G_{\Lambda}(x,x_n)}/{G_\Lambda(x_0,x_n)} \\ ~=~  \sum_{z\in\Z^{\Lambda,d}_+} \mu(z-x)
\lim_{n\to\infty} {G_{\Lambda}(z,x_n)}/{G_\Lambda(x_0,x_n)} 
\]
Since for $x\not=x_n$,
\[
G_{\Lambda}(x,x_n) ~=~ \sum_{z\in\Z^{\Lambda,d}_+} \mu(z-x)
G_{\Lambda}(z,x_n) 
\]
and by Lemma~\ref{lem7-2}, for any $\delta > 0$, 
\[
\lim_{n\to\infty} ~\frac{1}{G_{\Lambda}(x,x_n)} \sum_{z\in\Z^{\Lambda,d}_+: |z| > \delta |x_n|} \mu(z-x)
G_{\Lambda}(z,x_n) ~=~ 0,
\]
it is sufficient to show that for some $\delta > 0$, 
\be\label{e7-7}
\lim_{n\to\infty} \sum_{z\in\Z^{\Lambda,d}_+: |z| \leq \delta |x_n|} \mu(z-x)
\frac{G_{\Lambda}(z,x_n)}{G_\Lambda(x_0,x_n)} \\ ~=~  \sum_{z\in\Z^{\Lambda,d}_+} \mu(z-x)
\lim_{n\to\infty} \frac{G_{\Lambda}(z,x_n)}{G_\Lambda(x_0,x_n)}  
\ee
The proof of the last relation uses Proposition~\ref{pr6-1} and the dominated convergence theorem. By dominated convergence theorem,
\eqref{e7-7} holds if there exists  a positive and $\mu$-integrable  function $C(z)$ on $\Z^{\Lambda,d}_+$ such
that 
\be\label{e7-9}
\1_{|z| \leq \delta |x_n|} \frac{G_{\Lambda}(z,x_n)}{G_\Lambda(x_0,x_n)}  ~\leq~ C(z)
\quad \text{ for all $z\in\Z^{\Lambda,d}_+$. }
\ee
Because of the assumption (A2), an exponential function $C(z) = C\exp(\eps|z|)$ is
$\mu$-integrable for any $C>0$ if $\eps > 0$ is small
enough, and Proposition~\ref{pr6-1} proves that for any $\eps > 0$ there are $C>0$
and $\delta > 0$ for which \eqref{e7-9} also  holds with  $C(z) = C
\exp(\eps|z|)$. 
\end{proof}

\begin{proof} [Proof of Proposition~\ref{pr7-1} ] This proposition is a consequence of   
Lemma~\ref{lem7-3} and Corollary~\ref{pr5-1}. Indeed, let a sequence of points 
$x_n\in\Z^{\Lambda,d}_+$ with  $\lim_n |x_n| = +\infty$ and $\lim_n
  x_n/|x_n| = M/|M|$ be  fundamental  for the Markov chain $(Z_{\Lambda}(t))$ and let 
\be\label{e7-10} 
h(x) ~=~ \lim_{n\to\infty} {G_{\Lambda}(x,x_{n})}/{G_\Lambda(x_0,x_{n})}. 
\ee 
Then by irreducibility, 
\[
h(x) ~=~ \lim_{n\to\infty} \frac{G_{\Lambda}(x,x_{n})}{G_\Lambda(x_0,x_{n})} ~\geq~
\P_x(Z_\Lambda(t) = x_0, \; \text{ for some } \; t \geq 0) ~>~ 0 
\]
and by Lemma~\ref{lem7-3}, the function $h$  harmonic for $(Z_\Lambda(t))$. Moreover, by Corollary~\ref{pr5-1}, 
$h(x+w) ~=~ h(x)$ for all $x\in\Z^{\Lambda,d}_+$ and $w\in\Z^d$ with $w^\Lambda
  = 0$
and consequently, the function $h(x)=h(x^1,\ldots,x^n)$ does not depend on $x^i$ for  
$i\not\in\Lambda$. Letting therefore 
\[
f(x^\Lambda) ~\dot=~ h(x) 
\]
one gets a function $f>0$ on $\Z^\Lambda_+$ satisfying \eqref{e7-1} with
$f(x_0^\Lambda) = h(x_0) = 1$ and such that 
\[
\E_{x^\Lambda}(f(X_M(1))) ~=~ \E_x(f(Z^\Lambda_\Lambda(1))) ~=~ \E_x(h(Z_\Lambda(1))) ~=~
h(x) ~=~ f(x^\Lambda). 
\]
The last relation shows that the function $f$ is harmonic for the induced Markov chain
$(X_M(t))$. 
\end{proof}

\section{From local to the original process : proof of Theorem~\ref{th1}}\label{sec8}
The results of the previous sections are now used to obtain the asymptotic behavior of
the Martin kernel of the original Markov chain $(Z(t))$.

\subsection{Principal part of the renewal equation}
The first result of this section proves that for a sequence of points $x_n\in\Z_+^d$ with
$\lim_n|x_n|=\infty$ and $\lim_n x_n/|x_n| = M/|M|$ the right hand side of the renewal
equation \eqref{e2-1} with $x'=x_n$ and $\Lambda=\Lambda(M)$ 
can be decomposed into the main part 
\be\label{e8-2}
\Xi_{\delta,\Lambda}(x,x_n) ~=~ G_\Lambda(x,x_n)  ~-~ \E_x( G_\Lambda(S(\tau),x_n), \; \tau
< \tau_\Lambda, \; |S(\tau)| < \delta |x_n|) 
\ee
and the corresponding negligible part 
\[
\Xi_{\delta,\Lambda}(x,x_n) - G(x,x_n) ~=~ \E_x( G_\Lambda(S(\tau),x_n), \; \tau
< \tau_\Lambda, \; |S(\tau)| \geq \delta |x_n|)  
\]
with an arbitrary $\delta > 0$. This is a subject of the following proposition. 

\begin{prop}\label{pr8-1} Suppose that the conditions (A1)-(A3) are satisfied and let the
  coordinates of the mean $M$ be non-negative.  Suppose moreover that a sequence of points $x_n\in
  \Z_+^{d}$ is such that $\lim_n |x_n| = \infty$ and $\lim_n 
  x_n/|x_n| = M/|M|$. Then for $\Lambda=\Lambda(M)$  and any $\delta > 0$,
\[
\lim_{n\to\infty}~ \Xi_{\delta,\Lambda}(x,x_n)/G(x,x_n) ~=~ 1
\]
\end{prop}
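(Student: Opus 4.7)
The plan is to rewrite, via the renewal equation \eqref{e2-1},
\[
\Xi_{\delta,\Lambda}(x,x_n) - G(x,x_n) ~=~ R_\delta(x,x_n) ~\dot=~ \E_x\bigl(G_\Lambda(S(\tau),x_n);\, \tau<\tau_\Lambda,\, |S(\tau)|\geq\delta|x_n|\bigr),
\]
and to prove two estimates: (a) the exponential upper bound $R_\delta(x,x_n)\leq C\exp(-c|x_n|)$ for some $c,C>0$, and (b) the sub-exponential lower bound $\liminf_n|x_n|^{-1}\log G(x,x_n)\geq 0$. Together these yield $R_\delta(x,x_n)/G(x,x_n)\to 0$, hence $\Xi_{\delta,\Lambda}/G = 1 + R_\delta/G\to 1$.

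Claim (b) follows by applying Proposition~\ref{pr3-2} with $\Lambda = \{1,\ldots,d\}$ (so $Z_\Lambda = Z$ and $G_\Lambda = G$) to the sequences $x'_n\equiv x$ and $x_n$, yielding $\liminf_n|x_n|^{-1}\log G(x,x_n)\geq -\sup_{a\in D}a\cdot (M/|M|)$. Even without (A4) this supremum vanishes by a standalone convexity argument: if some $a\in D$ satisfied $a\cdot M>0$, then $t\mapsto\varphi(ta)$ would be convex on $[0,1]$ with $\varphi(0)=1$ and slope $a\cdot M>0$ at $0$, whence $\varphi(a)>1$, contradicting $a\in D$.

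For (a), I would first show the uniform bound $G_\Lambda(y,x_n)\leq G_\Lambda(x_n,x_n)\leq \sum_{t\geq 0}\P_0(S(t)=0) =: C_0 <\infty$ by the strong Markov property at the first visit to $x_n$ and transience of $S$ guaranteed by (A3), so that $R_\delta(x,x_n)\leq C_0\sum_{i\in\Lambda^c}\P_x(\tau_i<\infty,\,|S(\tau_i)|\geq\delta|x_n|)$. Fix $i\in\Lambda^c$ and let $e_j$ denote the $j$-th standard basis vector of $\R^d$. Since $M^i>0$ and $\varphi(a)=1+a\cdot M+O(|a|^2)$ near $a=0$, for sufficiently small $\alpha>0$ and then sufficiently small $\beta>0$ the tilt $a_{j,\sigma}:= -\alpha e_i +\sigma\beta e_j$ satisfies $\varphi(a_{j,\sigma})\leq 1$ for every $j\in\{1,\ldots,d\}$ and $\sigma\in\{\pm 1\}$; finiteness of $\varphi$ around such $a_{j,\sigma}$ is provided by (A2). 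Doob's optional stopping together with Fatou's lemma, applied to the non-negative supermartingale $\exp(a_{j,\sigma}\cdot S(t))$ at $\tau_i\wedge T$ and letting $T\to\infty$, gives $\E_x[\exp(a_{j,\sigma}\cdot S(\tau_i));\,\tau_i<\infty]\leq \exp(a_{j,\sigma}\cdot x)$. On the event $\{\tau_i<\infty,\,\sigma S^j(\tau_i)\geq\delta|x_n|/\sqrt{d}\}$, the inequality $S^i(\tau_i)\leq 0$ together with the choice of sign gives $a_{j,\sigma}\cdot S(\tau_i)\geq\beta\delta|x_n|/\sqrt{d}$ (with the minor adjustment $a=-(\alpha+\beta)e_i$ when $j=i$, $\sigma=-1$), so
\[
\P_x\bigl(\tau_i<\infty,\,\sigma S^j(\tau_i)\geq\delta|x_n|/\sqrt{d}\bigr)\leq\exp\bigl(a_{j,\sigma}\cdot x - \beta\delta|x_n|/\sqrt{d}\bigr).
\]
A union bound over $i\in\Lambda^c$, $j\in\{1,\ldots,d\}$, and $\sigma\in\{\pm 1\}$, using the implication $|S(\tau)|\geq\delta|x_n|\Rightarrow\max_j|S^j(\tau)|\geq\delta|x_n|/\sqrt{d}$, yields claim (a) with some $c>0$.

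The main obstacle is the calibration of the exponential tilts $a_{j,\sigma}$: one must simultaneously keep each $\varphi(a_{j,\sigma})\leq 1$ while arranging $a_{j,\sigma}\cdot S(\tau_i)\geq\beta\delta|x_n|/\sqrt{d}$ on the relevant events, in each of the finitely many cases distinguished by $(j,\sigma)$. Assumption (A2) is essential here because it guarantees $\varphi$ remains finite on a neighbourhood of $D$, where the perturbations live; for $j\in\Lambda$ (so $M^j=0$) any small $\beta$ works, while for $j\in\Lambda^c$ with $M^j>0$ and $\sigma=+1$ one must take $\beta$ small compared with $\alpha M^i$ to keep $a_{j,\sigma}\cdot M<0$ and hence $\varphi(a_{j,\sigma})\leq 1$.
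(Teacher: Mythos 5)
Your argument is correct, but it reaches the conclusion by a genuinely different route than the paper. The paper bounds \emph{both} factors in the sum $\sum_w \P_x(S(\tau)=w,\,\tau<\tau_\Lambda)\,G_\Lambda(w,x_n)$ exponentially via Lemma~\ref{lem8-1}/Corollary~\ref{cor8-1} ($G_\Lambda(x,x')\leq C\exp(-\sup_{a\in D}a\cdot(x'-x))$), and then must show that the resulting sum of exponentials over $\{w\in\Z^{\Lambda,d}_+\setminus\Z^d_+ : |w|\geq\delta|x_n|\}$ decays exponentially; this is the delicate part, handled by the geometric Lemmas~\ref{lem8-2} and~\ref{lem8-3} (the function $\lambda_M$, the fact that the exit region avoids the ray $\{cM : c\geq 0\}$ where both exponents can vanish, the splitting $\delta|x_n|\leq|z|\leq R|x_n|$ versus $|z|>R|x_n|$, and Lemma~1.2.15 of Dembo--Zeitouni). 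You instead bound $G_\Lambda(S(\tau),x_n)$ by the constant $G_S(0,0)$ — legitimate by the strong Markov property and transience of $(S(t))$ under (A3), exactly as the paper itself does inside Lemma~\ref{lem7-2} — and put all the exponential smallness into the overshoot probability $\P_x(\tau<\tau_\Lambda,\,|S(\tau)|\geq\delta|x_n|)$, which you control by optional stopping for the supermartingales $\exp(a_{j,\sigma}\cdot S(t))$ with small two-coordinate tilts $a_{j,\sigma}\in D$. This is essentially the mechanism of the paper's own Lemmas~\ref{lem8-4}--\ref{lem8-5} and Proposition~\ref{pr8-2} (which yield $\E_x(\exp(\eps|S(\tau)|);\,\tau<\tau_\Lambda)<\infty$ and hence your claim (a) by Chebyshev), but those results are proved independently of Proposition~\ref{pr8-1}, so there is no circularity; your version is shorter, and the cruder constant bound on $G_\Lambda$ suffices precisely because the denominator $G(x,x_n)$ decays only sub-exponentially — your claim (b), which is Corollary~\ref{cor3-2} with $\Lambda=\{1,\ldots,d\}$, and your standalone convexity argument that $\sup_{a\in D}a\cdot M=0$ is valid and even avoids any appeal to the homeomorphism $q\mapsto a(q)$. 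One phrase is loose: $a_{j,\sigma}\cdot M<0$ does not by itself give $\varphi(a_{j,\sigma})\leq 1$; it does in combination with the smallness of the tilt and the expansion $\varphi(a)=1+a\cdot M+O(|a|^2)$ you invoke earlier (choose $\alpha$ first, then $\beta$ much smaller), so the intended calibration is sound.
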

The proof of this proposition uses the following lemmas. 

\begin{lemma}\label{lem8-1} Under the hypotheses (A1)-(A3), there is $C > 0$ such that for any
  $\Lambda\subset\{1,\ldots,d\}$, $a\in D$ and
  $x,x'\in\Z^{\Lambda,d}_+$, 
\be\label{eq8-3}
G_\Lambda(x,x') ~\leq~ C \exp(a\cdot (x-x')). 
\ee
\end{lemma}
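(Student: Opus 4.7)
The plan is to run an exponential Doob-transform/optional-stopping argument together with a hitting-time decomposition, and then absorb the one remaining ``diagonal'' Green factor into a uniform transience constant.

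First I would observe that for $a\in D$ the function $h_a(x)=\exp(a\cdot x)$ is a supermartingale along the unrestricted random walk $(S(t))$, because
\[
\E_x\bigl(\exp(a\cdot S(n+1))\mid\mathcal{F}_n\bigr) ~=~ \exp(a\cdot S(n))\,\varphi(a) ~\leq~ \exp(a\cdot S(n)),
\]
since $\varphi(a)\leq 1$ on $D$. Let $T_{x'}=\inf\{t\geq 0:\,Z_\Lambda(t)=x'\}$ be the first hitting time of $x'$ for the killed process $(Z_\Lambda(t))$, so that $\{T_{x'}<\infty\}=\{T_{x'}<\tau_\Lambda\}$ and $S(T_{x'})=x'$ on this event. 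Optional stopping applied to $\exp(a\cdot S(\,\cdot\wedge T_{x'}\wedge n))$ together with Fatou's lemma gives
\[
\exp(a\cdot x') \,\P_x(T_{x'}<\infty) ~\leq~ \exp(a\cdot x),
\]
i.e.\ $\P_x(T_{x'}<\infty)\leq\exp(a\cdot(x-x'))$, uniformly in $\Lambda$ and $a\in D$.

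Next I would apply the strong Markov property at $T_{x'}$ to obtain the factorization
\[
G_\Lambda(x,x') ~=~ \P_x(T_{x'}<\infty)\,G_\Lambda(x',x'),
\]
so the problem reduces to a uniform bound on the diagonal Green mass $G_\Lambda(x',x')$. Using the trivial domination $\P_{x'}(S(t)=x',\tau_\Lambda>t)\leq\P_{x'}(S(t)=x')=\P_0(S(t)=0)$, one gets
\[
G_\Lambda(x',x') ~\leq~ \sum_{t=0}^\infty \P_0(S(t)=0) ~=:~ C,
\]
and $C<\infty$ because assumption (A3) ($M\neq 0$) forces the homogeneous walk $(S(t))$ on $\Z^d$ to be transient. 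Combining the two displays yields $G_\Lambda(x,x')\leq C\exp(a\cdot(x-x'))$ with $C$ independent of $\Lambda\subset\{1,\ldots,d\}$, of $a\in D$, and of $x,x'\in\Z_+^{\Lambda,d}$, as required.

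There is really no substantive obstacle here: the only tiny care needed is to justify optional stopping at the possibly infinite stopping time $T_{x'}\wedge\tau_\Lambda$, which is handled by stopping at $T_{x'}\wedge\tau_\Lambda\wedge n$, using non-negativity of $\exp(a\cdot S)$ and letting $n\to\infty$ via Fatou; and to note that transience of $(S(t))$ gives the uniform constant $C=G(0,0)$.
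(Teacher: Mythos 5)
Your argument is correct and is essentially the paper's own proof: both rest on the superharmonicity of $x\mapsto\exp(a\cdot x)$ for $a\in D$ (giving the hitting-probability bound $\leq\exp(a\cdot(x-x'))$, which the paper gets from the Harnack inequality and you get by optional stopping), a first-passage factorization of the Green function, and the uniform diagonal bound $C=G_S(0,0)<\infty$ from transience under (A3). The only cosmetic difference is that you factor at the hitting time of the killed walk $(Z_\Lambda(t))$ while the paper first dominates $G_\Lambda\leq G_S$ and factors for the free walk; this changes nothing of substance.
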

\begin{proof} Indeed, for any $a\in D$, the exponential function $
f(x) = \exp(a\cdot x)$ 
is super-harmonic for the random walk $(S(t))$ and hence, by the Harnack inequality
(see~\cite{Woess}),  
\[
\P_x(S(t)=x' \; \text{ for some } \; t \geq 0) ~\leq~ \exp(a\cdot(x-x')), \quad \forall
x,x'\in\Z^d. 
\]  
the Green function 
\[
G_S(x,x') ~=~ \sum_{t=0}^\infty \P_x(S(t)=x') 
\]
of the random walk $(S(t))$ satisfies therefore the inequality 
\begin{align*}
G_S(x,x') &~=~ G_S(x',x') \P_x(S(t)=x' \; \text{ for some } \; t \geq 0) \\&~\leq~ G_S(x',x')
\exp(a\cdot(x-x')) ~=~ G_S(0,0)\exp(a\cdot(x-x')).  
\end{align*}
Since clearly, $G_\Lambda(x,x')\leq G_S(x,x')$ for all $x,x'\in\Z^{\Lambda,d}_+$, the last
relation proves \eqref{eq8-3} with $C=G_S(0,0)$. 
\end{proof}

Recall that for a non-zero vector $q\in\R^d$, the only
  point in $D$ where the linear function $a\to a\cdot q$ achieves its maximum over the set
  $D$ is denoted  $a(q)$.  For $q=0$ it is convenient to let $a(0) = 0\in D$. Then 
\[
\max_{a\in D} a\cdot q ~=~ a(q)\cdot q 
\]
for any $q\in\R^d$ and from Lemma~\ref{lem7-1} it follows 
\begin{cor}\label{cor8-1}Under the hypotheses (A1)-(A3), there is $C > 0$ such that for any
  $\Lambda\subset\{1,\ldots,d\}$ and
  $x,x'\in\Z^{\Lambda,d}_+$, 
\[
G_\Lambda(x,x') ~\leq~ C \exp(-a(x'-x)\cdot (x'-x)). 
\]
\end{cor}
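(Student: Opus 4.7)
The plan is to obtain Corollary~\ref{cor8-1} as an essentially immediate optimization of the family of bounds supplied by Lemma~\ref{lem8-1}. That lemma asserts that for \emph{every} $a\in D$ and every pair $x,x'\in\Z^{\Lambda,d}_+$,
\[
G_\Lambda(x,x') ~\leq~ C\exp(a\cdot(x-x')) ~=~ C\exp(-a\cdot(x'-x)),
\]
with a single constant $C=G_S(0,0)$ that does not depend on $a$, $\Lambda$, $x$ or $x'$. Since the left-hand side does not depend on $a$, I may take the infimum of the right-hand side over $a\in D$, which is equivalent to taking the supremum of the exponent, giving
\[
G_\Lambda(x,x') ~\leq~ C\exp\!\left(-\sup_{a\in D} a\cdot(x'-x)\right).
\]

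Next I invoke the definition of $a(q)$ recalled just before the statement: for a non-zero $q\in\R^d$, $a(q)$ is by construction the unique point of $D$ where the linear functional $a\mapsto a\cdot q$ attains its maximum, and the convention $a(0)=0\in D$ ensures the identity
\[
\max_{a\in D} a\cdot q ~=~ a(q)\cdot q
\]
holds for all $q\in\R^d$ (including $q=0$, where both sides are zero because $0\in D$ under assumption (A2)). Applying this identity with $q=x'-x$ yields
\[
\sup_{a\in D} a\cdot(x'-x) ~=~ a(x'-x)\cdot(x'-x),
\]
and substituting into the previous display gives the desired inequality
\[
G_\Lambda(x,x') ~\leq~ C\exp(-a(x'-x)\cdot(x'-x)).
\]

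No substantial obstacle arises: the only points requiring a line of care are checking that the convention $a(0)=0$ makes the formula valid in the trivial case $x'=x$ (both sides reduce to the bound $G_\Lambda(x,x)\leq C$, which is $G_S(0,0)$ from Lemma~\ref{lem8-1}), and noting that the constant $C$ from Lemma~\ref{lem8-1} is uniform in $\Lambda$, which is what allows the corollary to be stated with a single $C$. Both are immediate from the proof of Lemma~\ref{lem8-1}.
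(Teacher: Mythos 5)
Your proof is correct and coincides exactly with the paper's approach: the paper likewise derives the corollary by taking the supremum over $a\in D$ of the exponent in Lemma~\ref{lem8-1} and then applying the identity $\max_{a\in D}a\cdot q = a(q)\cdot q$ (including the convention $a(0)=0$), noting that the constant $C=G_S(0,0)$ from Lemma~\ref{lem8-1} is uniform in $\Lambda$. (The paper's reference to ``Lemma~\ref{lem7-1}'' just before the corollary is evidently a typo for Lemma~\ref{lem8-1}, which your argument correctly uses.)
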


\begin{lemma}\label{lem8-2} Suppose that the conditions (A1)-(A3) are satisfied and let a
  sequence $x_n\in\Z^d$ be such that $\lim_n |x_n| = +\infty$. Then there is $R>0$ such that  for any $x\in\Z^d$, 
\be\label{e8-4}
\limsup_{n\to\infty} \frac{1}{|x_n|} \log \sum_{z\in\Z^d: |z| \geq R |x_n|}
\exp\bigl( - a(z-x)\cdot (z-x) - a(x_n-z)\cdot(x_n-z)\bigr) ~<~ 0
\ee
\end{lemma}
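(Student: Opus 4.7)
The plan is to bound the exponent $a(z-x)\cdot(z-x) + a(x_n-z)\cdot(x_n-z)$ from below by a positive multiple of $|z|$ once $|z|$ is much larger than $|x_n|$, so that the sum in \eqref{e8-4} is majorized by a convergent exponential lattice sum whose tail decays exponentially in $R|x_n|$. Throughout, let $h(q) = a(q)\cdot q = \max_{a\in D} a\cdot q$, the support function of the compact convex set $D$. This function is positively homogeneous and sublinear, non-negative because $0\in D$, and bounded by $L|q|$ with $L = \max_{a\in D}|a|$.

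The key ingredient is that $D$ has non-empty interior. By (A3), $\nabla\varphi(0) = M\neq 0$, so $\varphi$ strictly decreases from $1$ along the direction $-M$; combined with the continuity of $\varphi$ on a neighbourhood of $D$ (part of (A2)), this shows that $\{\varphi < 1\}$ is a non-empty open set and hence contains a Euclidean ball. Consequently the centrally symmetric convex set $D-D$ contains some ball $B(0,r)$ with $r > 0$, and therefore
\[
h(z) + h(-z) = \max_{c\in D-D} c\cdot z \geq r|z|, \qquad z\in\R^d.
\]

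Sublinearity of $h$, together with $h(x)\leq L|x|$ and $h(-x_n)\leq L|x_n|$, gives $h(z-x)\geq h(z) - L|x|$ and $h(x_n-z)\geq h(-z) - L|x_n|$, so
\[
h(z-x) + h(x_n-z) \geq r|z| - L(|x| + |x_n|).
\]
Choose $R > 2L/r$. Since $|x|$ is fixed while $|x_n|\to\infty$, for $|z|\geq R|x_n|$ and $n$ sufficiently large one has $L|x_n|\leq r|z|/2$ and $L|x|\leq r|z|/4$, whence $h(z-x) + h(x_n-z) \geq r|z|/4$.

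It then remains to observe that the tail sum $\sum_{|z|\geq R|x_n|} e^{-r|z|/4}$ is bounded by $C|x_n|^{d-1} e^{-rR|x_n|/4}$ via a crude volume estimate, so dividing the log of the sum in \eqref{e8-4} by $|x_n|$ and taking $\limsup$ yields a bound of $-rR/4 < 0$. The only conceptually non-routine point is the non-empty interior argument for $D$; everything else is a routine manipulation of support functions of full-dimensional convex bodies.
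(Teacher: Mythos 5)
Your proof is correct, and it reaches the key lower bound by a genuinely different route than the paper. The paper bounds the exponent below by $\lambda(z) - (|x|+|x_n|)\sup_{a\in D}|a|$ with $\lambda(q) = a(q)\cdot q + a(-q)\cdot(-q)$, then proves $\lambda(q) > 0$ for $q\neq 0$ by invoking \emph{strict convexity} of $D$ (so that the maximizer $a(q)$ is unique and hence $a(q)\neq a(-q)$), and finally extracts a uniform constant $\lambda^* = \inf_{|q|=1}\lambda(q) > 0$ by compactness and continuity. You instead observe that $h(z) + h(-z) = \max_{c\in D-D}c\cdot z$ is the support function of the centrally symmetric body $D-D$, which contains a ball $B(0,r)$ once $D$ has non-empty interior, giving $h(z)+h(-z)\geq r|z|$ directly without appealing to strict convexity or a compactness argument on the sphere. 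Your argument that $D$ has non-empty interior (via $\nabla\varphi(0) = M\neq 0$, continuity of $\varphi$ near $0$ from (A2), hence $\{\varphi < 1\}$ is open and non-empty) is sound and uses only (A1)--(A3). Both routes are valid; yours is slightly more elementary in that it needs only full-dimensionality of $D$ rather than strict convexity, while the paper relies on properties of $D$ it has already established (citing Hennequin). The remaining steps --- sublinearity of the support function to peel off the $|x|$ and $|x_n|$ corrections, and the exponential tail estimate for the lattice sum --- are the same in substance.
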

\begin{proof} Indeed, for any  $n\in\N$
and $x,z\in\Z^d$  
\begin{align*}
a(z-x)\!\cdot\!(z-x) + a(x_n-z)\!\cdot\!(x_n-z) &= \sup_{a\in D} a\cdot (z-x) + \sup_{a\in D}
a\cdot(x_n-z) \\
&\geq~ a(z)\cdot(z-x) + a(-z)\cdot(x_n-z) \\&\geq a(z)\!\cdot\!z + a(-z)\!\cdot\!(-z) - (|x| +
|x_n|) \sup_{a\in D} |a|
\end{align*}
and consequently,  the left hand side of \eqref{e8-4} does not
exceed
\begin{align*}
\limsup_{n\to\infty} \frac{1}{|x_n|} \log \sum_{z\in\Z^d: |z| \geq R |x_n|}
\exp\Bigl( - a(z)\cdot z - a(-z)\!\cdot\!(-z) + (|x| +
|x_n|) \sup_{a\in D} |a| \Bigr) \\ =~ \sup_{a\in D} |a| +  \limsup_{n\to\infty} \frac{1}{|x_n|} \log \sum_{z\in\Z^d: |z| \geq R |x_n|}
\exp\bigl( - a(z)\cdot z - a(-z)\!\cdot\!(-z)  \bigr). 
\end{align*}
To prove Lemma~\ref{lem8-2}, it is therefore sufficient to show that for $R>0$ large enough,
\be\label{e8-5}
\limsup_{n\to\infty} \frac{1}{|x_n|} \log \sum_{z\in\Z^d: |z| \geq R |x_n|}
\exp\bigl( - a(z)\cdot z - a(-z)\cdot(-z)  \bigr) ~<~ -\sup_{a\in D} |a|. 
\ee
For this we have to investigate the function 
\[
 \lambda(q) ~\dot=~ a(q)\cdot q + a(-q)\cdot(-q) ~=~ \sup_{a\in D} a\cdot q + \sup_{a\in D} a\cdot (-q).
\] 
As a supremum of a collection of convex
functions $q\to a\cdot q - a'\cdot q$ over a compact set $(a,a')\in D\times D$, this
function is finite, convex and therefore continuous on $\R^d$. 
Moreover, recall that under the hypotheses (A1)-(A3), the mapping $q\to a(q)$ determines a homeomorphism from the unit sphere
${\cal S}^d ~\dot=~ \{q\in\R^d: |q|=1\}$ to the boundary $\partial D$ of the set $D$ and
for any non-zero $q\in\R^d$, the point $a(q) ~\dot=~ a(q/|q|)$ is the only point in $D$
where the supremum of the linear function $a\to a\cdot q$ over $a\in D$ is
attained. Hence, for any $q\not=0$, one has $a(-q) ~\not=~ a(q)$, 
\[
\lambda(q) ~=~ |q| \lambda(q/|q|), 
\]
and 
\[
\lambda(q) ~=~ a(q)\cdot q + \sup_{a\in D} a\cdot (-q) ~>~ a(q)\cdot q + a(q)\cdot(-q) ~=~
0,
\]
from which it follows that 
\[
\lambda(q) ~\geq~ \lambda^*|q|
\]
with 
\[
\lambda^* ~\dot=~ \inf_{q\in\R^d: |q| =1} a(q)\cdot q + a(-q)\cdot(-q) ~>~ 0.   
\]
The last relations show that the left hand side of \eqref{e8-5} does not exceed 
\begin{align*}
\limsup_{n\to\infty} \frac{1}{|x_n|} &\log \sum_{z\in\Z^d: |z| \geq R |x_n|}
\exp\bigl( - \lambda^*|z| \bigr) \\&\leq~ \limsup_{n\to\infty} \frac{1}{|x_n|} \log
~\left(\exp(-\lambda^* R|x_n|/2) \!\sum_{z\in\Z^d: |z| \geq R |x_n|}
\exp\bigl( - \lambda^*|z|/2 \bigr) \right)\\ &\leq~ - \lambda^* R/2  + \limsup_{n\to\infty} \frac{1}{|x_n|} \log
~\sum_{z\in\Z^d: |z| \geq R |x_n|}
\exp\bigl( - \lambda^*|z|/2 \bigr) 
 \end{align*}
where 
\begin{align*}
\limsup_{n\to\infty} \frac{1}{|x_n|} \log
~\sum_{z\in\Z^d: |z| \geq R |x_n|}
\exp\bigl( - \lambda^*|z|/2 \bigr)  &\leq~ \limsup_{n\to\infty} \frac{1}{|x_n|} \log \sum_{z\in\Z^d}
\exp\bigl( - \lambda^*|z|/2 \bigr) \\&\leq~ 0 
\end{align*}
because for $\lambda^* > 0$, the series 
\[
\sum_{z\in\Z^d}
\exp\bigl( - \lambda^*|z|/2 \bigr) 
\]
converge. The inequality \eqref{e8-5} holds therefore for $
R > 2 \sup_{a\in D} |a|/\lambda^*$. 
\end{proof}

\begin{lemma}\label{lem8-3} Suppose that the conditions (A1)-(A3) are satisfied and let a 
compact set  $V\subset\R^d$ be such that $V\cap \{c M : c \geq 0\}  =\emptyset$. Then for
any $x\in\Z^d$ and any sequence $x_n\in\Z^d$ with $\lim_n |x_n| = +\infty$ and $\lim_n
  x_n/|x_n| = M/|M|$,  one has    
\be\label{e8-6}
\limsup_{n\to\infty} \frac{1}{|x_n|} \log \sum_{\substack{z\in\Z^d: z\in|x_n| V}}
\exp\bigl( - a(z-x)\cdot (z-x) - a(x_n-z)\cdot(x_n-z)\bigr) ~<~ 0
\ee
\end{lemma}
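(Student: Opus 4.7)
My plan is to reduce \eqref{e8-6} to an analysis of the support function $h_D(v) \dot= \sup_{a \in D} a \cdot v = a(v)\cdot v$ of the compact convex set $D$ (the second equality holds by the definition of $a(v)$, with the convention $a(0)=0$). The exponent in \eqref{e8-6} then equals $h_D(z-x) + h_D(x_n - z)$. Since $h_D$ is positively homogeneous of degree one and Lipschitz with constant $L \dot= \sup_{a \in D}|a| < \infty$, setting $y = z/|x_n|$ and $q_n = x_n/|x_n|$ gives, for every $z \in |x_n|V$,
\[
h_D(z-x) + h_D(x_n - z) \;\geq\; |x_n|\bigl(h_D(y) + h_D(q_n - y)\bigr) - L|x|.
\]
As $n \to \infty$, $q_n \to q \dot= M/|M|$, and uniformly in $y$ bounded, $h_D(q_n - y) \to h_D(q - y)$.

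The heart of the argument is then to show that $\Phi(y) \dot= h_D(y) + h_D(q-y)$ is strictly positive on $V$. The function $\Phi$ is convex, continuous and non-negative (since $0 \in D$ gives $h_D \geq 0$). The key geometric input is that under (A3), $\nabla\varphi(0) = M \neq 0$, so $D = \{\varphi \leq 1\}$ has a smooth boundary at $0 \in \partial D$; the convexity inequality $\varphi(a) \geq 1 + M\cdot a$ forces $D \subset \{a : a \cdot M \leq 0\}$, and the outward normal to $D$ at $0$ is $M/|M|$, making the normal cone to $D$ at $0$ exactly $\{cM : c \geq 0\}$. Consequently $h_D(v) = 0$ iff $v$ lies in this ray, so $\Phi(y) = 0$ forces both $y$ and $q - y$ to be non-negative multiples of $q$, i.e.\ $y = tq$ with $t \in [0, 1]$. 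Since by hypothesis $V \cap \{cM : c \geq 0\} = \emptyset$, we conclude $\Phi > 0$ on $V$, and by compactness $c_0 \dot= \inf_V \Phi > 0$.

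Combining these, for all $n$ large enough and every $z \in |x_n|V \cap \Z^d$ one has $h_D(z-x) + h_D(x_n - z) \geq c_0 |x_n|/2$. Since $V$ is bounded, $\operatorname{Card}\!\bigl( |x_n| V \cap \Z^d\bigr)$ grows only polynomially in $|x_n|$, so the sum in \eqref{e8-6} is bounded above by a polynomial in $|x_n|$ times $\exp(-c_0 |x_n|/2)$; dividing $\log$ by $|x_n|$ and passing to $\limsup$ yields the desired bound $\leq -c_0/2 < 0$.

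The step I expect to be most delicate is the identification of the zero set of $\Phi$: one must crucially use $M \neq 0$ to pin down the normal cone to $D$ at $0$ as a single ray (not a wider convex cone) and then match it to the excluded set in the hypothesis. Everything else is either bookkeeping with the support function or standard lattice-point counting.
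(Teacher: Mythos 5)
Your proof is correct, and it follows the same overall route as the paper: write the exponent as $h_D(z-x)+h_D(x_n-z)$ where $h_D$ is the support function of $D$, use positive homogeneity and Lipschitz continuity to reduce to the function $\Phi(y)=h_D(y)+h_D(M/|M|-y)$ (the paper calls it $\lambda_M$), show $\inf_V\Phi>0$ by continuity and compactness, and absorb the lattice-point count as a polynomial factor. The one place where you diverge is in showing $\Phi>0$ on $V$: you characterize the zero set of $h_D$ as the normal cone of $D$ at $0\in\partial D$, which under (A1)--(A3) is exactly the ray $\{cM:c\ge 0\}$, so $\Phi(y)=0$ forces both $y$ and $M/|M|-y$ into that ray and hence $y$ itself into it. The paper instead argues via the maximizer map $a(\cdot)$, writing $\sup_{a\in D}a\cdot q>a(M)\cdot q$ whenever $a(q)\ne a(M)$, and asserts $a(M)\ne a(q)$ and $a(M)\ne a(M/|M|-q)$ for $q\in V$. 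Your version is a little cleaner: the paper's second assertion is not literally true when $q$ is a strictly negative multiple of $M$ (then $M/|M|-q$ is a positive multiple of $M$ and $a(M/|M|-q)=a(M)=0$), though the conclusion $\lambda_M(q)>0$ still holds because the first supremum is then strictly positive; your normal-cone phrasing sidesteps this case distinction entirely. Both arguments rest on the same geometric facts — $0\in\partial D$, $\nabla\varphi(0)=M\ne 0$, and the strict convexity/smoothness of $D$ — so the difference is one of exposition rather than substance.
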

\begin{proof} Indeed, let a compact set  $V\subset\R^d$ be such that $V\cap \{c M : c \geq
  0\}  =\emptyset$. Then letting $c=\sup_{a\in D}|a|$, for any $x\in\Z^d$, $n\in\N$ and 
  $z\in\Z^d\cap(|x_n|V)$   one gets 
\begin{align*}
\sup_{a\in D} a\cdot(x_n-z) &~=~ \sup_{a\in D} \left(a\cdot\left(x_n- \frac{|x_n|}{|M|}
M\right) + a\cdot \left(\frac{|x_n|}{|M|}
M - z \right)\right) \\ 
&~\geq~ - c \left|x_n- \frac{|x_n|}{|M|}
M\right|  +  \sup_{a\in D} a\cdot \left(\frac{|x_n|}{|M|}
M - z \right) 
\end{align*}
and 
\[
\sup_{a\in D} a\cdot (z-x) ~\geq~ a(z)\cdot(z-x) ~\geq~ \sup_{a\in D} a\cdot  z
- c |x| 
\]
from which it follows that 
\begin{align*}
a(z-x)\cdot &(z-x) + a(x_n-z)\cdot(x_n-z) ~=~ \sup_{a\in D} a\cdot (z-x) + \sup_{a\in D}
a\cdot(x_n-z)\\
&\geq~ \sup_{a\in D} a\cdot z + \sup_{a\in D} a\cdot \left(\frac{|x_n|}{|M|}
M - z \right)  - c|x| - c\left|x_n- \frac{|x_n|}{|M|}
M\right|   
\end{align*} 
and consequently,
\begin{align}
\limsup_{n\to\infty}&\frac{1}{|x_n|} \log \sum_{\substack{z\in\Z^d: z\in|x_n|V}}
\exp\bigl( - a(z-x)\cdot (z-x) - a(x_n-z)\cdot(x_n-z)\bigr) \nonumber\\&\leq 
\limsup_{n\to\infty} \frac{1}{|x_n|} \log \sum_{\substack{z\in\Z^d: z\in|x_n|V}}
\exp\left( - \sup_{a\in D} a\cdot z - \sup_{a\in D} a\cdot \left(\frac{|x_n|}{|M|}
M - z \right) \right) \nonumber\\&\hspace{6cm} + c\lim_{n\to\infty} \left(\frac{|x|}{|x_n|} +
\left|\frac{x_n}{|x_n|}- \frac{M}{|M|}\right| \right)\nonumber
\\&\leq 
\limsup_{n\to\infty} \frac{1}{|x_n|} \log \sum_{\substack{z\in\Z^d: z\in |x_n| V}}
\exp\left( - \sup_{a\in D} a\cdot z - \sup_{a\in D} a\cdot \left(\frac{|x_n|}{|M|}
M - z \right) \right) \label{e8-7}
\end{align}  
where the last relation holds because $|x_n|\to \infty$ and $x_n/|x_n|\to M/|M|$ as
$n\to\infty$.  Moreover, letting 
\[
\lambda_M(q) ~=~ \sup_{a\in D} a\cdot q + \sup_{a\in D} a\cdot \left(\frac{M}{|M|} - q \right)
\]
one gets  
\begin{align*}
\sup_{a\in D} a\cdot z + \sup_{a\in D} a\cdot \left(\frac{|x_n|}{|M|}
M - z \right) &~=~ |x_n|\left(\sup_{a\in D} a\cdot \frac{z}{|x_n|} + \sup_{a\in D} a\cdot
\left(\frac{M}{|M|} - \frac{z}{|x_n|} \right)  \right)\\
&~=~ |x_n| \lambda_M\left(\frac{z}{|x_n|} \right) ~\geq~ |x_n| \inf_{q\in V} \lambda_M(q) 
\end{align*}
for all $z\in\Z^d\cap(|x_n|V)$. The last inequality combined with \eqref{e8-7} shows
that the left hand side of \eqref{e8-6} does not exceed 
\[
- \inf_{q\in V} \lambda_M(q) + \limsup_{n\to\infty} \frac{1}{|x_n|} \log \text{Card}(\{z\in\Z^d: z \in |x_n|V\}). 
\]
Since for a compact set $V\subset\R^d$, the number of points $\text{Card}(\{z\in\Z^d:
z \in rV\})$ of the set $\{z\in\Z^d: z \in rV\}$ tends to infinity polynomially
with respect to $r$ as $r\to\infty$, we conclude  that 
\[
\limsup_{n\to\infty} \frac{1}{|x_n|} \log \text{Card}(\{z\in\Z^d: z \in |x_n|V\}) = 0
\]
and 
\begin{align} 
\limsup_{n\to\infty} \frac{1}{|x_n|} \log\sum_{\substack{z\in\Z^d: z\in |x_n|V}}
\exp\bigl( - a(z-x)\cdot (z-x) - a(x_n-z)\cdot(x_n-z)\bigr) \nonumber\\\leq - \inf_{q\in V}
\lambda_M(q). \label{e8-8}
\end{align}
To complete the proof of \eqref{e8-6} it is now sufficient to show that 
\be\label{e8-?}
\inf_{q\in V}\lambda_M(q) ~>~ 0. 
\ee
For this we investigate the function $\lambda_M(\cdot)$. As a supremum of a collection of
convex functions 
\[
q ~\to~ a\cdot q + a'\cdot \left(\frac{M}{|M|} - q\right)
\] 
over the compact set $(a,a')\in D\times D$, this function is finite, convex and therefore
continuous on $\R^d$. Moreover, recall  that the mapping $q\to a(q)$ determines a homeomorphism from the unit sphere
${\cal S}^d ~\dot=~ \{q\in\R^d: |q|=1\}$ to the boundary $\partial D$ of the set $D$ and
for any non-zero $q\in\R^d$, the point $a(q) ~\dot=~ a(q/|q|)$ is the only point in $D$
where the supremum of the linear function $a\to a\cdot q$ over $a\in D$ is
attained. Since $V\cap \{c M : c \geq 0\}  =\emptyset$, from this it follows that for 
any $q\in V$, 
\[
a(M)\not= a(q), \quad a(M)\not= a\left(\frac{M}{|M|} - q
\right)  
\]
and 
\begin{align*}
\lambda_M(q) &~=~ \sup_{a\in D} a\cdot q + \sup_{a\in D} a\cdot \left(\frac{M}{|M|} - q
\right) \\&~>~ a\left(M\right) \cdot q +
a\left(M\right)\cdot\left(\frac{M}{|M|} - q\right) ~=~ a(M)\cdot \frac{M}{|M|}.  
\end{align*} 
 Since according to the definition of the mapping $q\to a(q)$ (see Section~\ref{sec1})
\[
a(M) = a(\nabla\varphi(0)) = 0
\]
this proves that $\lambda_M(q) > 0$ for all $q\in V$ and consequently \eqref{e8-?} holds. 
The  inequality \eqref{e8-?} combined with \eqref{e8-8} provides \eqref{e8-6}.
\end{proof}

\begin{proof}[Proof of Proposition~\ref{pr8-1}] Remark first of all that by Corollary~\ref{cor8-1}, for any $\delta > 0$,
\begin{multline*}
0 < \Xi_{\delta,\Lambda}(x,x_n) - G(x,x_n) ~=~ \sum_{w\in\Z^{\Lambda,d}_+\setminus \Z^d_+: |w| \geq \delta |x_n|} \P_x( S(\tau) =
w, \; \tau < \tau_\Lambda) G_\Lambda(w,x_n)\\\leq~ \sum_{w\in\Z^{\Lambda,d}_+\setminus
  \Z^d_+: |w| \geq \delta |x_n|}C^2 \exp\bigl( - a(z-x)\cdot (z-x) -
a(x_n-z)\cdot(x_n-z)\bigr) 
\end{multline*}
and recall that by Proposition~\ref{pr6-1},
\[
\liminf_{n\to\infty} \frac{1}{|x_n|} \log G(x,x_n) = 0.
\]
To prove Proposition~\ref{pr8-1} it is therefore sufficient to show that  
\be\label{e8-9p}
\limsup_{n\to\infty} \frac{1}{|x_n|} \log\sum_{\substack{z\in\Z^{\Lambda,d}_+\setminus\Z^d_+: \\|z| \geq \delta |x_n|}}
\exp\bigl( - a(z-x)\cdot (z-x) -
a(x_n-z)\cdot(x_n-z)\bigr)  ~<~ 0. 
\ee
For this we use 
Lemma~\ref{lem8-2} with $R > 0$ large enough, Lemma~\ref{lem8-3} with a
compact set 
\[
V = \{q\in\R^d : \delta \leq |q|\leq R \; \text{ and } q^i \leq 0 \; \text{ for all } \;
i\in\Lambda^c(M)\}
\]
and Lemma~1.2.15 of Dembo and Zeitouni~\cite{D-Z}. 
Indeed, denote for $0 < \delta < R$, 
\[
\Sigma^1_{\delta,R} (x,x_n) ~\dot= \sum_{z\in\Z^{\Lambda,d}_+\setminus\Z^d_+: ~\delta |x_n| \leq |z| \leq R|x_n|}
\hspace{-0.3cm}\exp\bigl( - a(z-x)\cdot (z-x) - a(x_n-z)\cdot(x_n-z)\bigr)
\]
and let 
\[
\Sigma^2_{R} (x,x_n) ~\dot= \sum_{z\in\Z^d: |z| > R|x_n|}
\exp\bigl( - a(z-x)\cdot (z-x) - a(x_n-z)\cdot(x_n-z)\bigr).
\]
Then by  Lemma~1.2.15 of Dembo and Zeitouni~\cite{D-Z},   the left hand side of \eqref{e8-9p} is equal to 
\begin{multline*}
\limsup_{n\to\infty} \frac{1}{|x_n|} \log \left(\Sigma^1_{\delta,R} (x, x_n) + \Sigma^2_{R}
(x, x_n) \right)\\ ~=~ \max\left\{\limsup_{n\to\infty} \frac{1}{|x_n|} \log \Sigma^1_{\delta,R}
(x, x_n), \;\limsup_{n\to\infty} \frac{1}{|x_n|} \log \Sigma^2_{R}
(x, x_n)\right\},
\end{multline*}
where by Lemma~\ref{lem8-3}, for any $0 < \delta < R$, 
\[
\limsup_{n\to\infty} \frac{1}{|x_n|} \log \Sigma^2_{\delta, R}
(x, x_n) ~<~ 0 
\]
and by Lemma~\ref{lem8-2}, 
\[
\limsup_{n\to\infty} \frac{1}{|x_n|} \log \Sigma^2_{R}(x,x_n) ~<~ 0 
\]
if $R>0$ is large enough. Using these relations with  $R> 0$ large
enough and $0 < \delta < R$ one gets therefore \eqref{e8-9p}.
\end{proof} 

\subsection{Generating functions of hitting probabilities} 
\begin{prop}\label{pr8-2} Suppose that the conditions (A1)-(A3) are satisfied, the
  coordinates of the mean vector $M$ are non-negative and let
  $\Lambda = \Lambda(M)$. Then there is $\eps_0>0$ such that for any $0 < \eps < \eps_0$, 
\[
\E_x(\exp(\eps|S(\tau)|), \; \tau < \tau_\Lambda) ~<~ \infty, \quad \forall \;
x\in\Z_+^d, 
\]
and for any $x^\Lambda\in\Z^\Lambda_+$, 
\[
\E_x(\exp(\eps|S^\Lambda(\tau)|), \; \tau < \tau_\Lambda) ~\to~ 0 \quad \text{ as } \quad
\min_{i\in\Lambda^c}|x^i|\to\infty. 
\]
 \end{prop}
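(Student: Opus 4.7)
\emph{Plan.} The strategy is to decompose the event $\{\tau < \tau_\Lambda\}$ according to which coordinate $i_0 \in \Lambda^c$ triggers the exit and, on each piece, bound the desired expectation by an exponential supermartingale. Since $\tau = \min_i \tau_i$ and $\tau < \tau_\Lambda = \min_{i\in\Lambda}\tau_i$, one has $\{\tau<\tau_\Lambda\} \subseteq \bigcup_{i_0\in\Lambda^c}\{\tau_{i_0}<\infty\}$, and $S^{i_0}(\tau_{i_0}) \leq 0$ on $\{\tau_{i_0}<\infty\}$. Combined with the pointwise majoration $e^{\eps|y|} \leq \sum_{\sigma\in\{-1,+1\}^d} e^{\eps\sigma\cdot y}$, the first assertion reduces to estimating $\E_x[e^{\eps\sigma\cdot S(\tau_{i_0})},\,\tau_{i_0}<\infty]$ for each of the finitely many pairs $(i_0,\sigma)$.

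For such a pair I would set $a = \eps\sigma - \delta\, e_{i_0}$, where $e_{i_0}$ is the $i_0$-th standard unit vector of $\R^d$ and $\delta > 0$ is chosen proportional to $\eps$ with a constant large enough, uniformly in $(i_0,\sigma)$, so that
\[
a\cdot M \;=\; \eps\sigma\cdot M - \delta M^{i_0} \;\leq\; -\eps M^{i_0};
\]
this is possible since $M^{i_0} > 0$ for every $i_0\in\Lambda^c$. The Taylor expansion $\varphi(a) = 1 + a\cdot M + O(|a|^2)$, valid near the origin by (A2), then yields $\varphi(a) \leq 1$ for every $\eps\leq\eps_0$ with $\eps_0$ suitably small. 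Hence $(e^{a\cdot S(t)})_t$ is a non-negative supermartingale, and optional stopping together with Fatou's lemma gives $\E_x[e^{a\cdot S(\tau_{i_0})},\,\tau_{i_0}<\infty] \leq e^{a\cdot x}$. Since $S^{i_0}(\tau_{i_0})\leq 0$ on this event, $-\delta S^{i_0}(\tau_{i_0}) \geq 0$, so $a\cdot S(\tau_{i_0}) \geq \eps\sigma\cdot S(\tau_{i_0})$, and summing over the finitely many $(i_0,\sigma)$ proves the first assertion.

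For the second assertion, on $\{\tau<\tau_\Lambda\}$ we have $S^j(\tau) > 0$ for every $j\in\Lambda$, so $|S^j(\tau)| = S^j(\tau)$ and the bound simplifies to $e^{\eps|S^\Lambda(\tau)|} \leq \sum_{\sigma\in\{-1,+1\}^\Lambda} e^{\eps\sigma\cdot S^\Lambda(\tau)}$. For each $(i_0,\sigma)$ I would now choose $b\in\R^d$ supported on $\Lambda\cup\{i_0\}$ with $b^j = \eps\sigma^j$ for $j\in\Lambda$ and $b^{i_0} = -\alpha$. Because $M^j = 0$ on $\Lambda$, the linear term reduces to $b\cdot M = -\alpha M^{i_0} < 0$, so again $\varphi(b) \leq 1$ for $\eps,\alpha$ small and the same supermartingale argument gives
\[
\E_x\!\left[e^{\eps\sigma\cdot S^\Lambda(\tau_{i_0})},\;\tau_{i_0}<\infty\right] \;\leq\; e^{b\cdot x} \;=\; \exp\bigl(\eps\sigma\cdot x^\Lambda - \alpha x^{i_0}\bigr).
\]
With $x^\Lambda$ fixed, every such term tends to $0$ as $x^{i_0}\to\infty$, hence as $\min_{i\in\Lambda^c}|x^i|\to\infty$, and summing the finite collection yields the stated limit. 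The main technical point is the uniform choice of $\eps_0$ making $\varphi \leq 1$ hold simultaneously on all the finitely many direction vectors $a_{i_0,\sigma}$ and $b_{i_0,\sigma}$; this is precisely where the positivity of $M^{i_0}$ for $i_0\in\Lambda^c$ and the $C^2$-smoothness of $\varphi$ at $0$ (ensured by (A2)) combine.
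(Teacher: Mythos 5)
Your proof is correct, and it rests on the same two pillars as the paper's: the supermartingale inequality $\E_x(e^{a\cdot S(T)},\,T<\infty)\leq e^{a\cdot x}$ for $a\in D$ (the paper's Lemma~\ref{lem8-4}), and the observation that $M^{i_0}>0$ for $i_0\in\Lambda^c$ opens room inside $D$ in the $-e_{i_0}$ direction, which you make precise through the Taylor expansion of $\varphi$ at $0$ (the paper achieves this in Lemma~\ref{lem8-5}). The genuine difference is the decomposition and the resulting family of test vectors. The paper partitions the exit event according to $\Lambda_+(S(\tau))=\Lambda'$, the set of coordinates of $S(\tau)$ that are still positive; for each $\Lambda'$ with $\Lambda(M)\subset\Lambda'\not=\{1,\ldots,d\}$ it builds a single vector $\hat a_{\Lambda'}$ (resp.\ $\tilde a_{\Lambda'}$) whose negative components sit exactly on $\Lambda'^c$, so the sign pattern of $S(\tau)$ is known and $\hat a_{\Lambda'}\cdot S(\tau)\geq\mathrm{const}\cdot|S(\tau)|$ falls out directly. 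You instead split by the coordinate $i_0\in\Lambda^c$ that triggers the exit and dominate $e^{\eps|y|}$ pointwise by $\sum_{\sigma\in\{\pm1\}^d}e^{\eps\sigma\cdot y}$, so that only the trigger coordinate is penalized and the sign vectors $\sigma$ absorb the unknown signs of the remaining coordinates of $S(\tau_{i_0})$. This sidesteps the combinatorics of sign patterns at the price of a larger (still finite) collection of test directions. One minor point of care, which you handle implicitly: the union bound should really be over the events $\{\tau=\tau_{i_0}<\tau_\Lambda\}$, on which $S(\tau)=S(\tau_{i_0})$ and $\tau_{i_0}<\infty$, rather than directly over $\{\tau_{i_0}<\infty\}$; similarly, the uniformity of $\eps_0$ over the finitely many pairs $(i_0,\sigma)$ should be spelled out, which you do note at the end. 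Both versions of the argument are valid and of comparable length.
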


The proof of this proposition uses the following lemmas

\begin{lemma}\label{lem8-4} Under the hypotheses (A1)-(A3), for any $a\in D$ and
  $x\in\Z^d_+$, 
\[
\E_x(\exp(a\cdot S(\tau)), \; \tau < \tau_\Lambda)  ~\leq~ \exp(a\cdot x)
\] 
\end{lemma}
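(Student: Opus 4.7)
\medskip

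\noindent\textbf{Proof proposal.} The plan is to exploit the fact that for every $a\in D$ the exponential function $h_a(x)=\exp(a\cdot x)$ is (sub-stochastically) super-harmonic for the homogeneous random walk $(S(t))$. Indeed, by translation invariance and the very definition of $\varphi$,
\[
\E_x\bigl(h_a(S(1))\bigr) ~=~ \exp(a\cdot x)\sum_{z\in\Z^d}\mu(z)\exp(a\cdot z) ~=~ \varphi(a)\,h_a(x) ~\leq~ h_a(x),
\]
the last inequality holding because $a\in D$. Consequently, under $\P_x$, the process $M_t ~\dot=~ h_a(S(t))$ is a non-negative supermartingale with $M_0=h_a(x)$.

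Next, I would apply the optional stopping theorem to $M_t$ at the bounded stopping time $\tau\wedge n$, which gives
\[
\E_x\bigl(M_{\tau\wedge n}\bigr) ~\leq~ M_0 ~=~ \exp(a\cdot x)
\]
for every $n\in\N$. Since $M_{\tau\wedge n}\geq 0$ and since $M_{\tau\wedge n}=\exp(a\cdot S(\tau))$ on the event $\{\tau\leq n\}$, this yields
\[
\E_x\bigl(\exp(a\cdot S(\tau)),\;\tau\leq n\bigr) ~\leq~ \exp(a\cdot x).
\]

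Letting $n\to\infty$ and applying the monotone convergence theorem, I obtain
\[
\E_x\bigl(\exp(a\cdot S(\tau)),\;\tau<\infty\bigr) ~\leq~ \exp(a\cdot x).
\]
Since $\tau\leq\tau_\Lambda$ (the minimum over a larger index set is smaller), the event $\{\tau<\tau_\Lambda\}$ is contained in $\{\tau<\infty\}$, so dropping non-negative terms on the left yields the desired inequality. There is essentially no obstacle here; the only point requiring a small amount of care is the handling of the possibly infinite stopping time $\tau$, which is taken care of by the standard truncation-then-monotone-convergence argument used above.
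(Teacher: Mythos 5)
Your proof is correct, and it reaches the inequality by a slightly different (though closely related) route than the paper. Both arguments hinge on the same key fact, namely that $\varphi(a)\leq 1$ for $a\in D$, i.e.\ that $x\mapsto\exp(a\cdot x)$ is super-harmonic for the free walk $(S(t))$. The paper then finishes in one line by an exponential change of measure: it observes that
\[
\E_x\bigl(\exp(a\cdot S(\tau)),\;\tau<\tau_\Lambda\bigr)\exp(-a\cdot x)
\]
is exactly the probability that the twisted substochastic random walk $(S_a(t))$, with transition probabilities $p(x,x')\exp(a\cdot(x'-x))$, exits $\Z^d_+$ before one of its coordinates indexed by $\Lambda$ becomes nonpositive, and a probability is at most $1$. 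You instead run the supermartingale machinery directly: optional stopping for the non-negative supermartingale $\exp(a\cdot S(t))$ at the bounded time $\tau\wedge n$, followed by monotone convergence and the inclusion $\{\tau<\tau_\Lambda\}\subset\{\tau<\infty\}$; all of these steps are valid as written (integrability holds since $\E_x\exp(a\cdot S(t))=\varphi(a)^t\exp(a\cdot x)<\infty$). Your route is marginally longer but entirely elementary and in fact proves the slightly stronger bound with $\{\tau<\infty\}$ in place of $\{\tau<\tau_\Lambda\}$; the paper's route gives, in addition, the exact probabilistic identification of the tilted expectation, which is in the spirit of the twisting arguments used elsewhere in the paper. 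Either argument fully establishes the lemma.
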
 
\begin{proof}
To get this inequality it is sufficient to notice that for any $a\in D$, the exponential
function $a\to \exp(a\cdot x)$ is super-harmonic for the Random walk $(S(t))$ and the
quantity 
\[
\E_x(\exp(a\cdot S(\tau)), \; \tau < \tau_\Lambda)\times \exp(-a\cdot x)
\] 
is equal to the probability that the twisted substochastic random walk $(S_a(t))$ having transition
probabilities $\tilde{p}(x,x') ~=~ p(x,x')\exp(a\cdot(a'-a))$ exits from $\Z^d_+$ before
the first time when at least one of its coordinates $(S_a^i(t))$ with $i\in\Lambda$
becomes negative or zero. 
\end{proof}

Let $e_i=(e_i^1,\cdots,e_i^d)$ denote the unit vector in $\R^d$ with $e_i^i=1$ and
$e_i^j=0$ for $j\not= i$. 

\begin{lemma}\label{lem8-5} Under the hypotheses of Proposition~\ref{pr8-2}, 
  for any $\Lambda'\subset\{1,\ldots,d\}$ such that
  $\Lambda(M)\subset\Lambda'\not=\{1,\ldots,d\}$,  there are $\delta > 0$ and $\sigma > 0$
  for which the points 
\[
\tilde{a}_{\Lambda'} = -\delta \sum_{i\in\Lambda'^c} M^i e_i + \sigma \sum_{i\in\Lambda(M)}
e_i\quad \text{ and } \quad \hat{a}_{\Lambda'} ~=~ -\delta \sum_{i\in\Lambda'^c} M^i e_i + \sigma \sum_{i\in\Lambda'} e_i
\]
belong to the set  $D$. 
\end{lemma}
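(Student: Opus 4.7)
The plan is to exploit a Taylor expansion of $\varphi$ around the origin, since $\varphi(0)=1$ (because $\mu$ is a probability measure and $(S(t))$ is an honest random walk on $\Z^d$), and since by Assumption (A2) the jump generating function $\varphi$ is real-analytic, in particular $C^2$, in a neighbourhood of $D\ni 0$. Recall $\nabla\varphi(0)=M$, so for $a$ in a fixed small ball around $0$ one has the bound
\[
\varphi(a) \leq 1 + M\cdot a + C_0 |a|^2
\]
with a constant $C_0$ depending only on $\mu$.

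Next I would compute the linear terms $M\cdot\tilde a_{\Lambda'}$ and $M\cdot\hat a_{\Lambda'}$ using the definitions. Since $M^i=0$ for $i\in\Lambda(M)$ and $\Lambda(M)\subset\Lambda'$, every index $i\in\Lambda'^c$ lies in $\Lambda(M)^c$, so $M^i\neq 0$ and by the non-negativity of the coordinates of $M$ actually $M^i>0$. Hence
\[
M\cdot\tilde a_{\Lambda'} = -\delta\!\!\sum_{i\in\Lambda'^c}(M^i)^2, \qquad M\cdot\hat a_{\Lambda'} = -\delta\!\!\sum_{i\in\Lambda'^c}(M^i)^2 + \sigma\!\!\sum_{i\in\Lambda'\setminus\Lambda(M)} M^i,
\]
using that the $\sigma$-contribution to $\tilde a_{\Lambda'}$ vanishes after dotting with $M$ (because those coordinates correspond to $\Lambda(M)$) while for $\hat a_{\Lambda'}$ only the indices in $\Lambda'\setminus\Lambda(M)$ survive. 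Denote $\alpha:=\sum_{i\in\Lambda'^c}(M^i)^2>0$ (which is strictly positive because $\Lambda'\neq\{1,\dots,d\}$) and $\beta:=\sum_{i\in\Lambda'\setminus\Lambda(M)}M^i\geq 0$.

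Then the main balancing step is to couple $\sigma$ to $\delta$. I would choose a small constant $\eta>0$ with $\eta\beta\leq \alpha/2$ and set $\sigma=\eta\delta$, so that
\[
M\cdot\tilde a_{\Lambda'} \leq -\delta\alpha, \qquad M\cdot\hat a_{\Lambda'} \leq -\delta\alpha/2.
\]
With this choice $|\tilde a_{\Lambda'}|$ and $|\hat a_{\Lambda'}|$ are both $O(\delta)$, so the Taylor estimate yields
\[
\varphi(\tilde a_{\Lambda'})\leq 1 -\delta\alpha + C_1\delta^2, \qquad \varphi(\hat a_{\Lambda'})\leq 1-\delta\alpha/2 + C_1\delta^2
\]
for some $C_1>0$. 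Taking $\delta>0$ small enough so that $C_1\delta<\alpha/2$ forces both right-hand sides to be strictly less than $1$, hence $\tilde a_{\Lambda'},\hat a_{\Lambda'}\in D$.

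The only subtle point is the simultaneous choice of $\delta$ and $\sigma$ that works for both points; this is resolved by the observation that the coupling $\sigma=\eta\delta$ with $\eta$ small enough is driven entirely by the worse (i.e.\ $\hat a_{\Lambda'}$) case, and the case of $\tilde a_{\Lambda'}$ comes for free because the $\sigma$-term there is automatically $M$-orthogonal. Nothing beyond (A1)--(A3) and the structure of $\Lambda(M)\subset\Lambda'\neq\{1,\dots,d\}$ is used, so no further analytic input should be needed.
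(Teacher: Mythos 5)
Your proof is correct and follows essentially the same route as the paper: both arguments rest on $\varphi(0)=1$, $\nabla\varphi(0)=M$ and $M^i>0$ for $i\in\Lambda'^c$, so that moving a small distance $\delta$ in the direction $-\sum_{i\in\Lambda'^c}M^ie_i$ pushes $\varphi$ strictly below $1$, after which a sufficiently small $\sigma$-perturbation keeps both points in $D$. The only (harmless) difference is that the paper justifies the $\sigma$-step qualitatively, by observing that $a_{\Lambda'}=-\delta\sum_{i\in\Lambda'^c}M^ie_i$ lies in the interior of $D$ and invoking openness, whereas you make it quantitative via a second-order Taylor bound together with the coupling $\sigma=\eta\delta$.
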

\begin{proof} Indeed, recall that $\Lambda(M) ~\dot=~ \{i\in\{1,\ldots,d\} : M^i =
  0\}$. Hence, for $i\not\in \Lambda(M)$, under the hypotheses of Proposition~\ref{pr8-2}, 
\[
M^i ~\dot=~ \bigl(\nabla\varphi(0)\bigr)^i ~\dot=~ \left.\frac{\partial}{\partial
  a^i}\varphi(a) \right|_{a=0} ~>~0
\]
and consequently, for some $\delta > 0$ small enough and any $\Lambda'\subset\{1,\ldots,d\}$
such that $\Lambda(M)\subset\Lambda'\not=\{1,\ldots,d\}$, 
\[
\varphi\Bigl( - \delta\sum_{i\in\Lambda'^c} M^ie_i\Bigr) ~<~ \varphi(0) ~=~ 1
\]
 The point 
\[
a_{\Lambda'} ~\dot=~ - \delta\sum_{i\in\Lambda'^c} M^ie_i 
\]
belongs therefore to the interior of the set $D$ and consequently there is $\sigma >0$
for which the points 
\[
\tilde{a}_{\Lambda'} = a_{\Lambda'} + \sigma \sum_{i\in\Lambda(M)}
e_i \quad \text{ and } \quad \hat{a}_{\Lambda'} ~=~ a_{\Lambda'} + \sigma\sum_{i\in\Lambda'} e_i
\]
also belong to the interior of the set $D$. 
\end{proof}

\begin{proof}[Proof of Proposition~\ref{pr8-2}] Recall that $\tau_i$ denotes the first
  time when the $i$-th coordinate of the random walk $(S(t))$ becomes negative or zero, 
\[
\tau ~\dot=~ \min_{i=1,\ldots, d} \tau_i,   \quad \quad \text{ and } \quad \quad \tau_{\Lambda}
~\dot=~ \min_{i\in\Lambda} \tau_i 
\]
Hence, on the event $\{\tau < \tau_\Lambda\}$, one has 
$S^i(\tau) > 0$ for all $i\in\Lambda$ and $S^i(\tau) \leq 0$ for some $i\not\in\Lambda$.
Letting   $\Lambda_+(x) ~=~ \{i\in\{1,\ldots, d\} : x^i > 0\}$,  
we get  therefore 
\be\label{eq8-10}
\E_x(\exp(\eps|S(\tau)|), \, \tau < \tau_\Lambda) ~= \sum_{\substack{\Lambda' : ~\Lambda \subset \Lambda'\\\Lambda'\not=\{1,\ldots,d\}}} \E_x\bigl(
\exp(\eps|S(\tau)|), \, \Lambda_+(S(\tau)) = \Lambda', \, \tau <
\tau_\Lambda\bigr).
\ee
Furthermore,  by Lemma~\ref{lem8-5}, for $\Lambda=\Lambda(M)$ and any $\Lambda'\subset\{1,\ldots,d\}$ such that
$\Lambda\subset\Lambda'\not=\{1,\ldots,d\}$ there are $\delta > 0$ and $\sigma > 0$ for
which 
\[
\hat{a}_{\Lambda'} ~\dot=~ -\delta \sum_{i\in\Lambda'^c} M^i e_i + \sigma \sum_{i\in\Lambda'}
e_i ~\in~ D.
\]
For such a point $\hat{a}_{\Lambda'}=(\hat{a}_{\Lambda'}^1,\ldots,\hat{a}_{\Lambda'}^d)$, on the event $\{\Lambda_+(S(\tau)) = \Lambda'\}$, 
\begin{align*}
\hat{a}_{\Lambda'}\cdot S(\tau) &~=~ -\delta \sum_{i\in\Lambda'^c} M^i S^i(\tau) + \sigma \sum_{i\in\Lambda'}
S^i(\tau) \\&~=~  \delta \sum_{i\in\Lambda'^c} M^i |S^i(\tau)| + \sigma \sum_{i\in\Lambda'}
|S^i(\tau)| ~\geq~  |S(\tau)|  \min\{\sigma, \delta\min_{i\in\Lambda^c} M^i\}.  
\end{align*}
Using this inequality at the right hand side of \eqref{eq8-10} with  
\[
0 < \eps < \min\{\sigma, \delta\min_{i\in\Lambda^c} M^i\}
\]
we obtain 
\[
\E_x(\exp(\eps|S(\tau)|), \; \tau < \tau_\Lambda) \leq \sum_{\substack{\Lambda' :~ \Lambda \subset \Lambda'\\\Lambda'\not=\{1,\ldots,d\}}} \!\E_x\left(
\exp(\hat{a}_{\Lambda'}\cdot S(\tau)), \; \Lambda_+(S(\tau)) = \Lambda', \; \tau <
\tau_\Lambda\right) 
\]
where by Lemma~\ref{lem8-4},
\[
\E_x\left(
\exp(\hat{a}_{\Lambda'}\cdot S(\tau)), \; \Lambda_+(S(\tau)) = \Lambda', \; \tau <
\tau_\Lambda\right)  ~\leq~ \exp(\hat{a}_{\Lambda'}\cdot x)
\]
and consequently, 
\[
\E_x(\exp(\eps|S(\tau)|), \; \tau < \tau_\Lambda)  ~\leq~ \sum_{\substack{\Lambda' :~
    \Lambda \subset \Lambda'\\\Lambda'\not=\{1,\ldots,d\}}}  \exp(\hat{a}_{\Lambda'}\cdot
x) ~<~ \infty.  
\]
The first assertion of Proposition~\ref{pr8-2} is therefore proved. To prove the second
assertion of this proposition we use again Lemmas~\ref{lem8-4} and ~\ref{lem8-5}  but with the points
\[
\tilde{a}_{\Lambda'} = -\delta \sum_{i\in\Lambda'^c} M^i e_i + \sigma \sum_{i\in\Lambda(M)}
e_i.
\]
The same arguments as above shows that on the event $\{\Lambda\subset \Lambda_+(S(\tau)) = \Lambda'\}$, 
\begin{align*}
\tilde{a}_{\Lambda'}\cdot S(\tau) &~=~ -\delta \sum_{i\in\Lambda'^c} M^i S^i(\tau) + \sigma \sum_{i\in\Lambda}
S^i(\tau) \\&~=~  \delta \sum_{i\in\Lambda'^c} M^i |S^i(\tau)| + \sigma \sum_{i\in\Lambda}
|S^i(\tau)| ~\geq~ \sigma \sum_{i\in\Lambda}
|S^i(\tau)|  ~=~ \sigma |S^\Lambda(\tau)|  
\end{align*}
and consequently, for $0 < \eps \leq \sigma$, 
\begin{align*}
\E_x(\exp(\eps|S^\Lambda(\tau)|), \; \tau < \tau_\Lambda)  &\leq \!\sum_{\substack{\Lambda'
    :~ \Lambda \subset \Lambda'\\\Lambda'\not=\{1,\ldots,d\}}} \!\E_x\left( 
\exp(\tilde{a}_{\Lambda'}\cdot S(\tau)), \, \Lambda_+(S(\tau)) = \Lambda', \, \tau < 
\tau_\Lambda\right) \\  
&\leq~ \sum_{\substack{\Lambda' :~
    \Lambda \subset \Lambda'\\\Lambda'\not=\{1,\ldots,d\}}}  \exp(\tilde{a}_{\Lambda'}\cdot
x) ~<~ \infty.  
\end{align*}
Since for any $x^\Lambda\in\Z_+^{\Lambda}$, and $\Lambda'\subset\{1,\ldots,d\}$ such that
$\Lambda \subset \Lambda'\not=\{1,\ldots, d\}$,
\[
\tilde{a}_{\Lambda'}\cdot x ~=~ - \delta \sum_{i\in\Lambda'^c} M^i x^i + \sigma
\sum_{i\in\Lambda} x^i ~\to~ - \infty \quad \text{ as } \quad \min_{i\in\Lambda^c} x^i
\to \infty, 
\]
the last inequality proves the second
assertion of Proposition~\ref{lem8-2}. 
\end{proof}

\subsection{Harmonic functions}

When combined with Corollary~\ref{cor4-1}, Proposition~\ref{pr8-2} implies the following
statement.

\begin{prop}\label{pr8-3} Suppose that the conditions (A1)-(A3) are satisfied, the
  coordinates of the vector $M$ are non-negative and let $\Lambda=\Lambda(M)$. 
  Then for any  harmonic function $f>0$ of the induced Markov chain $(X_M(t))$, the
  function 
\be\label{e8-9}
h(x) ~=~ f(x^\Lambda) -
\E_x\bigl(f(S^\Lambda(\tau)), \; \tau < \tau_\Lambda)\bigr) 
\ee
is finite, strictly positive and harmonic for the Markov chain $(Z(t))$. 
\end{prop}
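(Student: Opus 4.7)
The plan is to verify, in order, finiteness of $h$, nonnegativity via optional stopping, harmonicity via the Markov property, and strict positivity via an escape argument combined with irreducibility. The crucial pairing throughout is Corollary~\ref{cor4-1} — which forces any positive harmonic function $f$ of $(X_M(t))$ to satisfy $f(u) \leq C(\eps)\exp(\eps|u|)$ for every $\eps > 0$ — together with Proposition~\ref{pr8-2}, which makes the corresponding exponential moments of $|S^\Lambda(\tau)|$ on $\{\tau<\tau_\Lambda\}$ finite for all small enough $\eps$. Choosing $\eps$ below the threshold $\eps_0$ of Proposition~\ref{pr8-2}, these two bounds combine to give $\E_x(f(S^\Lambda(\tau)), \tau<\tau_\Lambda) < \infty$, so $h(x)$ is a well-defined real number.

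For nonnegativity I would observe that $x \mapsto f(x^\Lambda)$ is harmonic on $\Z_+^{\Lambda,d}$ for the local Markov-additive process $(Z_\Lambda(t))$, since its transitions depend only on the $\Lambda$-coordinates and $f$ is harmonic for $(X_M(t)) = (Z_\Lambda^\Lambda(t))$. Applying optional stopping at the bounded time $\tau\wedge n$, and using $\tau \leq \tau_\Lambda$ so that
\[
\{\tau_\Lambda > \tau\wedge n\} = \{\tau \leq n, \, \tau < \tau_\Lambda\} \sqcup \{\tau > n\},
\]
one obtains
\[
f(x^\Lambda) = \E_x(f(S^\Lambda(\tau)), \, \tau \leq n, \, \tau < \tau_\Lambda) + \E_x(f(S^\Lambda(n)), \, \tau > n).
\]
Monotone convergence on the first term as $n\to\infty$ then identifies $h(x) = \lim_n \E_x(f(S^\Lambda(n)), \tau > n) \geq 0$.

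Harmonicity of $h$ for $(Z(t))$ is then a computation with the Markov property at time $1$. Splitting $\E_x(h(S(1)), \tau>1)$ into its two summands: using $\{\tau > 1\} \subset \{\tau_\Lambda > 1\}$ and harmonicity of $f$ for $(X_M(t))$, the first piece becomes $f(x^\Lambda) - \E_x(f(S^\Lambda(\tau)), \tau = 1 < \tau_\Lambda)$; while the strong Markov property at time $1$ rewrites the second piece as $\E_x(f(S^\Lambda(\tau)), 1 < \tau < \tau_\Lambda)$. The two exit-time contributions fuse into $\E_x(f(S^\Lambda(\tau)), \tau < \tau_\Lambda)$, and $\E_x(h(S(1)), \tau > 1) = h(x)$ drops out. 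Absolute convergence of every sum involved is underwritten by the bound of the first paragraph.

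For strict positivity I invoke the second — and more delicate — assertion of Proposition~\ref{pr8-2}: since $\E_{\tilde x}(\exp(\eps|S^\Lambda(\tau)|), \tau<\tau_\Lambda)\to 0$ as $\min_{i\in\Lambda^c}\tilde x^i\to\infty$, one can fix $\tilde x \in \Z^d_+$ with a prescribed $\tilde x^\Lambda$ and with $\tilde x^i$ arbitrarily large for $i\in\Lambda^c$, so that $h(\tilde x) \geq f(\tilde x^\Lambda)/2 > 0$. Irreducibility of $(Z(t))$ on $\Z^d_+$ (assumption (A1) with $\Lambda = \{1,\ldots,d\}$) then furnishes, for every $x \in \Z^d_+$, a finite path of positive transition weight from $x$ to $\tilde x$ staying inside $\Z^d_+$; iterating the already established harmonicity yields $h(x) \geq \P_x(Z(k) = \tilde x)\, h(\tilde x) > 0$. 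The main obstacle, in my view, is the optional-stopping step: the nonnegativity of $h$ is not apparent from its definition as a difference and really depends on being able to promote the bounded-time martingale identity to the unbounded stopping time $\tau$, which is exactly what Corollary~\ref{cor4-1} combined with Proposition~\ref{pr8-2} enables.
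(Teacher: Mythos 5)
Your proof is correct and follows essentially the same route as the paper: harmonicity of $x\mapsto f(x^\Lambda)$ for the local process $(Z_\Lambda(t))$ yields finiteness and nonnegativity (your explicit optional stopping at $\tau\wedge n$ is just a more detailed version of the paper's supermartingale inequality $f(x^\Lambda)\geq \E_x(f(S^\Lambda(\tau)),\,\tau<\tau_\Lambda)$), the one-step Markov decomposition gives harmonicity of $h$, and Corollary~\ref{cor4-1} combined with Proposition~\ref{pr8-2} produces a point $\tilde x$ with $h(\tilde x)>0$, after which irreducibility (the paper phrases it via the Harnack inequality) spreads strict positivity to all of $\Z^d_+$. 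The only cosmetic difference is that you derive finiteness from the exponential bounds, while the paper reads it off the supermartingale inequality.
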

\begin{proof} If $f>0$ is a harmonic function of the induced Markov chain
  $(X_M(t))$ then the function $x\to f(x^\Lambda)$ is harmonic for the local Markov
  process $Z_\Lambda(t)=Z_{\Lambda(M)}(t) = (X_M(t),Y_M(t))$ because according to the definition of the induced Markov chain
  $(X_M(t))$ (see Section~\ref{sec2}), 
\[
X_M(t) ~=~ Z_\Lambda^\Lambda(t), \quad \forall t\geq 0. 
\]
Since the local Markov chain $Z_\Lambda(t)$ is identical to the random walk $S(t)$ for $t <
\tau_\Lambda$ and is killed upon the time $\tau_\Lambda$, from this it follows that 
\[
f(x^\Lambda) ~\geq~ \E_x(f(Z_\Lambda^\Lambda(\tau_{\Lambda^c}))) ~=~
\E_x(f(S^\Lambda(\tau_{\Lambda^c})), \; \tau_{\Lambda^c} < \tau_\Lambda) ~=~
\E_x(f(S^\Lambda(\tau)), \; \tau < \tau_\Lambda) 
\]
where the last equality holds because $\tau ~=~ \min\{\tau_\Lambda,
\tau_{\Lambda^c}\}$. The function \eqref{e8-9} is therefore finite and
non-negative. Furthermore, for any $x\in\Z^d_+$, using again the fact that the function
$x\to f(x^\Lambda)$ is harmonic for $(Z_\Lambda(t))$, one gets 
\begin{align*}
f(x^\Lambda) &~=~ \E_x(f(Z^\Lambda_\Lambda(1))) ~=~ \E_x(f(Z^\Lambda_\Lambda(1)), \; \tau =
1) + \E_x(f(Z^\Lambda_\Lambda(1)), \; \tau > 1)\\
&~=~  \E_x(f(S^\Lambda(1)), \; \tau =
1 < \tau_\Lambda) + \E_x(f(S^\Lambda(1)),  \; \tau > 1).
\end{align*}
Since moreover, 
\[
\E_x\bigl(f(S^\Lambda(\tau)), \; \tau < \tau_\Lambda\bigr) ~=~
\E_x\bigl(f(S^\Lambda(\tau)), \; \tau = 1 < \tau_\Lambda\bigr) +
\E_x\bigl(f(S^\Lambda(\tau)), \; 1 <\tau < \tau_\Lambda\bigr)
\]
then for any $x\in\Z^d_+$, 
\begin{align*}
h(x) &~=~ f(x^\Lambda) ~-~ \E_x\bigl(f(S^\Lambda(\tau)), \; \tau < \tau_\Lambda\bigr) \\
&~=~ \E_x(f(S^\Lambda(1)),  \; \tau > 1) - \E_x\bigl(f(S^\Lambda(\tau)), \; 1 <\tau <
\tau_\Lambda\bigr)\\
&~=~ \sum_{w\in\Z^d_+} \mu(w-x) f(w^\Lambda) - \sum_{w\in\Z^d_+}
\mu(w-x)\E_w\bigl(f(S^\Lambda(\tau)), \; \tau < \tau_\Lambda\bigr) \\
&~=~ \sum_{w\in\Z^d_+} \mu(w-x) h(w)
\end{align*}
and consequently, the function $h$ is harmonic for the Markov chain $(Z(t))$. Now, to complete
the proof of this proposition we have to show that the function $h$ is strictly
positive on $\Z^d_+$. For this we use Propositions~\ref{pr8-2} and
Corollary~\ref{cor4-1}. Recall that by Corollary~\ref{cor4-1}, 
\[
\limsup_{|u|\to\infty} \frac{1}{|u|} \log f(u) ~\leq~ 0. 
\]
Hence, for any $\eps > 0$ there is $C > 0$ such that 
\[
0 < f(u) ~\leq~ C\exp(\eps|u|), \quad \forall u\in\Z^\Lambda_+ 
\]
and consequently, for any $x\in\Z^d_+$, 
\[
h(x) ~\geq~ f(x^\Lambda) - C ~\E_x\bigl(\exp(\eps|S^\Lambda(\tau)|), \; \tau < \tau_\Lambda\bigr).
\]
Since by Proposition~\ref{pr8-2}, for $\eps > 0$ small enough and any
$x^\Lambda\in\Z_+^\Lambda$, 
\[
\E_x\bigl(\exp(\eps|S^\Lambda(\tau)|), \; \tau < \tau_\Lambda\bigr) ~\to~ 0 \quad \text{
  as } \; \min_{i\in\Lambda} x^i\to +\infty,
\]
from this it follows that there is $\tilde{x}\in\Z^d_+$ ( with a large $\min_{i\in\Lambda} \tilde{x}^i$) for
which $h(\tilde{x}) > 0$.  
Using finally the Harnack inequality 
\[
h(x) ~\geq~ h(\tilde{x}) ~\P_x(Z(t)=\tilde{x} \quad \text{ for some } \; t\geq 0) 
\]
and the fact that under the hypotheses (A2), the Markov chain $(Z(t))$ is
irreducible on $\Z^d_+$, we conclude that $h(x) > 0$ for all $x\in\Z^d_+$. 
\end{proof}

\subsection{Proof of Theorem~\ref{th1}} We are ready now to complete the proof of
Theorem~\ref{th1}. Indeed, suppose that the conditions (A1)-(A3) are satisfied and the
coordinates of the mean vector $M$ are non-negative. Then by Proposition~\ref{pr8-3}, for
any  harmonic function $f>0$ of the induced Markov chain $(X_M(t))$, the 
  function 
\[
h(x) ~=~ f(x^\Lambda) -
\E_x\bigl(f(S^\Lambda(\tau)), \; \tau < \tau_\Lambda)\bigr) 
\]
with $\Lambda=\Lambda(M)~\dot=~ \{i : M^i = 0\}$ is finite, strictly positive and harmonic for the Markov chain $(Z(t))$. 
Hence, the first assertion of Theorem~\ref{th1} is already proved.  

Suppose now that the sequence of points 
$z_n\in\Z^d_+$ with $\lim_n |z_n| = \infty$ and $\lim_n z_n/|z_n| = M/|M|$ is fundamental
for the local random walk $(Z_{\Lambda}(t))$ with 
$\Lambda = \Lambda(M)~\dot=~ \{i : M^i = 0\}$. Then by Proposition~\ref{pr7-1}, there is a
harmonic function $f>0$ of the induce Markov chain $(X_M(t))$ such that 
\be\label{e8-10}
\lim_{n\to\infty} G_{\Lambda}(x,z_n)/G_{\Lambda}(x_0,z_n) ~=~
f(x^{\Lambda})/f(x_0^{\Lambda}), \quad \forall x\in\Z^{\Lambda,d}_+
\ee
That was the first step of our proof. Now, we use the renewal equation \eqref{e2-1} 
to prove that for such a sequence $(z_n)$, for any $x\in\Z_+^d$, 
\be\label{e8-11}
\lim_{n\to\infty} G(x,z_n)/G_\Lambda(x_0,z_n) ~=~ \frac{1}{f(x_0^\Lambda)} \Bigl(f(x^\Lambda) -
\E_x\bigl(f(S^\Lambda(\tau), \; \tau < \tau_\Lambda)\bigr)\Bigr). 
\ee
By Proposition~\ref{pr8-1}, the right hand side of the above renewal equation can be
decomposed into a main part 
\[
\Xi_{\delta,\Lambda}(x,z_n) ~\dot=~ G_\Lambda(x,z_n) - \E_x( G_\Lambda(S(\tau),z_n), \;
\tau < \tau_\Lambda, \; |S(\tau)| < \delta|z_n|) 
\]
and the corresponding negligible part $
\Xi_{\delta,\Lambda}(x,z_n) - G(x,z_n)$ 
so that for any $\delta > 0$ and $x\in\Z^d_+$, 
\[
\lim_{n\to\infty} {\Xi_{\delta,\Lambda}(x,z_n)}/{G(x,z_n)} ~=~ 1.
\]
From this it follows that 
\begin{align}
\lim_{n\to\infty} &{G(x,z_n)}/{G_{\Lambda}(x_0,z_n)} ~=~ 
\lim_{n\to\infty} {\Xi_{\delta,\Lambda}(x,z_n)}/{G_{\Lambda}(x_0,z_n)} \nonumber\\&=~
\lim_{n\to\infty}  \left(\frac{G_\Lambda(x,z_n)}{G_{\Lambda}(x_0,z_n)}  - \E_x\left(
\frac{G_\Lambda(S(\tau) ,z_n)}{G_\Lambda(x_0,z_n)}, \; \tau < \tau_\Lambda, \; |S(\tau)| < \delta|z_n|\right)  \right)\label{e8-12}
\end{align}
 whenever the last limit exists. By Proposition~\ref{pr6-1}, for any $\eps > 0$
there are $N>0$, $C>0$ and $\delta >0$ such that   for any $n \geq N$,
\be\label{e8-13}
\1_{\{|w| <\delta |z_n|\}} {G_\Lambda(w,z_n)}/{G_\Lambda(x_0,z_n)} ~\leq~ C\exp(\eps|w|), \quad
\forall w\in\Z^{\Lambda,d}_+. 
\ee
Since  by Proposition~~\ref{pr8-1},  for $\eps>0$ small enough,
\[
\E_x( \exp(\eps|S(\tau)|), \; \tau < \tau_\Lambda)   ~<~ \infty,
\]
using dominated convergence theorem
from \eqref{e8-10} and \eqref{e8-13} it follows that for some $\delta > 0$, the limit
at the right hand side of \eqref{e8-12} exists and is equal to the right hand side of \eqref{e8-11} 
Relation \eqref{e8-11} is therefore proved. Since by Proposition~\ref{pr8-3}, the right
hand side of \eqref{e8-11} is non-zero, we conclude that 
\be\label{e8-14}
\lim_{n\to\infty} \frac{G(x,z_n)}{G(x_0,z_n)} ~=~ \frac{f(x^\Lambda) -
\E_x\bigl(f(S^\Lambda(\tau), \; \tau < \tau_\Lambda)\bigr)}{f(x_0^\Lambda) -
\E_{x_0}\bigl(f(S^\Lambda(\tau), \; \tau < \tau_\Lambda)\bigr)},\quad \forall x\in\Z^d_+. 
\ee
Any fundamental sequence $z_n\in\Z_+^d$ of the local random walk $(Z_{\Lambda(M)}(t))$,
with $\lim_n|z_n|=\infty$ and $\lim_n z_n/|z_n| = M/|M|$, is
therefore fundamental for the random walk $(Z(t))$. 

Consider now a sequence $x_n\in\Z_+^d$ with $\lim_n|x_n|=\infty$ and $\lim_n x_n/|x_n| =
M/|M|$ which is fundamental for the random walk $(Z(t))$ and let 
\be\label{e8-15}
h(x) ~=~ \lim_{n\to\infty} \frac{G(x,x_n)}{G(x_0,x_n)}, \quad \forall x\in\Z^d_+.
\ee
Then by compactness, there is a
subsequence $(x_{n_k})$ which is also fundamental for the local random walk
$(Z_{\Lambda(M)}(t))$ and consequently, there exist a harmonic function $f>0$ of the
induced Markov chain $(X_M(t))$ such that 
\[
\lim_{k\to\infty} \frac{G(x,x_{n_k})}{G(x_0,x_{n_k})} ~=~ \frac{f(x^\Lambda) -
\E_x\bigl(f(S^\Lambda(\tau), \; \tau < \tau_\Lambda)\bigr)}{f(x_0^\Lambda) -
\E_{x_0}\bigl(f(S^\Lambda(\tau), \; \tau < \tau_\Lambda)\bigr)},\quad \forall x\in\Z^d_+
\]
(we use here \eqref{e8-14} with $z_k = x_{n_k}$). Comparison of the last relation with
\eqref{e8-15} shows that 
\[
h(x) ~=~ \frac{f(x^\Lambda) -
\E_x\bigl(f(S^\Lambda(\tau), \; \tau < \tau_\Lambda)\bigr)}{f(x_0^\Lambda) -
\E_{x_0}\bigl(f(S^\Lambda(\tau), \; \tau < \tau_\Lambda)\bigr)},\quad \forall x\in\Z^d_+
\]
and consequently, 
\be\label{e8-16}
\lim_{n\to\infty} \frac{G(x,x_{n})}{G(x_0,x_{n})} ~=~ \frac{f(x^\Lambda) -
\E_x\bigl(f(S^\Lambda(\tau), \; \tau < \tau_\Lambda)\bigr)}{f(x_0^\Lambda) -
\E_{x_0}\bigl(f(S^\Lambda(\tau), \; \tau < \tau_\Lambda)\bigr)},\quad \forall x\in\Z^d_+.
\ee
The second assertion of Theorem~\ref{th1} is therefore also proved. 

\medskip 
Suppose finally that a harmonic function $f>0$ of the induced Markov chain $(X_{M}(t))$ is
  unique to constant multiples and let a sequence  $x_n\in
  \Z_+^{d}$ be such that $\lim_n |x_n| = \infty$ and $\lim_n 
  x_n/|x_n| = M/|M|$. Then for any subsequence  $z_k = x_{n_k}$ which is fundamental for the Markov
  chain $(Z(t))$, one has 
\[
\lim_{n\to\infty} \frac{G(x,x_{n_k})}{G(x_0,x_{n_k})} ~=~ \frac{f(x^\Lambda) -
\E_x\bigl(f(S^\Lambda(\tau), \; \tau < \tau_\Lambda)\bigr)}{f(x_0^\Lambda) -
\E_{x_0}\bigl(f(S^\Lambda(\tau), \; \tau < \tau_\Lambda)\bigr)},\quad \forall x\in\Z^d_+.
\]
 with the same function $f$. By
  compactness, from this it follows that the sequence $(x_n)$ is fundamental itself and
  satisfies \eqref{e8-16}. Theorem~\ref{th1} is therefore proved.

\section{Proofs of Proposition~\ref{pr1-1} and Proposition~\ref{pr1-2}}\label{sec9} 
Remark first of all that Proposition~\ref{pr1-1} is a particular case of
Proposition~\ref{pr1-2}, when the set $\Lambda(M) ~=~ \{i : M^i = 0\}$ contains only one
point. The proof of Proposition~\ref{pr1-1} is therefore the same and even simpler than
the proof of Proposition~\ref{pr1-2}. To prove Proposition~\ref{pr1-2} we use the results of of Picardello and
  Woess~\cite{Picardello-Woess}. Before 
formulating these results we recall  some useful properties of minimal $t$-harmonic functions
  and the convergence norm of transition kernels. 

\subsection{Convergence norm and $t$-harmonic functions} 

\begin{defi} 
Let $P=(p(x,x'), \; x,x'\in E)$ be a transition kernel of a time-homogeneous, irreducible Markov
chains $\xi=(\xi(t))$  on a countable, discrete state spaces $E$. 
\begin{enumerate}
\item For $t > 0$, a positive function $f : E\to\R_+$ is
said to be $t$-harmonic for $P$ if it satisfies the equality $Pf = tf$ (i.e. if it is an
eigenvector of the transition operator $P$ with respect to the eigenvalue $t$).   
\item A $t$-harmonic function $f>0$ is said to be minimal if for any  $t$-harmonic
  function $\tilde{f}>0$ the inequality $\tilde{f}\leq f$ implies the equality $\tilde{f}
  = c f$ with some $c>0$. 
\item The convergence norm $\rho(P)$ of $P$ is defined by 
\[
\rho(P) ~=~ \limsup_{n\to\infty} \left(\P_x(\xi(n)=x')\right)^{1/n}.
\]
By irreducibility, $\rho(P)$ does not depend on $x,x'\in E$ (see~Seneta~\cite{Seneta}). 
\end{enumerate}
\end{defi}
By Perron-Frobenius theorem~(see~\cite{Seneta}), for finite state space $E$, the quantity
$\rho(P)$ is equal to the maximal real eigenvalue of the matrix $P$. When the state space
$E$ is infinite (and countable), Theorem~6.3 of Seneta~\cite{Seneta} gives another equivalent representation of the convergence norm
of an irreducible transition kernel $P$ : 
\be\label{e9-1}
\rho(P) ~=~ \sup_{K\subset E} \rho_K(P) 
\ee
where the supremum is taken over all finite subsets $K\subset E$, and for any finite set
$K\subset E$, $\rho_K(P)$ is the maximal real eigenvalue of the truncated transition
matrix $\bigl(p(x,x'), \; x,x'\in K\bigr)$.

Recall finally that for $t > 0$, the set of $t$-harmonic functions of an irreducible Markov
kernel $P$ on a countable state space $E$ is nonvoid only if $t\geq\rho(P)$,
see~Pruitt~\cite{Pruitt}. For $t=1$, the $t$-harmonic functions are called harmonic. 

\medskip

Consider now a probability measure $\nu$ on $\Z$, let $(\xi(t))$ be a random walk on $\Z$ with transition probabilities 
$p(k,k') ~=~ \nu(k'-k)$ and let $T$ denote the first time when the random walk $(\xi(t))$
becomes negative or zero :
\[
T = \inf\{n\geq 0 : \xi(n) \leq 0\}.
\]
To prove Propositions~\ref{pr1-1} and ~\ref{pr1-2} we need to identify the convergence norm and the
harmonic functions of the substochastic Markov kernel 
\[
P_+ = (p(k,k') = \nu(k'-k),
\;  k,k' > 0)
\] 
on $\Z_+~\dot=~\{k\in\Z : k > 0\}$. 
The assumptions we need on the measure $\nu$ are the following 
{\em 
\begin{itemize}
\item[(B1)] the substochastic matrix $P_+ = (p(k,k') = \nu(k'-k), \;  k,k' > 0)$ is
  irreducible,  
\item[(B2)] $\sum_{k\in\Z} \nu(k) \, k ~=~ 0$ and 
\item[(B3)] for some $\eps > 0$,
$$\sum_{k\in\Z} \nu(k) \, \exp(\eps|k|) ~<~ \infty.$$
\end{itemize}
}

\begin{prop}\label{pr9-1} Under the hypotheses (B1)-(B3), $\rho(P_+) ~=~ 1$ and the only
  positive harmonic functions of $P_+$ are the constant multiples of 
\[
f(k) ~=~ k - \E_k(\xi(T)), \quad k\in\Z_+. 
\]
\end{prop}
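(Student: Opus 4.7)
The plan is to handle the three claims of Proposition~\ref{pr9-1} in sequence: existence and harmonicity of $f$ by a first-step analysis, the convergence norm $\rho(P_+)=1$ by combining Pruitt's criterion with a polynomial lower bound on the survival probabilities, and uniqueness by a Doob $f$-transform together with an appeal to the classical results on the random walk conditioned to stay positive.

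First I would verify that $f(k)=k-\E_k(\xi(T))$ is well defined, strictly positive and $P_+$-harmonic. Under (B2)--(B3) the walk $\xi$ has zero mean and finite variance on $\Z$, hence is recurrent, so $T<\infty$ $\P_k$-almost surely; the exponential moment bound (B3) combined with standard overshoot estimates yields $\E_k|\xi(T)|<\infty$. Positivity is immediate from $\xi(T)\le 0<k$. Harmonicity follows by splitting $\E_k\xi(T)=\E_k(\xi(1);T=1)+\E_k(\xi(T);T>1)$, applying the Markov property at time $1$, and using $\E_k\xi(1)=k$ coming from (B2), which yields
\[
(P_+f)(k)=\sum_{k'>0}\nu(k'-k)\bigl(k'-\E_{k'}\xi(T)\bigr)=k-\E_k\xi(T)=f(k).
\]

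For the convergence norm, the existence of the positive $1$-harmonic function $f$ forces $\rho(P_+)\le 1$ by Pruitt's theorem (substochasticity also gives $\rho(P_+)\le 1$). For the reverse inequality I would invoke a local central limit theorem for the mean-zero finite-variance walk killed at $0$ to obtain a polynomial lower bound of the form $\P_k(\xi(n)=k,\,T>n)\ge c_k\,n^{-3/2}$, whose $n$-th root tends to $1$; hence $\rho(P_+)\ge 1$. Alternatively, formula \eqref{e9-1} can be used via finite-section approximation: the principal eigenvalue of the restriction of $P_+$ to $\{1,\ldots,N\}$ tends to $1$ as $N\to\infty$, as can be read off from the restriction of $f$ itself.

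The main obstacle lies in the uniqueness statement, and here is the route I would take. Given any positive $P_+$-harmonic $h$, form the Doob $f$-transform $\tilde P(k,k')\dot=\nu(k'-k)f(k')/f(k)$, which by harmonicity of $f$ is an irreducible stochastic kernel on $\Z_+$ defining a Markov chain $\tilde\xi$ --- the random walk conditioned to stay positive. The ratio $\hat h\dot= h/f$ is then positive and $\tilde P$-harmonic, so the problem reduces to showing that every positive $\tilde P$-harmonic function is constant. Under (B2)--(B3) the chain $\tilde\xi$ drifts to $+\infty$ almost surely and has Martin boundary reduced to a single point, so $\hat h\equiv c$ and therefore $h=cf$; this Liouville-type property for the conditioned walk is precisely the content of Doney~\cite{Doney:02} (see also Example E~27.3 in Chapter VI of Spitzer~\cite{Spitzer}), which the introduction of the paper already invokes in Section~\ref{sec1} for exactly this purpose. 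I expect this to be the genuinely delicate step: the first-step identity for $f$ and the spectral-radius estimate are both essentially routine once one knows what to compute, whereas the triviality of the Martin boundary of $\tilde\xi$ requires real long-time information about the conditioned walk and is not accessible from the ratio limit machinery of Sections~\ref{sec5}--\ref{sec7} alone, since those rely on a non-vanishing direction $M/|M|$ not available here.
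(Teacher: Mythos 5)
Your proposal follows the same overall architecture as the paper: the crux of the uniqueness statement is in both cases outsourced to Doney~\cite{Doney:02} (equivalently Spitzer's Example E~27.3), your Doob $f$-transform reformulation (``the conditioned walk has trivial Martin boundary'') being just an equivalent repackaging of that citation; your preliminary first-step verification that $f$ is finite, positive and harmonic is correct and uses exactly (B2)--(B3) as it should. Where you genuinely diverge is the convergence norm. The paper's argument is elementary: $\rho(P_+)\le 1$ by substochasticity, and for the lower bound it uses Seneta's truncation formula \eqref{e9-1} together with translation invariance of the truncated eigenvalues, $\rho_K(P)=\rho_{k+K}(P)$, so that $\sup_{K\subset\Z_+}\rho_K(P)=\sup_{K\subset\Z}\rho_K(P)=\rho(P)=1$, the last equality coming from recurrence of the free zero-mean, finite-variance walk. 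Your primary route, a polynomial lower bound $\P_k(\xi(n)=k,\,T>n)\ge c_k n^{-3/2}$ from a conditional local limit theorem, does give $\rho(P_+)\ge 1$, but it invokes a substantially heavier piece of fluctuation theory than is needed; your fallback (``the principal eigenvalue of the restriction to $\{1,\ldots,N\}$ tends to $1$, as can be read off from the restriction of $f$'') is not a proof as stated, because the restriction of a harmonic function to a finite window is not an eigenfunction of the truncated matrix and non-symmetric truncations admit no simple variational comparison; the translation-invariance argument is what actually makes the finite-section idea work.

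One point you leave untreated and the paper does not: periodicity. The paper applies Doney's classification (through Lemma~5.3 of \cite{Ignatiouk:06}) only under the extra hypothesis $\nu(0)>0$, and reduces the general case to it by replacing $\nu$ with $\tilde\nu=(1-\theta)\nu+\theta\delta_0$, checking that this lazification changes neither the positive harmonic functions of $P_+$ nor the exit expectations $\E_k(\xi(T))$. If the version of Doney's theorem (or of the Liouville property for the conditioned walk) you invoke carries an aperiodicity-type hypothesis, your argument needs the same reduction; as written, this step is missing, though it is a technical repair rather than a flaw in the strategy.
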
 
\begin{proof} The inequality $\rho(P_+)\leq 1$ is clearly satisfied because the matrix
  $P_+$ is substochastic. To prove that $\rho(P_+)\geq 1$ we  consider a transition kernel 
\[
P ~=~ \bigl(\nu(k'-k), \; k,k'\in\Z\bigr) 
\]
of the homogeneous random walk $(\xi(t))$. Under the hypotheses (H1-(H2), this is
an irreducible random walk on $\Z$ with zero mean and a finite variance. The random walk
$(\xi(t))$ is therefore recurrent and consequently, $\rho(P) ~=~ 1$. Moreover, using
\eqref{e9-1} one gets 
\be\label{e9-1p}
\rho(P) ~=~ \sup_{K\subset \Z} \rho_K(P) \quad \text{ and } \quad \rho(P_+) ~=~
\sup_{K\subset \Z_+} \rho_K(P) 
\ee
where the supremums are taken over  finite sets $K$ and 
$\rho_K(P)$ is the maximal real eigenvalue of the truncated matrix $(\nu(k'-k), \; k,k'\in
K)$. Since the components of the matrix $P$ are invariant with respect to the translations
on $k\in\Z$,  
\[
\rho_K(P) ~=~ \rho_{k+K}(P), \quad \forall k\in\Z
\] 
for any finite set $K\subset\Z$. Hence, the right hand sides of the equalities
\eqref{e9-1p} are equal to each other and  consequently, $\rho(P_+) ~=~ \rho(P) ~=~ 1$. The first assertion of
Proposition~\ref{pr9-1} is therefore proved. 

When $\nu(0) ~>~ 0$, 
the second assertion follows from 
Theorem~1 of Doney~\cite{Doney:02} (see also Example E 27.3 in Chapter VI of
Spitzer~\cite{Spitzer}) and is proved in Lemma~5.3 of the paper ~\cite{Ignatiouk:06}. To
prove  the second assertion for a probability measure $\nu$ with $\nu(0)=0$, it is
sufficient to notice that the  transition kernel $P_+$ has the
same harmonic functions as the modified transition
kernel $\tilde{P}_+ = (\tilde{p}(k,k') ~=~ \tilde{\nu}(k'-k), \;
k,k'\in\Z_+)$ with  
\[
\tilde{\nu}(k) ~=~ \begin{cases}(1-\theta) \nu(k) &\text{if $k\not= 0$},\\
\theta &\text{for $k=0$}, \quad \text{ where \; $0<\theta < 1$,}
\end{cases}
\]
and that the modified random walk $(\tilde{\xi}(t))$ with transition
probabilities $\tilde{p}(k,k') = \tilde\nu(k'-k)$ and $\tilde{T} = \inf\{ n \geq 0 : \; \tilde{\xi}(t)
\leq 0\}$ satisfy  the equality $\E_k(\tilde{\xi}(\tilde{T})) ~=~ \E_k(\xi(T))$ for all
$k\in\Z_+$. \end{proof}

\subsection{Cartesian products of Markov chains}
Let $P=(P(u,u'), \; u,u'\in E_1)$ and $Q=(Q(u,u'), \; u,u'\in E_2)$ be two transition kernels of two  time-homogeneous Markov
chains $\xi(t)$ and $\eta(t)$ on countable, discrete state spaces $E_1$ and $E_2$ respectively. A Markov chain on the
Cartesian product $E_1\times E_2$ having transition kernel 
\be\label{e9-2}
R_a = a\, P\otimes\1_{E_2} + (1-a) \, \1_{E_1}\otimes Q
\ee
with some $0 < a < 1$, where $\1_{E_i}$ denotes the identity operator on $E_i$, is usually
called a {\em Cartesian product of Markov chains} $\xi$ and $\eta$. Transition
probabilities of such a Markov chain are given by 
\[
R_a((u^1,u^2), (v^1,v^2)) ~=~ \begin{cases} a~P(u^1, v^1) &\text{ if $u^2 = v^2$,}\\
(1-a)~Q(u^2,v^2) &\text{ if $u^1=v^1$,}\\
0 &\text{ otherwise.}
\end{cases}
\]
The kernel
$R_a=(R_a(u,v), \; u,v\in E\times E_2) $ is a {\em Cartesian
  product of transition kernels} $P$ and $Q$. By Lemma~3.1 of 
Picardello and Woess~\cite{Picardello-Woess}, the
convergence norm of the Cartesian product $R_a$ is related to those of $P$ and $Q$ as
follows 
\be\label{e9-3}
\rho(R_a) ~=~ a\rho(P) + (1-a)\rho(Q)
\ee
and it is clear that for any $r\geq \rho(P)$ and $s\geq\rho(Q)$, if $f>0$  is a $r$-harmonic
function of $P$ and $g>0$ is a $s$-harmonic function  of $Q$ then the function 
\be\label{e9-4}
h(u^1,u^2) ~=~ f\otimes g ~(u^1,u^2) ~\dot=~ f(u^1) g(u^2), \quad (u^1,u^2)\in E_1\times E_2
\ee
is a $t$-harmonic function of $R_a$ with $t= a r + (1-a) s$.   Conversely, for minimal $t$-harmonic
functions of the Cartesian product $R_a$, Theorem~3.2 of Picardello and Woess~\cite{Picardello-Woess} proves
the following property. 

\medskip
\noindent
{\bf Theorem~[Picardello and Woess~\cite{Picardello-Woess}].} {\em If the transition kernels $P$
  and $Q$ are stochastic  and irreducible  respectively on $E_1$ and $E_2$, then  for any
  $0< a < 1$ and  $t\geq \rho(R_a)$,
  every minimal $t$-harmonic function $h>0$ of the Cartesian product $R_a$ is of the form
  $f\otimes g$ with some minimal $r$-harmonic function $f>0$ of $P$, a minimal $s$-harmonic
  function $g>0$ of $Q$ and $r\geq \rho(P)$, $s\geq\rho(Q)$ satisfying the equality $a r +
  (1-a) s = t$.}

\medskip 

In a particular case, when $\rho(P) = \rho(Q) = 1$, for minimal harmonic
(i.e. $t$-harmonic with $t=1$) functions, this result implies the following statement. 

\begin{cor}\label{cor9-1} Suppose that the transition kernels $P$
  and $Q$ are stochastic  and irreducible   respectively on $E_1$ and $E_2$ and let $\rho(P)
  = \rho(Q) = 1$. Then for any $0< a < 1$, every minimal harmonic function $h > 0$ of $R_a$ is of the form
  $f\otimes g$ with some minimal harmonic functions $f>0$ and $g>0$ of $P$ and $Q$
  respectively. 
\end{cor}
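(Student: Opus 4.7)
The plan is essentially to read off the corollary from the Picardello--Woess theorem quoted just above it, by ruling out the ``non-harmonic'' choices of the eigenvalues $r$ and $s$.

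First I would compute $\rho(R_a)$ using equation \eqref{e9-3} (Lemma 3.1 of \cite{Picardello-Woess}): since by hypothesis $\rho(P) = \rho(Q) = 1$, one obtains
\[
\rho(R_a) \;=\; a\rho(P) + (1-a)\rho(Q) \;=\; a + (1-a) \;=\; 1,
\]
so $t = 1 \geq \rho(R_a)$ and the Picardello--Woess theorem applies at $t = 1$.

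Next I would invoke the theorem: any minimal harmonic function $h > 0$ of $R_a$ has the form $h = f \otimes g$, where $f > 0$ is a minimal $r$-harmonic function of $P$, $g > 0$ is a minimal $s$-harmonic function of $Q$, and the eigenvalues satisfy $r \geq \rho(P) = 1$, $s \geq \rho(Q) = 1$, together with the linear constraint
\[
a r + (1-a) s \;=\; 1.
\]
Since $0 < a < 1$ and $r, s \geq 1$, the left-hand side satisfies $ar + (1-a)s \geq a + (1-a) = 1$, with equality if and only if $r = 1$ and $s = 1$. Hence $r = s = 1$, so $f$ is a minimal harmonic function of $P$ and $g$ is a minimal harmonic function of $Q$, which is exactly the claim.

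There is essentially no obstacle here; the corollary is a direct specialization of the Picardello--Woess theorem, the only non-trivial observation being that the convex combination $ar + (1-a)s = 1$ with $r, s \geq 1$ and $a \in (0,1)$ forces $r = s = 1$. The hypothesis $\rho(P) = \rho(Q) = 1$, which in our application will be supplied by Proposition~\ref{pr9-1}, is precisely what makes this extremal-case argument work.
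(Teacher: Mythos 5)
Your proof is correct and is exactly the argument the paper implicitly intends when it states the corollary as an immediate consequence of the Picardello--Woess theorem: compute $\rho(R_a)=1$ from \eqref{e9-3}, apply the theorem at $t=1$, and observe that $ar+(1-a)s=1$ with $r,s\geq 1$ and $0<a<1$ forces $r=s=1$. Nothing is missing.
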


For sub-stochastic transition kernels, one gets therefore 

\begin{prop}\label{pr9-2} Suppose that the sub-stochastic  transition kernels $P$
  and $Q$ are irreducible   respectively on $E_1$ and $E_2$ and let $\rho(P)
  = \rho(Q) = 1$. Suppose moreover that every positive harmonic function of $P$ is a
  constant multiple of $f > 0$ and every positive harmonic function of $Q$ is a
  constant multiple of $g > 0$. Then for any $0< a < 1$, every positive harmonic function  of $R_a$ is
  a constant multiple of the function $f\otimes g$.  
\end{prop}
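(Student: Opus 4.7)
The plan is to reduce to Corollary~\ref{cor9-1} via Doob's $h$-transform with $h = f\otimes g$. First, $h$ is harmonic for $R_a$, since
\[
R_a h = a\,(Pf)\otimes g + (1-a)\,f\otimes(Qg) = a\,f\otimes g + (1-a)\,f\otimes g = h.
\]
Introduce the $h$-transforms $\tilde P^f(u,v) = P(u,v)\,f(v)/f(u)$, $\tilde Q^g(u,v) = Q(u,v)\,g(v)/g(u)$ and $\tilde R_a^h(x,y) = R_a(x,y)\,h(y)/h(x)$. A direct computation shows that $\tilde P^f$ and $\tilde Q^g$ are stochastic and irreducible, and that
\[
\tilde R_a^h = a\,\tilde P^f\otimes\1_{E_2} + (1-a)\,\1_{E_1}\otimes\tilde Q^g,
\]
so $\tilde R_a^h$ is itself a Cartesian product of stochastic irreducible kernels. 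Since $(\tilde P^f)^n(u,u) = P^n(u,u)$ and $(\tilde Q^g)^n(u,u) = Q^n(u,u)$, the standing hypothesis $\rho(P) = \rho(Q) = 1$ yields $\rho(\tilde P^f) = \rho(\tilde Q^g) = 1$.

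Next, the $h$-transform sets up the bijection $\tilde{f}\leftrightarrow\tilde{f}\cdot f$ between positive $\tilde P^f$-harmonic functions and positive $P$-harmonic functions. By the hypothesis on $P$, every positive $P$-harmonic function is a constant multiple of $f$, so every positive $\tilde P^f$-harmonic function is a positive constant; in particular the constant function $1$ is the unique (up to scaling) minimal positive harmonic function of $\tilde P^f$, and the same holds for $\tilde Q^g$. Applying Corollary~\ref{cor9-1} to the Cartesian product $\tilde R_a^h$ yields that every minimal positive harmonic function of $\tilde R_a^h$ is of the form $\tilde f\otimes\tilde g$ with $\tilde f$, $\tilde g$ minimal positive harmonic for $\tilde P^f$ and $\tilde Q^g$ respectively, hence itself a positive constant.

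Finally, by the Martin integral representation every positive harmonic function of an irreducible Markov chain is a mixture of its minimal positive harmonic functions; therefore every positive harmonic function of $\tilde R_a^h$ is constant. Undoing the $h$-transform (positive harmonic functions $H$ of $R_a$ correspond bijectively to positive harmonic functions $H/h$ of $\tilde R_a^h$), we conclude that every positive harmonic function of $R_a$ is a constant multiple of $h = f\otimes g$. The main point requiring care is the verification that Corollary~\ref{cor9-1} applies to $\tilde R_a^h$: the $h$-transform is introduced precisely to make its factors stochastic, and the identity $(\tilde P^f)^n(u,u) = P^n(u,u)$ combined with the hypothesis $\rho(P) = 1$ gives the convergence-norm condition $\rho(\tilde P^f) = 1$ (and similarly for $\tilde Q^g$).
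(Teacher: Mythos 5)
Your proof is correct and follows essentially the same route as the paper: a Doob $h$-transform by $f\otimes g$ (equivalently, twisting each factor by $f$ and $g$) to obtain stochastic irreducible kernels with the same convergence norms, an application of Corollary~\ref{cor9-1} to conclude the minimal harmonic functions of the transformed Cartesian product are constant, and then the Martin representation plus the inverse transform to recover the statement for $R_a$. The only difference is that you make explicit two points the paper leaves implicit (the diagonal identity $(\tilde P^f)^n(u,u)=P^n(u,u)$ behind $\rho(\tilde P^f)=\rho(P)$, and the passage from minimal to all positive harmonic functions), which is fine.
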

\begin{proof} It is sufficient to apply the above statement for twisted 
  transition kernels $\tilde{P} =(\tilde{P}(u,u'), \; u,u'\in\E_1)$ and
  $\tilde{Q}=(\tilde{Q}(u,u'), \; u,u'\in\E_2) $ defined by 
\[
\tilde{P}(u,u') ~=~ P(u,u') f(u')/f(u), \quad \text{ for } \; u,u'\in E_1 
\]
and 
\[
\tilde{Q}(u,u') ~=~ Q(u,u') g(u')/g(u), \quad \text{ for } \; u,u'\in E_2
\]
Under the hypotheses of Proposition~\ref{pr9-2}, these transition kernels are clearly
stochastic and irreducible, the only positive harmonic functions of $\tilde{P}$ (resp. $\tilde{Q}$) are
constant and 
\[
\rho(\tilde{P}) ~=~ \rho(P) ~=~ \rho(Q) ~=~ \rho(\tilde{Q}) ~=~ 1.
\]
By Corollary~\ref{cor9-1}, from this it follows that every minimal harmonic function of
the Cartesian product $\tilde{R}_a = a \, \tilde{P}\otimes \1_{E_2} + (1-a)\,
\1_{E_1}\times\tilde{Q}$ is also constant. To compete the proof of our proposition it is
now sufficient to compare the transitions kernels $\tilde{R}_a=(\tilde{R}_a(u,v), \; u,v\in E\times E_2)$ and $R_a=(R_a(u, v), \; u, v\in E\times
E_2)$. Since clearly  
\[
\tilde{R}_a(u,v) ~=~ R_a(u,v) ~ f\otimes g(v)/ f\otimes g(u), \quad \forall u, v \in E_1\times E_2,
\]
this proves that the only minimal harmonic functions (and consequently, also the only positive
harmonic functions) of $R_a$ are the constant multiples of $f\otimes g$. 
\end{proof}

For $a=(a_1,\ldots,a_m)\in ]0,1[^m$ with $a_1+\cdots + a_m = 1$,  a Cartesian product
    $R_a$ of
    $m$ transition kernels $P_1,\ldots, P_m$ of   time-homogeneous Markov 
chains $\xi_1(t)$, \ldots, $\xi_m(t)$ on countable, discrete state spaces
$E_1,\ldots, E_m$, is defined by 
\[
R_a = a_1P_1\otimes \1_{E_2} \otimes \cdots \otimes \1_{E_m} + a_2\1_{E_1} \otimes P_2 \otimes \1_{E_3}\!
\cdots \!\otimes \1_{E_m} + \cdots + a_m\1_{E_1}\otimes\cdots \otimes \1_{E_{m-1}}\otimes P_m.
\]
By induction with respect to $m$ and using \eqref{e9-3}, from
Proposition~\ref{pr9-2} it follows  

\begin{cor}\label{cor9-2} Suppose that the sub-stochastic  transition kernels $P_1$,\ldots, $P_m$
  are irreducible   respectively on $E_1$,\ldots ,$E_k$ and let
  $\rho(P_i)= 1$ for all $i=1,\ldots,m$. Suppose moreover that for any $i=1,\ldots, m$, every positive harmonic function of $P_i$ is a
  constant multiple of $f_i > 0$. Then for any $a=(a_1,\ldots,a_m)\in ]0,1[^m$ with 
    $a_1+\cdots + a_m = 1$, every positive harmonic function of $R_a$ is 
  a constant multiple of the function $f_1\otimes \cdots \otimes f_m$. 
\end{cor}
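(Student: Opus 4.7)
The plan is to prove Corollary~\ref{cor9-2} by induction on $m$, with Proposition~\ref{pr9-2} serving as both the base case ($m=2$) and the inductive engine. The base case is immediate, so fix $m\geq 3$ and assume the statement for $m-1$ factors.

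The key maneuver is to regroup $R_a$ as a Cartesian product of only two kernels on the coarser decomposition $(E_1\times\cdots\times E_{m-1})\times E_m$. Set $b = a_1+\cdots+a_{m-1}$, so $b\in(0,1)$ and $1-b=a_m$, and let
\[
Q \;=\; \frac{a_1}{b}\, P_1\otimes \1_{E_2}\otimes\cdots\otimes\1_{E_{m-1}}
\;+\;\cdots\;+\;\frac{a_{m-1}}{b}\,\1_{E_1}\otimes\cdots\otimes\1_{E_{m-2}}\otimes P_{m-1},
\]
which is the Cartesian product of $P_1,\ldots,P_{m-1}$ on $E_1\times\cdots\times E_{m-1}$ associated with the weight vector $(a_1/b,\ldots,a_{m-1}/b)$ (which lies in $]0,1[^{m-1}$ and sums to $1$). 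A direct computation of transition probabilities shows
\[
R_a \;=\; b\,Q\otimes \1_{E_m} \;+\; (1-b)\,\1_{E_1\times\cdots\times E_{m-1}}\otimes P_m,
\]
i.e.\ $R_a$ is exactly a two-factor Cartesian product of $Q$ and $P_m$ in the sense of \eqref{e9-2}.

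To apply Proposition~\ref{pr9-2} to this two-factor representation, I would verify three points about $Q$. First, $Q$ is irreducible on $E_1\times\cdots\times E_{m-1}$: given two states, one can move from one to the other by modifying one coordinate at a time, using the irreducibility of each $P_i$ together with the fact that $Q$ assigns strictly positive probability to single-coordinate transitions. Second, by the induction hypothesis applied to $Q$, the only positive harmonic functions of $Q$ are constant multiples of $f_1\otimes\cdots\otimes f_{m-1}$. Third, $\rho(Q)=1$ by formula \eqref{e9-3} (applied inductively: $\rho(Q)=\sum_{i=1}^{m-1}(a_i/b)\rho(P_i)=\sum_{i=1}^{m-1}(a_i/b)=1$); similarly $\rho(R_a)=b\rho(Q)+(1-b)\rho(P_m)=1$.

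With these three facts established, Proposition~\ref{pr9-2} applied to the pair $(Q,P_m)$ with parameter $b$ yields that every positive harmonic function of $R_a$ is a constant multiple of $(f_1\otimes\cdots\otimes f_{m-1})\otimes f_m = f_1\otimes\cdots\otimes f_m$, completing the induction. The only nontrivial verification is the irreducibility of $Q$, but this is a routine coordinate-by-coordinate argument; everything else reduces to bookkeeping with the already-established results \eqref{e9-3} and Proposition~\ref{pr9-2}.
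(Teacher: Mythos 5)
Your proof is correct and follows exactly the route the paper indicates (which it only sketches): the statement ``By induction with respect to $m$ and using \eqref{e9-3}, from Proposition~\ref{pr9-2} it follows.'' You have simply filled in the details — regrouping $R_a$ as a two-factor Cartesian product of $Q$ (the Cartesian product of $P_1,\ldots,P_{m-1}$ with rescaled weights) and $P_m$, verifying $\rho(Q)=1$ via \eqref{e9-3}, and applying the inductive hypothesis together with Proposition~\ref{pr9-2} — which is precisely the intended argument.
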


\subsection{Application for a homogeneous random walk on $\Z^m$.} 

Let  $(\xi(t))$ be a homogeneous random walk on $\Z^m$ with transition probabilities 
\[
P(u,u') ~=~ \nu(u'-u), \quad u,u'\in\Z^m
\]
and let $\tau_i$ denote the first time when the $i$-th coordinate $\xi^i(t)$ of $\xi(t)$ becomes
negative or zero. Suppose moreover that the probability measure $\nu$ on $\Z^m$ satisfies the following
conditions. 

\begin{itemize}
\item[(C1)] For $\Z_+^m ~\dot=~\{u\in\Z^m : u^i > 0, \; \forall i=1,\ldots,m\}$ the
  sub-stochastic matrix $(P(u,u') ~=~ \nu(u'-u), \; u,u'\in\Z^m_+\}$ is irreducible. 
\item[(C2)] The function
\[
\varphi_\xi(\a) ~\dot=~ \sum_{u\in\Z^m} \nu(u) \exp(\a\cdot u) 
\]
is finite in a neighborhood of zero in $\R^m$.
\item[(C3)] $\sum_{u\in\Z^m} u \nu(u) ~=~ 0$. 
\item[(C4)] $\nu(u) ~=~ 0$ if $u^i u^j \not= 0$ for some $1 \leq i < j \leq m$. 
\end{itemize}
Under the hypotheses (C1)-(C4), for every $1\leq i\leq m$, the $i$-th
coordinate $(\xi^i(t))$ of $(\xi(t))$ is a recurrent random walk on $\Z$ and consequently,
almost surely $\tau_i < \infty$.  
If the hypotheses (C1)-(C4) are satisfied and moreover 
\[
\nu(0) = 0 
\]
then the transition 
  kernel $P_+$ is a Cartesian
  product of $m$  irreducible  transition kernels $P_i = (P_i(u,u') ~=~
  \nu_i(k'-k), \; k,k'\in\Z_+)$: 
\[
P_+ ~=~ a_1~P_1\otimes \1_{\Z_+} \otimes \cdots \otimes \1_{\Z_+} + a_2~\1_{\Z_+} \otimes P_2 \otimes \1_{\Z_+}
\cdots \otimes \1_{\Z_+} + \cdots + a_m~\1_{\Z_+}\otimes \cdots \otimes \1_{\Z_+}\otimes
P_m
\]
with 
\be\label{e9-5}
a_i ~=~ \sum_{k\in\Z} \nu(k e_i) 
\ee
and 
\be\label{e9-6}
\nu_i(k) ~=~ \frac{1}{a_i} \nu(k e_i), \quad k\in\Z 
\ee
where $e_i=(e_i^1,\ldots,e_i^m)$ denotes the unit vector in $\Z^m$ with $e_i^i=1$ and
$e_i^j=0$ for $j\not= i$. Because of the assumptions (C1)-(C3), the probability measures $\nu_1$,\ldots,$\nu_m$ satisfy the
conditions (B1)-(B3) of Proposition~\ref{pr9-1} and hence, using Corollary~\ref{cor9-2}
one gets 

\begin{cor}\label{cor9-3} If the conditions (C1)-(C4) are satisfied and 
  $\nu(0)=0$ then 
the only positive harmonic
functions of the transition kernel $P_+$ are
the constant multiples of the function 
\be\label{e9-7}
f(u) ~=~ \prod_{i=1}^m \left(u^i ~-~ \E_{u^i}( \eta_i(T_i))\right),
\ee 
where $(\eta_i(t))$ is a random walk on $\Z$ with transition probabilities
$p_i(k,k')=\nu_i(k'-k)$ and $T_i = \inf\{n\geq 0 : \eta_i(n)\leq 0\}$. 
\end{cor}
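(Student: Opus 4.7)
The plan is to combine the Cartesian product decomposition of $P_+$ displayed just before the corollary with Proposition~\ref{pr9-1} and Corollary~\ref{cor9-2}. Under hypothesis (C4), a nonzero jump of $\nu$ is concentrated on exactly one coordinate axis, and since $\nu(0)=0$, each transition of the random walk $(\xi(t))$ either leaves a given coordinate fixed or changes only one of them; this is precisely what produces the announced identity
\[
P_+ ~=~ \sum_{i=1}^m a_i\,\1_{\Z_+}\otimes\cdots\otimes \1_{\Z_+}\otimes P_i\otimes \1_{\Z_+}\otimes\cdots\otimes \1_{\Z_+}
\]
with $a_i$ and $\nu_i$ defined by \eqref{e9-5} and \eqref{e9-6} and $P_i=(\nu_i(k'-k),\,k,k'\in\Z_+)$ on each factor. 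Note that $a_i>0$ for every $i$: if some $a_i$ were zero then (C4) would prevent the random walk $(\xi(t))$ from ever changing its $i$-th coordinate, contradicting the irreducibility assumption (C1).

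The next step is to verify that each $P_i$ satisfies the hypotheses (B1)--(B3) of Proposition~\ref{pr9-1}. Irreducibility of $P_i$ on $\Z_+$ (hypothesis (B1)) follows from (C1), because any path in the product space must in particular realize arbitrary transitions in each coordinate. The zero mean condition (B2) follows from (C3) together with (C4): decomposing the sum $\sum_u \nu(u)\,u$ into its axial contributions shows $\sum_{k\in\Z} \nu_i(k)\,k = a_i^{-1}\sum_{k\in\Z}\nu(ke_i)\,k = a_i^{-1}(\sum_u \nu(u)\,u)\cdot e_i = 0$. The exponential moment condition (B3) is immediate from (C2) since $\nu_i$ is a renormalization of the restriction of $\nu$ to the $i$-th axis. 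Consequently, Proposition~\ref{pr9-1} yields $\rho(P_i)=1$ and identifies the one-dimensional positive harmonic cone: every positive harmonic function of $P_i$ is a constant multiple of
\[
f_i(k) ~=~ k - \E_k(\eta_i(T_i)), \quad k\in\Z_+.
\]

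Finally, apply Corollary~\ref{cor9-2} to the Cartesian product above: since each $P_i$ is irreducible and substochastic on $\Z_+$ with $\rho(P_i)=1$ and a one-dimensional cone of positive harmonic functions spanned by $f_i$, every positive harmonic function of $P_+$ is a constant multiple of $f_1\otimes\cdots\otimes f_m$, which is exactly the function \eqref{e9-7}. The only point that requires any real care is the verification that the measures $\nu_i$ fall under the scope of Proposition~\ref{pr9-1}, and in particular the derivation of the zero-mean property (B2) from (C3) and (C4); the rest is a direct assembly. If one drops the simplifying assumption $\nu(0)=0$, the same Cartesian product structure still applies after an exponential time change absorbing the mass at $0$, but within the present statement this technicality is avoided.
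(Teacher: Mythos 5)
Your proposal is correct and follows essentially the same route as the paper, which obtains Corollary~\ref{cor9-3} precisely by writing $P_+$ as the Cartesian product of the one-dimensional kernels $P_i$ (using (C4) and $\nu(0)=0$), checking that each $\nu_i$ satisfies (B1)--(B3), applying Proposition~\ref{pr9-1} to each factor, and then invoking Corollary~\ref{cor9-2}. Your added verifications (that $a_i>0$ by irreducibility and that (B2) follows from (C3) together with (C4)) are exactly the details the paper leaves implicit.
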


The main result of this section is the following statement. 

\begin{prop}\label{pr9-3} Under the hypotheses (C1)-(C4), the only positive harmonic
  functions of the substochastic kernel $P_+ $ are the constant multiples of the function 
\be\label{e9-8}
f(u) ~=~ \prod_{i=1}^m u^i ~-~ \E_u\left( \prod_{i=1}^m \xi^i\left(\min_{1\leq i\leq m}\tau_i\right) \right), \quad u\in\Z_+^m 
\ee
\end{prop}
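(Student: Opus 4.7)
My plan is to deduce Proposition~\ref{pr9-3} from Corollary~\ref{cor9-3} in two main steps: first reduce to the case $\nu(0)=0$ by a standard laziness argument so that Corollary~\ref{cor9-3} becomes directly applicable, and second verify that the function $f$ defined in \eqref{e9-8} is itself a positive harmonic function of $P_+$. Uniqueness up to constants will then follow from Corollary~\ref{cor9-3}.

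For the reduction, if $\nu(0)=c>0$ I would introduce $\tilde\nu$ with $\tilde\nu(0)=0$ and $\tilde\nu(u)=\nu(u)/(1-c)$ for $u\neq 0$, and let $\tilde P_+$ be the associated substochastic kernel on $\Z_+^m$. The identity
\[
P_+ \;=\; c\,I + (1-c)\,\tilde P_+
\]
shows that $P_+ h = h$ if and only if $\tilde P_+ h = h$, so the cones of positive harmonic functions of $P_+$ and $\tilde P_+$ coincide. The measure $\tilde\nu$ still satisfies (C1)--(C4) with $\tilde\nu(0)=0$, and coupling the two walks so that $\tilde\xi$ is $\xi$ with its self-loop steps deleted shows that $\xi(T)$ and $\tilde\xi(\tilde T)$ have the same distribution, so the function \eqref{e9-8} is unchanged when $\nu$ is replaced by $\tilde\nu$. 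Hence I may assume $\nu(0)=0$.

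Positivity of $f$ is immediate from (C4): at most one coordinate can change per step, so at $T=\min_i\tau_i$ there is a unique index $J$ with $\xi^J(T)\leq 0$ while $\xi^i(T)>0$ for $i\neq J$; therefore $\prod_i \xi^i(T)\leq 0$ almost surely and $f(u)\geq\prod_i u^i>0$. For harmonicity, a one-step computation using (C3) and (C4) shows that $h_0(u):=\prod_i u^i$ is harmonic for the unrestricted walk $(\xi(t))$: each jump changes at most one coordinate, so $h_0(u+ke_j)-h_0(u)=k\prod_{i\neq j}u^i$, and averaging against $\nu$ picks up the factor $\sum_k k\,\nu(ke_j)=M^j=0$. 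Splitting the one-step expectation at $u\in\Z_+^m$ into the parts $\xi(1)\in\Z_+^m$ and $\xi(1)\notin\Z_+^m$, noting that on the latter event $T=1$ and $\xi(T)=\xi(1)$, and applying the strong Markov property at time $1$ to the second term of $f$, the two boundary sums cancel and one obtains $P_+ f(u)=h_0(u)-\E_u(\prod_i\xi^i(T))=f(u)$. The exponential-moment assumption (C2) supplies the integrability needed to justify the Fubini exchanges.

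With $f$ established as a positive harmonic function of $P_+$, Corollary~\ref{cor9-3} forces $f$ to be a strictly positive scalar multiple of the product $\prod_i(u^i-\E_{u^i}(\eta_i(T_i)))$, and hence every positive harmonic function of $P_+$ is a constant multiple of $f$. The main technical point to handle is the Fubini/dominated-convergence justification in the harmonicity verification, which leans essentially on (C2); algebraically, the axis-only structure imposed by (C4) is what makes both positivity and the one-step cancellation for harmonicity essentially free.
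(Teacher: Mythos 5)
Your overall architecture (pass to the self-loop--free measure $\tilde\nu$, then invoke Corollary~\ref{cor9-3}) matches the paper's, and your final logical step is sound: if the function $f$ of \eqref{e9-8} were known to be a finite, positive harmonic function of $P_+$, then Corollary~\ref{cor9-3} would indeed force it to be a positive multiple of $\prod_i\bigl(u^i-\E_{u^i}(\eta_i(T_i))\bigr)$ and the conclusion would follow. The genuine gap is in the sentence ``The exponential-moment assumption (C2) supplies the integrability needed to justify the Fubini exchanges.'' The quantity at stake is $\E_u\bigl|\prod_{i=1}^m\xi^i(T)\bigr|$ with $T=\min_i\tau_i$, and its finiteness does not follow from (C2) by any routine domination: the natural bound $\xi^i(T)\leq\max_{s<T_i}\eta_i(s)$ for the non-exiting coordinates is useless because the pre-passage maximum of a zero-mean one-dimensional walk has infinite expectation, and $T$ itself has infinite expectation, so optional stopping also fails (already $\E_{u^i}(\eta_i(T_i))\neq u^i$). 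Your one-step harmonicity computation is fine as an identity between possibly infinite nonnegative quantities (Tonelli applies because $\prod_i\xi^i(T)\leq 0$ a.s.\ by (C4)), but the cancellation giving $P_+f=f$ requires subtracting $\E_u\bigl(\prod_i\xi^i(T)\bigr)$, which is only legitimate once this expectation is known to be finite; without that, $f$ could be identically $+\infty$ and the statement itself is vacuous. A Fatou argument with an integrable lower bound does give integrability of a \emph{single} non-exiting coordinate at time $T$ (so your plan can be salvaged for $m=2$ with a little extra work on the overshoot), but for $m\geq 3$ the product of several coordinates at time $T$ has no integrable lower bound and the argument becomes circular.

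This missing finiteness is exactly what the paper's technical apparatus is built to deliver: Lemma~\ref{lem9-1} provides exponential moments only after discounting by $e^{-\eps\tau_\Lambda}$, Lemmas~\ref{lem9-2}, \ref{lem9-3} and Corollary~\ref{cor9-4} exploit the clock decomposition of (C4) to factor the regularized expectations, and the inductive computation of Lemma~\ref{lem9-4}, with monotone convergence as $\eps\downarrow 0$, simultaneously proves finiteness and the identity \eqref{e9-14} between \eqref{e9-8} and the Picardello--Woess product. In other words, the paper never verifies harmonicity of \eqref{e9-8} directly; it proves the algebraic identity instead, and the regularization is not a cosmetic device but the mechanism that makes the limit interchanges legitimate. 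Your shortcut is an attractive alternative in principle, but as written it hides the entire difficulty in the unproved integrability claim; to complete it you would need either an argument of the same nature as Lemmas~\ref{lem9-1}--\ref{lem9-4} or some other genuine proof that $\E_u\bigl|\prod_i\xi^i(T)\bigr|<\infty$.
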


Corollary~\ref{cor9-3} proves this proposition when $\nu(0)=0$ and the coordinates $(\xi^i(t))$ are lower-semicontinuous, 
i.e. if $\nu(u) ~=~ 0$ whenever $u^i < -1$ for some $1\leq i\leq m$. Indeed, in this 
case, almost surely, $\xi^i(\tau_i) = \eta^i(T_i) = 0$ and consequently, the
right hand sides of \eqref{e9-7} and \eqref{e9-8} are equal to $u^1\times \cdots \times u^m$. 
To prove Proposition~\ref{pr9-3} in a general case, we need the
following preliminary results. As above, for a given $\Lambda\subset\{1,\ldots,m\}$, we denote 
\[
\tau_\Lambda ~\dot=~ \min_{i\in\Lambda}\tau_i
\]

\begin{lemma}\label{lem9-1} Under the hypotheses (C1)-(C4), for any $\eps > 0$ there is
  $\delta_\eps >0$ such that  for any $0<\delta < \delta_\eps$ and any non-empty subset
  $\Lambda\subset\{1,\ldots,m\}$, 
\be\label{eq9-9}
\E_u\left( \exp\Bigl(-\eps \tau_\Lambda + \delta \sum_{i=1}^{m}\left|
\xi^i\left(\tau_\Lambda\right)\right|\Bigr) \right) < \infty.  
\ee
\end{lemma}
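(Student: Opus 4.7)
The plan is to bound the exponential moment by first splitting according to the signs of the coordinates of $\xi(\tau_\Lambda)$ and then applying a standard exponential-martingale / optional stopping argument. I would start from the elementary pointwise inequality
\[
\exp\Bigl(\delta\sum_{i=1}^m |k^i|\Bigr) ~\leq~ \sum_{s\in\{-1,+1\}^m} \exp(\delta\, s\cdot k), \qquad k\in\R^m,
\]
which reduces \eqref{eq9-9} to proving, for each of the $2^m$ sign vectors $s$, the single bound
\[
\E_u\bigl(\exp(-\eps\tau_\Lambda + \delta s\cdot\xi(\tau_\Lambda))\bigr) ~\leq~ \exp(\delta s\cdot u).
\]

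The tool here is the classical exponential martingale. By (C2), $\varphi_\xi$ is finite in a neighbourhood of $0\in\R^m$, so for every $a$ in that neighbourhood the process
\[
M_t(a) ~\dot=~ \exp\bigl(a\cdot\xi(t) - t\log\varphi_\xi(a)\bigr)
\]
is a positive martingale under $\P_u$ starting at $\exp(a\cdot u)$. I would apply optional sampling at the bounded stopping time $\tau_\Lambda\wedge n$ to obtain $\E_u(M_{\tau_\Lambda\wedge n}(a))\leq\exp(a\cdot u)$ and then let $n\to\infty$ by Fatou's lemma. For this I need $\tau_\Lambda<\infty$ a.s., which is where (C3) and (C4) enter: under (C4) the coordinates of $(\xi(t))$ change one at a time, so each $(\xi^i(t))$ is a time-changed one-dimensional zero-mean random walk whose increment distribution has finite variance by (C2); such a walk is recurrent on $\Z$, so $\tau_i<\infty$ a.s.\ for every $i$ and $\tau_\Lambda=\min_{i\in\Lambda}\tau_i<\infty$ a.s.\ whenever $\Lambda$ is non-empty. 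Fatou then yields
\[
\E_u\bigl(\exp(a\cdot\xi(\tau_\Lambda)-\tau_\Lambda\log\varphi_\xi(a))\bigr)~\leq~\exp(a\cdot u).
\]

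The final step is the choice of $\delta_\eps$. Since $\log\varphi_\xi$ is continuous at $0$ (by (C2)) with $\log\varphi_\xi(0)=0$, and there are only $2^m$ sign vectors, I can pick $\delta_\eps>0$ such that $\log\varphi_\xi(\delta s)\leq\eps$ for every $s\in\{-1,+1\}^m$ and every $0<\delta<\delta_\eps$ simultaneously. For such $\delta$, specialising the martingale bound to $a=\delta s$ and using $\exp(-\eps\tau_\Lambda)\leq\exp(-\tau_\Lambda\log\varphi_\xi(\delta s))$, then summing over $s$, produces \eqref{eq9-9} with the explicit finite upper bound $\sum_{s\in\{-1,+1\}^m}\exp(\delta s\cdot u)$. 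Note that neither $\delta_\eps$ nor this bound depends on $\Lambda$, giving the required uniformity. I do not anticipate a genuine obstacle here; the only delicate point is the a.s.\ finiteness of $\tau_\Lambda$ which is needed to carry out the Fatou passage, and which follows from the recurrence argument above.
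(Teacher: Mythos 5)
Your proof is correct, and it begins exactly as the paper's does, with the pointwise reduction $\exp(\delta\sum_i|k^i|)\leq\sum_{s\in\{-1,+1\}^m}\exp(\delta\, s\cdot k)$; the difference lies in how the resulting single exponential moment $\E_u(\exp(-\eps\tau_\Lambda+a\cdot\xi(\tau_\Lambda)))$ is bounded. The paper does this by brute force: drop the indicator $\{\tau_\Lambda=n\}$, use $\E_u(\exp(a\cdot\xi(n)))=\exp(a\cdot u)\varphi_\xi(a)^n$, and sum the resulting geometric series, which converges as soon as $\varphi_\xi(a)<e^\eps$. You instead invoke the exponential martingale $M_t(a)=\exp(a\cdot\xi(t)-t\log\varphi_\xi(a))$, optional sampling at $\tau_\Lambda\wedge n$, and Fatou. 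Both routes hinge on the same continuity of $\varphi_\xi$ at $0$ to choose $\delta_\eps$. The paper's summation is slightly more elementary (no appeal to martingale theory) and never needs to know that $\tau_\Lambda<\infty$ a.s.; your route appears to need that fact, and you correctly derive it from (C2)--(C4) via the Chung--Fuchs recurrence of each one-dimensional coordinate walk. Strictly speaking you could have dispensed with the a.s.\ finiteness: Fatou already gives $\E_u(M_{\tau_\Lambda}(a);\tau_\Lambda<\infty)\leq\exp(a\cdot u)$, and since the expectation in \eqref{eq9-9} is anyway only charged on $\{\tau_\Lambda<\infty\}$ (on its complement the factor $\exp(-\eps\tau_\Lambda)$ vanishes), the bound would follow without the recurrence argument. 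So your proof is sound, just does a little more work than necessary; the martingale formulation does buy you the cleaner constant $\sum_s\exp(\delta\, s\cdot u)$ in place of the paper's extra factor $(1-e^{-\eps}\varphi_\xi(a))^{-1}$, though that is irrelevant to the statement being proved.
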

\begin{proof} To prove this lemma we first notice that for any $\delta >0$, 
\begin{multline*}
\exp\left( \delta \sum_{i=1}^{m}\left|
\xi^i\left(\tau_\Lambda\right)\right|\right)  \leq \sup_{\substack{\a=(\a^1,\ldots,\a^m)\in\R^m :\\
  |\a^i|=\delta, \;\forall i=1,\ldots,m}}
~\exp\left(\a\cdot\xi\left(\tau_\Lambda\right)\right) \\\leq~ \sum_{\substack{\a=(\a^1,\ldots,\a^m)\in\R^m :\\
  |\a^i|=\delta, \;\forall i=1,\ldots,m}}
~\exp\left(\a\cdot\xi\left(\tau_\Lambda\right)\right) 
\end{multline*}
from which it follows that 
\begin{align*}
\E_u\left( \exp\Bigl(-\eps \tau_\Lambda + \delta \sum_{i=1}^{m}\left|
\xi^i\left(\tau_\Lambda\right)\right|\Bigr) \right) \leq \sum_{\substack{\a=(\a^1,\ldots,\a^m)\in\R^m :\\
  |\a^i|=\delta, \;\forall i=1,\ldots,m}} \!\E_u\Bigl( \exp\bigl(-\eps \tau_\Lambda + \a\cdot \xi\left(\tau_\Lambda\right) \bigr)\Bigr) \\
\leq~ 2^m ~\sup_{\a\in\R^m : |\a|_\infty \leq m \delta} ~\E_u\Bigl(
\exp\bigl(-\eps \tau_\Lambda + \a\cdot \xi\left(\tau_\Lambda\right) \bigr)\Bigr) 
\end{align*}
where 
\begin{align*}
\E_u\Bigl( \exp\bigl(-\eps \tau_\Lambda + \a\cdot \xi\left(\tau_\Lambda\right) \bigr)\Bigr)  &=~ \sum_{n=0}^\infty \exp(-\eps n) ~\E_u\left( 
\exp\left( \a\cdot \xi\left(n\right) \right), \; \tau_\Lambda 
= n\right)\\
&\leq~
\sum_{n=0}^\infty \exp(-\eps n) ~\E_u\left( 
\exp\left( \a\cdot \xi\left(n\right) \right)\right) \\&= \exp(\a\cdot u) \!\sum_{n=0}^\infty \exp(-\eps n)
 \varphi_\xi(\a)^n = \frac{\exp(\a\cdot u)}{1-\exp(-\eps)
 \varphi_\xi(\a)}  
\end{align*}
whenever $\varphi_\xi(\a) < \exp(\eps)$. Since the jump generating function
$\varphi_\xi$ is continuous and $\varphi_\xi(0) = 1$, from
this it follows that the left hand side of \eqref{eq9-9} is finite whenever $\delta > 0$
is small enough. Lemma~\ref{lem9-1} is therefore proved. 
\end{proof}

\begin{lemma}\label{lem9-3} Suppose that the conditions (C1)-(C4) are satisfied and let
  $\nu(0)=0$. Then for any $u\in\Z^m_+$, 
\begin{multline}\label{e9-12}
\E_u\left(\xi^m(\tau_m) \prod_{1\leq i\leq m-1} \xi^i\left(\min_{1\leq i\leq
  m-1}\tau_i\right) \right) \\~=~ \E_u(\xi^m(\tau_m)) \times E_u\left(\prod_{1\leq i\leq m-1} \xi^i\left(\min_{1\leq i\leq
  m-1}\tau_i\right) \right).  
\end{multline} 
\end{lemma}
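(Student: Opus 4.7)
The plan is to exploit the Cartesian-product structure of $(\xi(t))$ forced by (C4) combined with the assumption $\nu(0)=0$. Under these hypotheses every transition moves exactly one coordinate and does so by a non-zero amount, so we can realize $\xi$ on an enlarged probability space as follows. Let $I_1, I_2, \ldots$ be i.i.d.\ with $\P(I_k=i)=a_i$ for $a_i=\sum_{\ell\in\Z}\nu(\ell e_i)$, and for each $i\in\{1,\ldots,m\}$ let $Y^i_1, Y^i_2,\ldots$ be i.i.d.\ with law $\nu_i(\ell)=\nu(\ell e_i)/a_i$, all these $m+1$ sequences mutually independent. Setting $N^i(t)=\#\{k\leq t: I_k=i\}$ and $\eta^i(n)=u^i+\sum_{j=1}^n Y^i_j$, the law of $(\xi(t))$ under $\P_u$ coincides with the law of $(\eta^1(N^1(t)),\ldots,\eta^m(N^m(t)))$.

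Next I would identify each coordinate's exit value with the exit value of the corresponding embedded walk. Since $\nu(0)=0$ forces $\nu_i(0)=0$, every jump of $\eta^i$ is nonzero, so letting $T_i=\inf\{n\geq 0:\eta^i(n)\leq 0\}$ one has $\eta^i(n)>0$ for $n<T_i$ and hence $\tau_i=\inf\{t\geq 0 : N^i(t)=T_i\}$. In particular $\xi^i(\tau_i)=\eta^i(T_i)$, and more generally for any stopping time $\sigma$ depending only on $(\xi^1,\ldots,\xi^{m-1})$ the value $\xi^i(\sigma)$ with $i\leq m-1$ is a measurable function of the first $m-1$ embedded walks together with $(I_k)$. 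Applying this to $\sigma=\min_{j\leq m-1}\tau_j$ shows that
\[
W ~\dot=~ \prod_{1\leq i\leq m-1} \xi^i\!\left(\min_{1\leq j\leq m-1}\tau_j\right)
\]
is a measurable function of $\bigl(I_k,\, (Y^j_k)_{j\leq m-1,\,k\geq 1}\bigr)$, whereas
\[
V ~\dot=~ \xi^m(\tau_m) ~=~ \eta^m(T_m)
\]
is a measurable function of $(Y^m_k)_{k\geq 1}$ alone, because the definition of $T_m$ and of $\eta^m(T_m)$ involves only the sequence $(Y^m_k)$.

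By the independence of $(Y^m_k)_k$ from $\bigl(I_k,(Y^j_k)_{j\leq m-1,\,k}\bigr)$, the random variables $V$ and $W$ are independent under $\P_u$. Lemma~\ref{lem9-1} (applied with $\Lambda=\{m\}$ and with $\Lambda=\{1,\ldots,m-1\}$, using the elementary bound $|\xi^i(\tau_\Lambda)|\leq \exp(\delta|\xi^i(\tau_\Lambda)|)$ to pass from exponential moments to finite first moments) guarantees that $\E_u|V|<\infty$ and $\E_u|W|<\infty$. The factorization $\E_u(VW)=\E_u(V)\E_u(W)$ therefore holds and coincides with \eqref{e9-12}.

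I do not foresee a significant obstacle: the one point that needs care is the pathwise identity $\tau_i=\inf\{t:N^i(t)=T_i\}$, which requires the assumption $\nu(0)=0$ to ensure that each increment of the embedded walks is nontrivial; without this assumption $\xi^i$ could stall on a positive value while the other coordinates move, and $\eta^i(T_i)$ would no longer be the natural stopped value. All other steps are routine measurability and independence arguments.
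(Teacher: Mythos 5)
Your proof is correct, and it takes a slightly different route from the paper's. For Lemma~\ref{lem9-3} the paper switches to a continuous-time Poissonization $\hat\xi(t)=\xi({\cal N}(t))$, where ${\cal N}$ is split into independent Poisson processes ${\cal N}_i(a_it)$; the coordinates $\hat\xi^i$ then become genuinely independent continuous-time walks, and the independence of $\hat\xi^m(\hat\tau_m)$ from $\prod_{i<m}\hat\xi^i(\min_{i<m}\hat\tau_i)$ is immediate. You instead reuse the discrete-time i.i.d.\ allocation construction that the paper itself employs in the proof of Lemma~\ref{lem9-2}: an i.i.d.\ selector sequence $(I_k)$ and independent embedded walks $(Y^j_k)$. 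This is arguably cleaner, since it avoids introducing a second auxiliary construction, and the key observation — that $\xi^m(\tau_m)=\eta^m(T_m)$ is measurable with respect to $(Y^m_k)$ alone, while the product over $i\leq m-1$ is measurable with respect to $(I_k,(Y^j_k)_{j\leq m-1})$ — gives the same independence the paper obtains via Poissonization. Your careful note that $\nu(0)=0$ (hence $\nu_i(0)=0$) is needed to identify $\tau_i$ with the first time $N^i$ reaches $T_i$ is exactly right and is the subtle point.

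One small inaccuracy worth flagging: you cite Lemma~\ref{lem9-1} to get $\E_u|V|<\infty$ and $\E_u|W|<\infty$, but that lemma's bound carries the discount factor $\exp(-\eps\tau_\Lambda)$, so it does not directly yield finite (undiscounted) first moments. Fortunately this does not matter for \eqref{e9-12}: since $V=\xi^m(\tau_m)\leq 0$ and, by (C4) together with $\nu(0)=0$, exactly one factor of $W$ is non-positive while the rest are positive so that $W\leq 0$ as well, the independence of $V$ and $W$ gives $\E_u(VW)=\E_u(V)\E_u(W)$ in $[0,\infty]$ by Tonelli's theorem applied to the nonnegative variables $-V$ and $-W$, with no integrability hypothesis. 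The paper's own proof of this lemma does not address integrability either; finiteness is ultimately supplied in Lemma~\ref{lem9-4} via the $\exp(-\eps\tau_\Lambda)$ discounting and monotone convergence. So your overall argument stands; only the claimed source of the moment bound should be adjusted or dropped.
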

\begin{proof} To prove this lemma we consider a continuous-time version of the random walk
  $(\xi(t))$ defined by 
\[
\hat{\xi}(t) ~=~ \xi({\cal N}(t)), \quad t\in[0,+\infty[,
\]
where ${\cal N}(t)$ is the Poisson process on $[0,+\infty[$ with rate $1$. Because of the
    assumption (C4), the coordinates
    $(\hat{\xi}^1(t))$, \ldots $(\hat{\xi}^m(t))$ of such a random walk $(\hat{\xi}(t))$
    perform independent random walks on $\Z$: for any $t \geq 0$, ${\cal N}(t) ~=~ {\cal
      N}_1(a_1t) + \cdots + {\cal N}_m(a_mt)$ and 
\[
\hat{\xi}^i(t) ~=~ \eta_i({\cal N}_{i}(a_i t))
\]
where $({\cal N}_1(a_1t)), \ldots, ({\cal N}_m(a_mt))$ are independent Poisson processes
on $[0,+\infty[$ with rate $1$. The stopping times $\hat\tau_i ~\dot=~ \inf\{t \geq 0 :
    \hat\xi^i(t) \leq 0\}$ for $i=1,\ldots,m$, are therefore independent and consequently, the random variables 
\[
\hat\xi^m(\hat\tau_m) \quad \text{ and } \quad   \prod_{1\leq i\leq m-1}
\hat\xi^i\left(\min_{1\leq i\leq m-1}\hat\tau_i\right)  
\]
are also independent. Since almost surely, $\hat\xi^m(\hat\tau_m) = \xi^m(\tau_m)$ and 
\[
 \prod_{1\leq i\leq m-1}
\hat\xi^i\left(\min_{1\leq i\leq m-1}\hat\tau_i\right) ~=~  \prod_{1\leq i\leq m-1}
\xi^i\left(\min_{1\leq i\leq m-1}\tau_i\right),
\]
from this it follows \eqref{e9-12}. 
\end{proof}

\begin{lemma}\label{lem9-2} Suppose that the conditions (C1)-(C4) are satisfied and let
  $\nu(0)=0$. Then for any $1\leq i\leq m$ and $k\in\Z_+$, 
\be\label{e9-9p}
\E_k(\xi(\tau_i)) ~=~ \E_k(\eta^i(T_i))
\ee
and for any non-empty subset $\Lambda\subset\{1,\ldots,m\}$, $\eps > 0$ and
  $u=(u^1,\ldots,u^m)\in\Z^m_+$, 
\be\label{e9-9}
\E_u\Bigl( \exp\bigl(-\eps \tau_\Lambda\bigr)
\prod_{i=1}^{m} \xi^i\left(\tau_\Lambda\right)\Bigr) ~=~ \E_{u}\Bigl( \exp\bigl(-\eps
\tau_\Lambda\bigr)
\prod_{i\in\Lambda}\xi^i\left(\tau_\Lambda\right)\Bigr) \times
\prod_{i\not\in\Lambda} u^i, 
\ee
\end{lemma}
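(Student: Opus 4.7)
The strategy is to exploit the decomposition of the jumps of $(\xi(t))$ induced by (C4) together with $\nu(0)=0$: each jump $Z_k=\xi(k)-\xi(k-1)$ is nonzero in exactly one coordinate, so writing $Z_k=S_k e_{D_k}$ defines an iid sequence $\{(D_k,S_k)\}_{k\geq 1}$ with $\P(D_k=i)=a_i$ and, conditionally on $D_k=i$, $S_k\sim\nu_i$. Setting $N_i(n)=|\{k\leq n : D_k=i\}|$, one obtains the pathwise identity $\xi^i(n)=\eta_i(N_i(n))$, where $\eta_i$ is the one-dimensional random walk starting at $u^i$ with jumps of law $\nu_i$. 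Observe also that (C3) combined with (C4) gives $\sum_{k\in\Z} k\,\nu_i(k)=a_i^{-1}\sum_{u\in\Z^m} u^i\,\nu(u)=0$, so each $\eta_i$ has mean-zero jumps.

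For the first assertion \eqref{e9-9p}, the identity $\xi^i(n)=\eta_i(N_i(n))$ shows that $\tau_i=\inf\{n : \xi^i(n)\leq 0\}$ is exactly the step at which $\eta_i$ first exits $\Z_+$, i.e.\ $N_i(\tau_i)=T_i$; hence $\xi^i(\tau_i)=\eta_i(T_i)$ almost surely on $\{\tau_i<\infty\}$, and \eqref{e9-9p} follows by taking expectations.

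For the second assertion \eqref{e9-9}, I would condition on the $\sigma$-algebra $\mathcal{G}=\sigma\bigl((D_k)_{k\geq 1},\,\{S_k : D_k\in\Lambda\}\bigr)$. Since $\tau_\Lambda$ depends only on the coordinates $\xi^j$ with $j\in\Lambda$, both $\tau_\Lambda$ and the values $\xi^j(\tau_\Lambda)$ for $j\in\Lambda$ are $\mathcal{G}$-measurable, so the factor $\exp(-\eps\tau_\Lambda)\prod_{j\in\Lambda}\xi^j(\tau_\Lambda)$ can be taken outside the conditional expectation. For each $i\not\in\Lambda$, one has $\xi^i(\tau_\Lambda)=u^i+\sum_{k\leq\tau_\Lambda,\,D_k=i}S_k$, and the iid structure of $\{(D_k,S_k)\}_{k\geq 1}$ ensures that, conditionally on $\mathcal{G}$, the disjoint families $\{S_k : D_k=i\}$ for $i\not\in\Lambda$ remain jointly independent, each iid $\nu_i$-distributed and independent of $\mathcal{G}$. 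As $N_i(\tau_\Lambda)$ is $\mathcal{G}$-measurable and $\nu_i$ has zero mean, this yields $\E[\xi^i(\tau_\Lambda)\mid\mathcal{G}]=u^i$, and conditional independence across $i\not\in\Lambda$ produces
\[
\E\Bigl[\prod_{i\not\in\Lambda}\xi^i(\tau_\Lambda)\;\Big|\;\mathcal{G}\Bigr]~=~\prod_{i\not\in\Lambda}u^i;
\]
the equality \eqref{e9-9} then follows from the tower property.

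The main technical point is to justify these conditional manipulations, which requires integrability of $\exp(-\eps\tau_\Lambda)\prod_i|\xi^i(\tau_\Lambda)|$. This is supplied by Lemma~\ref{lem9-1}: the elementary inequality $\prod_i|x^i|\leq(\sum_i|x^i|)^m\leq C_\delta\exp(\delta\sum_i|x^i|)$, valid for any $\delta>0$ with a suitable constant $C_\delta$, combined with the bound \eqref{eq9-9} chosen for $\delta$ small enough, dominates $\exp(-\eps\tau_\Lambda)\prod_i|\xi^i(\tau_\Lambda)|$ by an integrable function and legitimizes every step above.
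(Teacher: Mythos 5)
Your proof is correct and follows essentially the same route as the paper: the same representation $\xi^i(t)=\eta_i(N_i(t))$ via the jump-direction decomposition, the same identification $N_i(\tau_i)=T_i$ for \eqref{e9-9p}, and the same conditioning argument (zero mean of $\nu_i$ plus conditional independence of the coordinates outside $\Lambda$) for \eqref{e9-9}, with integrability supplied by Lemma~\ref{lem9-1}. Conditioning on the larger $\sigma$-algebra $\mathcal{G}$ rather than on $(\tau_\Lambda, N_1(\tau_\Lambda),\ldots,N_m(\tau_\Lambda))$ as in the paper is only a cosmetic variation.
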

\begin{proof} Remark first that by Lemma~\ref{lem9-1}, the left and the right
  hand sides of these equalities are well defined, 
because clearly,  
\[
\left| \xi^i\left(\tau_\Lambda\right)\right| ~\leq~ \frac{1}{\delta}\exp(\delta \left|
\xi^i\left(\tau_\Lambda\right)\right|), \quad \forall \delta >0.
\]
Consider now a sequence of independent identically distributed random variables
  $(\theta_n)$ taking the values in the set  $\{1,\ldots,m\}$ with $
\P\left(\theta_n = i \right) ~=~ a_i$ for all $1\leq i\leq m$ 
with the quantities $a_i$ defined  by \eqref{e9-5}. Denote $
N_i(t) ~=~ \1_{\{\theta_1 = i\}} + \cdots + \1_{\{\theta_t = i\}}$ 
and let $(\eta_1(t)), \ldots (\eta_m(t))$ be independent random walks on $\Z$ which are independent on the
sequence $(\theta_n)$ and have transition probabilities
$p_i(k,k')=\nu_i(k'-k)$ defined by \eqref{e9-6} respectively for $i=1, \ldots,m$.  Then
our random walk $\xi(t)=(\xi^1(t),\ldots,\xi^m(t))$ can be represented in the 
following way : $\xi^i(t) = \eta^i(N_i(t))$ for any  $i\in\{1,\ldots,m\}$
and $t\in\N$. According to this representation, 
the stopping times $\tau_i~\dot=~ \inf\{n\geq 0 :
\xi^i(n)\leq 0\}$ and $T_i ~\dot=~ \inf\{
n\geq 0 : \eta^i(n) \leq 0\}$ are related as follows : 
\[
T_ i~=~ N_i(\tau_i) \quad \text{and} \quad  \tau_i = \inf\{n\geq 0 : N_i(n) ~=~ T_i\}.
\]
Since clearly, $\xi^i(\tau_i) = \eta^i(T_i)$, one gets therefore 
\eqref{e9-9p}. Furthermore, for given $\tau_\Lambda$ and $N_1(\tau_\Lambda),
\ldots, N_m(\tau_\Lambda)$, the random vectors $(\xi^i(\tau_\Lambda) =
\eta^i(N_i(\tau_\Lambda)), \; i\in\Lambda)$ and $(\xi^i(\tau_\Lambda) =
\eta^i(N_i(\tau_\Lambda)), \; i\not\in\Lambda)$ are conditionally independent and the
conditional expectation 
\[
\E_u\Bigl(\;\prod_{i\in\{1,\ldots,m\}\setminus\Lambda} \xi^i\left(\tau_\Lambda\right)
\; \Bigr| \; \tau_\Lambda, N_1(\tau_\Lambda),
\ldots, N_m(\tau_\Lambda)\Bigr) 
\]
is equal to 
\begin{multline*}
\E_u\Bigl(\; \prod_{i\in\{1,\ldots,m\}\setminus\Lambda} \eta^i\left(N_i(\tau_\Lambda)\right)
\; \Bigr| \; \tau_\Lambda, N_1(\tau_\Lambda),
\ldots, N_m(\tau_\Lambda)\Bigr) \\~=~ \prod_{i\in\{1,\ldots,m\}\setminus\Lambda}
E_{u^i}\left(\left.\eta^i\left(N_i(\tau_\Lambda)\right)\;\right| N_i(\tau_\Lambda)\right).
\end{multline*} 
Moreover, because of the assumption (C3),
$E_{u^i}\left(\left.\eta^i\left(N_i(\tau_\Lambda)\right)\;\right| N_i(\tau_\Lambda)\right)
= u^i$ for any $i\not\in\Lambda$. The left hand side of
\eqref{e9-9} is therefore equal to 
\[
\E_u\Bigl( \exp\bigl(-\eps \tau_\Lambda\bigr)
\prod_{i\in\Lambda} \xi^i\left(\tau_\Lambda\right)\Bigr) \prod_{i\in\{1,\ldots,m\}\setminus\Lambda}
u^i. 
\]
and consequently, \eqref{e9-9} holds. 
\end{proof}

By strong Markov property, from \eqref{e9-9} it follows  

\begin{cor}\label{cor9-4}
Under the hypotheses of Lemma~\ref{lem9-2}, for any non-empty subset $\Lambda\subset\{1,\ldots,m\}$, $\eps > 0$ and
  $u=(u^1,\ldots,u^m)\in\Z^m_+$,
\begin{multline}\label{e9-10}
\E_u\Bigl( \exp\bigl(-\eps \tau_\Lambda\bigr)
\prod_{i=1}^{m} \xi^i\bigl(\tau_\Lambda\bigr), \;
\tau_{ \{1,\ldots, m\}\setminus \Lambda} < \tau_\Lambda\Bigr) ~=~\\ 
\E_{u}\Bigl( \exp\bigl(-\eps \tau_\Lambda\bigr)
\prod_{i\in\Lambda}\xi^i\left(\tau_\Lambda\right)\times
\prod_{i\not\in\Lambda} \xi^i\left(\tau_{ \{1,\ldots, m\}\setminus \Lambda}\right), \;
\tau_{ \{1,\ldots, m\}\setminus \Lambda}< \tau_\Lambda\Bigr).
\end{multline}
\end{cor}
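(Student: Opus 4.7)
The plan is to deduce this corollary from identity \eqref{e9-9} of Lemma~\ref{lem9-2} by applying the strong Markov property at the stopping time $\tau_{\Lambda^c}$, where $\Lambda^c \dot= \{1,\ldots,m\}\setminus\Lambda$. First, on the event $\{\tau_{\Lambda^c} < \tau_\Lambda\}$, the position $v = \xi(\tau_{\Lambda^c})$ has all coordinates $v^i$ with $i \in \Lambda$ strictly positive, while some coordinates $v^j$ with $j \in \Lambda^c$ have become non-positive. Writing $\tau_\Lambda = \tau_{\Lambda^c} + \tau_\Lambda'$ with $\tau_\Lambda'$ the residual time until $\tau_\Lambda$ after $\tau_{\Lambda^c}$, the strong Markov property yields
\begin{equation*}
A \dot= \E_u\Bigl(\exp(-\eps\tau_\Lambda)\prod_{i=1}^m \xi^i(\tau_\Lambda),\; \tau_{\Lambda^c}<\tau_\Lambda\Bigr) = \E_u\Bigl(\exp(-\eps\tau_{\Lambda^c}) \, \1_{\{\tau_{\Lambda^c}<\tau_\Lambda\}} \, F(\xi(\tau_{\Lambda^c}))\Bigr),
\end{equation*}
where $F(v) \dot= \E_v\bigl(\exp(-\eps\tau_\Lambda)\prod_{i=1}^m \xi^i(\tau_\Lambda)\bigr)$.

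The second step is to observe that identity \eqref{e9-9} in fact holds for any starting point $v\in\Z^m$ satisfying $v^i>0$ for every $i\in\Lambda$, not merely for $v\in\Z^m_+$. Indeed, the proof of Lemma~\ref{lem9-2} relies only on the decomposition $\xi^i(t)=\eta^i(N_i(t))$ provided by (C4), on the fact that $\tau_\Lambda$ and the integer-valued times $N_j(\tau_\Lambda)$ are measurable with respect to $(\theta_n)$ and $\{\eta^i : i\in\Lambda\}$ (and hence independent of $\eta^j$ for $j\in\Lambda^c$), and on the zero-drift property (C3), which guarantees $\E_{v^j}\bigl(\eta^j(N_j(\tau_\Lambda))\bigm|N_j(\tau_\Lambda)\bigr)=v^j$ regardless of the sign of $v^j$. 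Applying this extended version of \eqref{e9-9} at $v=\xi(\tau_{\Lambda^c})$ gives
\begin{equation*}
F(v) = \E_v\Bigl(\exp(-\eps\tau_\Lambda)\prod_{i\in\Lambda} \xi^i(\tau_\Lambda)\Bigr) \, \prod_{i\notin\Lambda} v^i.
\end{equation*}

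Substituting this factorisation back, the product $\prod_{i\notin\Lambda}\xi^i(\tau_{\Lambda^c})$ appears explicitly outside the inner expectation, and a second use of the strong Markov property recombines $\exp(-\eps\tau_{\Lambda^c})$ with the inner $\E_v\bigl(\exp(-\eps\tau_\Lambda')\prod_{i\in\Lambda}\xi^i(\tau_\Lambda)\bigr)$ into a single expectation of $\exp(-\eps\tau_\Lambda)\prod_{i\in\Lambda}\xi^i(\tau_\Lambda)$ under $\P_u$, restricted to $\{\tau_{\Lambda^c}<\tau_\Lambda\}$. This yields precisely the right-hand side of \eqref{e9-10}. The one delicate point I expect is the justification that \eqref{e9-9} survives the relaxation of the positivity condition on coordinates indexed by $\Lambda^c$; once this is verified by re-reading the proof of Lemma~\ref{lem9-2}, the remaining manipulation is a routine application of the tower property, with all expectations finite thanks to Lemma~\ref{lem9-1}.
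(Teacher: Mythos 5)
Your proof is correct and follows essentially the same route as the paper, which deduces \eqref{e9-10} in one line by applying the strong Markov property at $\tau_{\{1,\ldots,m\}\setminus\Lambda}$ together with the factorisation \eqref{e9-9} of Lemma~\ref{lem9-2}. Your observation that \eqref{e9-9} must be (and can be) extended to starting points whose coordinates indexed by $\{1,\ldots,m\}\setminus\Lambda$ may be non-positive is precisely the detail the paper leaves implicit, and your justification of it through the conditional-independence and zero-drift structure used in the proof of Lemma~\ref{lem9-2} is sound.
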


We are ready now to get  another equivalent representation of the function
\eqref{e9-7}. 
\begin{lemma}\label{lem9-4} If the conditions (C1)-(C4) are satisfied and 
  $\nu(0)=0$, then for any $u=(u^1,\ldots,u^m)\in\Z^m_+$, 
\be\label{e9-14}
 \prod_{i=1}^m \left(u^i - \E_{u^i}( \eta_i(T_i))\right) ~=~ \prod_{i=1}^m \left(u^i -
\E_{u^i}( \xi^i(\tau_i))\right) =~ \prod_{i=1}^m u^i - \E_u\left( \prod_{i=1}^m
\xi^i\left(\min_{1\leq j\leq m}\tau_j\right) \right) 
\ee
\end{lemma}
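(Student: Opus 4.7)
My proposal begins with the observation that the first equality in \eqref{e9-14} is immediate from Lemma~\ref{lem9-2}, equation~\eqref{e9-9p}, which gives $\E_{u^i}(\xi^i(\tau_i)) = \E_{u^i}(\eta_i(T_i))$ for every $i$. Writing $v^i \dot= \E_{u^i}(\xi^i(\tau_i))$ and expanding $\prod_i(u^i - v^i)$ by inclusion-exclusion, the second equality reduces to showing
\[
\E_u\Bigl(\prod_{i=1}^m\xi^i(\tau)\Bigr) = \sum_{\emptyset\neq S\subset\{1,\ldots,m\}} (-1)^{|S|-1}\prod_{i\in S} v^i\prod_{i\notin S} u^i,
\]
and this is what I plan to prove by direct computation.

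The plan is to move to the continuous-time lift used in the proof of Lemma~\ref{lem9-3}: I set $\hat\xi^i(t)\dot=\eta_i(N_i(a_it))$ with independent rate-one Poisson clocks $(N_i)$. Assumption~(C4) then makes $\hat\xi^1,\ldots,\hat\xi^m$ mutually independent random walks, and as in the proof of Lemma~\ref{lem9-3} one has $\hat\xi^i(\hat\tau_i) = \xi^i(\tau_i)$ and $\hat\xi^i(\hat\tau) = \xi^i(\tau)$ almost surely, where $\hat\tau_i\dot=\inf\{t\geq 0:\hat\xi^i(t)\leq 0\}$ and $\hat\tau\dot=\min_i\hat\tau_i$. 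Because the $\hat\tau_i$ have continuous distributions, they are pairwise distinct almost surely, so $\hat\tau = \hat\tau_J$ for a unique index $J$ and I may decompose
\[
\E_u\Bigl(\prod_{i=1}^m\hat\xi^i(\hat\tau)\Bigr) = \sum_{j=1}^m \E_u\Bigl(\hat\xi^j(\hat\tau_j)\prod_{i\neq j}\hat\xi^i(\hat\tau_j),\ \hat\tau_j<\hat\tau_i\ \forall i\neq j\Bigr).
\]
Conditioning on $\sigma(\hat\xi^j)$ and using the mutual independence of $(\hat\xi^i)_{i\neq j}$ factorises the inner expectation as $\prod_{i\neq j}g_i(\hat\tau_j)$ with $g_i(t)\dot=\E(\hat\xi^i(t)\1_{t<\hat\tau_i})$. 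Optional stopping of the zero-mean martingale $\hat\xi^i$ at the bounded time $t\wedge\hat\tau_i$, justified by the exponential moments of Lemma~\ref{lem9-1}, gives $g_i(t) = u^i - h_i(t)$ with $h_i(t)\dot=\E(\hat\xi^i(\hat\tau_i)\1_{\hat\tau_i\leq t})$.

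Expanding $\prod_{i\neq j}(u^i - h_i(\hat\tau_j))$ multilinearly and applying Fubini (again covered by Lemma~\ref{lem9-1}), each subset $A\subset\{1,\ldots,m\}\setminus\{j\}$ contributes
\[
(-1)^{|A|}\prod_{i\notin A\cup\{j\}} u^i\cdot\E\Bigl(\prod_{i\in S}\hat\xi^i(\hat\tau_i)\,\1_{\hat\tau_j = \max_{i\in S}\hat\tau_i}\Bigr),\qquad S\dot= A\cup\{j\}.
\]
Reindexing the double sum over $(j,A)$ as a sum over nonempty $S$ and then $j\in S$, the absence of ties together with the independence of the $\hat\xi^i(\hat\tau_i)$ gives $\sum_{j\in S}\E(\prod_{i\in S}\hat\xi^i(\hat\tau_i)\1_{\hat\tau_j=\max_{i\in S}\hat\tau_i}) = \prod_{i\in S} v^i$, and collecting the signs $(-1)^{|A|}=(-1)^{|S|-1}$ produces the displayed identity. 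The main technical obstacle will be the careful justification of the optional-stopping and Fubini interchanges, for which the uniform exponential-moment bounds of Lemma~\ref{lem9-1} are essential; no further combinatorial difficulties are expected beyond the bookkeeping of signs.
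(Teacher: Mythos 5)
Your argument is correct, but it takes a genuinely different route from the paper's. The paper proves the second equality in \eqref{e9-14} by induction on $m$: it peels off the last coordinate, regularises with the factor $\exp(-\eps\,\min_j\tau_j)$ so that all expectations are a priori well defined, lets $\eps\to 0$ by monotone convergence, and carries out the inductive step with the conditional-independence identities of Lemma~\ref{lem9-2} and Corollary~\ref{cor9-4} together with the factorisation of Lemma~\ref{lem9-3}. You instead work entirely in the Poissonized picture used in the proof of Lemma~\ref{lem9-3}, where (C4) makes the coordinates $\hat\xi^1,\dots,\hat\xi^m$ genuinely independent, and you obtain the identity in one stroke: decompose according to the index attaining $\hat\tau=\min_i\hat\tau_i$ (no ties a.s.), condition on the path of that coordinate, use optional stopping at the bounded time $t\wedge\hat\tau_i$ to write $\E\bigl(\hat\xi^i(t)\1_{\{t<\hat\tau_i\}}\bigr)=u^i-\E\bigl(\hat\xi^i(\hat\tau_i)\1_{\{\hat\tau_i\le t\}}\bigr)$, and resum by inclusion--exclusion, the partition $\sum_{j\in S}\1_{\{\hat\tau_j=\max_{i\in S}\hat\tau_i\}}=1$ recombining into $\prod_{i\in S}v^i$ by independence. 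What your route buys is directness: no induction, and an explicit inclusion--exclusion formula for $\E_u\bigl(\prod_i\xi^i(\tau)\bigr)$ as a by-product. What the paper's route buys is that every manipulation is performed on the discounted quantities controlled by Lemma~\ref{lem9-1}, so finiteness questions never arise until the very end, where the sign $\prod_i\xi^i(\tau)\le 0$ and monotone convergence settle them.

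One caution on your justifications: Lemma~\ref{lem9-1} is not quite the right citation for the optional-stopping and Fubini steps, because its bound carries the discount factor $\exp(-\eps\tau_\Lambda)$ and therefore does not by itself yield $\E_{u^i}\bigl|\xi^i(\tau_i)\bigr|<\infty$. What you actually need is (i) finiteness of $v^i=\E_{u^i}\bigl(\xi^i(\tau_i)\bigr)$, which is the classical finiteness of the mean ladder height for a centered one-dimensional walk with exponential moments (conditions (B1)--(B3), and already implicit in the statement of the lemma), and (ii) the observation that every integrand you interchange has a fixed sign: $\hat\xi^j(\hat\tau_j)\le 0$, each $h_i(t)=\E\bigl(\hat\xi^i(\hat\tau_i)\1_{\{\hat\tau_i\le t\}}\bigr)\le 0$, and $g_i(t)=u^i-h_i(t)\in[u^i,\,u^i-v^i]$ is bounded. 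With these two remarks, Tonelli and optional stopping at bounded times (which only requires first moments of the jumps) make your interchanges watertight, and the proof goes through.
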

\begin{proof} The first equality of \eqref{e9-14} follows from \eqref{e9-9p}. To prove the
  second equality, it is convenient to use the induction with respect to $m$. For $m=1$,
  this equality  is trivial. 
Suppose now that \eqref{e9-14} holds for some $m=k\geq 1$. Then 
\begin{multline*}
\prod_{i=1}^{k+1} \left(u^i -
\E_{u^i}( \xi^i(\tau_i))\right) = \left(\prod_{i=1}^k u^i ~-~
\E_u\left( \prod_{i=1}^k \xi^i\Bigl(\min_{1\leq j\leq k}\tau_j\Bigr) \right)
\right)\\ \times \left(u^{k+1} ~-~ \E_{u^{k+1}}\left( \xi^{k+1}(\tau_{k+1})\right)\right) 
\end{multline*}
and hence, to get \eqref{e9-14} for $m=k+1$, it is sufficient to show that 
\begin{align}
\E_u\left( \prod_{i=1}^{k+1} \xi^i\left(\min_{1\leq j\leq k+1}\tau_j\right) \right) ~=~
u^{k+1} \times \E_u\left( \prod_{i=1}^k \xi^i\left(\min_{1\leq j\leq
  k}\tau_j\right) \right)  \hspace{1.5cm}\label{e9-15}\\ ~+~ \E_{u^{k+1}}(
\xi^{k+1}(\tau_{k+1})) \times\prod_{i=1}^k u^i  ~-~ \E_{u^{k+1}}(
\xi^{k+1}(\tau_{k+1}))   \times \E_u\left( \prod_{i=1}^k \xi^i\left(\min_{1\leq j\leq
  k}\tau_j\right) \right)  \nonumber 
\end{align}
To prove this equality let us notice that  because of the assumption (C4),
almost surely $\tau_i\not=\tau_j$ for all $1\leq  i < j\leq
k+1$. Hence, almost surely, only one of the coordinates $\xi^1(\min_{1\leq j\leq k+1}\tau_j)$,
\ldots, $\xi^{k+1}(\min_{1\leq j\leq k+1}\tau_j)$ is negative or zero, and consequently,  
\[
\prod_{i=1}^{k+1} \xi^i\left(\min_{1\leq i\leq k+1}\tau_i\right) ~\leq ~ 0. 
\]
The left hand side of \eqref{e9-15} is therefore negative or zero. 
While we do not yet know whether the left hand side of \eqref{e9-15} finite or equal to $-\infty$, the quantities 
\[
\E_u\left( \exp\Bigl(-\eps \min_{1\leq i\leq k+1}\tau_i\Bigr)
\prod_{i=1}^{k+1} \xi^i\Bigl(\min_{1\leq j\leq k+1}\tau_j\Bigr) \right) 
\] 
are well defined for any $\eps > 0$ by Lemma~\ref{lem9-1}, and by monotone
convergence theorem, 
\be\label{e9-16}
\E_u\left( \prod_{i=1}^{k+1} \xi^i\Bigl(\,\min_{1\leq j\leq k+1}\tau_j\Bigr) \right)  =
\lim_{\substack{\eps\to 0\\ \eps > 0}} \E_u\left( \exp\Bigl(-\eps \min_{1\leq j\leq k+1}\tau_j\Bigr)
\prod_{i=1}^{k+1} \xi^i\Bigl(\, \min_{1\leq j\leq k+1}\tau_j\Bigr) \right). 
\ee
Furthermore, using again the fact that almost surely $\tau_i\not=\tau_{k+1}$ for all
$i=1,\ldots,k$,  one gets 
\begin{align*}
\E_u&\left( \exp\Bigl(-\eps \min_{1\leq j\leq k+1}\tau_j\Bigr)
\prod_{i=1}^{k+1} \xi^i\Bigl(\min_{1\leq j\leq k+1}\tau_j\Bigr) \right) \\
&= \E_u\left( \exp\Bigl(-\eps \min_{1\leq j\leq k}\tau_j\Bigr)
\prod_{i=1}^{k+1} \xi^i\Bigl(\min_{1\leq j\leq k}\tau_j\Bigr), \; \min_{1\leq j\leq k}
\tau_j < \tau_{k+1}\right)\\
&\hspace{2.6cm}+ \E_u\left( \exp\Bigl(-\eps \tau_{k+1}\Bigr)
\prod_{i=1}^{k+1} \xi^i\left(\tau_{k+1}\right), \; \min_{1\leq j\leq k}
\tau_j > \tau_{k+1}\right) \nonumber\\
&= \E_u\left( \exp\Bigl(-\eps \min_{1\leq j\leq k}\tau_j\Bigr)
\prod_{i=1}^{k+1} \xi^i\Bigl(\min_{1\leq j\leq k}\tau_j\Bigr)\right) + \E_u\left( \exp\Bigl(-\eps \tau_{k+1}\Bigr)
\prod_{i=1}^{k+1} \xi^i\left(\tau_{k+1}\right)\right) \\
&\hspace{2.6cm} - \E_u\left( \exp\Bigl(-\eps \min_{1\leq j\leq k}\tau_j\Bigr)
\prod_{i=1}^{k+1} \xi^i\Bigl(\min_{1\leq j\leq k}\tau_j\Bigr), \; \min_{1\leq j\leq k}
\tau_j > \tau_{k+1}\right)\\
&\hspace{3.8cm} - \E_u\left( \exp\Bigl(-\eps \tau_{k+1}\Bigr)
\prod_{i=1}^{k+1} \xi^i\left(\tau_{k+1}\right), \; \min_{1\leq j\leq k}
\tau_j < \tau_{k+1}\right)  
\end{align*} 
For the right hand side of the last equality,  Lemma~\ref{lem9-2} applied with
$\Lambda=\{1,\ldots,k\}$ proves that the first term if  is equal to  
\[
u^{k+1}\times \E_u\left( \exp\Bigl(-\eps \min_{1\leq j\leq k}\tau_j\Bigr)
\prod_{i=1}^{k} \xi^i\Bigl(\min_{1\leq j\leq k}\tau_j\Bigr)\right), 
\]
again by Lemma~\ref{lem9-2} but now with $\Lambda=\{t_{k+1}\}$ the
second term is equal to 
\[
\E_u\left( \exp\Bigl(-\eps \tau_{k+1}\Bigr) \xi^{k+1}\left(\tau_{k+1}\right)\right) \times
\prod_{i=1}^{k} u^i,
\]
Corollary~\ref{cor9-4} applied with $\Lambda=\{1,\ldots,k\}$ shows that the third term is equal to 
\[
\E_u\left( \exp\Bigl(-\eps \min_{1\leq j\leq k}\tau_j\Bigr)
\xi^{k+1}(\tau_{k+1})\times \prod_{i=1}^{k} \xi^i\Bigl(\min_{1\leq j\leq k}\tau_j\Bigr), \; \min_{1\leq j\leq k}
\tau_j > \tau_{k+1}\right) 
\]
and again by Corollary~\ref{cor9-4} but now with $\Lambda=\{t_{k+1}\}$, the fourth term is
equal to 
\[
\E_u\left( \exp\Bigl(-\eps \tau_{k+1}\Bigr)
\xi^{k+1}(\tau_{k+1})\times \prod_{i=1}^{k} \xi^i\Bigl(\min_{1\leq j\leq k}\tau_j\Bigr), \; \min_{1\leq j\leq k}
\tau_j > \tau_{k+1}\right).  
\]
Since the sum of the last two terms is equal to 
\[
\E_u\left( \exp\Bigl(-\eps \min_{1\leq j\leq k}\tau_j\Bigr)
\xi^{k+1}(\tau_{k+1})\times \prod_{i=1}^{k} \xi^i\Bigl(\min_{1\leq j\leq k}\tau_j\Bigr)\right) 
\]
letting $\eps\to 0$ and using monotone convergence theorem one gets 
\begin{align*}
\E_u\left( \prod_{i=1}^{k+1} \xi^i\left(\min_{1\leq j\leq k+1}\tau_j\right) \right) ~=~
u^{k+1} \times \E_u\left( \prod_{i=1}^k \xi^i\left(\min_{1\leq j\leq
  k}\tau_j\right) \right)  \hspace{1.5cm}\\ ~+~ \E_{u^{k+1}}(
\xi^{k+1}(\tau_{k+1})) \times\prod_{i=1}^k u^i  ~-~   \times \E_u\left(
\xi^{k+1}(\tau_{k+1}) \times \prod_{i=1}^k \xi^i\left(\min_{1\leq j\leq
  k}\tau_j\right) \right).
\end{align*}
The last relation combined with Lemma~\ref{lem9-3} proves \eqref{e9-14}. 
\end{proof}

\begin{proof}[Proof of Proposition~\ref{pr9-3}] Lemma~\ref{lem9-4} and 
  Corollary~\ref{cor9-3} prove this proposition in the case where $\nu(0)=0$. 
Suppose now that $\nu(0)\not= 0$ and let us notice that  a positive function $f> 0$
on $\Z^m_+$ is harmonic for the transition kernel $P_+$ if and only if it is harmonic for
the new transition kernel $\tilde{P}_+ = (\tilde{P}(u,u') ~=~ \tilde{\nu}(u'-u), \;
u,u'\in\Z^m_+)$ with  
\[
\tilde{\nu}(u) ~=~ \begin{cases} (1-\nu(0))^{-1} \nu(u) &\text{ if $u\not= 0$},\\
0 &\text{ if $u= 0$}. 
\end{cases}
\]
If the original probability measure $\nu$ satisfies the conditions (C1)-(C4), the modified
probability measure also satisfies the conditions (C1)-(C4). Since $\tilde\nu(0)=0$, the only positive harmonic
functions of the modified transition kernel $\tilde{P}_+$, as well as of the original
transition kernel $P_+$, are therefore the constant multiples of the function 
\be\label{e9-18}
f(u) ~=~ \prod_{i=1}^m u^i ~-~ \E_u\left( \prod_{i=1}^m \tilde\xi^i\Bigl(\min_{1\leq i\leq
  m}\tilde\tau_i\Bigr) \right), \quad u\in\Z_+^m
\ee
where $\tilde\xi(t) = (\tilde\xi^1(t),\ldots,\tilde\xi^m(t))$ is the new random walk on
$\Z^m$ with transition probabilities $\tilde{p}(u,u') ~=~ \tilde\nu(u'-u)$ and
$\tilde\tau_i = \inf\{t\geq 0 : \tilde\xi^i(t)\leq 0\}$ for $1\leq
i\leq m$. Consider finally a sequence of independent identically distributed Bernoulli random
variables $(\theta_n)$ with $\P(\theta_n = 1) = 1-\nu(0)$. The random walk $(\xi(t)$ can
be represented in terms of the random walk $(\tilde\xi(t))$ as follows : if the random
walk $(\tilde\xi(t))$ and the sequence $(\theta_n)$ are independent, then letting $N(n) =
\theta_1 +\cdots + \theta_n$ one gets $\xi(n) = \tilde\xi(N(n))$. With such a 
representation, almost surely, 
\[
\prod_{i=1}^m \tilde\xi^i\Bigl(\min_{1\leq i\leq
  m}\tilde\tau_i\Bigr) ~=~ \prod_{i=1}^m \xi^i\Bigl(\min_{1\leq i\leq m}\tau_i\Bigr)
\]
and consequently, the right hand side of \eqref{e9-18} is equal to the right hand side of
\eqref{e9-8}. Proposition~\ref{pr9-3} is therefore proved.
\end{proof}

\subsection{Proofs of Proposition~\ref{pr1-1} and Proposition~\ref{pr1-2}}

Recall that for $\Lambda=\Lambda(M)~\dot~=\{i :~ M^i = 0\}$, the random process $S^{\Lambda}(t) = (S^i(t))_{i\in\Lambda}$
  is a homogeneous random walk on $\Z^{\Lambda}$ with transition
  probabilities 
\[
\P_u\left(S^{\Lambda} = u'\right) ~=~ \mu_\Lambda(u'-u) ~\dot=~  \sum_{x\in\Z^d : x^{\Lambda} = u'-u}
\mu(x), \quad u,u'\in\Z^{\Lambda(q)}
\]
and zero mean 
\[
\E_0(S^\Lambda(1)) ~=~ \sum_{u\in\Z^{\Lambda}} \mu_\Lambda(u) u ~=~ \sum_{x\in\Z^d} \mu(x)
\, x^\Lambda ~=~ M^{\Lambda} ~=~ 0.
\]
The induced Markov chain $(X_M(t))$  is identical to the random walk $S^{\Lambda(M)}(t)$
  for $t < \tau_{\Lambda(M)} = \inf\{ t \geq 0 : S^i(t) \leq 0 \; \text{ for some } \;
i\in\Lambda(M)\}$ and killed at the time $\tau_{\Lambda(M)}$. 
The transition kernel of the Markov chain $(X_M(t))$ is $$P_+ ~=~ \left(p(u,u') ~=~ \mu_{\Lambda(M)}(u'-u), \;
u,u'\in\Z^{\Lambda(M)}_+\right).$$ 
Under the hypotheses (A1)-(A4), the probability measure $\mu_{\Lambda(M)}$ satisfies
the conditions
(C1)-(C4) of Proposition~\ref{pr9-3}, and consequently, every
minimal harmonic function $f > 0$ of  $(X_M(t))$ is a constant multiple of the function 
\[
f(u) ~=~ \prod_{i\in\Lambda(M)} u^i ~-~ \E_{u}\left( \prod_{i\in\Lambda(M)} S^i\left(\tau_{\Lambda(M)}\right)\right). 
\] 
By Theorem~\ref{th1}, from this it follows that every sequence $x_n\in\Z^d_+$ with $\lim_n
|x_n| = \infty$ and $\lim_n x_n/|x_n| ~=~ M$ is fundamental for the Markov chain $(Z(t))$
and for any $x\in\Z^d_+$, 
\[
\lim_n G(x,x_n)/G(x_0,x_n) ~=~ h(x)/h(x_0)
\]
with 
\[
h(x) ~=~ f(x^{\Lambda(M)}) - \E_x\Bigl( f\bigl(S^{\Lambda(M)}(\tau)\bigr), \; \tau <
\tau_{\Lambda(M)}\Bigr). 
\]
Remark finally  that for any $x\in\Z_+^d$, 
\begin{align*}
h(x) ~=~ \prod_{i\in\Lambda(M)} x^i  -~ \E_{x}\left( \prod_{i\in\Lambda(M)}
S^i\left(\tau_{\Lambda(M)}\right)\right) -~ \E_x\left(\prod_{i\in\Lambda(M)}
S^i\left(\tau\right), \; \tau < \tau_{\Lambda(M)}  \right) \\ + \E_x\left(
\E_{S^{\Lambda(M)}(\tau)}\Bigl( \,\prod_{i\in\Lambda(M)}
S^i\left(\tau_{\Lambda(M)}\right)\Bigr), \; \tau < \tau_{\Lambda(M)} \right).
\end{align*}
Since by strong Markov property, the last term is equal to 
\[
 \E_x\Bigl(\;\prod_{i\in\Lambda(M)}
S^i\left(\tau_{\Lambda(M)}\right), \; \tau < \tau_{\Lambda(M)} \Bigr).
\]
we conclude that for any $x\in\Z_+^d$, 
\begin{align*}
h(x) &~=~ \prod_{i\in\Lambda(M)} x^i  ~-~ \E_{x}\Bigl( \; \prod_{i\in\Lambda(M)}
S^i\left(\tau_{\Lambda(M)}\right), \; \tau \geq \tau_{\Lambda(M)} \Bigr) \\&\hspace{6cm}
-~ \E_x\Bigl(\; \prod_{i\in\Lambda(M)}
S^i\left(\tau\right), \; \tau < \tau_{\Lambda(M)}  \Bigr) \\
&~=~ \prod_{i\in\Lambda(M)} x^i  ~-~ \E_{x}\Bigl( \; \prod_{i\in\Lambda(M)}
S^i\left(\tau\right), \; \tau < \infty \Bigr). 
\end{align*}

\providecommand{\bysame}{\leavevmode\hbox to3em{\hrulefill}\thinspace}
\providecommand{\MR}{\relax\ifhmode\unskip\space\fi MR }
\providecommand{\MRhref}[2]{%
  \href{http://www.ams.org/mathscinet-getitem?mr=#1}{#2}
}
\providecommand{\href}[2]{#2}

\end{document}